\definecolor{darkgreen}{rgb}{0,0.45,0}
\def\noteson{%
    \gdef\luis##1{\noindent{\color{blue}[Luis: ##1]}}%
    \gdef\steve##1{\noindent{\color{red}[Steve: ##1]}}%
    \gdef\magnus##1{\noindent{\color{magenta}[Magnus: ##1]}}%
    \gdef\steffen##1{\noindent{\color{orange}[Steffen: ##1]}}%
    \gdef\todo##1{\noindent{\color{violet}[to do: ##1]}}
    \gdef\revcom##1{\noindent{\color{red}[addresses comment ##1]}}%
    }
\newtheorem{theorem}{Theorem}[section]
\newtheorem{lemma}[theorem]{Lemma}
\newtheorem{conjecture}[theorem]{Conjecture}
\newtheorem{proposition}[theorem]{Proposition}
\newtheorem{corollary}[theorem]{Corollary}
\newtheorem{resultx}{Result}
\theoremstyle{definition}
\newtheorem{definition}[theorem]{Definition}
\newtheorem{example}[theorem]{Example}
\newtheorem{remark}[theorem]{Remark}
\newtheorem{notation}[theorem]{Notation}
\renewcommand{\colon}{:}
\definecolor{light-gray}{gray}{0.8}
\newcommand{\hook}{%
{\,\begin{tikzpicture}[scale=0.05]
   \draw[-] (0,0) -- (2,0);
   \draw[-] (0,0) -- (0,2);
   \draw[-] (2,0) -- (2,1);
   \draw[-] (0,2) -- (1,2);
   \draw[-] (1,1) -- (1,2);
   \draw[-] (1,1) -- (2,1);
\end{tikzpicture}}}
\newcommand{\rectangle}{%
{\,\begin{tikzpicture}[scale=0.05]
   \draw[-] (0,0) -- (2,0);
   \draw[-] (0,0) -- (0,2);
   \draw[-] (2,0) -- (2,2);
   \draw[-] (0,2) -- (2,2);
\end{tikzpicture}}}
\renewcommand{\sp}{\textup{\texttt{+}}}
\newcommand{\sm}{\textup{\texttt{-}}}
\newcommand{\spm}{{\textup{\texttt{\textpm}}}}
\renewcommand{\to}{\xrightarrow{\;\;\;}}
\renewcommand{\nrightarrow}{\centernot{\xrightarrow{\;\;\;\;}}}
\renewcommand{\mod}{\mathrm{mod}\xspace}
\newcommand{\define}[1]{\textit{#1}}
\renewcommand{\epsilon}{\varepsilon}
\renewcommand{\phi}{\varphi}
\newcommand{\Erank}{\mathpzc{rk}}
\newcommand{\Eupset}{\mathpzc{lim}}
\DeclareMathOperator{\gldim}{gldim}
\DeclareMathOperator{\pdim}{pdim}
\newcommand{\gldimrank}{\gldim^\Erank}
\newcommand{\pdimrank}{\pdim^\Erank}
\newcommand{\gldimupset}{\gldim^\Eupset}
\newcommand{\pdimupset}{\pdim^\Eupset}
\newcommand{\gldimEcal}{\gldim^\Ecal}
\let\ker\undefined
\DeclareMathOperator{\ker}{ker}
\DeclareMathOperator{\coker}{coker}
\DeclareMathOperator{\Hom}{Hom}
\DeclareMathOperator{\Tor}{Tor}
\DeclareMathOperator{\Ext}{Ext}
\DeclareMathOperator{\Lan}{Lan}
\DeclareMathOperator{\End}{End}
\DeclareMathOperator{\add}{add}
\DeclareMathOperator{\Span}{Span}
\DeclareMathOperator{\proj}{proj}
\newcommand{\coim}{\mathsf{coim}}
\newcommand{\im}{\mathsf{im}}
\newcommand{\rk}{\mathsf{rk}}
\newcommand{\mrd}{\sbarc^{\rectangle}}
\newcommand{\mrdh}{\sbarc^{\hook}}
\newcommand{\bettibars}{\mathsf{b}}
\newcommand{\bettibarsrank}{\mathsf{b}^\Erank}
\newcommand{\decbars}{\mathsf{b}^{\rectangle}}
\newcommand{\barc}{\Bcal}
\newcommand{\sbarc}{\barc}
\newcommand{\barcEcal}{\sbarc^\Ecal}
\newcommand{\barcrank}{\sbarc^\Erank}
\newcommand{\bettirank}{\beta^\Erank}
\newcommand{\vect}{\mathrm{vec}}
\newcommand{\Vect}{\mathrm{Vec}}
\newcommand{\Ab}{\mathrm{Ab}}
\newcommand{\Modcat}{\mathrm{Mod}}
\newcommand{\modcat}{\mathrm{mod}}
\newcommand{\fpp}{\textit{fp}\xspace}
\DeclareMathAlphabet{\mathpzc}{OT1}{pzc}{m}{it}
\newcommand\DEFINEALPHABETLOOP[3]{%
  \ifx\relax#3\expandafter\@gobble\else\expandafter\@firstofone\fi
  {\expandafter\newcommand\expandafter*\csname#3#1\endcsname{#2{#3}}%
   \DEFINEALPHABETLOOP{#1}{#2}}%
}%
\newcommand\Definealphabet[2]{%
  \DEFINEALPHABETLOOP{#1}{#2}abcdefghijklmnopqrstuvwxyzABCDEFGHIJKLMNOPQRSTUVWXYZ\relax
}%
\title[Bottleneck stability of rank decompositions]{On the bottleneck stability of rank decompositions\\of multi-parameter persistence modules}
\author{Magnus Bakke Botnan}
\address{
Vrije Universiteit,
Department of Mathematics,
Faculteit der Exacte Wetenschappen,
Amsterdam}
\author{Steffen Oppermann}
\address{
Department of Mathematical Sciences,
NTNU, Trondheim, Norway}
\author{Steve Oudot}
\address{
Inria and \'Ecole Polytechnique,
Palaiseau, France}
\author{Luis Scoccola}
\address{
Mathematical Institute,
University of Oxford, United Kingdom}
\begin{document}

\maketitle

\begin{abstract}
    A significant part of modern topological data analysis is concerned with the design and study of algebraic invariants of poset representations---often referred to as persistence modules.
    One such invariant is the minimal rank decomposition, which 
    encodes the ranks of all the structure morphisms of the persistence module by a single ordered pair of rectangle-decomposable modules, interpreted as a signed barcode.
    This signed barcode generalizes the concept of persistence barcode from one-parameter persistence to any number of parameters, raising the question of its bottleneck stability.
    We show in this paper that the minimal rank decomposition is not stable under the natural notion of signed bottleneck matching between signed barcodes.
    We remedy this by turning our focus to the rank exact decomposition, a related signed barcode induced by the minimal projective resolution of the module relative to the so-called rank exact structure, which we prove to be bottleneck stable under signed matchings.
    As part of our proof, we obtain two intermediate results of independent interest: we compute the global dimension of the rank exact structure on the category of finitely presentable multi-parameter persistence modules, and we prove a bottleneck stability result for hook-decomposable modules.
    We also give a bound for the size of the rank exact decomposition that is polynomial in the size of the usual minimal projective resolution, we prove a universality result for the dissimilarity function induced by the notion of signed matching, and we compute, in the two-parameter case, the global dimension of a different exact structure related to the upsets of the indexing poset.
    This set of results combines concepts from topological data analysis and from the representation theory of posets, and we believe is relevant to both areas.
\end{abstract}

\medskip

\setcounter{tocdepth}{1}
\tableofcontents


\noindent \textbf{Keywords:} multi-parameter persistence, stability, poset representation, relative homological algebra, global dimension.

\medskip

\noindent \textbf{AMS subject classification:} 55N31, 06B15, 18G25

\medskip

\section{Introduction}

\subsection{Context and objectives}

The design and study of discrete invariants for persistence modules---poset representations, in the language of representation theory---lies at the heart of topological data analysis (TDA), which uses these invariants as features, regularizers, or estimators in applications. The discrete nature of the invariants is paramount, enabling their accurate construction and comparison on a computer.  In the one-parameter setting---the setting in which the indexing poset is a linear order---the persistence barcodes, or equivalently the persistence diagrams, have emerged as the canonical choice of discrete invariant, notably because they are provably complete and comparatively easy to interpret.

The situation in the multi-parameter setting---the setting in which the indexing poset is not a linear order and is typically a product of linear orders---is much less clear-cut: 
even fairly simple non-linear posets, including sufficiently large 2-dimensional grids, are of wild representation type \cite{Nazarova} and thus a full classification of the indecomposable representations of these posets is generally believed to be a hopelessly difficult task;
for manifestations of this phenomenon in the case of multi-parameter persistence, see \cite{carlsson-zomorodian,bauer-scoccola}.
Research in multi-parameter TDA within the past decade has therefore considered alternative, incomplete, discrete invariants, most notably: the Hilbert function and its associated Hilbert series~\cite{harrington-otter-schenck-tillmann}; the multigraded Betti numbers~\cite{lesnick-wright,berkouk2019stable}; the fibered barcode~\cite{Landi,lesnick-wright}; the rank invariant~\cite{carlsson-zomorodian} and its generalized version~\cite{kim-memoli}; the generalized persistence diagrams obtained through M\"obius inversion~\cite{kim-memoli,patel,clause-kim-memoli}, compressed multiplicities~\cite{asashiba2019approximation}, and minimal rank decompositions~\cite{botnan-oppermann-oudot}, which have strong ties with one another; the rank decompositions coming from rank exact resolutions~\cite{botnan-oppermann-oudot}; the persistence contours and their associated simple noise systems~\cite{gafvert2017stable,scolamiero2017multidimensional}; and the stable rank and its associated shift dimension~\cite{chacholski2021shift,gafvert2017stable}. The significant increase in the number and variety of these invariants over time calls for a general effort to classify and sort them according to their respective qualities. This, in turn, raises the question of defining what it means to be a {\em good} discrete invariant.

To the end user in data sciences, the most natural measure of quality for a discrete invariant would be its performances in machine learning or data mining applications. But in order to explain these performances, prior foundational developments are required, including notably the study of the stability properties of the invariant under perturbations of the persistence module it describes:
for example, in the one-parameter case, the stability of barcodes is a main ingredient in the proof of convergence of stochastic gradient descent for persistence-based optimization \cite{carriere}, as well as in proofs of consistency of persistence-based statistics (see, e.g., \cite{chazal2021introduction} for an overview).
For this, a suitable choice of distance or dissimilarity measure between invariants must be made, with the following trade-off in mind: on the one hand, stability bounds are sought for;
on the other hand, the dissimilarity should be as discriminative as possible. Meanwhile, for practical purposes, the dissimilarity should be computationally tractable. In the one-parameter setting, the bottleneck distance is the metric of choice on persistence barcodes or diagrams, as it is computationally tractable, easily interpretable (being a best-matching distance), and provably equivalent (via an isometry theorem) to the universal interleaving distance between the persistence modules themselves.

Again, the situation in the multi-parameter setting is more contrasted. First, bottleneck-type stability results for discrete invariants in this context are scarce; second, 
the computation of available metrics, such as for instance the matching distance between fibered barcodes~\cite{bjerkevik-kerber,kerber2018exact,kerber_et_al:LIPIcs:2020:12211}, poses some serious challenges.
A recent development~\cite{oudot-scoccola} aimed at resolving these issues by extending the classical bottleneck distance on persistence barcodes to a signed version designed for the comparison of multigraded Betti numbers.
While not a pseudo-metric (the triangle inequality may be infringed), this dissimilarity does satisfy a universality property, it is just as easy to compute as its unsigned counterpart, and it makes the multigraded Betti numbers stable.

It follows from the rank-nullity theorem from linear algebra that the Hilbert function (i.e., the pointwise dimension) of a multi-parameter persistence module can be seen as the Euler characteristic with respect to the multigraded Betti numbers, meaning that the alternating sum of the pointwise dimension function of the projectives represented by the multigraded Betti numbers equals the pointwise dimension of the module \cite{oudot-scoccola}.
Our focus here is on the study of an invariant through the lens of the signed bottleneck dissimilarity.
This invariant is given by Betti numbers relative to an exact structure other than the standard one, namely the rank exact structure, which was introduced in~\cite{botnan-oppermann-oudot} for the purpose of proving the existence of decompositions of the rank invariant.
Part of the motivation comes from the fact that the Euler characteristic with respect to the Betti numbers relative to the rank exact structure yields the rank invariant, an invariant finer than the Hilbert function.
Our primary objective is to obtain bottleneck stability guarantees for these relative multigraded Betti numbers.
%
%
%

Our approach is part of a general program which aims to study multi-parameter persistence modules via relative homological algebra \cite{hochschild,auslander-solberg,enochs-jenda}.
The basic idea of the program is to associate an exact structure to a given invariant of persistence modules (such as the Hilbert function or the rank invariant), which allows one to decompose the value of the invariant on a certain module as a formal sum of simpler modules, namely, projective modules relative to the exact structure.
This formal sum can be interpreted as a signed barcode and gives a discrete representation of the invariant.
This program was initiated in \cite{botnan-oppermann-oudot} in the case of the rank invariant and was then extended to other invariants in \cite{blanchette-brustle-hanson}.
Recent papers following this line of work include
\cite{asashiba-escolar-nakashima-yoshiwaki}, which focuses on the exact structure whose projective modules are the interval decomposable modules; and \cite{cacholski-guidolin-ren-scolamiero-tombari}---made public after the first version of this paper was made public---which focuses on the effective computation of relative Betti numbers of modules over posets using Koszul resolutions.
The present work takes the program one step further by bringing stability questions into the picture, considering the stability of relative projective resolutions of poset representations in terms of interleavings and matchings.

\subsection{Mathematical framework and contributions.}
For details about the notions used here, the reader may refer to \cref{section:background}.
Let \(\Pscr\) be a poset and let \(\vect\) be the category of finite dimensional vector spaces over a fixed field \(\kbb\).
A (pointwise finite dimensional) \define{\(\Pscr\)-persistence module} is a functor \(\Pscr \to \vect\).
A \define{\(\Pscr\)-barcode} is a multiset\footnote{
For the formal notion of multiset used in this article, see \cref{section:multisets}.}
of isomorphism classes of indecomposable \(\Pscr\)-persistence modules, usually denoted by \(\Ccal\), and a \define{signed \(\Pscr\)-barcode} is an ordered pair \((\Ccal_\sp, \Ccal_\sm)\) of \(\Pscr\)-barcodes, for which we use the shorthand \(\Ccal_\spm\).
By the main result of \cite{botnan-crawleybovey},
and the Krull--Remak--Schmidt--Azumaya theorem, any \(\Pscr\)-persistence module can be decomposed as a direct sum of indecomposable persistence modules in an essentially unique way (see \cref{subsection:barcodes}).
The \define{barcode} of a \(\Pscr\)-persistence module \(M\), denoted \(\barc(M)\), is the multiset of isomorphism classes of indecomposables of \(M\), counted with multiplicity.
A \(\Pscr\)-persistence module is \define{finitely presentable} (\fpp) if it is the cokernel of a morphism between persistence modules that are finite direct sums of indecomposable projectives.
The subcategory spanned by \fpp \(\Pscr\)-persistence modules is denoted by \(\vect^\Pscr_\fpp\).
Of particular interest is the \(n\)-fold product poset \(\Rscr^n\), where \(\Rscr\) is the poset of real numbers with its standard linear order, and \(n\) is a fixed natural number.
Modules of the form \(\Rscr^n \to \vect\) are known as \define{\(n\)-parameter persistence modules} and are usually compared using the \define{interleaving distance} \cite{lesnick,chazal-silva-glisse-oudot}, denoted \(d_I\).
The interleaving distance is based on the notion of \define{\(\epsilon\)-interleaving} between \(\Rscr^n\)-persistence modules, which can be interpreted as an approximate version of the notion of isomorphism.

The \define{rank invariant} of a \(\Pscr\)-persistence module \(M\) is the function \(\rk(M) \colon \{(a,b) \in \Pscr \times \Pscr \colon a \leq b\} \to \Zbb\) where \(\rk(M)(a,b)\) is the rank of the structure morphism \(M(a) \to M(b)\).
A \define{rank decomposition} of a \(\Pscr\)-persistence module \(M\) is a signed \(\Pscr\)-barcode \(\Ccal_\spm\) such that \(\rk(M) = \sum_{A \in \Ccal_\sp} \rk(A) - \sum_{B \in \Ccal_\sm} \rk(B)\) as \(\Zbb\)-valued functions.
It is shown in \cite{botnan-oppermann-oudot} that every \fpp \(\Rscr^n\)-persistence module admits a rank decomposition \(\Ccal_\spm\) with the property that the modules in \(\Ccal_\sp\) and in \(\Ccal_\sm\) are (right open) rectangle modules.
Moreover, if one requires the multisets \(\Ccal_\sp\) and \(\Ccal_\sm\) to be disjoint, then the signed barcode \(\Ccal_\spm\) is uniquely determined, and is called the \define{minimal rank decomposition by rectangles} of \(M\), denoted \(\mrd_\spm(M)\).


The starting point of this work is the following bottleneck instability result for the minimal rank decomposition by rectangles, alluded to, but not proven, in \cite{oudot-scoccola}.
In order to state the result, we use the following notion of bottleneck matching for signed barcodes, from~\cite{oudot-scoccola}.
An \define{\(\epsilon\)-matching} between barcodes \(\Ccal\) and \(\Dcal\) is a partial bijection \(\Ccal \nrightarrow \Dcal\) with the property that matched indecomposables are \(\epsilon\)-interleaved, and unmatched ones are \(\epsilon\)-interleaved with the zero module.
An \(\epsilon\)-matching between signed barcodes \(\Ccal_\spm\) and \(\Dcal_\spm\) is an \(\epsilon\)-matching between \(\Ccal_\sp \cup \Dcal_\sm\) and \(\Dcal_\sp \cup \Ccal_\sm\).
The \define{bottleneck distance} is then defined by \(d_B(\Ccal, \Dcal) = \inf\{\epsilon \colon \exists \text{ \(\epsilon\)-matching } \Ccal\nrightarrow \Dcal \}\), and the \define{signed bottleneck dissimilarity} by
\[
    \widehat{d_B}\big(\Ccal_\spm, \Dcal_\spm\big) = d_B(\Ccal_\sp \cup \Dcal_\sm,\Dcal_\sp \cup \Ccal_\sm).
\]

\begin{resultx}[\cref{proposition:instability-mrd-rectangles}]
    \label{proposition:instability-mrd-rectangles-intro}
    Let \(n \geq 2 \in \Nbb\).
    There is no function \(f \colon \Rbb_{\geq 0} \to \Rbb_{\geq 0}\) with \(f(r) \xrightarrow{r \to 0} 0\) such that the following inequality holds for all \fpp modules \(M,N \colon \Rscr^n \to \vect\) at finite interleaving distance:
    \[
        \widehat{d_B}(\,\mrd_\spm(M)\,,\,\mrd_\spm(N)\,) \leq f\left(d_I(M,N)\right).
    \]
\end{resultx}

Considering minimal rank decompositions by shapes other than right open rectangles does not in general resolve 
the issue of instability: in \cref{remark:instability-mrdh}, we explain how a result analogous to \cref{proposition:instability-mrd-rectangles-intro} can be proven for the minimal rank decomposition by hooks, also introduced in \cite{botnan-oppermann-oudot} and recalled in \cref{section:rank-decs}.

We thus turn our focus to a different type of  rank decomposition, which may not be minimal in the above sense, but which, for this very reason, can be proven to be bottleneck stable.
We call this rank decomposition the \emph{rank exact decomposition}.
There are two main differences between the minimal rank decomposition by rectangles and the rank exact decomposition:
\begin{enumerate}[(i)]
    \item Instead of using rectangle modules to decompose the rank invariant, we use hook modules, whose definition we recall below.
    \item The notion of minimality of the minimal rank decomposition by rectangles---which requires the positive and negative barcodes to be disjoint---is replaced by the requirement that the signed barcode comes from a minimal projective resolution in the so-called rank exact structure, also described below.
\end{enumerate}

\begin{figure}[t]
    \centering
    \begin{subfigure}{0.75\linewidth}
        \centering
        \includegraphics[height=4.9em]{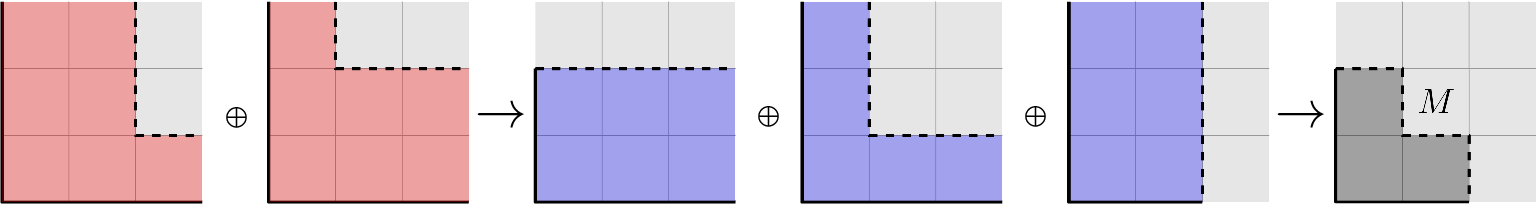}
        \caption{A minimal rank projective resolution of a persistence module \(M\).} 
        \label{figure:example-resolution-a}
    \end{subfigure}
    \hfill
    \begin{subfigure}{0.2\linewidth}
        \centering
        \includegraphics[height=4.9em]{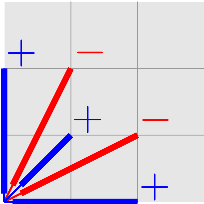}
        \caption{\(\barcrank_\spm(M)\).}
        \label{figure:example-resolution-b}
    \end{subfigure}
    \caption{
        The minimal rank projective resolution of \cref{figure:example-resolution-a} is used to define the rank exact decomposition of the \(\Rscr^2\)-persistence module \(M\), which, in this case, is an interval module.
        The rank exact decomposition of \(M\) is shown in \cref{figure:example-resolution-b} by representing an indecomposable summand \(\Lsf_{i,j}\) by a segment joining \(i\) and \(j\).
        Summands in even homological degrees correspond to positive bars; summands in odd homological degrees correspond to negative bars.
        For details, see \cref{example:rank-exact-resolution}.}
    \label{figure:example-resolution}
\end{figure}

In the case of the rank invariant, the right notion of resolution is that of a rank projective resolution~\cite{botnan-oppermann-oudot}.
A short exact sequence of \(\Rscr^n\)-persistence modules \(0 \to L \to M \to N \to 0\) is \define{rank exact} if \(\rk(M) = \rk(N) + \rk(L)\), and a long exact sequence is rank exact if it can be broken up into short exact sequences that are rank exact (see \cref{section:exact-structures} for a precise definition).
An \(\Rscr^n\)-persistence module \(P\) is \define{rank projective} if \(\Hom(P,-)\) maps rank exact sequences to exact sequences.
A \define{rank projective resolution} of an \(\Rscr^n\)-persistence module \(M\) is a resolution of \(M\) by rank projective modules, which, as a long exact sequence, is rank exact.
It is shown in \cite{botnan-oppermann-oudot} that the rank projective modules are precisely the hook-decomposable modules, that is, the modules that decompose as direct sums of hook modules, where the \define{hook module} \(\Lsf_{i,j}\) with \(i < j \in \Rscr^n \cup \{\infty\}\) is the interval module with support \(\{a \in \Rscr^n \colon a \geq i, a \ngeq j\}\).
Note that, in particular, hook modules are a generalization of the (right open) one-dimensional interval modules.
Let \(M\) be a \fpp \(\Rscr^n\)-persistence module and let \(P_\bullet \to M\) be a minimal rank projective resolution (the fact that this resolution exists and has finite length follows from \cite{botnan-oppermann-oudot}); see \cref{figure:example-resolution-a} for an illustration.
Let \(\bettirank_k(M) \coloneqq \barc(P_k)\), which is independent of the choice of minimal rank projective resolution.
The \define{rank exact decomposition} of \(M\) is the signed barcode
\[
    \barcrank_\spm(M) = \left( \bettirank_{2\Nbb}(M),\bettirank_{2\Nbb+1}(M)\right) = \left(\; \bigcup_{k \text{ even}} \bettirank_k(M)\;,\; \bigcup_{k \text{ odd}} \bettirank_k(M)\;\right);
\]
see \cref{figure:example-resolution-b} for an illustration.

\medskip 

Our first positive result is the following bottleneck stability result for the rank exact decomposition.
\cref{figure:example-stability} illustrates the kind of bottleneck matching that \cref{theorem:stability-rank-intro} guarantees to exist.
It is interesting to note that, when \(n=1\), \cref{theorem:stability-rank-intro} specializes to the well-known algebraic stability of one-parameter barcodes \cite{chazal-cohen-steiner-glisse-guibas-oudot,chazal-silva-glisse-oudot,bauer-lesnick}.

\begin{resultx}[\cref{theorem:stability-rank}]
    \label{theorem:stability-rank-intro}
    Let \(n \geq 1 \in \Nbb\).
    For all \fpp modules \(M,N \colon \Rscr^n \to \vect\), we have
    \[
        \widehat{d_B}(\, \barcrank_\spm(M)\,,\, \barcrank_\spm(N)\,) \leq (2n-1)^2 \cdot d_I(M,N).
    \]
    In other words, for all \fpp modules \(M,N \colon \Rscr^n \to \vect\), we have
    \[
        d_B\big(\;\bettirank_{2\Nbb}(M) \cup \bettirank_{2\Nbb+1}(N)\;,\;\bettirank_{2\Nbb}(N) \cup \bettirank_{2\Nbb+1}(M)\;\big) \leq (2n-1)^2 \cdot d_I(M,N).
    \]
\end{resultx}

We prove \cref{theorem:stability-rank-intro} by following \cite[Theorem~8.4]{oudot-scoccola}, which is a general stability result for signed barcodes that arise from a given exact structure.
Informally, the result says that the bottleneck stability of such a signed barcode follows from two main ingredients:
\begin{enumerate}
    \item A bottleneck stability result for the projectives relative to the exact structure.
    \item A finite upper bound for the global dimension of the exact structure.
\end{enumerate}
Most of our work goes into checking these two facts in the case of the rank exact structure, which yields two intermediate results of independent interest stated as  \cref{proposition-stability-hooks-intro} and \cref{theorem:gldim-rank-Rn-intro} below. 
In the proofs of \cref{theorem:stability-rank-intro} and \cite[Theorem~8.4]{oudot-scoccola}, ingredients (1) and (2) are combined using a persistent version of Schanuel's lemma (\cref{lemma:relative-stability-resolutions}), which applies, under mild assumptions, to any exact structure on the category of \(\Rscr^n\)-persistence modules.

\begin{figure}[t]
    \centering
    \begin{subfigure}{0.49\linewidth}
        \centering
        \includegraphics[height=6em]{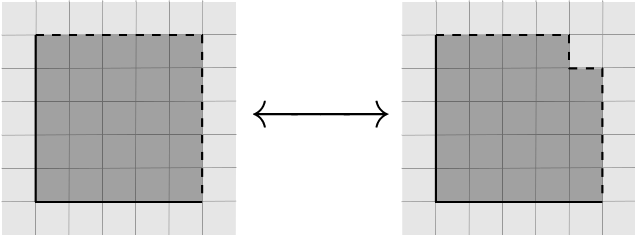}
    \end{subfigure}
    \begin{subfigure}{0.49\linewidth}
        \centering
        \includegraphics[height=6em]{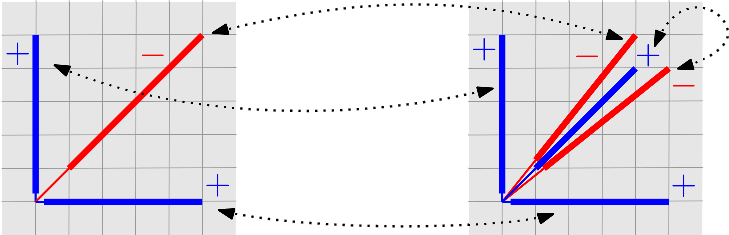}
    \end{subfigure}
    \caption{
        An example of a low-cost matching (right) between the rank exact decompositions of two \fpp \(\Rscr^2\)-persistence modules at small interleaving distance (left).
        For details, see \cref{example:stability}.
        }
    \label{figure:example-stability}
\end{figure}

\cref{proposition-stability-hooks-intro} adapts, from rectangles to hooks, Bjerkevik's bottleneck stability result for rectangle-decomposable modules \cite[Theorem~4.3]{bjerkevik}, and follows from a generalization of Bjerkevik's result, which we state as \cref{bjerkevik-thm-1}:

\begin{resultx}[\cref{proposition-stability-hooks}]
    \label{proposition-stability-hooks-intro}
    Let \(P,Q \colon \Rscr^n \to \vect\) decompose as countable direct sums of hook modules and let \(\epsilon \geq 0\).
    If \(P\) and \(Q\) are \(\epsilon\)-interleaved, then there exists a \((2n-1)\epsilon\)-matching between \(\barc(P)\) and \(\barc(Q)\).
\end{resultx}

We explain in \cref{remark:projective-stability-tight} how we can also adapt \cite[Example~5.2]{bjerkevik} to show that the bound in \cref{proposition-stability-hooks-intro} is tight when \(n=2\).

\cref{theorem:gldim-rank-Rn-intro}
gives the global dimension of the rank exact structure, denoted $\gldimrank$, on the category of \fpp \(\Rscr^n\)-persistence modules.
The result follows from \cref{theorem:gldim-rank-poset}, which provides upper and lower bounds for the global dimension of the rank exact structure on the category of persistence modules over a finite lattice.

\begin{resultx}[\cref{theorem:gldim-rank-Rn}]
    \label{theorem:gldim-rank-Rn-intro}
    Let $n \geq 1 \in \Nbb$.
    We have \(\gldimrank\left(\vect^{\Rscr^n}_{\textup{\fpp}}\right) = 2n-2\).
\end{resultx}


When $n=1$, \cref{theorem:gldim-rank-Rn-intro} says that every \fpp module is rank projective.
It follows that every \fpp module decomposes as a direct sum of hook modules, which, in this setting, are interval modules supported on intervals of the form $[i,j)$ in~$\Rscr$.
Hence, when $n=1$ the result recovers the well-known decomposition theorem for \fpp one-parameter persistence modules
(see, e.g., \cite[Section~5.2]{carlsson-zomorodian-collins-guibas}).

\medskip

For computation purposes, one is not only interested in the maximum length of a minimal rank projective resolution, but also in the number of bars in the rank exact decomposition.
Our next result gives an upper bound on the size of the rank exact decomposition of a module, which, for any fixed number of parameters \(n\), is polynomial in the size of the (usual) multigraded Betti numbers of the module.
It also gives a quadratic lower bound.
While the gap between the upper and lower bounds is quite significant, the result is a first step towards establishing how practical the rank exact decomposition is.
Recall that, for a \fpp \(M \colon \Rscr^n \to \vect\), the \(k\)th multigraded Betti numbers of \(M\) are given by \(\beta_k(M) \coloneqq \barc(P_k)\), where \(P_\bullet \to M\) is a (usual) minimal projective resolution of \(M\).

\begin{resultx}[\cref{proposition:polynomial-bound}]
    \label{proposition:polynomial-bound-intro}
    Let $n \geq 1 \in \Nbb$.
    Given \(M \colon \Rscr^n \to \vect\) \fpp, let \(\bettibars(M) = \sum_i |\beta_i(M)|\).
    We have
    \[
        |\barcrank_\sp(M)| + |\barcrank_\sm(M)|\; \in \; O\Big( \bettibars(M)^{e_n}\Big), \text{ where $e_n = {(2n+1)^{2n-1}}$},
    \]
    in the sense that the quantity 
    $|\barcrank_\sp(M)| + |\barcrank_\sm(M)|$ can be bounded above by $\bettibars(M)^{e_n}$ times a constant that only depends on $n$.
    Also, there exists a family of \fpp modules $M : \Rscr^2 \to \vect$ for which $\bettibars(M)$ can be made arbitrarily large, and such that $|\barcrank_\sp(M)| + |\barcrank_\sm(M)| \geq \bettibars(M)^2/48$.
\end{resultx}

In particular, there is no linear upper bound for \(|\barcrank_\sp(M)| + |\barcrank_\sm(M)|\) in terms \(\bettibars(M)\) that holds for all \fpp \(M \colon \Rscr^2 \to \vect\).
We suspect that, when $n=2$, there is a cubic upper bound; see \cref{conjecture:cubic-bound}.

\medskip

Our last result regarding the rank exact decomposition establishes a universality property for \(\widehat{d_B}\), and, as a consequence, a no-go result for distances on signed barcodes.
This result suggests that it may be difficult to find a non-trivial dissimilarity on signed barcodes that
makes the rank exact decomposition stable, that satisfies the triangle inequality,
and whose computation reduces to the computation of a distance between unsigned barcodes.
We first note in \cref{remark:no-triangle-inequality} that \(\widehat{d_B}\) does in general not satisfy the triangle inequality, even when restricted to signed barcodes that are the rank exact decomposition of a \fpp \(\Rscr^n\)-persistence module.
Thus, \(\widehat{d_B}\) is not an extended pseudo distance, but just a dissimilarity.
In order to state the results, we introduce two properties of a dissimilarity on finite signed barcodes \(d\).
We say that the dissimilarity \(d\) is \define{\(\barcrank_{\spm}\)-stable} if it satisfies \(d(\barcrank_\spm(M), \barcrank_\spm(N)) \leq d_I(M,N)\) for all \fpp \(\Rscr^n\)-persistence modules \(M\) and \(N\), and that it is \define{balanced} if it satisfies
\[
    d\big((\Ccal_\sp, \Ccal_\sm \cup \Acal), (\Dcal_\sp , \Dcal_\sm)\big) = d\big((\Ccal_\sp , \Ccal_\sm), (\Dcal_\sp \cup \Acal, \Dcal_\sm)\big),
\]
for all finite signed \(\Rscr^n\)-barcodes \(\Ccal_\spm\) and \(\Dcal_\spm\), and finite (unsigned) \(\Rscr^n\)-barcode \(\Acal\).
The motivation for considering balanced dissimilarities on signed barcodes---besides the fact that \(\widehat{d_B}\) is balanced---is that their computation reduces to the computation of a distance between unsigned barcodes, in the sense that for any balanced dissimilarity \(d\) on finite signed barcodes there exists a dissimilarity \(d'\) on finite (unsigned) barcodes such that \(d\big(\Ccal_\spm, \Dcal_\spm \big) = d'\big(\Ccal_\sp \cup \Dcal_\sm, \Dcal_\sp \cup \Ccal_\sm\big)\) for all finite signed barcodes \(\Ccal_\spm\) and \(\Dcal_\spm\).

\begin{resultx}[\cref{proposition:universality,prop:no-stable-tri}]
    \label{proposition:universality-intro}
    Let $n \geq 1 \in \Nbb$.
    The collection of \(\barcrank_{\spm}\)-stable and balanced dissimilarity functions on finite signed \(\Rscr^n\)-barcodes has a maximum with respect to the pointwise order, denoted \(d_\msf\).
    When restricted to signed barcodes containing only hook modules, $\widehat{d_B}$ and $d_\msf$ satisfy the following bi-Lipschitz equivalence:
    \[
        \widehat{d_B}/(2n-1)^2 \leq d_\msf \leq \widehat{d_B}.
    \]
    In particular, if \(d\) is a \(\barcrank_{\spm}\)-stable and balanced dissimilarity function on finite signed \(\Rscr^n\)-barcodes that satisfies the triangle inequality, then, for all \fpp \(M,N \colon \Rscr^n \to \vect\) such that \(d_I(M,N) < \infty\), we have \(d(\barcrank_\spm(M), \barcrank_\spm(N)) = 0\).
\end{resultx}

\medskip

We conclude the paper with a discussion about other exact structures on categories of persistence modules.
In particular, using an approach similar to the one used to prove \cref{theorem:gldim-rank-Rn-intro}, we compute the global dimension of the limit exact structure on persistence modules over a finite, \(2\)-dimensional grid, and on the category of \fpp two-parameter persistence modules.
When it exists, the \define{limit exact structure} on a category of functors \(\Pscr \to \vect\), denoted \(\Eupset\), is defined to be the exact structure whose exact sequences are the sequences \(0 \to L \to M \to N \to 0\) such that, for all upward closed subsets \(U \subseteq \Pscr\), the sequence of vector spaces \(0 \to \varprojlim_U L \to \varprojlim_U M \to \varprojlim_U N \to 0\) is exact.

\begin{resultx}[\cref{theorem:global-dimension-upset,corollary:infty-dim-upset-exact}]
    \label{theorem:global-dimension-upset-intro}
    We have \(\gldimupset\left(\vect^{[m]^2}\right) = m-2\), where \(m \geq 3\) and \([m]^2\) is the poset given by an \(m\)-by-\(m\) \(2\)-dimensional grid.
    As a consequence, we have \(\gldimupset\left(\vect^{\Rscr_{\geq 0}^2}_{\textup{\fpp}}\right) = \infty\).
\end{resultx}

In the result above, the poset $\Rscr_{\geq 0}^n$ is used instead of $\Rscr^n$ so that there are enough relative projectives; see \cref{sec:other-exact-structures} for details.
We also discuss in \cref{remark:connection-miller} the relationship between \(\Eupset\)-projective resolutions and the upset resolutions of \cite{miller}.

\medskip


\subsection{Acknowledgements}
L.S.~thanks Ignacio Darago and Pedro Tamaroff for inspiring conversations about homological algebra.
We thank H\aa{}vard Bjerkevik for suggesting several improvements to \cref{matchings-section}, and the reviewers for many comments that have improved this manuscript.
L.S.~was partially supported by the National Science Foundation through grant CCF-2006661
and CAREER award DMS-1943758, as well as by EPSRC grant ``New Approaches to Data Science: Application Driven Topological Data Analysis'', EP/R018472/1.
For the purpose of Open Access, the authors have applied a CC BY public copyright licence to any Author Accepted Manuscript (AAM) version arising from this submission.

\section{Background and notation}
\label{section:background}
We assume familiarity with basic category theory and homological algebra.
Throughout the paper, we fix a field \(\kbb\) and let \(\vect\) denote the category of finite dimensional \(\kbb\)-vector spaces.
We also fix a natural number \(n \geq 1 \in \Nbb\).

\subsection{Distances and dissimilarities}
A \define{dissimilarity} on a set or proper class \(A\) is a function \(d \colon A \times A \to \Rbb \cup \{\infty\}\) that is non-negative and satisfies \(d(x,y) = d(y,x)\) for all \(x,y \in A\), and \(d(x,x) = 0\) for all \(x \in A\).
An \define{extended pseudo distance} on \(A\) is a dissimilarity that satisfies \(d(x,z) \leq d(x,y) + d(y,z)\) for all \(x,y,z \in A\).

\subsection{Posets}
We consider the poset of real numbers \(\Rscr\) with its standard total order, and the product poset \(\Rscr^n\).
For \(m \geq 1 \in \Nbb\), we also consider the finite poset \([m] = \{0, \dots, m-1\}\) with its standard order, and the product poset \([m]^n\).
We write \(0 = (0,\dots,0) \in \Rscr^n\) and \(0 = (0,\dots,0) \in [m]^n\).

\subsection{Persistence modules}
Let \(\Pscr\) be a poset.
By an abuse of notation we also denote by \(\Pscr\) the category that has as objects the elements of \(\Pscr\), and exactly one morphism from \(i\) to \(j\) whenever \(i \leq j \in \Pscr\).
A \define{pointwise finite dimensional \(\Pscr\)-persistence module} is a functor \(\Pscr \to \vect\).
For simplicity, we refer to pointwise finite dimensional \(\Pscr\)-persistence modules simply as \define{\(\Pscr\)-persistence modules}.

Let \(M \colon \Pscr \to \vect\).
For \(a \leq b \in \Pscr\) we let \(\phi^M_{a,b} \colon M(a) \to M(b)\) denote the corresponding structure morphism.

\subsection{Persistence modules as modules}
Given a poset \(\Pscr\), we let \(\kbb \Pscr\) denote the \(\kbb\)-algebra that is freely generated as a \(\kbb\)-vector space by pairs \([i,j] \in \Pscr \times \Pscr\) with \(i \leq j\), and with multiplication given by linearly extending the rule
\[
    [i,j] \cdot [k,l] =
    \begin{cases}
        [i,l], & \text{if \(j=k\),}  \\
        0,     & \text{otherwise.}
    \end{cases}
\]

If \(\Lambda\) is a \(\kbb\)-algebra, we denote the category of right \(\Lambda\)-modules by $\Modcat_{\Lambda}$ and the category of finite dimensional \(\Lambda\)-modules by \(\modcat_{\Lambda}\).
A proof of the next result is in \cref{section:appendix}.



\begin{restatable}{lemma}{persistencemodulesaremodules}
    \label{lemma:persistence-modules-are-modules}
    There is an additive, fully faithful functor \(\Vect^\Pscr \to \Modcat_{\kbb \Pscr}\).
    The functor is given by mapping \(M \colon \Pscr \to \Vect\) to the module with underlying vector space \(\bigoplus_{i \in \Pscr} M(i)\), and with action given by linearly extending the rule \(\left(\sum_{i \in \Pscr} m_i\right) \cdot [j,k] = \phi^M_{j,k}(m_j)\), where \(m_i \in M(i) \subseteq \bigoplus_{i \in \Pscr} M(i)\).
    If \(\Pscr\) is finite, then this functor restricts to an equivalence of categories \(\vect^\Pscr \simeq \modcat_{\kbb \Pscr}\).
\end{restatable}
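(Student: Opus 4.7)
The plan is to first verify that the stated formula defines a valid right $\kbb\Pscr$-module, then check functoriality, then establish full faithfulness by exploiting the idempotents $e_i := [i,i]$, and finally handle essential surjectivity when $\Pscr$ is finite using the fact that $\sum_i e_i$ is then a two-sided unit of $\kbb\Pscr$. Throughout, the underlying idea is the familiar one that a representation of a poset is the same data as a module over its incidence-like algebra, with the degree-$i$ part of the module recovered by right-multiplication by $e_i$.

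The first step is to verify well-definedness and associativity of the action. An element $m = \sum_i m_i$ of $\bigoplus_i M(i)$ has only finitely many nonzero components, so $m \cdot [j,k] = \phi^M_{j,k}(m_j)$ is well-defined, and linearity extends this uniquely to an action of $\kbb\Pscr$. Associativity need only be checked on basis elements $[j,k]$ and $[j',k']$: both sides vanish unless $k = j'$, in which case functoriality of $M$ gives $\phi^M_{j',k'} \circ \phi^M_{j,k} = \phi^M_{j,k'}$, and the two sides agree. Functoriality of the construction itself is then immediate: a natural transformation $\alpha \colon M \to N$ yields $\bigoplus_i \alpha_i$, and compatibility with the action on basis elements $[j,k]$ reduces exactly to the naturality square $\alpha_k \circ \phi^M_{j,k} = \phi^N_{j,k} \circ \alpha_j$; additivity on morphisms is obvious from the definition.

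For full faithfulness, the key observation is that the $e_i$ are pairwise orthogonal idempotents in $\kbb\Pscr$ and that $M(i)$ is recovered from $\bigoplus_j M(j)$ as the subspace $\bigl(\bigoplus_j M(j)\bigr) \cdot e_i$. Hence any $\kbb\Pscr$-module morphism $f \colon \bigoplus_i M(i) \to \bigoplus_i N(i)$ must send $M(i)$ into $N(i)$, and so decomposes as a family $\{f_i \colon M(i) \to N(i)\}_{i \in \Pscr}$. Faithfulness is then immediate, while fullness follows because compatibility of $f$ with right multiplication by $[j,k]$ for $j \leq k$ is precisely the naturality condition on $\{f_i\}$. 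For essential surjectivity in the finite case, I would observe that $1 = \sum_i e_i$ is a two-sided unit of $\kbb\Pscr$, so any $N \in \modcat_{\kbb\Pscr}$ decomposes as $N = \bigoplus_i N \cdot e_i$; setting $M(i) = N \cdot e_i$ and $\phi^M_{j,k}(m) = m \cdot [j,k]$ yields a persistence module whose image under the functor is canonically isomorphic to $N$, and finite-dimensionality of $N$ forces each $M(i)$ to be finite-dimensional.

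The main point of care, rather than any genuine difficulty, will be keeping the unital and non-unital conventions straight: $\kbb\Pscr$ need not be unital when $\Pscr$ is infinite, so the target $\Modcat_{\kbb\Pscr}$ must be read as modules over a possibly non-unital algebra in general, and the equivalence claim is only asserted in the finite case where the unit exists. None of the steps above involves a substantive calculation; the proof is essentially an organized unpacking of the idempotent decomposition induced by the family $\{e_i\}_{i \in \Pscr}$.
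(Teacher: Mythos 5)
Your proposal is correct and follows essentially the same route as the paper: both arguments rest on the orthogonal idempotents $[i,i]$ recovering the graded pieces $M(i)$, and on $\sum_{i}[i,i]$ being the unit of $\kbb\Pscr$ in the finite case to get essential surjectivity. The only difference is organizational — the paper packages the inverse assignment $A \mapsto (i \mapsto A\cdot[i,i])$ as a functor $G$ and checks $G\circ F\cong \mathrm{id}$ and $F\circ G\cong\mathrm{id}$, whereas you argue full faithfulness directly and invoke the idempotent decomposition only for essential surjectivity — which is an equally valid presentation of the same idea.
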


In view of \cref{lemma:persistence-modules-are-modules}, we abuse notation and treat \(\Pscr\)-persistence modules as functors \(\Pscr \to \vect\) or as right \(\kbb \Pscr\)-modules, particularly in the context of finite posets.

\begin{remark}
    If \(\Pscr\) is a finite poset, then \(\kbb \Pscr\) is an Artin $\kbb$-algebra (i.e., a $\kbb$-algebra of finite dimension as a $\kbb$-vector space) and is in fact isomorphic to the incidence \(\kbb\)-algebra of the poset \(\Pscr\), as defined in \cite{doubilet-rota-stanley}.
    The fact that $\kbb \Pscr$ is an Artin algebra lets us rely on the well-developed theory of representations of Artin algebras, and in particular on the relative homological algebra developed in \cite{auslander-solberg}.
\end{remark}

\subsection{Interleavings}
\label{section:interleavings}
We recall the notion of interleaving between \(\Rscr^n\)-persistence modules of \cite{lesnick,lesnick-thesis}.
Let \(M \colon \Rscr^n \to \vect\) and \(\epsilon \geq 0 \in \Rscr^n\).
The \define{\(\epsilon\)-shift} of \(M\), denoted \(M[\epsilon] \colon \Rscr^n \to \vect\), is such that \(M[\epsilon](r) = M(r + \epsilon)\) and has structure morphism corresponding to \(r \leq s \in \Rscr^n\) given by \(\phi_{r+\epsilon, s+\epsilon}^M\).
Shifting gives a functor \((-)[\epsilon] \colon \vect^{\Rscr^n} \to \vect^{\Rscr^n}\), and, for every persistence module \(M\), there is a natural transformation \(\eta^M_\epsilon \colon M \to M[\epsilon]\) with \(r\)-component given by \(\phi^M_{r,r+\epsilon} \colon M(r) \to M(r+\epsilon)\).
For \(M,N \colon \Rscr^n \to \vect\) and \(\epsilon, \delta \geq 0 \in \Rscr^n\), an \define{\((\epsilon;\delta)\)-interleaving} between \(M\) and \(N\) consists of morphisms \(f \colon M \to N[\epsilon]\) and \(g \colon N \to M[\delta]\) such that \(g[\epsilon] \circ f = \eta^M_{\epsilon+\delta}\) and \(f[\delta] \circ g = \eta^N_{\delta+\epsilon}\).
When \(\epsilon \geq 0 \in \Rscr\), an \define{\(\epsilon\)-interleaving} is defined to be an \((\epsilon, \dots, \epsilon; \epsilon, \dots, \epsilon)\)-interleaving.
The \define{interleaving distance} between \(M, N \colon \Rscr^n \to \vect\) is
\[
    d_I(M,N) = \inf\left( \{ \; \epsilon \geq 0 \in \Rscr \; \colon \; \text{there exists an \(\epsilon\)-interleaving between \(M\) and \(N\) } \} \cup \{\infty\}\right).
\]
By composing interleavings, one shows that the interleaving distance is an extended pseudo distance on the collection of \(\Rscr^n\)-persistence modules.

Let \(\epsilon \geq 0 \in \Rscr\).
A persistence module \( M \colon \Rscr^n \to \vect \) is \define{\(\epsilon\)-trivial} if there exists an \(\epsilon/2\)-interleaving between \(M\) and the zero module.
If \(M\) is not \(\epsilon\)-trivial, we say that \(M\) is \define{\(\epsilon\)-significant}; this is equivalent to the existence of \(r \in \Rscr^n\) such that \(\phi_{r,r+\epsilon}^M\) is not the zero morphism.

\subsection{Interval modules}
\label{section:interval-modules}
Let \(\Pscr\) be a poset.
An \define{interval} of \(\Pscr\) consists of a non-empty subset \(\Ical \subseteq \Pscr\) satisfying the following two properties: if \(a,b \in \Ical\), then \(c \in \Ical\) for all \(a \leq c \leq b\); and for all \(a,b \in \Ical\), there exists a finite sequence \(c_1, \dots, c_\ell \in \Ical\) such that \(a = c_1\), \(b = c_\ell\), and \(c_k\) and \(c_{k+1}\) are comparable for all \(1 \leq k \leq \ell-1\).

If \(\Ical \subseteq \Pscr\) is an interval, the \define{interval module} with support \(\Ical\) is the persistence module \(\kbb_\Ical \colon \Pscr \to \vect\) that takes the value \(\kbb\) on the elements of \(\Ical\) and the value \(0\) elsewhere, and is such that all the morphisms that are not constrained to be zero are the identity of \(\kbb\).

The \emph{indecomposable projective} corresponding to \(i \in \Pscr\), denoted \(\Psf_i \colon \Pscr \to \vect\), is the interval module with support \(\{j \in \Pscr \colon i \leq j\}\).
The name comes from the fact that any indecomposable projective module $\Pscr \to \mathrm{Vec}$ is isomorphic to $\Psf_i$ for some $i \in \Pscr$.
The \emph{simple module} corresponding to \(i \in \Pscr\), denoted \(\Ssf_i \colon \Pscr \to \vect\), is the interval module with support \(\{i\}\).
The name comes from the fact that any module $\Pscr \to \vect$ with no non-zero proper submodules is isomorphic to $\Ssf_i$ for some $i \in \Pscr$.
If \(i < j \in \Pscr \cup \{\infty\}\), we let \(\Lsf_{i,j} \colon \Pscr \to \vect\) denote the interval module with support \(\{k \in \Pscr \colon i \leq k \text{ and } j \nleq k\}\).
We refer to modules of the form \(\Lsf_{i,j}\) as \define{hook modules}, and to modules which are isomorphic to a direct sum of hook modules as \define{hook-decomposable} modules.
We choose the notation \(\Lsf_{i,j}\) and the name ``hook module'' since they are reminiscent of the shape of the support of these modules in the case \(\Pscr = \Rscr^2\).
Note that, for every \(i \in \Pscr\), we have \(\Psf_i = \Lsf_{i,\infty}\).
If \(i \leq j \in \Pscr\), we let \(\Rsf_{i,j} \colon \Pscr \to \vect\) denote the interval module with support \(\{k \in \Pscr \colon i \leq k \leq j\}\).
We refer to modules of the form \(\Rsf_{i,j}\) as \define{rectangle modules}, taking inspiration from their shape when the indexing poset is \(\Rscr^2\) or a \(2\)-dimensional grid.

\subsection{Multisets}
\label{section:multisets}
Let \(A\) be a set.
A finite multiset of elements of \(A\) is often defined to be a function \(\alpha \colon A \to \Nbb\), thought of as a multiplicity function, with the property that \(\{a \in A \colon \alpha(a) > 0\}\) is finite.
Using this definition, equality of multisets is simply equality of functions, but other relationships between multisets, such as functions or partial matchings between them, are harder to manipulate.
For this reason, we use an equivalent notion of multiset that is more convenient for our purposes.

An \define{indexed multiset of elements of \(A\)} consists of a pair \((I,f)\), with \(I\) a set, called the \define{indexing set}, and \(f\) a function \(I \to A\), called the \define{indexing function}.
An indexed multiset of elements of \(A\) is \define{finite} if its indexing set is finite.
Fix \((I,f)\) and \((J,g)\) indexed multisets of elements of \(A\).
A \define{bijection} \(h \colon (I,f) \to (J,g)\) is simply a bijection \(h \colon I \to J\) between the indexing sets, and an \define{isomorphism} between \((I,f)\) and \((J,g)\) is a bijection \(h \colon (I,f) \to (J,g)\) such that \(f = g \circ h\).
When \((I,f)\) and \((J,g)\) are isomorphic, we write \((I,f) = (J,g)\).
The union \((I,f) \cup (J,g)\) is the indexed multiset of elements of \(A\) given by \((I \amalg J, f \amalg g)\).
We say that \((I,f)\) and \((J,g)\) are \define{disjoint} if the images of \(f\) and \(g\) are disjoint.
The \define{multiplicity} of $a \in A$ in \((I,f)\) is the cardinality of the preimage \(f^{-1}(a) \subseteq I\).
The \emph{cardinality} of a multiset $(I,f)$, denoted $|I|$ when the indexing function is omitted, is simply the cardinality of the indexing set.

When there is no risk of confusion, we refer to indexed multisets simply as multisets, leave the indexing function \(f\) of a multiset \((I,f)\) implicit, and do not distinguish an element \(i \in I\) from \(f(i) \in A\).
When the indexing is relevant, we often write \(\{a_i\}_{i \in I}\) for an indexed multiset of elements of \(A\).

\subsection{Barcodes and signed barcodes}
\label{subsection:barcodes}
Let \(\Pscr\) be a poset.
A \define{\(\Pscr\)-barcode} consists of a multiset of isomorphism classes of indecomposable \(\Pscr\)-persistence modules.
A \define{signed \(\Pscr\)-barcode} consists of an ordered pair of \(\Pscr\)-barcodes.
When there is no risk of confusion, we keep the poset \(\Pscr\) implicit and refer to \(\Pscr\)-barcodes as barcodes and to signed \(\Pscr\)-barcodes as signed barcodes.

We denote the isomorphism class of a persistence module \(M\) by \([M]\).
By \cite[Theorem~1]{botnan-crawleybovey} and the Krull--Remak--Schmidt--Azumaya theorem \cite{azumaya}, any \(\Pscr\)-persistence module \(M\) decomposes as a direct sum of indecomposables \(M \cong \bigoplus_{i \in I} M_i\) for a unique (up to isomorphism of multisets) multiset of isomorphism classes of indecomposables \(\{[M]_i\}_{i \in I}\); we denote \(\barc(M) = \{ [M_i] \}_{i \in I}\), which is independent of the choice of decomposition of \(M\) as direct sum of indecomposables.

\subsection{Matchings, bottleneck distance, and signed bottleneck dissimilarity}
\label{section:matchings-bottleneck-distance-bottleneck-dissimilarity}

A \define{partial matching} \(h \colon (I,f) \nrightarrow (J,g)\) between multisets \((I,f)\) and \((J,g)\) consists of subsets \(\coim(h) \subseteq I\) and \(\im(h) \subseteq J\) and a bijection \(h \colon \coim(h) \to \im(h)\).
We often refer to a partial matching \(h \colon (I,f) \nrightarrow (J,g)\) simply as a matching.
Let \(\Ccal = \{[M_i]\}_{i \in I}\) and \(\Dcal = \{[N_j]\}_{j \in J}\) be \(\Rscr^n\)-barcodes and let \(\epsilon \geq 0 \in \Rscr\).
An \define{\(\epsilon\)-matching} between \(\Ccal\) and \(\Dcal\) is a matching \(h \colon \Ccal \nrightarrow \Dcal\) such that
\begin{itemize}
    \item for all \(i \in \coim(h)\), the modules \(M_i\) and \(N_{h(i)}\) are \(\epsilon\)-interleaved,
    \item for all \(i \in I \setminus \coim(h)\), the module \(M_i\) is \(\epsilon\)-interleaved with the zero module,
    \item for all \(j \in J \setminus \im(h)\), the module \(N_j\) is \(\epsilon\)-interleaved with the zero module.
\end{itemize}

Although the interleaving distance between indecomposable modules may be hard to compute in general, in \cref{lemma:formulas-interleavings} we give an explicit formula for computing the interleaving distance between hook modules, which is of particular relevance to this work.

The \define{bottleneck distance} between \(\Rscr^n\)-barcodes \(\Ccal\) and \(\Dcal\) is
\[
    d_B(\Ccal,\Dcal) = \inf\left( \{ \; \epsilon \geq 0 \in \Rscr \; \colon \; \text{there exists an \(\epsilon\)-matching between \(\Ccal\) and \(\Dcal\) } \} \cup \{\infty\}\right).
\]
By composing matchings, one shows that the bottleneck distance is an extended pseudo distance on the collection of \(\Rscr^n\)-barcodes.
Note that, for \(M,N \colon \Rscr^n \to \vect\), we always have \(d_I(M,N) \leq d_B(\barc(M),\barc(N))\).

The \define{signed bottleneck dissimilarity} between signed \(\Rscr^n\)-barcodes \(\Ccal_\spm\) and \(\Dcal_\spm\) is
\[
    \widehat{d_B}(\Ccal_\spm, \Dcal_\spm) = d_B(\Ccal_\sp \cup \Dcal_\sm, \Dcal_\sp \cup \Ccal_\sm).
\]

\subsection{Rank decompositions}
\label{section:rank-decs}

Let \(\Pscr\) be a poset.
The \define{rank invariant} \cite{carlsson-zomorodian} of a module \(M \colon \Pscr \to \vect\) is the function \(\rk(M) \colon \{(a,b) \in \Pscr \times \Pscr \colon a \leq b\} \to \Zbb\) where \(\rk(M)(a,b)\) is the rank of the structure morphism \(\phi^M_{a,b} \colon M(a) \to M(b)\).
A \define{rank decomposition} of a \(\Pscr\)-persistence module \(M\) is a signed barcode \(\Ccal_\spm\) such that \(\rk(M) = \sum_{A \in \Ccal_\sp} \rk(A) - \sum_{B \in \Ccal_\sm} \rk(B)\) as \(\Zbb\)-valued functions.
We recall the existence of the minimal rank decompositions by rectangles and by hooks from \cite{botnan-oppermann-oudot}.

A \define{right open rectangle module} over \(\Rscr^n\) is an interval \(\Rscr^n\)-persistence module with support \([a_1, b_1) \times \dots \times [a_n, b_n) \subseteq \Rscr^n\) for some \(a < b \in (\Rscr \cup \{\infty\})^n\), with \(a_k < b_k\) for all \(1 \leq k \leq n\).
We denote such an interval module by \(\Rsf^o_{a,b}\).

\begin{theorem}[{\cite[Corollary~5.6]{botnan-oppermann-oudot}}]
    Let \(M \colon \Rscr^n \to \vect\) be \fpp.
    There exists a unique signed \(\Rscr^n\)-barcode \(\mrd_\spm(M) = (\mrd_\sp(M), \mrd_\sm(M))\) with the following properties:
    \begin{itemize}
        \item the (isomorphism classes of) modules appearing in \(\mrd_\sp(M)\) and \(\mrd_\sm(M)\) are right open rectangle modules;
        \item the multisets \(\mrd_\sp(M)\) and \(\mrd_\sm(M)\) are disjoint;
        \item \(\mrd_\spm(M)\) is a rank decomposition of \(M\).
    \end{itemize}
    Similarly, there exists a unique signed \(\Rscr^n\)-barcode \(\mrdh_\spm(M) = (\mrdh_\sp(M), \mrdh_\sm(M))\) with the following properties:
    \begin{itemize}
        \item the (isomorphism classes of) modules appearing in \(\mrdh_\sp(M)\) and \(\mrdh_\sm(M)\) are hook modules;
        \item the multisets \(\mrdh_\sp(M)\) and \(\mrdh_\sm(M)\) are disjoint;
        \item \(\mrdh_\spm(M)\) is a rank decomposition of \(M\).
    \end{itemize}
\end{theorem}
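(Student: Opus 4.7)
The plan is to split the statement into existence and uniqueness, and prove each for rectangles and hooks by two complementary mechanisms: existence will come from the rank exact structure, and uniqueness from the $\Zbb$-linear independence of the rank invariants of rectangles (respectively hooks).

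For existence, I would start from a minimal rank projective resolution $P_\bullet \to M$. Since $M$ is \fpp{}, such a resolution exists and has finite length (one can invoke the global dimension bound $\gldimrank(\vect^{\Rscr^n}_{\fpp}) = 2n-2$ of \cref{theorem:gldim-rank-Rn-intro}, or directly \cite{botnan-oppermann-oudot}). Because every rank exact short exact sequence preserves the rank invariant by definition, telescoping the resolution yields
\[
    \rk(M) \;=\; \sum_{k \geq 0} (-1)^k \rk(P_k).
\]
Each $P_k$ is hook-decomposable, so collecting the hook summands according to the parity of $k$ defines a signed $\Rscr^n$-barcode consisting of hooks that is a rank decomposition of $M$. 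To achieve disjointness of the positive and negative parts I would iteratively cancel pairs of isomorphic indecomposables appearing in both parts; each isomorphism class has finite multiplicity, so this process terminates and yields $\mrdh_\spm(M)$. For the rectangle case, I would convert the hook decomposition to a rectangle decomposition using an inclusion-exclusion identity writing $\rk(\Lsf_{i,j})$ as a $\Zbb$-linear combination of rank invariants $\rk(\Rsf^o_{a,b})$ indexed by subsets of the coordinates of $j$, and then cancel to obtain $\mrd_\spm(M)$.

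For uniqueness, the key lemma I would establish is that the family of rank invariants $\{\rk(\Rsf^o_{a,b})\}_{a<b}$ (respectively $\{\rk(\Lsf_{i,j})\}_{i<j}$) is $\Zbb$-linearly independent as integer-valued functions on $\{(x,y) \in \Rscr^n \times \Rscr^n : x \leq y\}$. In the rectangle case this is a Möbius-inversion statement: for each fixed $(a,b)$ there is an explicit integer-valued functional---an alternating sum of point-evaluations of the rank invariant over the $2^n$ corners of the box $[a,b)$---that picks out precisely the multiplicity of $\Rsf^o_{a,b}$ in any formal linear combination. Granted linear independence, if $(\Ccal_\sp,\Ccal_\sm)$ and $(\Dcal_\sp,\Dcal_\sm)$ are two disjoint rectangle decompositions of $\rk(M)$, then $\sum_{A\in\Ccal_\sp\cup\Dcal_\sm}\rk(A) = \sum_{A\in\Dcal_\sp\cup\Ccal_\sm}\rk(A)$; linear independence forces the multisets $\Ccal_\sp\cup\Dcal_\sm$ and $\Dcal_\sp\cup\Ccal_\sm$ to agree, and an elementary argument combined with the two disjointness hypotheses then forces $\Ccal_\sp = \Dcal_\sp$ and $\Ccal_\sm = \Dcal_\sm$.

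The hook case of uniqueness follows the same template, but the Möbius inversion is the main obstacle: hooks are indexed by pairs $(i,j) \in \Rscr^n \times (\Rscr^n \cup \{\infty\})$ in which the coordinates of $j$ can independently be finite or $\infty$, so the supports $\{k : i \leq k, j \nleq k\}$ have a less uniform combinatorial shape than rectangle supports. I would address this by writing a functional that is itself an alternating sum over subsets of the finite coordinates of $j$, combining corner-evaluations near $i$ with evaluations that detect the position of the \emph{deepest} coordinate of $j$; verifying that this functional picks out the correct coefficient and vanishes on all other hooks is the technical core. Once the linear independence is in hand, uniqueness of $\mrdh_\spm(M)$ follows by the same cancellation argument as in the rectangle case.
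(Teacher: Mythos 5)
This theorem is not proved in the paper at all: it is quoted verbatim from \cite{botnan-oppermann-oudot}, so the only meaningful comparison is with that source, and your plan essentially reconstructs its route. Existence from a finite minimal rank projective resolution (hook-decomposable terms, Euler-characteristic identity for \(\rk\) because the resolution is rank exact), passage from hooks to rectangles by inclusion--exclusion, and uniqueness by \(\Zbb\)-linear independence of the rank invariants plus the disjointness cancellation is exactly how the cited paper proceeds; your cancellation argument for uniqueness given linear independence is fine, and the inclusion--exclusion identity you assert does hold, though it is a genuine lemma (it works only because each of the at most \(n\) rectangles covering a hook is downward closed inside the hook, so for \(x\leq y\) the mixed configurations that would break inclusion--exclusion at the level of \(\rk\) cannot occur; this is the content of Lemma~5.5 there).

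The gap is in the linchpin uniqueness lemma. First, the rectangle functional as you describe it---an alternating sum of evaluations of \(\rk\) over the \(2^n\) corners of \([a,b)\)---does not isolate the multiplicity of \(\Rsf^o_{a,b}\): since \(\rk(\Rsf^o_{a,b})(x,y)\) is the indicator of \(a\leq x\) and \(y\) coordinatewise strictly below \(b\), you must perturb \emph{both} arguments, i.e.\ take an alternating sum over \(2^n\times 2^n\) pairs of corners, and because rectangles are right open and the competing barcode may have arbitrary real endpoints you need one-sided limits (or a grid refined by the endpoints of \emph{both} decompositions), not evaluations at the corners themselves. Second, the uniqueness statement quantifies over all signed barcodes satisfying the three bullets, not just finite ones; the rank-decomposition equation only gives pointwise finiteness of the sums, so interchanging your functional (and its limits) with possibly infinite sums needs the local-finiteness care that is precisely why \cite{botnan-oppermann-oudot} formulates its uniqueness theorem under such hypotheses. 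Third, for the hook case you do not need a bespoke functional (note also that in this paper's convention the upper index of a hook is a point of \(\Rscr^n\) or the single symbol \(\infty\), not a point with independently infinite coordinates): the inclusion--exclusion map sending \(\Lsf_{i,j}\) to its rectangle combination is triangular---its leading term, the unique contribution whose upper corner has all coordinates finite, is \(\Rsf^o_{i,j}\) itself---hence injective on formal \(\Zbb\)-combinations, so hook independence follows from rectangle independence. These repairs are routine, but as written the uniqueness half is a plan rather than a proof.
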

If \(M \colon \Rscr^n \to \vect\) is \fpp, we refer to \(\mrd_\spm(M)\) and \(\mrdh_\spm(M)\) as the \define{minimal rank decomposition by rectangles} of \(M\) and the \define{minimal rank decomposition by hooks} of \(M\), respectively.

It is worth mentioning that, when \(n=1\), we have \(\mrd_\spm(M) = \mrdh_\spm(M)\), with the negative part being empty, and with the positive part being equal to the standard one-parameter barcode.

\subsection{Basic homological algebra of persistence modules}

Let \(\Pscr\) be a poset and let \(M \colon \Pscr \to \vect\).
We say that \(M\) is \define{finitely presentable} (abbreviated \fpp) if it is isomorphic to the cokernel of a morphism \(P \to Q\) such that \(P\) and \(Q\) are isomorphic to finite direct sums of indecomposable projective modules (i.e., modules of the form \(\Psf_i\));
recall that these modules are defined in \cref{section:interval-modules}.
The subcategory of \(\vect^\Pscr\) spanned by \fpp \(\Pscr\)-persistence modules is denoted by \(\vect^\Pscr_\fpp\).
Note that \(\vect^\Pscr_\fpp\) is an Abelian category when $\Pscr$ is either finite poset or $\Rscr^n$, which are the cases that are relevant to this paper.
If \(\Pscr\) is finite, then every \(\Pscr\)-persistence module is finitely presentable, but this need not be the case if \(\Pscr\) is infinite.
The following is well known (see, e.g., \cite[Chapter~9,~Corollary~7.3]{mitchell}).

\begin{lemma}
    \label{lemma:proj-and-inj-modules}
    Let \(\Pscr\) be a finite poset.
    A module \(M \in \modcat_{\kbb \Pscr}\) is \(\kbb \Pscr\)-projective if and only if \(M\) is isomorphic to a direct sum \(\bigoplus_{i \in I} \Psf_i\) for some finite multiset \(I\) of elements of \(\Pscr\).
    If \(\Pscr\) has a minimum \(0 \in \Pscr\), a module \(M \in \modcat_{\kbb \Pscr}\) is \(\kbb \Pscr\)-injective if and only if it is isomorphic to a direct sum \(\bigoplus_{i \in I} \Rsf_{0,i}\) for some finite multiset \(I\) of elements of \(\Pscr\).
    \qed
\end{lemma}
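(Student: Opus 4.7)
The plan is to treat the projective and injective cases separately, using the standard Morita-theoretic description of (in)decomposable (in)jectives over the Artin algebra $\kbb\Pscr$ together with Krull--Remak--Schmidt.

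\emph{Projective case.} First, I would observe that the identity $1_{\kbb\Pscr}$ decomposes as $\sum_{i \in \Pscr} [i,i]$ into a finite sum of pairwise orthogonal idempotents. The right ideal $[i,i]\cdot\kbb\Pscr$ has as $\kbb$-basis $\{[i,j] : j \geq i\}$, and under the equivalence $\vect^\Pscr \simeq \modcat_{\kbb\Pscr}$ from \cref{lemma:persistence-modules-are-modules} this right ideal corresponds exactly to the interval module $\Psf_i$. Since $[i,i]\cdot\kbb\Pscr\cdot[i,i]$ is one-dimensional, hence local, the idempotent $[i,i]$ is primitive, so $\Psf_i$ is indecomposable. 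This gives $\kbb\Pscr \cong \bigoplus_{i \in \Pscr}\Psf_i$ as right $\kbb\Pscr$-modules, so each $\Psf_i$ is a direct summand of a free module and therefore projective. Conversely, any finitely generated projective $M$ is a direct summand of some $(\kbb\Pscr)^m \cong \bigoplus_i \Psf_i^{\,m}$; because $\kbb\Pscr$ is Artin, $\modcat_{\kbb\Pscr}$ satisfies Krull--Remak--Schmidt, forcing $M$ to be a direct sum of indecomposable summands of $\bigoplus_i \Psf_i^{\,m}$, i.e. of copies of the $\Psf_i$.

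\emph{Injective case.} Here I would use the standard $\kbb$-duality $D = \Hom_\kbb(-,\kbb)\colon \modcat_{\kbb\Pscr} \to \modcat_{(\kbb\Pscr)^{\mathrm{op}}}$, which is an exact contravariant equivalence that exchanges indecomposable projectives and indecomposable injectives. The key identification is $(\kbb\Pscr)^{\mathrm{op}} \cong \kbb\Pscr^{\mathrm{op}}$, sending $[i,j]$ to the corresponding morphism in the opposite poset. Applying the projective case to $\Pscr^{\mathrm{op}}$, the indecomposable projective of $\modcat_{\kbb\Pscr^{\mathrm{op}}}$ at $i$ is the $\Pscr^{\mathrm{op}}$-persistence module supported on $\{j \in \Pscr : j \leq_\Pscr i\}$. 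Its $\kbb$-dual is a $\Pscr$-persistence module with the same one-dimensional support; because $\Pscr$ has a minimum $0$, that support is precisely $\{j \in \Pscr : 0 \leq j \leq i\}$, which realises $\Rsf_{0,i}$. Thus the $\Rsf_{0,i}$ are exactly the indecomposable injectives, and a second application of Krull--Remak--Schmidt shows every injective is a finite direct sum of them.

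The only delicate step is the injective direction: one must carefully track how the duality $D$ interacts with the anti-isomorphism $(\kbb\Pscr)^{\mathrm{op}} \cong \kbb\Pscr^{\mathrm{op}}$ to see that $D$ carries the indecomposable projective of $\kbb\Pscr^{\mathrm{op}}$ at vertex $i$ to a $\Pscr$-module with the expected support. Everything else is routine linear algebra over an Artin $\kbb$-algebra.
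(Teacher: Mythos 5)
Your proof is correct. Note that the paper does not actually prove \cref{lemma:proj-and-inj-modules}: it records it as a known fact with a pointer to Mitchell's book \cite{mitchell}, where the statement is obtained in the setting of functor categories. Your argument is the standard self-contained alternative from the representation theory of Artin algebras: decompose $1_{\kbb\Pscr}=\sum_i [i,i]$ into pairwise orthogonal idempotents, identify $[i,i]\cdot\kbb\Pscr$ with $\Psf_i$, use that $[i,i]\,\kbb\Pscr\,[i,i]\cong\End_{\kbb\Pscr}(\Psf_i)$ is one-dimensional (hence local) to get indecomposability, and invoke Krull--Remak--Schmidt for the converse; then transport the projective case through the duality $D=\Hom_\kbb(-,\kbb)$ and the anti-isomorphism $(\kbb\Pscr)^{\mathrm{op}}\cong\kbb\Pscr^{\mathrm{op}}$ to identify the indecomposable injectives with the interval modules on downsets, which under the existence of a minimum $0$ are exactly the $\Rsf_{0,i}$. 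All the steps you flag go through: the downset of $i$ is an interval, its dual is the corresponding interval module, and the hypothesis that $\Pscr$ has a minimum is used precisely to write that downset as $\{j\colon 0\leq j\leq i\}$. What your route buys is independence from the functor-category machinery of \cite{mitchell}, at the cost of using the Artin-algebra toolkit (primitive idempotents, Krull--Remak--Schmidt, $\kbb$-duality), which the paper in any case has available via \cref{lemma:persistence-modules-are-modules}.
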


The following is well known and follows from, e.g., \cite[Chapter~9,~Corollary~10.3]{mitchell}.

\begin{lemma}
    \label{lemma:finite-poset-finite-resolution}
    If \(\Pscr\) is a finite poset, then every module \(M \in \modcat_{\kbb \Pscr}\) admits a finite \(\kbb \Pscr\)-projective resolution and a finite \(\kbb \Pscr\)-injective resolution.\qed
\end{lemma}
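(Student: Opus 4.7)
The plan is to reduce the claim for an arbitrary $M \in \modcat_{\kbb\Pscr}$ to the case of the simple modules $\Ssf_i$ via the existence of finite composition series, and then to prove the claim for each simple by induction on a height function on the poset. I focus on the projective case; the injective case is analogous after passing to the opposite poset $\Pscr^{\mathrm{op}}$, which is again a finite poset, so that the indecomposable injectives of $\kbb\Pscr$ correspond to the indecomposable projectives of $\kbb\Pscr^{\mathrm{op}}$ under the standard $\kbb$-linear duality $D = \Hom_\kbb(-,\kbb)$.

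The first step is the reduction to simples. Since $\Pscr$ is finite, $\kbb\Pscr$ is a finite-dimensional $\kbb$-algebra, so every $M \in \modcat_{\kbb\Pscr}$ is itself finite-dimensional over $\kbb$ and therefore admits a finite composition series whose composition factors are simple modules, necessarily of the form $\Ssf_i$ for $i \in \Pscr$. The class of modules of finite projective dimension is closed under extensions (this is a routine long exact sequence argument in $\Ext$), so by induction on the length of a composition series it suffices to exhibit a finite projective resolution for each $\Ssf_i$.

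The second step is the induction. For $i \in \Pscr$, let $h(i)$ be the length of the longest chain in $\Pscr$ starting strictly above $i$. The base case $h(i) = 0$ says that $i$ is maximal, in which case $\Ssf_i = \Psf_i$ is itself projective by the description of $\Psf_i$ recalled in \cref{section:interval-modules}. For the inductive step, I would consider the short exact sequence
\[
    0 \to K \to \Psf_i \to \Ssf_i \to 0,
\]
where $\Psf_i$ is the projective cover of $\Ssf_i$. From the description of $\Psf_i$ as the interval module supported on $\{j \in \Pscr : i \leq j\}$, the kernel $K$ is supported on $\{j \in \Pscr : i < j\}$, so every composition factor of $K$ is a simple $\Ssf_j$ with $j > i$ and hence $h(j) < h(i)$. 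By the inductive hypothesis combined with the first step, $K$ has finite projective dimension, and consequently $\pdim(\Ssf_i) \leq \pdim(K) + 1 < \infty$.

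The main obstacle is essentially bookkeeping: verifying that the class of modules of finite projective dimension is closed under extensions, and that the kernel of the projective cover $\Psf_i \twoheadrightarrow \Ssf_i$ is genuinely supported strictly above $i$. Both are standard facts about representations of finite posets, and I expect no serious difficulty beyond assembling them carefully. The injective analogue is then immediate from the projective case applied to $\Pscr^{\mathrm{op}}$ via the duality $D$, which is exact and interchanges projective and injective modules between $\modcat_{\kbb\Pscr}$ and $\modcat_{\kbb\Pscr^{\mathrm{op}}}$.
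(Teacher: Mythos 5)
Your proposal is correct. The paper itself contains no argument for this lemma: it is stated with a pointer to the literature (Mitchell, Chapter~9, Corollary~10.3), which is where the fact that incidence algebras of finite posets have finite global dimension is established. Your write-up is essentially the standard self-contained proof lying behind that citation: filter \(M\) by a composition series to reduce to the simples \(\Ssf_i\) (using closure of finite projective dimension under extensions), then induct on the height of \(i\), using that the kernel of \(\Psf_i \twoheadrightarrow \Ssf_i\) is supported strictly above \(i\) and hence has composition factors \(\Ssf_j\) with \(j > i\); the injective half follows by applying the \(\kbb\)-linear duality \(D = \Hom_\kbb(-,\kbb)\), which identifies \(\modcat_{\kbb\Pscr}\)-injectives with \(\kbb\Pscr^{\mathrm{op}}\)-projectives. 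All the steps you flag as bookkeeping do go through: for maximal \(i\) one indeed has \(\Ssf_i = \Psf_i\), the kernel computation is immediate from the interval-module description of \(\Psf_i\), and extension-closure of finite projective dimension is the usual long exact sequence argument. A small bonus of your argument over the bare citation is that it yields the explicit bound \(\gldim(\kbb\Pscr) \leq\) length of the longest chain in \(\Pscr\), which is in the spirit of the quantitative global-dimension results the paper proves later (e.g.\ \cref{proposition:gl-dim-lattice}).
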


\subsection{Exact structures}
\label{section:exact-structures}
We start with a standard definition to fix notation.
A sequence \(0 \to A \xrightarrow{\;f\;} B \xrightarrow{\;g\;} C \to 0\) in an Abelian category is \define{exact} if \(f\) is the kernel of \(g\) and \(g\) is the cokernel of \(f\).
We refer to exact sequences of the form \(0 \to A \to B \to C \to 0\) as \define{short exact} sequences.

Let \(\Ecal\) be a collection of short exact sequences of an Abelian category \(\Cscr\).
If \(0 \to A \xrightarrow{\;i\;} B \xrightarrow{\;d\;} C \to 0\) is in \(\Ecal\), we say that \(i\) is an \define{inflation} and that \(d\) is a \define{deflation}.
The collection \(\Ecal\) is an \define{exact structure} on \(\Cscr\) if it satisfies the following properties:
\begin{itemize}
    \item \(\Ecal\) is closed under isomorphisms of short exact sequences;
    \item the identity morphism on the zero object is a deflation;
    \item the composition of two deflations is a deflation;
    \item the pullback of a deflation along any morphism is a deflation;
    \item the pushout of an inflation along any morphism is an inflation.
\end{itemize}

\begin{example}
The collection of all short exact sequences of an Abelian category \(\Cscr\) forms an exact structure on \(\Cscr\).
\end{example}

For a thorough introduction to the theory of exact structures as well as some context and history, we refer the reader to \cite{draxler-reiten-smalo-solberg}.
Note that the notion of exact structure is self-dual.

Let \(\Ecal\) be an exact structure on an Abelian category \(\Cscr\).
A short exact sequence of \(\Cscr\) is \define{\(\Ecal\)-exact} if it belongs to the exact structure \(\Ecal\).
A sequence
\[
    \cdots \to A_{k+1} \xrightarrow{f_{k+1}} A_{k} \xrightarrow{f_k} A_{k-1} \to \cdots \to A_0 \to 0.
\]
is \define{\(\Ecal\)-exact} if it is exact and for every \(k \in \Nbb\) we have that the short exact sequence \(0 \to \ker(f_{k}) \to A_k \to \coker(f_{k+1}) \to 0\) is \(\Ecal\)-exact.
An object \(X \in \Cscr\) is \define{\(\Ecal\)-projective} if \(\Hom_\Cscr(X,-) \colon \Cscr \to \Ab\) maps every element of \(\Ecal\) to a short exact sequence of Abelian groups.
The notion of \define{\(\Ecal\)-injective} is dual.
An \define{\(\Ecal\)-projective resolution} of \(Y \in \Cscr\) consists of an \(\Ecal\)-exact sequence \( \cdots \to X_{k+1} \to X_k \to \cdots \to X_0 \to Y \to 0\) where \(X_k\) is \(\Ecal\)-projective for all \(k \in \Nbb\).
The notion of \define{\(\Ecal\)-injective resolution} is dual.
The \emph{length} of an $\Ecal$-projective resolution $X_\bullet \to Y$ is the maximum over $i \in \Nbb$ such that $X_i \not\cong 0$, with the convention that the length is $\infty$ if no such $i$ exists.

An \emph{$\Ecal$-projective cover} of an object $Y \in \Cscr$ is a deflation $p : X \to Y$ with $X$ an $\Ecal$-projective object, and such that any endomorphism $q : X \to X$ such that $p = p \circ q$ is necessarily an isomorphism.
An $\Ecal$-projective resolution $X_\bullet \to Y$ is \emph{minimal} if $X_0 \to Y$ and $X_k \to \ker(X_{k-1} \to X_{k-2})$ for all $k \geq 1$ are $\Ecal$-projective covers (here, by convention $X_{-1} = Y$).
It is straightforward to see that, if $p : X \to Y$ and $p' : X' \to Y$ are $\Ecal$-projective covers, then there exists an isomorphism $r : X \to X'$ such that $p = p' \circ r$.
An easy inductive argument then shows that, if $Y$ admits a minimal $\Ecal$-projective resolution, then any other minimal $\Ecal$-projective resolution is isomorphic to it.

The \emph{$\Ecal$-projective dimension} of $Y \in \Ccal$, denoted $\pdim^\Ecal(Y)$, is the infimum, over all $\Ecal$-projective resolutions, of the length of the resolution;
note that this number could be infinity.
It is easy to verify that, if $Y$ admits a minimal $\Ecal$-projective resolution, then the length of this resolution equals the $\Ecal$-projective dimension of $Y$.

The \emph{global dimension} of $\Ecal$, denoted $\gldim^\Ecal(\Ccal)$, is the supremum, over all $Y \in \Ccal$, of the $\Ecal$-projective dimension of $Y$.

%

\subsection{Signed barcodes from exact structures}
Let \(\Pscr\) be a poset and let \(\Ecal\) be an exact structure on the category of \(\Pscr\)-persistence modules.
Assume that \(M \colon \Pscr \to \vect\) admits a minimal \(\Ecal\)-projective resolution \(P_\bullet \to M\).
In that case, we define the \(k\)th \define{\(\Ecal\)-relative Betti numbers} of \(M\) by \(\beta^\Ecal_k(M) \coloneqq \barc(P_k)\), which are independent of the particular choice of minimal resolution for \(M\) by the uniqueness up to isomorphism of minimal resolutions.
We also define the \define{\(\Ecal\)-signed barcode} of \(M\) as the signed barcode
\[
    \barcEcal_\spm(M) \coloneqq \left(\;\bigcup_{k \text{ even}} \beta_k^\Ecal(M)\;,\; \bigcup_{k \text{ odd}} \beta_k^\Ecal(M)\;\right).
\]

The prototypical example comes from the usual exact structure on the category of \fpp \(\Pscr\)-persistence modules, which has as exact sequences the usual exact sequences.
The \(k\)th Betti numbers of a \fpp \(\Pscr\)-persistence module \(M\) relative to the usual exact structure are called the \(k\)th \define{(multigraded) Betti numbers} of \(M\), and are denoted by \(\beta_k(M)\).

\subsection{The rank exact structure}
Let \(\Pscr\) be a poset.
A short exact sequence \(0 \to A \to B \to C \to 0\) in \(\vect^\Pscr\) is \define{rank exact} \cite{botnan-oppermann-oudot} if \(\rk(B) = \rk(A) + \rk(C)\).
When it exists, the exact structure whose exact sequences are the rank exact sequences is called the \emph{rank exact structure}, and is denoted by \(\Erank\).

\begin{restatable}[{cf.~\cite{botnan-oppermann-oudot}}]{theorem}{rankexactstructurefacts}
    \label{theorem:rank-exact-structure-facts}
    Let $n \in \Nbb$ and let $\Pscr$ be a finite poset.
    \begin{enumerate}
        \item Let $\Mcal$ be either \(\vect_{\fpp}^{\Rscr^n}\) or $\vect^\Pscr$.
        The collection of rank exact sequences forms an exact structure on the category $\Mcal$.
        The projectives of this exact structure are the hook-decomposable modules.
        Every module of $\Mcal$ admits a finite minimal rank projective resolution.
        \item Let \(M \colon \Rscr^n \to \vect\) be \fpp.
            A minimal rank projective resolution of \(M\) can be obtained as the left Kan extension of a minimal rank projective resolution of a module \(M' \colon [m_1] \times \cdots \times [m_n] \to \vect\) along an injective monotonic map \(f \colon [m_1] \times \cdots \times [m_n] \to \Rscr^n\) with image the finite lattice generated by the grades involved in a minimal projective presentation of $M$.
    \end{enumerate}
\end{restatable}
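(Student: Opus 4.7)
The plan is to treat the finite poset case first, then transport to $\vect^{\Rscr^n}_{\fpp}$ via left Kan extension, thereby proving (2) along the way and deducing (1) in the $\Rscr^n$ case from the finite poset case.

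For (1) in the finite poset case, I would verify the exact structure axioms directly: closure under isomorphism is immediate, the identity on zero is trivially a deflation, and composition of deflations preserves rank additivity by a straightforward rank-additivity argument. Closure under pullbacks and pushouts uses that for any short exact sequence $0 \to A \to B \to C \to 0$ one has $\rk(B) \leq \rk(A) + \rk(C)$ pointwise, so rank exactness is equivalent to attaining this upper bound; this pointwise characterisation is preserved by the relevant universal constructions. For the characterisation of $\Erank$-projectives as hook-decomposable modules, and for the existence of finite minimal rank projective resolutions, I would cite \cite{botnan-oppermann-oudot}: the result there uses that $\kbb\Pscr$ is an Artin algebra, so the Auslander--Solberg framework \cite{auslander-solberg} applies to produce minimal projective covers and hence minimal resolutions, and the classification of $\Erank$-projectives is obtained there by a direct computation.

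For (2), start with a minimal projective presentation $Q \to P \to M \to 0$ of $M$, let $S \subset \Rscr^n$ be the (finite) set of grades appearing in $Q$ and $P$, and let $L \subset \Rscr^n$ be the finite sublattice generated by $S$ under coordinate-wise meets and joins. Let $f \colon L \hookrightarrow \Rscr^n$ be the inclusion and set $M' \coloneqq M \circ f$. Since the projective presentation of $M$ is concentrated on $L$, the counit $\Lan_f(M') \to M$ is an isomorphism. Apply (1) in the finite case to obtain a finite minimal rank projective resolution $P'_\bullet \to M'$ in $\vect^L$. The resolution $\Lan_f P'_\bullet \to M$ is then the desired object, provided one verifies: (a) $\Lan_f$ sends hooks to hooks, namely $\Lan_f \Lsf^L_{i,j} \cong \Lsf^{\Rscr^n}_{i,j}$, by direct computation of the Kan extension formula on the interval support; (b) $\Lan_f$ preserves rank exactness, which reduces to observing that for $r \leq s \in \Rscr^n$, the structure morphism of $\Lan_f N$ at $(r,s)$ agrees, up to canonical isomorphism on source and target, with the structure morphism of $N$ at $(r', s') \in L \times L$ where $r' = \bigwedge\{k \in L : k \geq r\}$ and $s' = \bigwedge\{k \in L : k \geq s\}$, so that rank additivity on $L$ pulls back to rank additivity on $\Rscr^n$; and (c) $\Lan_f$ preserves the minimality of covers, which follows because minimal $\Erank$-projective covers on $L$ are determined by their image in a suitable top, and this top is preserved by $\Lan_f$ on the relevant hook-decomposable modules.

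Finally, (1) in the $\Rscr^n$ \fpp case follows by combining (2) with the finite case: the exact structure axioms on $\vect^{\Rscr^n}_{\fpp}$ can be verified by restricting each short exact sequence to a finite sublattice containing all its grades and invoking the corresponding axiom there, the characterisation of projectives transports by (a), and the existence of minimal finite resolutions is (2) itself. The main obstacle will be step (b) above, namely controlling the ranks of $\Lan_f N$ at arbitrary pairs of points in $\Rscr^n \times \Rscr^n$ from the ranks of $N$ over $L \times L$; this is a careful but essentially combinatorial argument about how the colimit defining $\Lan_f$ simplifies when $L$ is a sublattice closed under the meets needed, and I expect it is treated in \cite{botnan-oppermann-oudot} via a direct computation that we can cite or lightly adapt.
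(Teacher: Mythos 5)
Your overall route is essentially the paper's: the paper proves part (1) by citing \cite{botnan-oppermann-oudot} (Theorems~4.4 and 4.8 for the finite case, Theorem~4.4 and Proposition~4.14 for the \fpp $\Rscr^n$ case, plus the remark that finite resolutions yield finite minimal ones), and proves part (2) exactly as you propose: restrict a minimal presentation to a finite subposet, check that $\Lan_f$ is exact and recovers $M$, $P$, $Q$, and then invoke \cite{botnan-oppermann-oudot} (Lemmas~4.12 and 4.13) for the fact that $\Lan_f$ preserves minimal rank projective resolutions --- which is precisely the content of your items (a)--(c), so deferring them to that reference is consistent with the paper. Two points in your write-up need repair, though. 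First, the generic inequality you state is reversed: for a short exact sequence $0 \to A \to B \to C \to 0$ of pointwise finite-dimensional modules one always has $\rk(B) \geq \rk(A) + \rk(C)$ (for instance $0 \to \Ssf_b \to \Rsf_{a,b} \to \Ssf_a \to 0$ over $a < b$ has $\rk(B)(a,b) = 1 > 0 = \rk(A)(a,b) + \rk(C)(a,b)$), and rank exactness means attaining this \emph{lower} bound; your hands-on verification of the pullback/pushout axioms would have to be redone with the correct direction, and note that this closure is exactly what \cite{botnan-oppermann-oudot} (Theorem~4.4) establishes and what the paper simply cites. Second, you take $L$ to be the sublattice generated by the grades, which need not be a product of chains (e.g.\ the sublattice generated by $(0,0)$ and $(1,1)$ in $\Rscr^2$ is just those two points), so your construction does not literally produce an injective monotonic map with domain $[m_1] \times \cdots \times [m_n]$ as the statement requires; the paper instead takes the full grid $A_1 \times \cdots \times A_n$ built from the coordinate sets of the grades, and your argument (exactness of $\Lan_f$, preservation of projectives, hence $\Lan_f(M') \cong M$) goes through verbatim with that choice.
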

Since \cref{theorem:rank-exact-structure-facts} is not stated in exactly that way in \cite{botnan-oppermann-oudot}, we include a proof of the result in \cref{section:appendix}, for completeness.

By \cref{theorem:rank-exact-structure-facts}, any \fpp \(\Rscr^n\)-persistence module admits Betti numbers relative to the rank exact structure.
We can thus instantiate the definition of relative signed barcode to the rank exact structure on \fpp persistence modules, which we now do.
The \define{rank exact decomposition} of a \fpp persistence module \(M \colon \Rscr^n \to \vect\) is the signed barcode
\[
    \barcrank_\spm(M) = \left(\; \bigcup_{k \text{ even}} \bettirank_k(M)\;,\; \bigcup_{k \text{ odd}} \bettirank_k(M)\;\right).
\]

\begin{example}
    \label{example:rank-exact-resolution}
    For an illustration of this example, see \cref{figure:example-resolution}.
    Let $M \coloneqq \kbb_{\Ical} : \Rscr^2 \to \vect$, with $\Ical = \big([0,2)\times [0,1)\big) \cup \big([0,1)\times [0,2)\big)$.
    Then, the following is a minimal rank projective resolution of $M$:
    \begin{equation}
        \label{equation:minimal-rank-exact-resolution}
        0 \to \Lsf_{(0,0),(2,1)} \oplus \Lsf_{(0,0),(1,2)}
            \xrightarrow{\scriptsize\begin{pmatrix}0 & 1 \\ 1 & -1 \\ -1 & 0\end{pmatrix}}
        \Lsf_{(0,0),(0,2)} \oplus \Lsf_{(0,0),(1,1)}\oplus \Lsf_{(0,0),(2,0)}
            \xrightarrow{(1,1,1)}
        M
    \end{equation}
    Here, we are letting $1$ denote the morphism between interval modules $\kbb_\Ical \to \kbb_\Jcal$ with component $\mathsf{id} : \kbb \to \kbb$ in the intersection $\Ical \cap \Jcal$ and zero otherwise, when this morphism is well defined.

    To see that \cref{equation:minimal-rank-exact-resolution}
    is indeed a minimal rank projective resolution of $M$, one can proceed as follows.
    First, one checks that the sequence is exact, which is easily done since it is enough to do it pointwise at each element of $\Rbb^2$, and it is enough to do it for points on the grid $\{0,1,2\}^2 \subseteq \Rscr^2$.
    Then, one checks that any morphism $\Lsf_{a,b} \to M$ factors through
    $\Lsf_{(0,0),(0,2)} \oplus \Lsf_{(0,0),(1,1)}\oplus \Lsf_{(0,0),(2,0)} \to M$.
    This is a simple case analysis: if $0 \leq b_y < 1$, the morphism factors through
    $1 : \Lsf_{(0,0),(0,2)} \to M$;
    if $1 \leq b_y < 2$, it factors through
    $1 : \Lsf_{(0,0),(1,1)} \to M$;
    and if $2 \leq b_y$, it factors through
    $1 : \Lsf_{(0,0),(2,0)} \to M$.

    The two considerations above imply that \cref{equation:minimal-rank-exact-resolution} is a rank exact sequence.
    So, to conclude, we need to prove that the right-most morphism is a rank projective cover, and for this we need to prove that any endomorphism of
    $X = \Lsf_{(0,0),(0,2)} \oplus \Lsf_{(0,0),(1,1)}\oplus \Lsf_{(0,0),(2,0)}$
    commuting with the right-most morphism is necessarily an isomorphism.
    Note that, when written in matrix form, any endomorphism of $X$ is necessarily diagonal since there are no non-zero morphism between distinct summands of $X$.
    This forces any endomorphism commuting with the right-most morphism to be the identity, concluding the proof.
\end{example}

\subsection{Stability of relative projective resolutions}
\label{stability-projective-resolutions-section}

An exact structure \(\Ecal\) on \(\vect^{\Rscr^n}\) is \define{shift-invariant} if, for all \(\epsilon \geq 0 \in \Rscr\) and \(\Ecal\)-exact sequence \(0 \to A \to B \to C \to 0\), we have that \(0 \to A[\epsilon] \to B[\epsilon] \to C[\epsilon] \to 0\) is \(\Ecal\)-exact.
The following result is a straightforward generalization of the stability of projective resolutions proven in \cite[Section~3]{oudot-scoccola}; we include its proof in \cref{section:appendix}, for completeness.

\begin{restatable}{proposition}{relativestabilityresolutions}
    \label{lemma:relative-stability-resolutions}
    Let \(\Ecal\) be a shift-invariant exact structure on \(\vect^{\Rscr^n}\).
    Let \(M,N \colon \Rscr^n \to \vect\), and let \(P_\bullet \to M\) and \(Q_\bullet \to N\) be \(\Ecal\)-projective resolutions of finite length.
    If \(M\) and \(N\) are \(\epsilon\)-interleaved, then so are
    \[
        P_0 \oplus Q_1[\epsilon] \oplus P_2[2\epsilon] \oplus Q_3[3\epsilon] \oplus \cdots \;\;\text{and}\;\; Q_0 \oplus P_1[\epsilon] \oplus Q_2[2\epsilon] \oplus P_3[3\epsilon] \oplus \cdots
    \]
\end{restatable}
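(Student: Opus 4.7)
The plan is to adapt the argument of \cite[Section~3]{oudot-scoccola} for the standard exact structure to an arbitrary shift-invariant exact structure \(\Ecal\).  Because both resolutions have finite length, all direct sums appearing in the statement are finite.  Two features of \(\Ecal\) do the main work: every \(\Ecal\)-projective object has the left lifting property with respect to all \(\Ecal\)-deflations, and by shift-invariance, for every \(\delta \geq 0 \in \Rscr\), the shifted sequences \(P_\bullet[\delta] \to M[\delta]\) and \(Q_\bullet[\delta] \to N[\delta]\) remain \(\Ecal\)-projective resolutions.

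The first step is to lift the interleaving morphisms \(f \colon M \to N[\epsilon]\) and \(g \colon N \to M[\epsilon]\) to chain maps between the (shifted) projective resolutions.  Inductively, one lifts the composite \(P_0 \to M \xrightarrow{f} N[\epsilon]\) to \(f_0 \colon P_0 \to Q_0[\epsilon]\) using \(\Ecal\)-projectivity of \(P_0\); once \(f_0, \dots, f_{k-1}\) have been constructed compatibly with the differentials, the morphism \(f_{k-1} \circ (P_k \to P_{k-1}) \colon P_k \to Q_{k-1}[\epsilon]\) composes to zero into \(Q_{k-2}[\epsilon]\), hence factors through \(\ker(Q_{k-1}[\epsilon] \to Q_{k-2}[\epsilon])\), and therefore lifts to \(f_k \colon P_k \to Q_k[\epsilon]\) by \(\Ecal\)-projectivity of \(P_k\) applied to the \(\Ecal\)-deflation \(Q_k[\epsilon] \to \ker(Q_{k-1}[\epsilon] \to Q_{k-2}[\epsilon])\).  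Symmetrically, one constructs \(g_\bullet \colon Q_\bullet \to P_\bullet[\epsilon]\).  The same technique, applied to the fact that both the chain map \(g_\bullet[\epsilon] \circ f_\bullet\) and the termwise structure morphism \(P_\bullet \to P_\bullet[2\epsilon]\) cover \(g \circ f = \iota^M_{2\epsilon}\), produces morphisms \(h_k \colon P_k \to P_{k+1}[2\epsilon]\) satisfying the chain-homotopy identity
\[
    d_{k+1}[2\epsilon] \circ h_k + h_{k-1} \circ d_k = \iota^{P_k}_{2\epsilon} - g_k[\epsilon] \circ f_k
\]
for all \(k \geq 0\); symmetrically, one obtains \(h'_\bullet\).

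The last step is to assemble these data into block morphisms realizing the claimed \(\epsilon\)-interleaving between \(A = P_0 \oplus Q_1[\epsilon] \oplus P_2[2\epsilon] \oplus \cdots\) and \(B = Q_0 \oplus P_1[\epsilon] \oplus Q_2[2\epsilon] \oplus \cdots\).  The morphism \(F \colon A \to B[\epsilon]\) has, on the \(k\)th summand of \(A\), three non-zero components: the shifted differential into the \((k-1)\)st summand of \(B[\epsilon]\), the shifted chain map \(f_k\) or \(g_k\) into the \(k\)th summand of \(B[\epsilon]\), and the shifted homotopy \(h_k\) or \(h'_k\) into the \((k+1)\)st summand of \(B[\epsilon]\); the morphism \(G\) is defined symmetrically by interchanging the roles of \(P\) and \(Q\).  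With signs chosen as in the total complex of the natural bicomplex associated with these data, the compositions \(G[\epsilon] \circ F\) and \(F[\epsilon] \circ G\) decompose, component by component, into expressions that cancel via the chain-map identity \(d \circ f = f \circ d\), the chain-homotopy identity \(d \circ h + h \circ d = \iota - g \circ f\), and the boundary condition \(d \circ d = 0\), leaving only the diagonal structure morphisms \(\iota^A_{2\epsilon}\) and \(\iota^B_{2\epsilon}\).  The main obstacle will be arranging the signs consistently so that every cross-term cancels; this is a routine but notationally delicate bookkeeping exercise.
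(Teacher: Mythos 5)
Your first two steps are sound: relative projectivity of the \(P_k\), \(Q_k\) together with shift-invariance (which guarantees that the shifted resolutions are still \(\Ecal\)-exact) does give chain maps \(f_\bullet\colon P_\bullet\to Q_\bullet[\epsilon]\), \(g_\bullet\colon Q_\bullet\to P_\bullet[\epsilon]\) lifting the interleaving, and homotopies \(h,h'\) comparing \(g[\epsilon]\circ f\) and \(f[\epsilon]\circ g\) with the structure morphisms. The gap is in the final assembly. Write \(F\colon A\to B[\epsilon]\) and \(G\colon B\to A[\epsilon]\) as the tridiagonal block morphisms you describe (differential one step down, chain map on the diagonal, homotopy one step up). Then the component of \(G[\epsilon]\circ F\) from the \(k\)th summand of \(A\) to the \((k{+}2)\)nd summand of \(A[2\epsilon]\) is the single composite \(h_{k+1}\circ h_k\) (resp.\ \(h'_{k+1}\circ h'_k\)); only one path contributes, so no choice of signs can cancel it, and there is no reason for it to vanish for arbitrary choices of homotopies. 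Likewise the component landing in the \((k{+}1)\)st summand is \(\pm\, h'_k\circ f_k \pm f_{k+1}\circ h_k\), and these two composites are in general distinct morphisms (one can only show that their difference is killed by the next differential), since \(h\) and \(h'\) are chosen independently. Hence \(G[\epsilon]\circ F\neq\iota^{A}_{2\epsilon}\): this is not a sign-bookkeeping issue but a genuine obstruction. Already in the classical case \(\epsilon=0\), \(M=N\) (the generalized Schanuel lemma), the isomorphism \(P_0\oplus Q_1\oplus P_2\oplus\cdots\cong Q_0\oplus P_1\oplus Q_2\oplus\cdots\) is not obtained by folding a chain homotopy equivalence; one either splits the acyclic mapping cone of the comparison map or argues by induction, and neither mechanism is available verbatim once \(\epsilon>0\).

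Repairing your route would require producing a coherent system of higher homotopies and totalizing it, which is substantially more than the data you construct. The paper avoids homotopies entirely: it first proves a persistent Schanuel lemma for \(\Ecal\)-projective covers, by pulling back the deflation \(Q\to N\) along \(f\circ\alpha\) (and symmetrically), observing that the pullback splits as \(L[\epsilon]\oplus P\) because deflations are stable under pullback and \(P\) is \(\Ecal\)-projective, and exhibiting an explicit \(\epsilon\)-interleaving between the two pullbacks. It then inducts on the common length of the resolutions, replacing \(\ker\alpha\) and \(\ker\gamma\) by \((\ker\alpha)[\epsilon]\oplus Q_0\) and \((\ker\gamma)[\epsilon]\oplus P_0\), which admit \(\Ecal\)-projective resolutions of length one less (here shift-invariance is used again). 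You should replace your last step by an argument of this kind.
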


\section{Bottleneck instability of minimal rank decompositions}

In this section, we prove \cref{proposition:instability-mrd-rectangles}, the instability result for minimal rank decompositions by rectangles in the signed bottleneck dissimilarity.
We also explain in \cref{remark:instability-mrdh} how a similar example shows that minimal rank decompositions by hooks are also not stable in the signed bottleneck dissimilarity.

\begin{proposition}
    \label{proposition:instability-mrd-rectangles}
    Let \(n \geq 2 \in \Nbb\).
    There is no function \(f \colon \Rbb_{\geq 0} \to \Rbb_{\geq 0}\) with \(f(r) \xrightarrow{r \to 0} 0\) such that the following inequality holds for all \fpp modules \(M,N \colon \Rscr^n \to \vect\) at finite interleaving distance:
    \[
        \widehat{d_B}(\,\mrd_\spm(M)\,,\,\mrd_\spm(N)\,) \leq f\left(d_I(M,N)\right).
    \]
\end{proposition}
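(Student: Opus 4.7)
The plan is to construct, for each $\delta \in (0,1)$, a pair of \fpp persistence modules $M_\delta, N_\delta \colon \Rscr^n \to \vect$ such that $d_I(M_\delta, N_\delta) \leq \delta$ while the signed bottleneck dissimilarity $\widehat{d_B}(\mrd_\spm(M_\delta), \mrd_\spm(N_\delta))$ stays bounded below by a positive constant $C$ independent of $\delta$. Such a family immediately contradicts the existence of any function $f$ with $f(r) \to 0$ as $r \to 0$ satisfying the stated inequality. I would first treat the case $n=2$ and then extend to arbitrary $n \geq 2$ by taking the external product of each module with the fixed rectangle module $\kbb_{[0,1)^{n-2}}$ along the remaining coordinates; this operation preserves $d_I$ and transforms every rectangle appearing in $\mrd_\spm$ by multiplication with $[0,1)^{n-2}$, leaving the combinatorics of $\widehat{d_B}$ essentially unaffected.

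The key intuition is that the disjointness requirement defining $\mrd_\spm$ is inherently discontinuous: when the rank invariant of a module admits a formal cancellation along a pair of identical rectangles, those rectangles are absent from $\mrd_\spm$; but an arbitrarily small perturbation can break this exact identity, forcing both rectangles to reappear---one in the positive part, one in the negative part---even when they are ``large''. Accordingly, I would choose $M_\delta$ so that $\mrd_\spm(M_\delta)$ contains no large rectangle, and $N_\delta$ as a small perturbation whose rank invariant forces $\mrd_\spm(N_\delta)$ to include a pair of nearly-identical but distinct large rectangles of opposite signs, neither of which has a close counterpart in $\mrd_\spm(M_\delta)$. A natural candidate family is built from small direct sums or slight modifications of the support of an L-shaped interval module, chosen so that two structurally large rectangles are unmasked by the perturbation.

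Establishing the interleaving bound $d_I(M_\delta, N_\delta) \leq \delta$ should follow straightforwardly from the construction by writing down explicit interleaving morphisms that act as the identity away from the perturbed region. The crux of the proof, and the main obstacle, is the lower bound on $\widehat{d_B}$: in every partial matching between the multisets $\mrd_\sp(M_\delta) \cup \mrd_\sm(N_\delta)$ and $\mrd_\sp(N_\delta) \cup \mrd_\sm(M_\delta)$, at least one of the two large rectangles arising in $\mrd_\spm(N_\delta)$ must either remain unmatched---whence its large interleaving distance to the zero module yields the lower bound---or be matched to a rectangle far from it. Ruling out cheap matches relies on the basic fact that right-open rectangles with very different endpoints have pairwise interleaving distance bounded below by the $\ell^\infty$-discrepancy of their endpoints. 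The main difficulty is the global combinatorial nature of this argument: eliminating every possible partial matching simultaneously requires a careful case analysis tailored to the explicit construction, showing that no pairing of the few rectangles in $\mrd_\spm(M_\delta)$ with those in $\mrd_\spm(N_\delta)$ can achieve total cost below $C$.
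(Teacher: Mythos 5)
Your overall strategy (a family of pairs with \(d_I \to 0\) but \(\widehat{d_B}\) bounded below, reduced to \(n=2\) via external products) is the same as the paper's, but the crux of your construction has a genuine gap. You want \(N_\delta\) to be a small perturbation of \(M_\delta\) that ``unmasks'' a pair of nearly-identical large rectangles of opposite signs in \(\mrd_\spm(N_\delta)\), and you claim that in every partial matching at least one of these two rectangles is either unmatched or matched far away. That dichotomy is false: in the matching defining \(\widehat{d_B}\), the multisets compared are \(\mrd_\sp(M_\delta) \cup \mrd_\sm(N_\delta)\) and \(\mrd_\sp(N_\delta) \cup \mrd_\sm(M_\delta)\), so the positive and the negative bars of \(N_\delta\) lie on \emph{opposite} sides and may be matched to each other. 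Since you make them nearly identical, that match costs only on the order of \(\delta\), and nothing in your sketch prevents the remaining bars from being matched at comparable cost. Allowing such cheap within-one-module cancellations is exactly what the signed bottleneck dissimilarity is designed to do, so a single \(\delta\)-perturbation unmasking a \(\delta\)-close \(\pm\) pair cannot yield a constant lower bound on \(\widehat{d_B}\).

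The missing idea in the paper's proof is a telescoping chain that makes the unmasked positive and negative rectangles \emph{far from each other} while keeping the interleaving distance small. With \(\epsilon = 1/(k+1)\), one takes the direct sum over \(i=0,\dots,k\) of L-shaped intervals \(M_{a+i\epsilon,b}\) (each contributing two long positive rectangles and one negative square \(T_{a+i\epsilon}\)) together with added square summands: in \(A\) the added squares are \(T_{a+i\epsilon}\), cancelling all negative squares so \(\mrd_\sm(A)=\varnothing\); in \(B\) they are shifted by one step, so after cancellation only a positive \(T_{a+1}\) and a negative \(T_a\) survive. These two leftovers are at distance \(1\) from each other, and \(T_a\) is also at distance \(1\) from the zero module and more than \(1\) from every long rectangle, so every signed matching costs at least \(1\), while \(d_I(A,B)\le \epsilon/2 \to 0\). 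Two smaller points: by \cref{lemma:formulas-interleavings} the interleaving distance between two rectangles is the \emph{minimum} of the endpoint discrepancy and the larger of their distances to the zero module, so ``far endpoints'' alone does not rule out cheap matches; and for the reduction to \(n\ge 3\) the paper products with \(\Psf_0 \colon \Rscr^{n-2} \to \vect\) rather than with \(\kbb_{[0,1)^{n-2}}\), since the latter caps all the relevant interleaving distances at \(1/2\) and would force you to redo the lower-bound bookkeeping.
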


\begin{proof}
    We prove the claim in the case \(n=2\).
    Examples in dimension \(n\geq 3\) can be constructed by taking the external tensor product of each of the modules in the example we give with \(\Psf_0 \colon \Rscr^{n-2} \to \vect\), where the external tensor product of \(X \colon \Rscr^k \to \vect\) and \(Y \colon \Rscr^\ell \to \vect\) is the persistence module \(X\widehat{\otimes} Y \colon \Rscr^{k + \ell} \to \vect\) with \((X\widehat{\otimes}Y)(r) = X(r_1, \dots,r_k) \otimes Y(r_{k+1}, \dots, r_{k+\ell})\).

    Let \(b = 10\) and \(a = 2\).
    Let \(H_{a,b}, V_{a,b}, T_{a} \colon \Rscr^2 \to \vect\) be the right open rectangle modules with support \([0,b) \times [0,a)\), \([0,a) \times [0,b)\), and \([0,a)\times [0,a)\), respectively.
    Let \(M_{a,b}\) be the interval module whose support is the union of the supports of \(H_{a,b}\) and \(V_{a,b}\).
    In particular, we have \(\mrd_\spm(M_{a,b}) = \left(\{[V_{a,b}], [H_{a,b}]\}, \{[T_{a}]\}\right)\).
    Given a natural number \(k \in \Nbb\), let \(\epsilon = 1/(k+1)\) and consider the following two persistence modules
    \begin{align*}
        A & = (M_{a,b} \oplus T_{a}) \oplus (M_{a+\epsilon,b} \oplus T_{a+\epsilon}) \oplus (M_{a+2\epsilon,b} \oplus T_{a+2\epsilon}) \oplus \cdots \oplus (M_{a+k\epsilon,b} \oplus T_{a+k\epsilon})                \\
        B & = (M_{a,b} \oplus T_{a+\epsilon}) \oplus (M_{a+\epsilon,b} \oplus T_{a+2\epsilon}) \oplus (M_{a+2\epsilon,b} \oplus T_{a+3\epsilon}) \oplus \cdots \oplus (M_{a+k\epsilon,b} \oplus T_{a+(k+1)\epsilon}).
    \end{align*}
    Note that $A$ and $B$ are \(\epsilon/2\)-interleaved, and that their minimal rank decompositions by rectangles are as follows (see \cref{figure:instability-mrd-rectangles}):
    \begin{align*}
        \mrd_\spm(A) & = \left( \{[V_{a,b}], [H_{a,b}]\} \cup \{[V_{a+\epsilon,b}],  [H_{a+\epsilon,b}]\} \cup \cdots \cup \{[V_{a+k\epsilon,b}],  [H_{a+k\epsilon, b}]\}, \varnothing \right)                               \\
        \mrd_\spm(B) & = \left( \{[V_{a,b}], [H_{a,b}]\} \cup \{[V_{a+\epsilon,b}], [H_{a+\epsilon,b}]\} \cup \cdots \cup \{[V_{a+k\epsilon,b}], [H_{a+k\epsilon, b}]\} \cup \{[T_{a+(k+1)\epsilon}]\},  \{[T_a]\}\right).
    \end{align*}
    Any matching between \(\mrd_\spm(A)\) and \(\mrd_\spm(B)\) has cost at least \(1\), since \([T_{a}] \in \mrd_\sm(B)\) is either unmatched, which has cost \(1\), matched with \([T_{a+(k+1)\epsilon}] = [T_{a+1}] \in \mrd_\sp(B)\), which has cost \(1\), or matched with another bar in \(\mrd_\sp(B)\), which has cost larger than \(1\).
    By letting \(k\to \infty\) we get a sequence of pairs of modules \(A\) and \(B\) whose interleaving distance converges to \(0\), and such that an optimal matching between their minimal rank decomposition by rectangles has cost at least \(1\).
\end{proof}

\begin{figure}
    \begin{center}
        \includegraphics[width=0.8\linewidth]{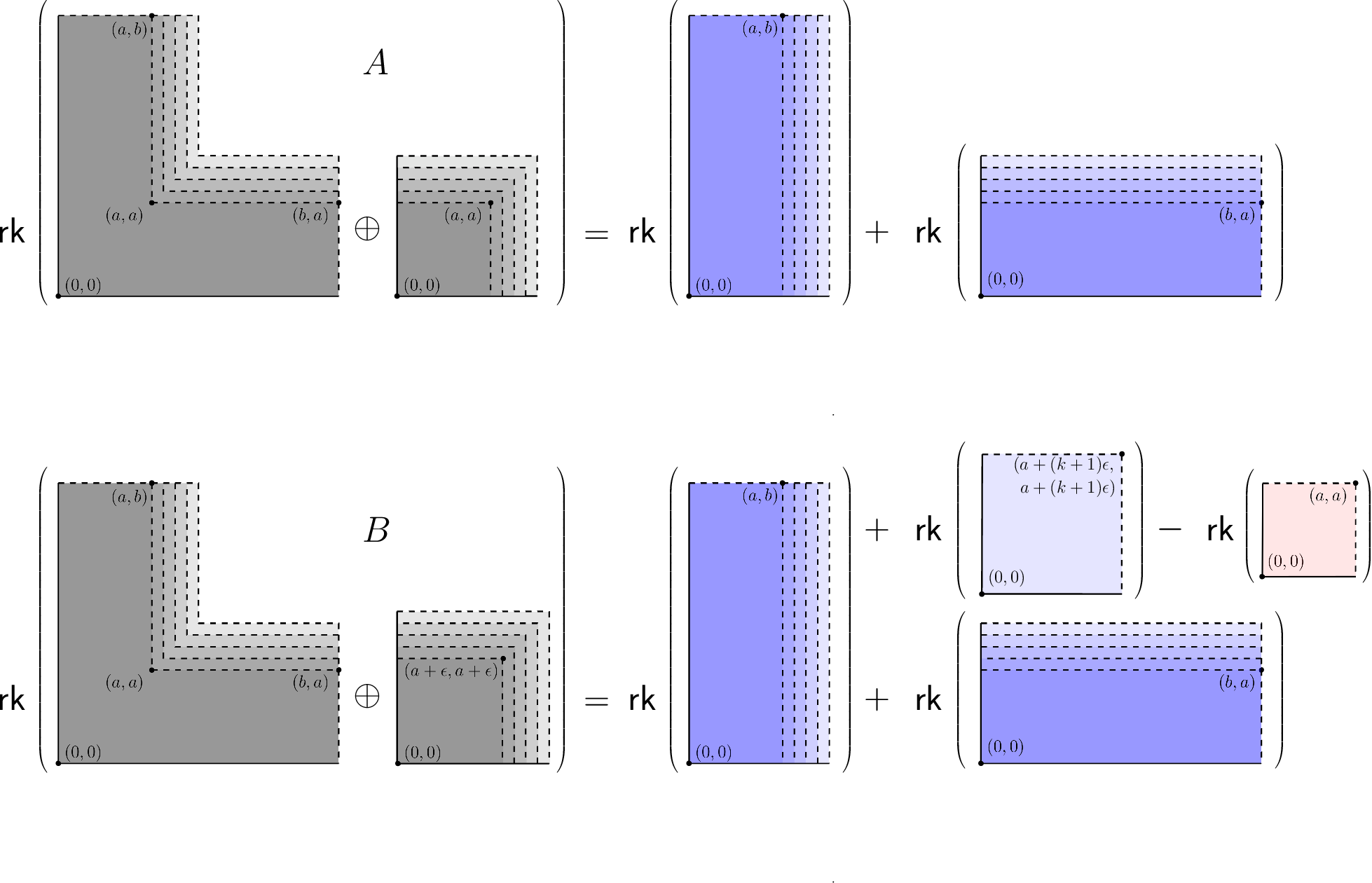}
    \end{center}
    \caption{The minimal rank decompositions of the modules \(A\) and \(B\) of the proof of \cref{proposition:instability-mrd-rectangles}, when \(k=4\).}
    \label{figure:instability-mrd-rectangles}
\end{figure}

\begin{remark}
    \label{remark:instability-mrdh}
    An example analogous to that in the proof of \cref{proposition:instability-mrd-rectangles} shows that the minimal rank decomposition by hooks (\cref{section:rank-decs}) is also not stable in the signed bottleneck dissimilarity when \(n\geq 2\).
    To build the example, one replaces the modules \(M_{a+i\epsilon,b}\) by the modules \(T_{a+i\epsilon}\), the modules \(T_{a+i\epsilon}\) by the modules \(\Lsf_{0,(a+i\epsilon,a+i\epsilon)}\), the modules \(H_{a+i\epsilon,b}\) by the modules \(\Lsf_{0,(0,a+i\epsilon)}\), and the modules \(V_{a+i\epsilon,b}\) by the modules \(\Lsf_{0,(a+i\epsilon,0)}\).
\end{remark}

\section{Bottleneck stability of hook-decomposable modules}
\label{matchings-section}

In \cref{section:matchings}, we prove \cref{proposition-stability-hooks}, the bottleneck stability result for hook-decomposable modules.
In \cref{remark:projective-stability-tight}, we show that the stability result of \cref{section:matchings} is tight when \(n=2\).
In order to do this, we repurpose an example from \cite{bjerkevik} originally designed for rectangle-decomposable modules.
Note however that,
in general, right open rectangle modules and hook modules can have quite different behaviors under interleavings or matchings, therefore we cannot directly apply Bjerkevik's stability result for rectangle-decomposable modules \cite[Theorem~4.3]{bjerkevik} to prove the stability of hook-decomposable modules.

\subsection{Bottleneck stability}
\label{section:matchings}

We start with some definitions that let us state simple generalizations of the main constructions in \cite{bjerkevik}.

Let \(\Pcal\) be a class of indecomposable persistence modules \(\Rscr^n \to \vect\).
A \define{matching constant} for \(\Pcal\) is any number \(c \geq 1 \in \Rbb\) such that, whenever \(P,Q \colon \Rscr^n \to \vect\) are \(\epsilon\)-interleaved countably \(\Pcal\)-decomposable modules, there exists a \(c\epsilon\)-matching between \(\barc(P)\) and \(\barc(Q)\) and thus, in particular, we have \(d_B(\barc(P),\barc(Q)) \leq c \cdot d_I(P,Q)\).

We use the following version of Bjerkevik's stability result \cite[Theorem~4.3]{bjerkevik},

\begin{theorem}[Bjerkevik]
    \label{bjerkevik-thm-1}
    Let \(\Pcal\) be a class of isomorphism classes of interval modules that is closed under shifts, such that the Hom-spaces between its elements are of dimension~$\leq 1$, and for which there exists a total pre-order \(\preceq\) (i.e., a pre-order such that every pair of elements is comparable) and a constant \(c \geq 1\) satisfying the following two conditions for all \(\epsilon > 0\) and all \(R,S,T \in \Pcal\) with \(R \preceq T\):
    \begin{enumerate}
        \item If there are non-zero morphisms \(\kbb_R \to \kbb_S[\epsilon]\) and \(\kbb_S \to \kbb_T[\epsilon]\), then \(S\) is \(c\epsilon\)-interleaved with \(R\) or \(T\).
        \item Assume \(R\) and \(T\) are \(2c\epsilon\)-significant (i.e., not $2c\epsilon$-interleaved with the zero module).
              If there are non-zero morphisms \(\kbb_R \to \kbb_S[\epsilon]\) and \(\kbb_S \to \kbb_T[\epsilon]\), then their composite \(\kbb_R \to \kbb_T[2\epsilon]\) is non-zero.
    \end{enumerate}
    Then \(c\) is a matching constant for \(\Pcal\).
\end{theorem}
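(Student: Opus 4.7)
The plan is to mimic Bjerkevik's original bottleneck stability proof for rectangle-decomposable modules, with axioms (1) and (2) serving as the abstraction of the rectangle-specific geometric lemmas used there. Concretely, fix decompositions $P \cong \bigoplus_{i \in I} \kbb_{R_i}$ and $Q \cong \bigoplus_{j \in J} \kbb_{S_j}$ with $R_i, S_j \in \Pcal$; since $\Pcal$ is shift-closed and its Hom-spaces are at most one-dimensional, the interleaving morphisms $f \colon P \to Q[\epsilon]$ and $g \colon Q \to P[\epsilon]$ are represented by matrices whose entries are scalar multiples of canonical morphisms between elements of $\Pcal$, and linear-algebraic reasoning on these matrices becomes available.

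First I would discard the bars that are not $2c\epsilon$-significant: these can be matched to the zero module at cost at most $c\epsilon$. The task thus reduces to constructing a $c\epsilon$-matching between the $2c\epsilon$-significant parts of $\barc(P)$ and $\barc(Q)$.

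The core step is a Hall-type inequality. For each level $X \in \Pcal$ under $\preceq$, let $I_X \subseteq I$ index the $2c\epsilon$-significant bars with $R_i \preceq X$, and similarly $J_X \subseteq J$ for $Q$. I would prove that the composite
\[
\bigoplus_{i \in I_X} \kbb_{R_i} \;\hookrightarrow\; P \;\xrightarrow{f}\; Q[\epsilon] \;\twoheadrightarrow\; \bigoplus_{j \in J_X} \kbb_{S_j}[\epsilon]
\]
is injective, where the right-hand projection kills all $S_j$ with $S_j \succ X$. Suppose towards a contradiction that some non-zero element lies in the kernel; then $f$ sends it into $\bigoplus_{S_j \succ X} \kbb_{S_j}[\epsilon]$. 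Pick such a $j$, and pick a maximal index $i_\ell \in I_X$ (with respect to $\preceq$) contributing a non-zero entry $\kbb_{R_{i_\ell}} \to \kbb_{S_j}[\epsilon]$. Because the element comes from $2c\epsilon$-significant summands, the identity $g[\epsilon] \circ f = \iota^P_{2\epsilon}$ provides, via a matrix-chase on $g$, a non-zero composite $\kbb_{R_{i_\ell}} \to \kbb_{S_j}[\epsilon] \to \kbb_{R_{i'}}[2\epsilon]$ with $i' \in I_X$. Applying condition (1) to the triple $(R_{i_\ell}, S_j, R_{i'})$ (using $R_{i_\ell} \preceq X \succeq R_{i'}$ and the totality of $\preceq$ to orient the triple), $S_j$ must be $c\epsilon$-interleaved with either $R_{i_\ell}$ or $R_{i'}$, forcing $S_j \preceq X$ and contradicting $S_j \succ X$. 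Condition (2) enters at the crucial point of the matrix-chase, guaranteeing that the diagonal contribution $R_{i_\ell} \to S_j \to R_{i_\ell}[2\epsilon]$ does not accidentally vanish and so the kernel element is genuinely witnessed.

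Once this Hall inequality is established at every level $X$, and symmetrically with $P$ and $Q$ swapped, I would invoke the countable version of Hall's marriage theorem (e.g.\ via a compactness argument on finite truncations, as in Bjerkevik's treatment of countable decompositions) to extract the desired $c\epsilon$-matching. The hard part will be organising the triple $(R, S, T)$ and the level sets $I_X, J_X$ so that the totality of $\preceq$ together with conditions (1) and (2) really does deliver the upper-triangular-with-nonzero-diagonal structure on the restricted matrix; in particular, getting the correct level set $J_X$ on the target side may require replacing $X$ with a controlled shift of itself, which is precisely where the constant $c$ calibrates the final bound.
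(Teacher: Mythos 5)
Your high-level scaffolding (discard the non-$2c\epsilon$-significant bars, represent the interleaving morphisms by scalar matrices using the dimension-$\leq 1$ Hom-spaces, verify a Hall-type condition, apply Hall's marriage theorem) is the same as the paper's, which simply defers to Bjerkevik's Lemma~4.9 with conditions (1) and (2) substituted for his rectangle-specific lemmas. However, your core technical step is not the right statement and, as formulated, is false. First, injectivity of the level-restricted composite $\bigoplus_{i \in I_X} \kbb_{R_i} \to Q[\epsilon] \to \bigoplus_{j \in J_X} \kbb_{S_j}[\epsilon]$ fails even in one parameter with a single $2c\epsilon$-significant bar: for $P = \kbb_{[0,10)}$ and $Q = \kbb_{[\epsilon,10-\epsilon)}$, which are $\epsilon$-interleaved, the interleaving morphism $P \to Q[\epsilon]$ has non-zero kernel. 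What Bjerkevik's argument actually establishes is not injectivity of a module morphism but the cardinality inequality $|A| \leq |\mu(A)|$ for every finite set $A$ of $2c\epsilon$-significant summands of $P$, where $\mu(A)$ consists of the summands of $Q$ that are $c\epsilon$-interleaved with some member of $A$; this is proved by showing that the scalar matrix of $\iota^P_{2\epsilon}$ restricted to $A$, minus the contribution factoring through the complement of $\mu(A)$, is triangular with non-zero diagonal (condition (2) guaranteeing the diagonal, condition (1) the triangularity), hence has rank $|A|$, and it factors through $\bigoplus_{j \in \mu(A)}$. Your level-set inequality $|I_X| \leq |J_X|$, even if true, would not verify Hall's condition for the relevant bipartite graph, whose edges are the $c\epsilon$-interleaved pairs: a matching respecting pre-order levels need not match interleaved bars.

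Second, your contradiction step is unjustified: from condition (1) you conclude that $S_j$ is $c\epsilon$-interleaved with $R_{i_\ell}$ or $R_{i'}$ and then infer $S_j \preceq X$, but nothing in the hypotheses ties the abstract total pre-order to interleavings in this way (and it is false for the pre-order by $\alpha(i)+\alpha(j)$ used for hooks later in the paper). In the correct argument the conclusion drawn is ``$S_j \in \mu(A)$'', i.e.\ $S_j$ is a legitimate potential match, which is exactly what makes the complementary contribution strictly triangular; it is never a bound on the level of $S_j$. Relatedly, the kernel element in your argument could also map into $2c\epsilon$-insignificant summands with $S_j \preceq X$ (also killed by your projection), and the ``matrix-chase'' producing a non-zero composite through one particular $j$ ignores possible cancellations among different $j$ --- precisely the point of the rank argument you defer to ``the hard part''. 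Finally, note that after obtaining the two one-sided matchings (one covering the significant bars of $\barc(P)$, one those of $\barc(Q)$) one still needs the Cantor--Bernstein-style combination step that the paper's sketch makes explicit.
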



\begin{proof}[Proof sketch]
  The proof of this result is completely analogous to the proof of \cite[Theorem~4.3]{bjerkevik}, which follows directly from \cite[Lemma~4.9]{bjerkevik} and \cite[Theorem~4.10]{bjerkevik} via a purely combinatorial argument that we outline now. 
   
Let \(P,Q \colon \Rscr^n \to \vect\) decompose as countable direct sums of indecomposables in \(\Pcal\), and let us assume that \(P\) and \(Q\) are \(\epsilon\)-interleaved. For any element $P_i\in \barc(P)$, let
  \[ \mu(P_i) = \left\{Q_j\in \barc(Q) \mid P_i\ \text{and}\ Q_j\ \text{are $c\epsilon$-interleaved}\right\}, \]
  where $\mu(P_i)$ is viewed as a multisubset of $\barc(Q)$, containing all the elements of $\barc(Q)$ with which $P_i$ can potentially be matched. More generally, for any multisubset $A\subseteq \barc(P)$, let
  \[ \mu(A) = \bigcup_{P_i\in A} \mu(P_i), \]
  where, again, $\mu(A)$ is viewed as a multisubset of~$\barc(Q)$. Now, we build the (possibly infinite) undirected bipartite graph $G$ on the multisets
  $\barc(P)\sqcup\barc(Q)$ with an edge between $P_i\in\barc(P)$ and $Q_j\in\barc(Q)$ whenever $Q_j\in\mu(P_i)$. Then, \cite[Lemma~4.9]{bjerkevik} states that $|A|\leq |\mu(A)|$ for any $A\subseteq \barc(P)$ that contains only $2c\epsilon$-significant elements. This implies that any subset $A$ of the nodes of~$G$ corresponding to $2c\epsilon$-significant  elements of~$\barc(P)$ is connected to at least $|A|$ other nodes of~$G$. As a consequence, Hall's Marriage theorem~\cite[Theorem~4.10]{bjerkevik}---applied to the subgraph of $G$ induced by the $2c\epsilon$-significant elements of~$\barc(P)$ and their neighbors in~$G$---ensures the existence of a (possibly not perfect) matching~$\sigma$  in~$G$ that covers all the nodes corresponding to  $2c\epsilon$-significant  elements of~$\barc(P)$. Symmetrically, there exists a matching~$\tau$  in~$G$ that covers all the nodes corresponding to  $2c\epsilon$-significant  elements of~$\barc(Q)$. From the union of the edges in $\sigma$ and $\tau$, one can then extract a matching in~$G$ that covers all the   $2c\epsilon$-significant  elements of~$\barc(P)\sqcup\barc(Q)$, by a combinatorial argument similar to the one used in the Cantor--Bernstein theorem \cite[pp.~110-111]{aigner1999proofs}.

  As seen from this outline, the only part of the proof that depends on the class~$\Pcal$ of indecomposables is \cite[Lemma~4.9]{bjerkevik}.  The proof of this lemma itself is quite modular,  based on a matrix rank argument, and the only places where it makes any assumptions about the intervals in \( \Pcal \)---making it specific to the case of rectangles---are:
  \begin{itemize}
\item in the construction of the matrices from the morphisms of the $\epsilon$-interleaving between $P$ and $Q$: there, the restrictions of the interleaving morphisms to individual summands of $P$ or $Q$ are identified with $\kbb$-endomorphisms, which is possible in the case of rectangle summands because the Hom-spaces between them are of dimension $\leq 1$; here we make the same assumption on the Hom-spaces between the indecomposables in~$\Pcal$;   
\item when  \cite[Lemma~4.7]{bjerkevik} is invoked; here the lemma is replaced by our assumption~(1);
\item when  \cite[Lemma~4.8]{bjerkevik} is invoked; here the lemma is replaced by our assumption~(2);
\item when $(2n - 1)$ is used, it should be replaced by our matching constant $c$. 
\end{itemize}
As a result, the proof of \cite[Lemma~4.9]{bjerkevik}, and so the one of \cite[Theorem~4.3]{bjerkevik} as well, extend to any class~$\Pcal$ that satisfies our assumptions.
\end{proof}

The following lemma is also implicit in Bjerkevik's work.

\begin{lemma}
    \label{lemma:matching-constant-union}
    Let \(\Pcal\) and \(\Pcal'\) be isomorphism classes of interval modules, closed under shifts, with matching constants \(c\) and \(c'\) respectively.
    Suppose that, every time \(\kbb_R\) and \(\kbb_T\) belong to one of the two classes and \(\kbb_S\) belongs to the other class, we have that any composite morphism \(\kbb_R \to \kbb_S \to \kbb_T\) is zero.
    Then, \(\max(c,c')\) is a matching constant for \(\Pcal \cup \Pcal'\).
\end{lemma}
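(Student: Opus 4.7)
The plan is to reduce the hypothesis of an $\epsilon$-interleaving between two modules $P$ and $Q$ that decompose over $\Pcal \cup \Pcal'$ to a pair of independent interleavings, one between the $\Pcal$-parts of $P$ and $Q$ and another between the $\Pcal'$-parts, and then to apply the matching constants $c$ and $c'$ separately. Concretely, I would regroup summands and write $P = P_0 \oplus P_1$ and $Q = Q_0 \oplus Q_1$ where $P_0, Q_0$ collect the indecomposable summands in $\Pcal$ and $P_1, Q_1$ the summands in $\Pcal'$. Given interleaving morphisms $f \colon P \to Q[\epsilon]$ and $g \colon Q \to P[\epsilon]$, I would decompose them as block ``matrices''
\[
f = \begin{pmatrix} f_{00} & f_{01} \\ f_{10} & f_{11} \end{pmatrix}, \qquad g = \begin{pmatrix} g_{00} & g_{01} \\ g_{10} & g_{11} \end{pmatrix},
\]
with respect to these splittings.

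The key step is to observe that the composites contributing to the off-diagonal part of $g[\epsilon] \circ f$ and $f[\epsilon] \circ g$ vanish. For instance, $g_{01}[\epsilon] \circ f_{10} \colon P_0 \to P_0[2\epsilon]$ factors, component-wise, through direct summands of $Q_1[\epsilon]$; on individual indecomposables, each contribution has the form $\kbb_R \to \kbb_S[\epsilon] \to \kbb_T[2\epsilon]$ with $R, T \in \Pcal$ (up to shift) and $S \in \Pcal'$ (up to shift). By the shift-closure of $\Pcal$ and $\Pcal'$ together with the lemma's hypothesis, each such composite is zero, so the whole off-diagonal composite vanishes. A symmetric argument handles the other three cross-composites. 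Consequently, the interleaving equations $g[\epsilon]\circ f = \iota^P_{2\epsilon}$ and $f[\epsilon]\circ g = \iota^Q_{2\epsilon}$ reduce to
\[
g_{00}[\epsilon]\circ f_{00} = \iota^{P_0}_{2\epsilon}, \quad f_{00}[\epsilon]\circ g_{00} = \iota^{Q_0}_{2\epsilon}, \quad g_{11}[\epsilon]\circ f_{11} = \iota^{P_1}_{2\epsilon}, \quad f_{11}[\epsilon]\circ g_{11} = \iota^{Q_1}_{2\epsilon},
\]
which exhibit $(f_{00}, g_{00})$ as an $\epsilon$-interleaving between $P_0$ and $Q_0$, and $(f_{11}, g_{11})$ as an $\epsilon$-interleaving between $P_1$ and $Q_1$.

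From here the proof concludes routinely: since $c$ is a matching constant for $\Pcal$, there is a $c\epsilon$-matching between $\barc(P_0)$ and $\barc(Q_0)$, and since $c'$ is one for $\Pcal'$, there is a $c'\epsilon$-matching between $\barc(P_1)$ and $\barc(Q_1)$. Taking the disjoint union of these two matchings produces a $\max(c,c')\epsilon$-matching between $\barc(P) = \barc(P_0) \cup \barc(P_1)$ and $\barc(Q) = \barc(Q_0)\cup\barc(Q_1)$, so $\max(c,c')$ is a matching constant for $\Pcal \cup \Pcal'$. The only point that requires a little care---rather than being the main obstacle---is verifying block-wise that the off-diagonal composites really do vanish, which amounts to unwinding the hypothesis on triple composites through the (possibly countably infinite) direct-sum decompositions; pointwise finite-dimensionality ensures that at each grade only finitely many summands contribute, so this is straightforward.
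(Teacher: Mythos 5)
Your proposal is correct and follows essentially the same route as the paper's proof: split $P$ and $Q$ into their $\Pcal$- and $\Pcal'$-decomposable parts, use the hypothesis on triple composites (plus shift-closure) to kill the cross terms so that the diagonal components of the interleaving morphisms give $\epsilon$-interleavings between the corresponding parts, then apply the two matching constants and take the union of the resulting matchings. The block-matrix bookkeeping and the remark about pointwise finite-dimensionality are just a more explicit rendering of the paper's "compose with the canonical inclusions and projections" step.
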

\begin{proof}
    Suppose that \(P,Q \colon \Rscr^n \to \vect\) are \(\epsilon\)-interleaved, countably \((\Pcal \cup \Pcal')\)-decomposable modules.
    Let \(P = P_1 \oplus P_2\) and \(Q = Q_1 \oplus Q_2\), with \(P_1\) and \(Q_1\) being \(\Pcal\)-decomposable and \(P_2\) and \(Q_2\) being \(\Pcal'\)-decomposable.
    Let \(i \neq j \in \{1,2\}\).
    The interleaving between \(P\) and \(Q\)
consists of morphisms \(f : P \to Q[\epsilon]\) and \(g : Q \to P[\epsilon]\),
which we can write in matrix form as
\[
    \begin{pmatrix}
        f_{1,1} & f_{1,2}\\
        f_{2,1} & f_{2,2}
    \end{pmatrix}
    : P_1 \oplus P_2 \to Q_1[\epsilon] \oplus Q_2[\epsilon]
    \;\; \text{ and } \;\;
    \begin{pmatrix}
        g_{1,1} & g_{1,2}\\
        g_{2,1} & g_{2,2}
    \end{pmatrix}
    : Q_1 \oplus Q_2 \to P_1[\epsilon] \oplus P_2[\epsilon].
\]
We claim that $f_{1,1}$ and $g_{1,1}$ form an $\epsilon$-interleaving between $P_1$ and $Q_1$, and that $f_{2,2}$ and $g_{2,2}$ form an $\epsilon$-interleaving between $P_2$ and $Q_2$.
Proving this is sufficient since then, by assumption, there exists a \(c\epsilon\)-matching between \(\Bcal(P_1)\) and \(\Bcal(Q_1)\) and a \(c'\epsilon\)-matching between \(\Bcal(P_2)\) and \(\Bcal(Q_2)\), and the result follows by combining these two matchings.

Using that $f$ and $g$ form an $\epsilon$-interleaving, and standard matrix multiplication, we get
\[
    \begin{pmatrix}
        g_{1,1}[\epsilon] & g_{1,2}[\epsilon]\\
        g_{2,1}[\epsilon] & g_{2,2}[\epsilon]
    \end{pmatrix}
    \begin{pmatrix}
        f_{1,1} & f_{1,2}\\
        f_{2,1} & f_{2,2}
    \end{pmatrix}
    =
    \begin{pmatrix}
        g_{1,1}[\epsilon]\circ f_{1,1} + 
        g_{1,2}[\epsilon]\circ f_{2,1} &
        g_{1,1}[\epsilon]\circ f_{1,2} + 
        g_{1,2}[\epsilon]\circ f_{2,2} \\
        g_{2,1}[\epsilon]\circ f_{1,1} + 
        g_{2,2}[\epsilon]\circ f_{2,1} &
        g_{2,1}[\epsilon]\circ f_{1,2} + 
        g_{2,2}[\epsilon]\circ f_{2,2}
    \end{pmatrix}
    =
    \begin{pmatrix}
        \eta_{2\epsilon}^{P_1} & 0\\
        0 & \eta_{2\epsilon}^{P_2}
    \end{pmatrix},
\]
so $g_{1,1}[\epsilon]\circ f_{1,1} + g_{1,2}[\epsilon]\circ f_{2,1} = \eta_{2\epsilon}^{P_1}$ and $g_{2,1}[\epsilon]\circ f_{1,2} + g_{2,2}[\epsilon]\circ f_{2,2} = \eta_{2\epsilon}^{P_2}$.
But, by assumption, $g_{1,2}[\epsilon]\circ f_{2,1} = 0$ and $g_{2,1}[\epsilon]\circ f_{1,2} = 0$, since $P_1$ and $Q_2$ belong to different classes, and $P_2$ and $Q_1$ belong to different classes as well.
This implies that $g_{1,1}[\epsilon]\circ f_{1,1} = \eta_{2\epsilon}^{P_1}$ and $g_{2,2}[\epsilon]\circ f_{2,2} = \eta_{2\epsilon}^{P_2}$.
A symmetric argument shows that 
$f_{1,1}[\epsilon]\circ g_{1,1} = \eta_{2\epsilon}^{Q_1}$ and $f_{2,2}[\epsilon]\circ g_{2,2} = \eta_{2\epsilon}^{Q_2}$, finishing the proof.
\end{proof}

We are now ready to prove the main result of this section.

\begin{proposition}
    \label{proposition-stability-hooks}
    Let \(P,Q \colon \Rscr^n \to \vect\) decompose as countable direct sums of hook modules and let \(\epsilon \geq 0\).
    If \(P\) and \(Q\) are \(\epsilon\)-interleaved, then there exists a \((2n-1)\epsilon\)-matching between \(\barc(P)\) and \(\barc(Q)\).
\end{proposition}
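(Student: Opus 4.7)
The plan is to deduce this proposition from \cref{bjerkevik-thm-1} applied to the class $\Pcal$ of isomorphism classes of hook modules over $\Rscr^n$, with matching constant $c = 2n-1$. Three basic checks and one substantive verification are required.

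First, $\Pcal$ is closed under shifts: since $M[\epsilon](r) = M(r+\epsilon)$, one computes $\Lsf_{i,j}[\epsilon] \cong \Lsf_{i - \epsilon \cdot \mathbf{1},\, j - \epsilon \cdot \mathbf{1}}$, using the convention $\infty - \epsilon = \infty$. Second, Hom-spaces between hooks are at most one-dimensional: the support of a hook $\Lsf_{i,j}$ is path-connected through the lower corner $i$ in the sense of \cref{section:interval-modules}, and the intersection of the supports of two hooks is the cone above $i \vee i'$ with the two upper cones above $j$ and $j'$ removed, which can always be routed through $i \vee i'$; consequently a natural transformation $\Lsf_{i,j} \to \Lsf_{i',j'}$ is determined by a single scalar on this connected region. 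Third, I would equip $\Pcal$ with the total pre-order induced by the size parameter $s(\Lsf_{i,j}) \coloneqq \min_{1 \leq k \leq n}(j_k - i_k) \in (0,\infty]$, which matches up to a constant the interleaving distance of a hook to the zero module via \cref{lemma:formulas-interleavings}, so that $2c\epsilon$-significance of a hook corresponds (up to constants) to $s > 2c\epsilon$.

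The substantive step is then verifying conditions (1) and (2) of \cref{bjerkevik-thm-1} for $c = 2n-1$. This I would do by adapting the case analysis in Bjerkevik's Lemmas~4.7 and~4.8 from rectangles to hooks. The key observation is that a non-zero morphism $\kbb_R \to \kbb_S[\epsilon]$ between hooks translates, through a comparison of supports, into coordinate-wise inequalities between the parameters $(i,j)$ of $R$ and $(i',j')$ of $S$, and similarly for $\kbb_S \to \kbb_T[\epsilon]$. Chaining these inequalities as in Bjerkevik's argument and using the formula for interleavings of hooks, one checks that when $R \preceq T$ the intermediate hook $S$ has size parameter squeezed between those of $R$ and $T$ up to $c\epsilon$, yielding condition (1); and that under the significance hypothesis, the composite morphism $\kbb_R \to \kbb_T[2\epsilon]$ is non-zero on some point of $R \cap T[2\epsilon]$, giving condition (2).

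The main obstacle will be handling hooks $\Lsf_{i,j}$ where some coordinates of $j$ equal $\infty$; these include all indecomposable projectives $\Psf_i = \Lsf_{i,\infty}$, whose support extends to infinity in all directions. For such hooks the size parameter $s$ may be infinite, and the case analysis underlying Bjerkevik's bounds must be broken down according to the pattern of finite versus infinite coordinates of $j$, treating the "infinite'' directions as having slack $\infty$ while still ensuring that the $2n-1$ constant suffices overall. In principle one could alternatively split $\Pcal$ into sub-classes by this pattern and combine via \cref{lemma:matching-constant-union}, but the compositional-vanishing hypothesis of that lemma is not satisfied in general between, say, projectives and proper hooks, so a unified analysis seems preferable.
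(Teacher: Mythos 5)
Your overall strategy (invoking \cref{bjerkevik-thm-1} for the class of hooks with $c=2n-1$) is the same as the paper's, and your checks of shift-closure and one-dimensionality of Hom-spaces are fine, but the two load-bearing choices in your sketch do not work as stated. First, the pre-order. Your size parameter $s(\Lsf_{i,j})=\min_k(j_k-i_k)$ misreads \cref{lemma:formulas-interleavings}: for a hook, $d_I(\Lsf_{i,j},0)=\max_k(j_k-i_k)/2$ (the $\min$ formula is the one for rectangles), so $s$ does not control significance. More seriously, a size-based pre-order cannot drive the argument: condition (1) of \cref{bjerkevik-thm-1} demands that $S$ be $c\epsilon$-\emph{interleaved} with $R$ or $T$, and having the sizes of $R,S,T$ squeezed within $c\epsilon$ of each other says nothing about the positions of their corners---two large hooks of equal size with distant corners are far apart in $d_I$. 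The pre-order that makes the Bjerkevik-style contradiction go through is positional: the paper orders hooks by $\alpha(i)+\alpha(j)$ with $\alpha(x)=\sum_k x_k$, and then a violation of $i\le a+(2n-1)\epsilon$ or $j\le b+(2n-1)\epsilon$ contradicts $\alpha(i)+\alpha(j)\le\alpha(a)+\alpha(b)$ by summing the coordinate inequalities coming from the non-vanishing of the morphisms. Your sketch of conditions (1) and (2) would have to be rebuilt around such a positional order; as written it proves the wrong statement.

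Second, your reason for rejecting the route through \cref{lemma:matching-constant-union} is incorrect, and this is precisely where your ``unified analysis'' runs into trouble. Any morphism $\Lsf_{a,b}\to\Psf_c$ with $b<\infty$ is zero (the generator would have to map to a class surviving past $b$), so for the two classes ``hooks with $j\in\Rscr^n$'' and ``projectives $\Psf_a=\Lsf_{a,\infty}$'' every composite $\kbb_R\to\kbb_S\to\kbb_T$ with $S$ in the opposite class from $R,T$ vanishes: if $R,T$ are proper hooks the first arrow dies, if $R,T$ are projectives the second one does. Hence the hypothesis of \cref{lemma:matching-constant-union} \emph{is} satisfied, and the paper uses exactly this splitting: the proper hooks are handled by \cref{bjerkevik-thm-1} with the $\alpha$-order and constant $2n-1$, while the projectives get the constant $\max(1,n-1)$ from one-parameter stability and \cite[Theorem~4.12]{bjerkevik}. (Note also that by the paper's definition $j\in\Rscr^n\cup\{\infty\}$, so there are no ``mixed'' patterns of finite and infinite coordinates to analyze.) If you insist on a unified application of \cref{bjerkevik-thm-1}, you must still supply a total pre-order and verify conditions (1)--(2) on the subclass of projectives, where your size parameter is identically $+\infty$ and a different (again positional) order is needed; nothing in your sketch covers this, so as it stands the argument has a genuine gap for exactly the summands $\Psf_a$ you flag as the main obstacle.
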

\begin{proof}
    We use \cref{lemma:matching-constant-union} with one class consisting of modules of the form \(\Lsf_{a,b}\) with \(a < b \in \Rscr^n\), and the other class consisting of modules of the form \(\Lsf_{a,\infty} = \Psf_a\) with \(a \in \Rscr^n\), noticing that any morphism \(\Lsf_{a,b} \to \Psf_c\) with \(b < \infty\) must be zero.

   A matching constant for the second class is \(\max(1,n-1)\), by the bottleneck stability theorem for one-parameter barcodes (case $n=1$) and by \cite[Theorem~4.12]{bjerkevik} (case $n\geq 2$).
    It thus suffices to show that \(2n-1\) is a matching constant for the first class.
    To prove this, we use \cref{bjerkevik-thm-1}, so we need to verify conditions \((1)\) and \((2)\) in the case of hooks.
    We start by defining the pre-order \(\preceq\), which we do by following Bjerkevik's arguments.
    For \(i \in \Rscr^n\), let \(\alpha(i) = \sum_{m = 1}^n i_{m}\), where $i_m$ denotes the $m$-th coordinate of~$i$.
    Now, we let \(\Lsf_{i,j} \preceq \Lsf_{i',j'}\) if and only if \(\alpha(i) + \alpha(j) \leq \alpha(i') + \alpha(j')\).

    To verify condition \((1)\), we follow the general outline of the proof of \cite[Lemma~4.7]{bjerkevik}.
    Suppose that there exist non-zero morphisms \(f : \Lsf_{i,j} \to \Lsf_{a,b}[\epsilon]\) and \(g : \Lsf_{a,b} \to \Lsf_{c,d}[\epsilon]\), and that $\Lsf_{i,j} \preceq \Lsf_{c,d}$.
    Since the preorder $\preceq$ is linear, we have either $\Lsf_{i,j} \preceq \Lsf_{a,b}$ or $\Lsf_{a,b} \preceq \Lsf_{c,d}$.
    We assume $\Lsf_{i,j} \preceq \Lsf_{a,b}$, the other case being analogous.
    Since $f \neq 0$, it must be the case that
    \begin{enumerate}
        \item[(I)] $a \leq i + \epsilon$;
        \item[(II)] $b \leq j + \epsilon$.
    \end{enumerate}
    It is thus enough to prove that 
    \begin{itemize}
        \item $i \leq a + (2n - 1)\epsilon$;
        \item $j \leq b + (2n - 1)\epsilon$,
    \end{itemize}
    since this implies that the supports are close (see also \cref{lemma:formulas-interleavings}).
    We prove that $i_m \leq a_m + (2n-1)\epsilon$ for every $1 \leq m \leq n$ by contradiction; the case of $j$ and $b$ is analogous.
    Suppose $i_m > a_m + (2n-1)\epsilon$, using this, (I), and (II) we get:
    \begin{align*}
        \alpha(a) + \alpha(b)
            &= \sum_{k=1}^n a_k + \sum_{k=1}^n b_k\\
            &< i_m - (2n -1)\epsilon + \sum_{k\neq m} (i_k + \epsilon) + \sum_{k=1}^n (j_k + \epsilon)\\
            &= \sum_{k = 1}^n i_k + \sum_{k = 1} j_k = \alpha(i) + \alpha(j),
    \end{align*}
    which contradicts the fact that $\Lsf_{i,j} \preceq \Lsf_{a,b}$.
   

    To verify condition \((2)\), we follow the general outline of the proof of \cite[Lemma~4.8]{bjerkevik}.
    Suppose that there exist non-zero morphisms \(f \colon \Lsf_{i,j} \to \Lsf_{a,b}[\epsilon]\) and \(g \colon \Lsf_{a,b} \to \Lsf_{c,d}[\epsilon]\), that \(\Lsf_{i,j} \preceq \Lsf_{c,d}\), and that \(\Lsf_{i,j}\) and \(\Lsf_{c,d}\) are \(2(2n-1)\epsilon\)-significant.
    We must prove that \(g[\epsilon] \circ f\) is non-zero, and we proceed by contradiction.
    If we assume \(g[\epsilon] \circ f = 0\), then, we claim we have the following:
    \begin{enumerate}[(i)]
        \item \(i + 2\epsilon \geq c\);
        \item \(i + 2\epsilon \geq d\);
        \item \(j + 2\epsilon \geq d\);
        \item \(i_{k} > c_{k} + (4n-4)\epsilon\) for some \(1 \leq k \leq n\);
        \item \(j_{k'} > d_{k'} + (4n-4)\epsilon\) for some \(1 \leq k' \leq n\).
    \end{enumerate}
    The first and third claims follow from the fact that \(f\) and \(g\) are non-zero.
    The second claim follows from the fact that, if \(i_l + 2\epsilon < d_l\) for some \(1 \leq l \leq n\), then \(\Lsf_{i,j}\) and \(\Lsf_{c,d}[2\epsilon]\) would intersect, and \(g[\epsilon] \circ f\) would be non-zero.
    The fourth claim follows from the second claim and the fact that \(\Lsf_{c,d}\) is \((4n-2)\epsilon\)-significant, while the fifth claim follows from the second one and the fact that \(\Lsf_{i,j}\) is \((4n-2)\epsilon\)-significant.
    We can then compute as follows:
    \begin{align*}
        \alpha(i) + \alpha(j) & = \sum_{m = 1}^n i_m + \sum_{m' = 1}^n j_{m'}                                                                                   \\
                              & > c_{k} + (4n-4)\epsilon + d_{k'} + (4n-4)\epsilon + \sum_{m \neq k} (c_m - 2\epsilon) + \sum_{m' \neq k'} (d_{m'} - 2\epsilon) \\
                              & = \alpha(c) + \alpha(d) + (4n-4)\epsilon \geq \alpha(c) + \alpha(d),
    \end{align*}
    which contradicts the fact that \(\Lsf_{i,j} \preceq \Lsf_{c,d}\).
\end{proof}

%
%
%
%
%
%
%
%
%

\subsection{On the relation between hooks and rectangles}
\label{remark:projective-stability-tight}

In this section, we let \(a \lessdot b \in (\Rscr \cup \{\infty\})^n\) denote the fact that \(a_k < b_k \in \Rscr \cup \{\infty\}\) for all \(1 \leq k \leq n\).
Note that a pair \((a,b)\) with \(a \lessdot b \in \Rscr^n\) determines both the right open rectangle module \(\Rsf^o_{a,b}\) and the hook module \(\Lsf_{a,b}\).
There thus seems to be a kind of duality between these two types of modules, which we now explore.

We say that a right open rectangle module has \define{finite diagonal} if it is isomorphic to a module of the form 
\( \Rsf_{a,b}^o \) with \(a \lessdot b \in \Rscr^n\).
Similarly, a hook module has \define{finite diagonal} if it is isomorphic to a module of the form \( \Lsf_{a,b} \) with \(a \lessdot b \in \Rscr^n\).
Let \(I\) be multiset of pairs \((a,b)\) with \(a \lessdot b \in \Rscr^n\), and let
\(M = \bigoplus_{(a,b)\in I} \Rsf_{a,b}^o\) and \(N = \bigoplus_{(a,b)\in I} \Lsf_{a,b}\).
We say that \(M\) is the rectangle-decomposable module \define{corresponding} to \(N\), and that \(N\) is the hook-decomposable module \define{corresponding} to \(M\).
In particular, any rectangle-decomposable module having indecomposable summands with finite diagonal has a corresponding hook-decomposable module, well defined up to isomorphism, and any hook-decomposable module having indecomposable summands with finite diagonal has a corresponding rectangle-decomposable module, well defined up to isomorphism.



The following result gives explicit formulas for the interleaving distance between a rectangle or hook module and the zero module, and for the interleaving distance between two rectangle modules or two hook modules.
The formulas for the interleaving distances to the zero module are a consequence of the following observation: for a rectangle to be close to empty it is sufficient for one of its sides to be small, whereas in the case of a hook all sides (i.e., differences in coordinates between the generator and the relation) must be small. 
The formula for the distance between two rectangle modules or two hook modules is a case analysis: two rectangle modules or two hook modules can be close either because their endpoints are close, or because both are close to the zero module.

\begin{lemma}
    \label{lemma:formulas-interleavings}
Using the conventions \(|\infty - \infty| = 0\) and \(|a - \infty| = |\infty - a| = \infty\) for all \(a \in \Rscr\), we have the following.
If \(a \lessdot b \in (\Rscr \cup \{\infty\})^n\) and \(c \lessdot d \in (\Rscr \cup \{\infty\})^n\),
then:
\begin{align*}
    d_I(\Rsf_{a,b}^o, 0) &= \min_{1 \leq k \leq n} |b_k - a_k|/2\\
    d_I(\Rsf_{a,b}^o, \Rsf_{c,d}^o) &= \min\left(\; \max(\|c-a\|_\infty, \|d-b\|_\infty)\;,\; \max(d_I(\Rsf_{a,b}^o, 0),d_I(\Rsf_{c,d}^o, 0))\;\right)
\end{align*}
If \(a < b \in \Rscr^n \cup \{\infty\}\) and \(c < d \in \Rscr^n \cup \{\infty\}\), then:
\begin{align*}
    d_I(\Lsf_{a,b}, 0) &= \max_{1 \leq k \leq n} |b_k - a_k|/2 = \|b-a\|_\infty/2\\
    d_I(\Lsf_{a,b}, \Lsf_{c,d}) &= \min\left(\; \max(\|c-a\|_\infty, \|d-b\|_\infty)\;,\; \max(d_I(\Lsf_{a,b}, 0),d_I(\Lsf_{c,d}, 0))\;\right).
\end{align*}
Moreover, the infimum that defines the interleaving distances in the above formulas is attained by an interleaving.
\end{lemma}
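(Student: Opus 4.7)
The plan is to treat the two pairs of formulas (distances to zero, and distances between two modules of the same type) in sequence, and to handle rectangles and hooks by a single underlying argument that isolates the naturality condition from shape-specific computations.

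First, for $d_I(\Rsf_{a,b}^o, 0)$ and $d_I(\Lsf_{a,b}, 0)$, I will use that $M$ is $\epsilon$-interleaved with the zero module if and only if the natural shift morphism $\iota^M_{2\epsilon} \colon M \to M[2\epsilon]$ is zero, since the only morphisms to or from the zero module are zero. For an interval module $\kbb_\Ical$ the morphism $\iota^M_{2\epsilon}$ vanishes precisely when there is no $r \in \Ical$ such that $r + (2\epsilon, \ldots, 2\epsilon)$ also lies in $\Ical$. For a right open rectangle with $\Ical = [a,b)$, checking $r = a$ is sufficient and gives $\min_k (b_k - a_k) \leq 2\epsilon$, since exiting the support in any one coordinate is enough. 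For a hook with support $\{r : r \geq a,\ r \ngeq b\}$, one must exit the support in every coordinate when starting from $r = a$, which gives $\max_k (b_k - a_k) \leq 2\epsilon$.

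For the distances between two modules of the same type, I will first establish a classification of morphisms: the Hom space between two rectangle (resp. hook) modules is at most one-dimensional, and any non-zero morphism $\kbb_\Ical \to \kbb_\Jcal$ must be a scalar multiple of the canonical identity-on-$(\Ical \cap \Jcal)$ morphism. Checking naturality at pairs $x \leq y$ yields two conditions on the intersection: it must be downward-closed in $\Ical$ (from pairs with both endpoints in $\Ical$ and $y \in \Jcal$), and it must be upward-closed in $\Jcal$ with respect to $\Ical$ (from pairs with $x \in \Ical \cap \Jcal$ and $y \in \Jcal \setminus \Ical$ reached via a non-zero structure map of $\kbb_\Jcal$). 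For rectangles $[a,b)$ and $[c,d)$ these conditions reduce to $a \geq c$ and $b \geq d$ coordinate-wise, and an analogous calculation handles hooks, using the convention $\infty \geq \infty$. Applying this to $f \colon M \to N[\epsilon]$ and $g \colon N \to M[\epsilon]$ and combining both directions gives $\|c-a\|_\infty \leq \epsilon$ and $\|d-b\|_\infty \leq \epsilon$ whenever both $f$ and $g$ are non-zero.

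Equipped with this classification, the upper bound is obtained by two constructions: canonical interleavings, valid when $\epsilon \geq \max(\|c-a\|_\infty, \|d-b\|_\infty)$ and the support intersections are non-empty; and the trivial interleavings by zero morphisms, valid whenever both modules are $\epsilon$-interleaved with the zero module. A short case analysis at the infimum, using the endpoint conditions above, ensures that at least one of the two constructions produces an actual interleaving, giving the attainment claim. For the lower bound, suppose $M$ and $N$ are $\epsilon$-interleaved via $(f,g)$. If either $f$ or $g$ is zero, then the interleaving equations force both $\iota^M_{2\epsilon}$ and $\iota^N_{2\epsilon}$ to vanish, so $\epsilon \geq \max(d_I(M,0), d_I(N,0))$; otherwise, the classification gives $\epsilon \geq \max(\|c-a\|_\infty, \|d-b\|_\infty)$. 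The main obstacle I foresee is the careful verification of the second naturality condition, which is easy to overlook: the canonical identity-on-intersection morphism can fail to be natural when the target's support extends beyond the source's along a direction reached by a structure map from inside the intersection, and this failure is precisely what produces the endpoint constraint on $b$ and $d$ in addition to the more obvious constraint on $a$ and $c$.
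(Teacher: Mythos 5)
Your proposal is correct, and it follows exactly the route the paper indicates: the paper states this lemma without a formal proof, offering only the remark that triviality of a rectangle needs one small side while a hook needs all sides small, and that two rectangles (or hooks) are close either because their endpoints are close or because both are near zero — which is precisely your case analysis, fleshed out via the Hom classification, the canonical identity-on-intersection interleavings, and the zero interleavings. I checked the delicate points (the second naturality constraint forcing $b \geq d$, the non-emptiness of the shifted support intersections when the endpoint term is strictly smaller than the triviality term, and the verification that the canonical composites equal the shift morphisms), and they all go through as you outline.
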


In the next two lemmas, we collect properties relating rectangle modules and hook modules.

\begin{lemma}
    \label{lemma:matching-hook-to-matching-rect}
    An \(\epsilon\)-matching between barcodes consisting of hook modules with finite diagonal induces an \(\epsilon\)-matching between the barcodes consisting of the corresponding right open rectangle modules.
    The converse statement is not true in general.
\end{lemma}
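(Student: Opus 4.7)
The plan is to reduce both claims to the explicit distance formulas of \cref{lemma:formulas-interleavings}. The key observation is that for any $a \lessdot b \in \Rscr^n$ one has $d_I(\Rsf^o_{a,b}, 0) \leq d_I(\Lsf_{a,b}, 0)$, simply because $\min_k |b_k - a_k| \leq \max_k |b_k - a_k|$. Feeding this inequality into the pairwise formula---where the first term $\max(\|c-a\|_\infty, \|d-b\|_\infty)$ inside the outer $\min$ is identical in the hook and rectangle cases, while the second term is pointwise smaller for rectangles---yields $d_I(\Rsf^o_{a,b}, \Rsf^o_{c,d}) \leq d_I(\Lsf_{a,b}, \Lsf_{c,d})$ for all finite-diagonal pairs.

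For the forward direction, I would take an $\epsilon$-matching $h$ between the hook barcodes and declare the same partial bijection on indices to be the matching $h'$ between the corresponding rectangle barcodes. The two inequalities above imply that matched rectangle pairs are $\epsilon$-interleaved and unmatched rectangles are $\epsilon$-interleaved with the zero module, so $h'$ is an $\epsilon$-matching.

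To show that the converse fails, I would exhibit a single long, thin module. Take $n = 2$, fix a large $R > 0$, and consider the hook barcode $\Ccal = \{[\Lsf_{(0,0),(1,R)}]\}$ versus $\Dcal = \varnothing$. The corresponding rectangle barcodes admit the empty matching as a $(1/2)$-matching since $d_I(\Rsf^o_{(0,0),(1,R)}, 0) = \min(1,R)/2 = 1/2$, whereas any matching between $\Ccal$ and $\Dcal$ has cost at least $d_I(\Lsf_{(0,0),(1,R)}, 0) = \max(1,R)/2 = R/2$; letting $R \to \infty$ makes the gap arbitrarily large.

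There is no real obstacle here: both claims follow essentially from reading off \cref{lemma:formulas-interleavings}, and the counterexample reflects the asymmetry that a rectangle can be close to zero because of a single short side, whereas a hook requires all coordinate differences to be small.
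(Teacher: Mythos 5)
Your proposal is correct and follows essentially the same route as the paper: both claims are read off from the distance formulas of \cref{lemma:formulas-interleavings} (using that the infimum there is attained), and the failure of the converse is exhibited by a single long, thin module whose rectangle is close to zero but whose hook is not --- your $\Lsf_{(0,0),(1,R)}$ is, up to relabeling coordinates, the paper's example with $x>y>0$. No gaps.
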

\begin{proof}
    The first claim follows from the facts that 
    \( d_I(\Rsf_{a,b}^o, 0) \leq d_I(\Lsf_{a,b}, 0) \) and \(d_I(\Rsf_{a,b}^o, \Rsf_{c,d}^o) \leq d_I(\Lsf_{a,b}, \Lsf_{c,d})\), for all \(a \lessdot b \in \Rscr^n\) and \(c \lessdot d \in \Rscr^n \), by \cref{lemma:formulas-interleavings}.
    For the second claim, note that for \(x >  y > 0 \in \Rscr\), we have \(d_I(\Rsf^o_{(0,0),(x,y)}, 0) = y/2\), while \(d_I(\Lsf_{(0,0),(x,y)}, 0) = x/2\).
    In particular, there exists a \(y/2\)-matching between the barcode \(\{\Rsf^o_{(0,0),(x,y)}\}\) and the empty barcode but not between the barcode \(\{\Lsf_{(0,0),(x,y)}\}\) and the empty barcode.
\end{proof}

\begin{lemma}
    \label{lemma:int-rect-to-int-hook}
    Let \(\epsilon > 0\) and suppose that \(M\) and \(N\) are right open rectangle-decomposable modules with the property that all of their indecomposable summands have finite diagonal and are not \(\epsilon\)-interleaved with the zero module.
    If \(M\) and \(N\) are \(\epsilon\)-interleaved, then so are their corresponding hook-decomposable modules.
\end{lemma}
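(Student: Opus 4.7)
The plan is to build an explicit $\epsilon$-interleaving between $M'$ and $N'$ out of a given $\epsilon$-interleaving $(f,g)$ of $M$ and $N$, by reusing the same scalar coefficients. Write $M = \bigoplus_{i \in I} \Rsf^o_{a_i, b_i}$ and $N = \bigoplus_{j \in J} \Rsf^o_{c_j, d_j}$, so that $M' = \bigoplus_i \Lsf_{a_i, b_i}$ and $N' = \bigoplus_j \Lsf_{c_j, d_j}$. Since the Hom-space between any two of the rectangle (or hook) summands has dimension at most one, I can represent $f$ and $g$ in matrix form and identify their non-zero entries with scalars $\lambda_{ij}, \mu_{ji} \in \kbb$, via a choice of canonical non-zero morphism between the corresponding indecomposables.

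The critical step, which uses the non-triviality hypothesis, is to show that whenever $\lambda_{ij}$ and $\mu_{ji}$ are both non-zero for the same pair $(i,j)$, one has $\|a_i - c_j\|_\infty \leq \epsilon$ and $\|b_i - d_j\|_\infty \leq \epsilon$. The first bound follows immediately from the inequalities that ensure the rectangle morphisms $R_i \to S_j[\epsilon]$ and $S_j \to R_i[\epsilon]$ are non-zero. For the second bound, I would study the composition $f_{ij}[\epsilon] \circ g_{ji} \colon S_j \to S_j[2\epsilon]$: since $S_j$ is not $\epsilon$-trivial, this composition must lie in the one-dimensional space $\Hom(S_j, S_j[2\epsilon]) = \kbb \cdot \iota^{S_j}_{2\epsilon}$. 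A pointwise computation shows that the composition equals $\mu_{ji}\lambda_{ij}$ times the identity on the intersection of the three supports $S_j \cap R_i[\epsilon] \cap S_j[2\epsilon]$ and vanishes elsewhere; naturality combined with the one-dimensionality of $\Hom$ forces the support of $\iota^{S_j}_{2\epsilon}$ to be contained in that of $R_i[\epsilon]$, which, via \cref{lemma:formulas-interleavings}, translates to $d_j \leq b_i + \epsilon$ on every coordinate. A symmetric argument using the composition $g_{ji}[\epsilon] \circ f_{ij}$ and the non-triviality of $R_i$ yields $b_i \leq d_j + \epsilon$ on every coordinate, giving $\|b_i - d_j\|_\infty \leq \epsilon$.

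With endpoint closeness in hand, for every pair $(i,j)$ with $\lambda_{ij}\mu_{ji} \neq 0$ both hook morphisms $\Lsf_{a_i, b_i} \to \Lsf_{c_j, d_j}[\epsilon]$ and $\Lsf_{c_j, d_j} \to \Lsf_{a_i, b_i}[\epsilon]$ are non-zero. I define $f' = (\lambda'_{ij})$ and $g' = (\mu'_{ji})$ by reusing the scalars $\lambda_{ij}$ and $\mu_{ji}$ through the canonical non-zero hook morphisms whenever such morphisms exist, and setting the corresponding entry to zero otherwise. To conclude, I would verify the interleaving identities $g'[\epsilon] \circ f' = \iota^{M'}_{2\epsilon}$ and $f'[\epsilon] \circ g' = \iota^{N'}_{2\epsilon}$. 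The endpoint closeness ensures that, on the diagonal ($i=i'$), each non-trivial hook composition equals $\mu_{ji}\lambda_{ij} \cdot \iota^{L_i}_{2\epsilon}$, mirroring the rectangle case, so the sum over $j$ reduces to the same scalar identity as for $(f,g)$ and yields $\iota^{L_i}_{2\epsilon}$; the off-diagonal hook entries either vanish automatically or cancel via the same scalar identities satisfied by $(f,g)$.

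The main obstacle will be the naturality argument in the second paragraph, showing that $\mu_{ji}\lambda_{ij} \neq 0$ forces the support inclusion of $\iota^{S_j}_{2\epsilon}$ into that of $R_i[\epsilon]$, together with the off-diagonal bookkeeping in the final step: one must check that hook compositions between summands with distinct indices indeed cancel in the same way as their rectangle counterparts, even though the canonical hook and rectangle morphisms can differ in their pointwise supports.
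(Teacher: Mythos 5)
Your construction is the same as the paper's: transport the matrix entries (scalars) of the given interleaving from the rectangle summands to the corresponding hook summands and argue that the resulting pair of morphisms is again an $\epsilon$-interleaving. However, two points need attention. First, your ``critical step'' is misdirected: the endpoint closeness $\|a_i-c_j\|_\infty\leq\epsilon$, $\|b_i-d_j\|_\infty\leq\epsilon$ follows immediately from the Hom-nonvanishing conditions for right open rectangles (a non-zero morphism $\Rsf^o_{a,b}\to\Rsf^o_{c,d}[\epsilon]$ forces $c-\epsilon\leq a$ and $d-\epsilon\leq b$, and the morphism in the other direction gives the reverse inequalities), so the naturality/one-dimensionality detour and the non-triviality hypothesis are not needed there. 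More importantly, endpoint closeness of matched pairs is not the ingredient that makes the construction work; what is needed is simply that each individual non-zero morphism between finite-diagonal rectangle modules induces a non-zero morphism, with the same scalar, between the corresponding hooks (the condition $a\lessdot d-\epsilon$ implies $a\not\geq d-\epsilon$), which holds entrywise and requires no pairing of $f$- and $g$-entries.

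Second, and this is the genuine gap, the final verification that $g'[\epsilon]\circ f'=\iota^{M'}_{2\epsilon}$ and $f'[\epsilon]\circ g'=\iota^{N'}_{2\epsilon}$ is exactly where the significance hypothesis must be used, and your proposal leaves it as an unresolved ``obstacle'' rather than proving it. The difficulty you flag is real: since rectangles and hooks are generated at the same point but have different supports, a composite through a middle summand is non-zero precisely when the generator $a_i$ lies in the support of the shifted target, which reads $a_i\lessdot b_{i'}-2\epsilon$ for rectangles but only $a_i\not\geq b_{i'}-2\epsilon$ for hooks. Hence an off-diagonal hook composite can be non-zero while the corresponding rectangle composite vanishes, in which case the rectangle identity $g[\epsilon]\circ f=\iota^{M}_{2\epsilon}$ imposes no constraint on the relevant scalar sums, and your claim that the off-diagonal entries ``cancel via the same scalar identities satisfied by $(f,g)$'' does not follow. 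Likewise, on the diagonal it is the hypothesis that the summands are not $\epsilon$-interleaved with zero (so $a_i\lessdot b_i-2\epsilon$), not endpoint closeness, that guarantees every composite with non-zero scalars equals the canonical morphism and hence that the scalar sums equal $1$. Until the off-diagonal vanishing is actually established using the $2\epsilon$-significance of all summands, the proof is incomplete.
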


It is not clear to us whether the analogous statement where the roles of rectangles and hooks are swapped holds true.

\begin{proof}
    Let \(M'\) and \(N'\) be the hook-decomposable modules corresponding to \(M\) and \(N\), respectively.
    Note that any non-zero morphism between rectangle modules with finite diagonal \(\Rsf_{a,b}^o \to \Rsf_{c,d}^o\) is given by multiplication by a non-zero scalar and induces a non-zero morphism between the corresponding hook modules \(\Lsf_{a,b} \to \Lsf_{c,d}\) given by multiplication by the same scalar.
    In particular, given an \(\epsilon\)-interleaving between \(M\) and \(N\), we get a pair of morphism \(M' \to N'[\epsilon]\) and \(N' \to M'[\epsilon]\).
    Since all of the summands of \(M\) and \(N\) are \(2\epsilon\)-significant, we see that this last pair of morphisms is in fact an \(\epsilon\)-interleaving between \(M'\) and \(N'\).
\end{proof}

Combining \cref{lemma:matching-hook-to-matching-rect,lemma:int-rect-to-int-hook} with \cite[Example~5.2]{bjerkevik} we see that, when \(n=2\), the constant \(2n-1 = 3\) of \cref{proposition-stability-hooks} is the smallest possible.

\section{Global dimension of the rank exact structure and size of the rank exact decomposition}
\label{section:bounds-section}

In \cref{section:global-dimension-rank-exact-section}, we give lower and upper bounds for the global dimension of the rank exact structure on the category of modules over a finite lattice, and use these to prove \cref{theorem:gldim-rank-Rn}, which gives the global dimension of the rank exact structure on the category of \fpp \(\Rscr^n\)-persistence modules.
In \cref{section:polynomial-bound}, we prove \cref{proposition:polynomial-bound}, which gives lower and upper bounds for the size of the rank exact decomposition of a \fpp module \(M\) in terms of the size of the (usual) Betti numbers of \(M\).

\subsection{Global dimension}
\label{section:global-dimension-rank-exact-section}
The aim of this subsection is to determine the global dimension of categories of persistence modules with respect to the rank exact structure (recall that global dimension is defined in \cref{section:exact-structures}).
More precisely, we show that the global dimension of \( n \)-parameter persistence modules is precisely \( 2n - 2 \) (\cref{proposition:global-dimension-rank-grid,theorem:gldim-rank-Rn}).
We also give more general bounds for the global dimension of the rank exact structure on any finite lattice \( \Sscr \) (Theorem~\ref{theorem:gldim-rank-poset}).

The way towards these results is a bit roundabout: First we introduce a new lattice \( \Pscr \), consisting, in spirit, of compatible pairs of elements of \( \Sscr \) (see Notation~\ref{notation:P_from_S} for the precise construction).
The key point is that (a certain quotient of) \( \Pscr \) is closely related to the rank projectives over \( \Sscr \) (Lemma~\ref{lemma.algebra_isomorphism}), giving us a way to translate rank projective dimensions to usual projective dimensions (Lemma~\ref{lemma.equivalence_X_and_L}).
This type of argument is familiar from the representation theory of finite dimensional algebras, and is sometimes referred to as ``projectivization'' \cite[Section~2,~Chapter~2]{auslander-reiten-smalo}.

As a first step, we look at (usual) projective resolutions over a lattice \( \Pscr \). (We will use this result for the specific \( \Pscr \) alluded to above eventually.)
%
%
%
We construct a Koszul-like resolution for interval modules whose support is an upset.
We use this resolution in \cref{corollary:projective-dimension-rectangle} to bound the projective dimension of rectangle modules, which in particular include the (usual) injective modules.

\begin{proposition}
    \label{proposition:Koszul-for-upper-set}
    Let \(\Pscr\) be a lattice. 
    Let \(m \in \Nbb\) and let \(f \colon [m] \to \Pscr\) be a function, not assumed to be monotonic.
    Let \(\Ical = \{i \in \Pscr \colon \exists k \in [m], f(k) \leq i\} \subseteq \Pscr\) be the upset generated by \(f([m])\).
    There exists a \(\kbb \Pscr\)-projective resolution of \(\kbb_\Ical\) of length \(m-1\), as follows:
    \[
        0 \to \Psf_{\vee f([m])} \to \bigoplus_{\substack{A \subseteq [m] \\ |A| = m-1}} \Psf_{\vee f(A)}
        \to \cdots
        \to \bigoplus_{\substack{A \subseteq [m] \\ |A| = 2}} \Psf_{\vee f(A)} \to \bigoplus_{k \in [m]} \Psf_{f(k)} \to \kbb_{\Ical}.
    \]
    If for every proper subset \(A \subsetneq [m]\) we have \(\vee f(A) < \vee f([m])\), then the above resolution is a shortest possible projective resolution.
\end{proposition}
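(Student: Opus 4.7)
The proposal is to exhibit the complex as a Koszul/Čech-type resolution and verify exactness pointwise. For each nonempty subset $A \subseteq [m]$, the summand $\Psf_{\vee f(A)}$ is the projective on the upset generated by $\{f(k) : k \in A\}$, so whenever $B \subsetneq A$ there is a canonical map $\Psf_{\vee f(A)} \to \Psf_{\vee f(B)}$ corresponding to $1 \in \kbb$. I would define the differential on the summand indexed by $A = \{a_0 < a_1 < \dots < a_{p-1}\}$ by $\sum_{i=0}^{p-1} (-1)^i \iota_i$, where $\iota_i \colon \Psf_{\vee f(A)} \to \Psf_{\vee f(A \setminus \{a_i\})}$ is the canonical inclusion. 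The standard simplicial sign cancellation shows $d \circ d = 0$. The augmentation $\bigoplus_k \Psf_{f(k)} \to \kbb_\Ical$ is the sum of the canonical surjections $\Psf_{f(k)} \to \kbb_{\{j : f(k) \leq j\}} \hookrightarrow \kbb_\Ical$.

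For exactness, I would evaluate the complex at a fixed $j \in \Pscr$. If $j \notin \Ical$, then $f(k) \not\leq j$ for all $k$, and a fortiori $\vee f(A) \not\leq j$ for every nonempty $A$, so every term in the complex vanishes at $j$, as does $\kbb_\Ical(j)$. If $j \in \Ical$, let $A_j = \{k \in [m] : f(k) \leq j\}$, which is nonempty. Since $\Pscr$ is a lattice, $\vee f(A) \leq j$ iff $A \subseteq A_j$, so $\Psf_{\vee f(A)}(j) = \kbb$ exactly for $A \subseteq A_j$ and is $0$ otherwise. Thus at $j$ the complex becomes the augmented simplicial chain complex (with $\kbb$-coefficients) of the full simplex on the vertex set $A_j$, which is contractible, giving exactness at $j$. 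Since exactness is checked pointwise in $\vect^\Pscr$, the complex is a projective resolution; its length is $m-1$ as claimed.

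For the minimality statement, assume $\vee f(A) < \vee f([m])$ for every proper $A \subsetneq [m]$. I would show that the resolution is in fact minimal, from which the lower bound $\pdim \kbb_\Ical \geq m-1$ (and hence the sharpness of the length) follows. The key observation is that a morphism $\Psf_i \to \Psf_j$ of indecomposable projectives is in the radical of $\Hom$ precisely when $i > j$ strictly (otherwise it is zero or an isomorphism). The last differential $\Psf_{\vee f([m])} \to \bigoplus_{|A|=m-1} \Psf_{\vee f(A)}$ has components that are the canonical inclusions with $\vee f([m]) > \vee f(A)$ strictly by hypothesis, and so each component lies in the radical; consequently the map cannot split off $\Psf_{\vee f([m])}$ as a summand of its target, meaning the resolution already has no cancellable direct summands at that step. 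Inductively the same observation handles the lower differentials (the targets of the canonical inclusions $\Psf_{\vee f(A)} \to \Psf_{\vee f(B)}$ for $B \subsetneq A$ satisfy $\vee f(B) \leq \vee f(A)$, and a splitting would force $\vee f(B) = \vee f(A)$, which propagates to $\vee f([m]) = \vee f(A)$ for some proper $A$, contradicting the hypothesis). Hence $\Psf_{\vee f([m])}$ is a summand of the $(m-1)$-th term of a minimal projective resolution, so the projective dimension of $\kbb_\Ical$ is at least $m-1$.

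The main subtlety I expect is in cleanly justifying the minimality in the last paragraph: one must exclude that the Koszul complex has superfluous summands in intermediate degrees. The cleanest route is probably to argue that all differentials are radical maps between the indecomposable projective summands, and therefore the resolution is minimal in the strong sense that $\beta_{p,\vee f(A)}(\kbb_\Ical)$ equals the multiplicity of $\Psf_{\vee f(A)}$ in the $p$-th term; the conclusion on the length then follows from $\beta_{m-1,\vee f([m])} \neq 0$.
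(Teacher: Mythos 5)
Your construction and exactness argument coincide with the paper's: the same signed Koszul/\v{C}ech complex, with exactness checked pointwise by identifying the complex at \(j \in \Pscr\) with the (augmented) simplicial chain complex of the full simplex on \(A_j = \{k \colon f(k) \leq j\}\), which is empty precisely when \(j \notin \Ical\). Where you diverge is in the sharpness claim: the paper only observes that, under the hypothesis, the leftmost map \(\Psf_{\vee f([m])} \to \bigoplus_{|A|=m-1}\Psf_{\vee f(A)}\) is not a split monomorphism (indeed \(\Hom(\Psf_{\vee f(A)},\Psf_{\vee f([m])})=0\) when \(\vee f(A) < \vee f([m])\)), which by the standard syzygy/Schanuel argument already forces \(\pdim \kbb_\Ical \geq m-1\); you instead prove the stronger statement that the whole resolution is minimal because every differential component is a radical map, and conclude via the nonvanishing of the top Betti number (equivalently \(\Ext^{m-1}(\kbb_\Ical,\Ssf_{\vee f([m])}) \neq 0\), since radical differentials die under \(\Hom(-,\Ssf_{\vee f([m])})\)). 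This is correct, and it buys you the full multigraded Betti numbers of \(\kbb_\Ical\) rather than just the length, but the one step you leave loose — that a degenerate component in an intermediate degree ``propagates'' to contradict the hypothesis — should be spelled out: if \(\vee f(A\setminus\{a\}) = \vee f(A)\) for some \(a \in A \subseteq [m]\), then \(f(a) \leq \vee f(A\setminus\{a\}) \leq \vee f([m]\setminus\{a\})\), whence \(\vee f([m]\setminus\{a\}) = \vee f([m])\), contradicting the assumption on the proper subset \([m]\setminus\{a\}\). With that one-line fix your route is complete; note also that invoking ``minimal resolutions compute projective dimension'' over a possibly infinite lattice is cleanest done exactly through the \(\Ext\)-computation you mention, which sidesteps any finiteness issues.
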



\begin{proof}
    We start by specifying the morphisms in the resolution. Note that if \( [m] \supseteq A \supseteq B \) then \( \vee f(A) \geq \vee f(B) \), and so we have a natural inclusion of \( \iota_{A, B} \colon \Psf_{\vee f(A)} \to \Psf_{\vee f(B)} \). The components of the morphisms in the resolution above are given by
    \[
        \Psf_{\vee f(A)} \to \Psf_{\vee f(B)} 
        \colon
        \begin{cases}
            (-1)^{| \{ b \in B : b \leq i \} |} \cdot \iota_{A, B} & \text{if } A = B \sqcup \{ i \}   \\ 0 & \text{else.}
        \end{cases}
    \]
    Now we look at these maps pointwise. So fix \( p \in \Pscr \). Since
    \[ \Psf_{\vee f(A)}(p) = \begin{cases} \kbb & \text{if $p \geq f(a)$ for all $a \in A$} \\ 0 & \text{else,} \end{cases} \]
    we observe that only subsets of \( [m]^{\leq p} := \{ i : f(i) \leq p \} \subseteq [m] \) contribute for this \( p \). Thus, when evaluating at $p \in \Pscr$, our sequence of maps degenerates to
    \[ 0 \to \kbb \to \bigoplus_{\substack{A \subseteq [m]^{\leq p}\\|A| = |[m]^{\leq p}|-1}} \kbb \to \cdots \to \bigoplus_{\substack{A \subseteq [m]^{\leq p}\\|A| = 2}} \kbb \to \bigoplus_{a \in [m]^{\leq p}} \kbb. \]
    With the signs explained above, this is exactly the complex calculating simplicial homology of a (filled) \( (|[m]^{\leq p}|-1) \)-simplex. If \( [m]^{\leq p} = \varnothing \), then the entire complex vanishes; otherwise this homology is concentrated in degree \( 0 \), where it is \( \kbb \).
    Thus, we do have a sequence which is exact everywhere except at its right end, and the cokernel is \( \kbb \) whenever there is at least one \( a \in [m] \) such that \( p \geq f(a) \), in other words if \( p \in \Ical \). This establishes the resolution of the proposition.

    Finally, if we additionally assume that \( \vee f(A) < \vee f([m]) \) for all proper subsets \( A \) of \( [m] \), then the first morphism (on the left) of the resolution is not a split monomorphism, so the resolution is a shortest possible projective resolution.
\end{proof}

\begin{notation}
Let \(\Pscr\) be a finite poset.
If \(i \in \Pscr\), let \(i^\wedge \subseteq \Pscr\) be the set of minimal elements of \(\{a \in \Pscr \colon a \nleq i\}\), and let \(i^+ = \{a \in \Pscr \colon i < a, \nexists i < j < a\} \subseteq \Pscr\) be the set of elements directly above \(i\) in the Hasse diagram of~\(\Pscr\).
\end{notation}

\begin{corollary}
    \label{corollary:projective-dimension-rectangle}
    Let \(\Pscr\) be a finite lattice, let \(0 \in \Pscr\) denote its minimum, and let \(i \in \Pscr\).
    Then \(\pdim_{\kbb \Pscr}(\Rsf_{0,i}) \leq | i^\wedge |\).
    If for any proper subset \( A \subsetneq i^\wedge \) we have \( \vee A < \vee i^\wedge \), then \( \pdim_{\kbb \Pscr}(\Rsf_{0,i}) = | i^\wedge | \).
\end{corollary}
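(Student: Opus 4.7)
The plan is to reduce to \cref{proposition:Koszul-for-upper-set} via the short exact sequence
\[
0 \to K \to \Psf_0 \to \Rsf_{0,i} \to 0,
\]
where the surjection $\Psf_0 \twoheadrightarrow \Rsf_{0,i}$ acts as the identity on the common support $\{k : k \leq i\}$. Since $0$ is the minimum of $\Pscr$, the projective $\Psf_0$ is the constant module $\kbb$ on all of $\Pscr$, so the kernel $K$ is the module that is $\kbb$ on $\{k \in \Pscr : k \nleq i\}$ and $0$ elsewhere, with identity structure maps wherever possible. The key observation is that this support is precisely the upset generated by $i^\wedge$.

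Applying \cref{proposition:Koszul-for-upper-set} to the inclusion $f \colon i^\wedge \hookrightarrow \Pscr$ (with $m = |i^\wedge|$) thus yields a projective resolution of $K$ of length $|i^\wedge| - 1$. Splicing this resolution with the short exact sequence above produces a projective resolution of $\Rsf_{0,i}$ of length $|i^\wedge|$, establishing $\pdim_{\kbb \Pscr}(\Rsf_{0,i}) \leq |i^\wedge|$.

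For the equality under the hypothesis that $\vee A < \vee i^\wedge$ for every proper $A \subsetneq i^\wedge$, the second assertion of \cref{proposition:Koszul-for-upper-set} guarantees that the resolution of $K$ is of minimal length, so $\pdim(K) = |i^\wedge| - 1$. The edge case $|i^\wedge| = 0$ forces $i$ to be the maximum of $\Pscr$, in which case $\Rsf_{0,i} = \Psf_0$ and the identity $\pdim(\Rsf_{0,i}) = 0 = |i^\wedge|$ is trivial. Otherwise, I would argue that $\Rsf_{0,i}$ is indecomposable and non-projective: by \cref{lemma:proj-and-inj-modules}, any indecomposable projective has the form $\Psf_j$ whose support is an upset, while the support $\{k : k \leq i\}$ of $\Rsf_{0,i}$ is not an upset when $i$ is not maximal. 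The long exact sequence for $\Ext^\bullet_{\kbb \Pscr}(-,-)$ applied to the short exact sequence above then gives $\pdim(\Rsf_{0,i}) = \pdim(K) + 1 = |i^\wedge|$. The only mildly delicate step is isolating the edge case and verifying that $\Rsf_{0,i}$ is not projective so that the dimension shift is genuine; all other steps are direct applications of \cref{proposition:Koszul-for-upper-set} and \cref{lemma:proj-and-inj-modules}.
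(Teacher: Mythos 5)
Your proposal is correct and follows essentially the same route as the paper: the short exact sequence \(0 \to \kbb_{\Ical} \to \Psf_0 \to \Rsf_{0,i} \to 0\) with \(\Ical\) the upset generated by \(i^\wedge\), combined with \cref{proposition:Koszul-for-upper-set}. You merely spell out the details the paper leaves implicit (the splicing, the dimension shift, and the non-projectivity of \(\Rsf_{0,i}\) when \(i\) is not maximal), and these are all handled correctly.
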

\begin{proof}
    This follows by combining \cref{proposition:Koszul-for-upper-set} with the short exact sequence \(0 \to \kbb_{\Ical} \to \Psf_0 \to \Rsf_{0,i} \to 0\), where \(\Ical\) is the upset generated by \(i^\wedge\).
\end{proof}




Our overall aim in this subsection is to relate the rank global dimension (and certain rank projective dimensions) of the exact structure on one given lattice \( \Sscr \) to the usual global dimension (and usual projective dimensions) on a different lattice \( \Pscr \).
However, we need to consider a quotient of the algebra \( \kbb \Pscr \). Our next aim is thus to introduce this type of quotient, and then work towards \cref{lemma:proj-dim-kP/Q,prop.gldim_for_kP/e} which, in essence, say that the quotient construction does not influence our homological dimensions too much.


\begin{definition}
    If \( \Qscr \subseteq \Pscr \) are finite posets, let \(e = \sum_{i \in \Qscr} [i,i] \in \kbb\Pscr\) be the idempotent of \(\kbb\Pscr\) corresponding to \(\Qscr\).
    We define the quotient algebra \(\kbb \Pscr/\Qscr \coloneqq \kbb \Pscr / (e)\), where \((e)\) is the two-sided ideal of \(\kbb \Pscr\) generated by \(e\).
\end{definition}

\begin{lemma}
    \label{lemma:kP/Q-description}
    Let \( \Qscr \subseteq \Pscr \) be finite posets.
    The \(\kbb\)-algebra \(\kbb \Pscr/\Qscr\) is isomorphic to the following \(\kbb\)-algebra \(\Abb\).
    As a vector space, \(\Abb\) is freely generated by pairs \([i,j]\) such that \(i \leq j \in \Pscr\) and such that there does not exist \(x \in \Qscr\) with \(i \leq x \leq j\).
    The multiplication of \(\Abb\) is given by linearly extending the rule
    \[
        [i,j] \cdot [k,l] =
        \begin{cases}
            [i,l], & \text{if \(j=k\), and \(\nexists x \in \Qscr\) such that \(i \leq x \leq l\).} \\
            0,     & \text{otherwise.}
        \end{cases}
    \]
\end{lemma}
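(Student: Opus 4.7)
The plan is to compute the two-sided ideal $(e) \subseteq \kbb\Pscr$ explicitly, and then observe that both the vector-space basis and the multiplication rule of the quotient match the description of $\Abb$.

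First, I will show that the ideal $(e)$ coincides with the $\kbb$-linear span of those basis elements $[i,j]$ (with $i \leq j \in \Pscr$) for which there exists some $x \in \Qscr$ with $i \leq x \leq j$. For the inclusion $\supseteq$, given such $[i,j]$ and a witness $x \in \Qscr$, the multiplication rule of $\kbb\Pscr$ gives
\[
    [i,x] \cdot e \cdot [x,j] \;=\; [i,x] \cdot [x,x] \cdot [x,j] \;=\; [i,j],
\]
so $[i,j] \in (e)$. For the inclusion $\subseteq$, by bilinearity it suffices to show that $[i,j] \cdot e \cdot [k,l]$ lies in the claimed span for every pair of basis elements $[i,j]$ and $[k,l]$. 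A direct computation using the multiplication rule shows that this product vanishes unless there exists $x \in \Qscr$ with $j = x = k$, in which case it equals $[i,l]$, and then $i \leq x \leq l$, as required.

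Second, I will use this to identify the underlying vector space of $\kbb\Pscr/(e)$. Since the basis of $\kbb\Pscr$ is partitioned into those $[i,j]$ that lie in $(e)$ and those that do not, the classes of the remaining basis elements, namely the $[i,j]$ with $i \leq j$ and with no $x \in \Qscr$ satisfying $i \leq x \leq j$, form a $\kbb$-basis of $\kbb\Pscr/(e)$. This matches the described basis of $\Abb$.

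Third, I will transport the multiplication. Given two basis elements $[i,j]$ and $[k,l]$ of $\Abb$ (so no element of $\Qscr$ lies between their respective endpoints), their product in $\kbb\Pscr$ is either $0$ (if $j \neq k$) or $[i,l]$ (if $j = k$); in the latter case the class of $[i,l]$ in the quotient is $[i,l]$ if no $x \in \Qscr$ satisfies $i \leq x \leq l$, and $0$ otherwise. This agrees exactly with the rule defining $\Abb$, so the linear map $\Abb \to \kbb\Pscr/(e)$ sending each basis element to its class is a well-defined algebra isomorphism.

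The only genuinely delicate step is the explicit identification of $(e)$ in the first paragraph; the rest is bookkeeping. I expect no obstacle beyond carefully handling the multiplication rule, in particular the cases in the product $[i,j]\cdot e \cdot [k,l]$ where no $x \in \Qscr$ equals both $j$ and $k$.
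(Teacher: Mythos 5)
Your proposal is correct and is essentially the paper's argument: the paper defines the evident $\kbb$-linear surjection $\kbb\Pscr \to \Abb$ (killing the $[i,j]$ with some $x \in \Qscr$ between $i$ and $j$) and identifies its kernel with $(e)$, which is exactly the computation you carry out when you show $(e)$ equals the span of those basis elements. The remaining bookkeeping (basis of the quotient, transport of the multiplication) matches the paper's implicit steps, so there is no substantive difference between the two proofs.
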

\begin{proof}
    Consider the \(\kbb\)-linear map \(\kbb \Pscr \to \Abb\) which maps \([i,j]\) to \([i,j]\) if there does not exist \(x \in \Qscr\) with \( i \leq x \leq j \), and to \(0\) otherwise.
    It is clear that this linear map is a \(\kbb\)-algebra morphism.
    Its kernel is generated, as a \(\kbb \Pscr\)-bimodule, by the elements \([x,x]\) with \(x \in \Qscr\).
    Thus, its kernel is the two-sided ideal \((e)\), where \(e \in \kbb \Pscr\) is the idempotent corresponding to \(\Qscr\).
\end{proof}

The first technical step towards relating the homological algebra over \( \kbb \Pscr/\Qscr \) back to \( \kbb \Pscr \) is the following \( \Tor \)-vanishing result.
Recall that $S_k$ denotes the simple corresponding to $k \in \Pscr$ (defined in \cref{section:interval-modules}).

\begin{lemma}
    \label{lemma.no_tor}
    Let \( \Qscr \subseteq \Pscr \) be finite posets such that any subset of \( \Qscr \) has a join in \( \Pscr \) which lies in \( \Qscr \).
    Then, for \( k \not\in \Qscr \) we have
    \(\Tor^{\kbb \Pscr}_d(\Ssf_k, \kbb \Pscr/\Qscr) = 0\), for all \(d > 0 \in \Nbb\).
\end{lemma}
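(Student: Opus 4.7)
The plan is to exploit the bimodule structure of $\kbb\Pscr/\Qscr$ by viewing it as a left $\kbb\Pscr$-module, decomposing it into indecomposable summands, and constructing for each summand a length-one projective resolution whose tensor product with $\Ssf_k$ is well behaved.

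First, I would decompose $\kbb\Pscr/\Qscr \cong \bigoplus_{j \in \Pscr} (\kbb\Pscr/\Qscr)\cdot e_j$ as a left $\kbb\Pscr$-module via the right action of the idempotents $e_j = [j,j]$. Since $e_j \in (e)$ for $j \in \Qscr$, only the summands $Y_j \coloneqq (\kbb\Pscr/\Qscr)\cdot e_j$ with $j \notin \Qscr$ are non-zero, so it suffices to show $\Tor_d(\Ssf_k, Y_j) = 0$ for all $d > 0$, all $k \notin \Qscr$, and all $j \notin \Qscr$.

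The main step---and the only point where the closure hypothesis on $\Qscr$ is used---is to show that each such $Y_j$ has projective dimension at most one. Writing $I_\ell \coloneqq \kbb\Pscr\cdot e_\ell$ for the indecomposable projective left $\kbb\Pscr$-module at $\ell$ and using \cref{lemma:kP/Q-description}, one sees that the submodule $(e)\cdot e_j \subseteq I_j$ has basis $\{[a,j] : \exists\, x \in \Qscr,\ a \leq x \leq j\}$. By the hypothesis, provided this index set is non-empty, $q_j \coloneqq \bigvee\{x \in \Qscr : x \leq j\}$ exists in $\Pscr$ and lies in $\Qscr$; and the condition ``$a \leq x \leq j$ for some $x \in \Qscr$'' then simplifies to the single condition $a \leq q_j$. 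This identifies $(e)\cdot e_j$ with $I_{q_j}$ via $[a,q_j] \mapsto [a,j]$, yielding a short exact sequence of left $\kbb\Pscr$-modules
\[
    0 \to I_{q_j} \to I_j \to Y_j \to 0,
\]
with the convention that $I_{q_j} = 0$ when no $x \in \Qscr$ lies below $j$ (in which case $Y_j \cong I_j$ is already projective).

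Finally, I would apply $\Ssf_k \otimes_{\kbb\Pscr}(-)$ and read off the long exact sequence of $\Tor$. Projectivity of $I_j$ and $I_{q_j}$ immediately gives $\Tor_d(\Ssf_k, Y_j) = 0$ for $d \geq 2$ and identifies $\Tor_1(\Ssf_k, Y_j)$ with the kernel of $\Ssf_k \otimes I_{q_j} \to \Ssf_k \otimes I_j$. Since $\Ssf_k \otimes_{\kbb\Pscr} \kbb\Pscr\cdot e_\ell \cong \Ssf_k \cdot e_\ell$ vanishes unless $\ell = k$, and since $q_j \in \Qscr$ while $k \notin \Qscr$ force $q_j \neq k$, the source of this map is already zero and injectivity holds automatically. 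The main obstacle I anticipate is not a technical one but a conceptual one: recognizing that the join-closure hypothesis is exactly what is needed to express $(e)\cdot e_j$ as a \emph{single} indecomposable projective $I_{q_j}$ rather than a more complicated module requiring a longer resolution. Once that identification is in place, the rest is a routine application of the long exact sequence of $\Tor$.
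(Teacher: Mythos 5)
Your proof is correct and follows essentially the same route as the paper's: both rest on the short exact sequence $0 \to (e) \to \kbb\Pscr \to \kbb\Pscr/\Qscr \to 0$ (you take it column-by-column as $0 \to (e)e_j \to \kbb\Pscr e_j \to (\kbb\Pscr/\Qscr)e_j \to 0$) and on the same key use of the join-closure hypothesis to identify each column of $(e)$ with the indecomposable projective at $q_j = \bigvee\{x \in \Qscr : x \leq j\}$. The only difference is organizational: you read off all the $\Tor$-vanishing, including $d=1$, from the resulting length-one resolutions and the fact that $q_j \in \Qscr$ while $k \notin \Qscr$, whereas the paper dimension-shifts and disposes of the $d=1$ case by a separate elementary-tensor computation showing $\Ssf_k \otimes_{\kbb\Pscr} (e) = 0$.
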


\begin{proof}
    Denote by \( e \in \kbb \Pscr \) the idempotent of \( \kbb \Pscr \) corresponding to \( \Qscr \). We use the short exact sequence
    \[ 0 \to (e) \to \kbb \Pscr \to \kbb \Pscr / \Qscr \to 0 \]
    to calculate the desired \( \Tor \)-groups. Since the middle term is projective, we have \(\Tor^{\kbb \Pscr}_d(\Ssf_k, \kbb \Pscr/\Qscr) \cong \Tor^{\kbb \Pscr}_{d-1}(\Ssf_k, (e))\) whenever \( d \geq 2 \), and
    \(\Tor^{\kbb \Pscr}_d(\Ssf_k, \kbb \Pscr/\Qscr) \) isomorphic to a subgroup of \(\Tor^{\kbb \Pscr}_{d-1}(\Ssf_k, (e))\) if \( d = 1 \). In particular, it suffices to show that \( \Tor^{\kbb \Pscr}_{d-1}(\Ssf_k, (e)) = 0 \) for all \( d \geq 1 \).

    To that end, note that \( (e) = \Span \{ [i, j] \colon \exists s \in \Qscr \text{ with } i \leq s \leq j \} \).
    Let us first consider the case \( d = 1 \), where we look at \( \Tor^{\kbb \Pscr}_{0}(\Ssf_k, (e)) = \Ssf_k \otimes_{\kbb \Pscr} (e) \). Clearly an elementary tensor \( x \otimes [i,j] = x [i,i] \otimes [i,j] \) vanishes if \( i \neq k \). If \( i = k \) then we can pick \( s \) as in the description of \( (e) \) above, and note that \( s \neq k \). Thus, also in this case the elementary tensor \( x \otimes [i,j] = x [i,s] \otimes [s,y] \) vanishes.

    Now we consider \( d > 1 \). Here it suffices to show that \( (e) \) is projective as a left \( \kbb \Pscr \)-module. To that end, note that \( (e) = \bigoplus_{j \in \Pscr} \Span \{ [i, j] \colon \exists s \in \Qscr \text{ with } i \leq s \leq j \} \) as left \( \kbb \Pscr \)-modules. Moreover, for a given \( j \) we have
    \[ \Span \{ [i, j] \colon \exists s \in \Qscr \text{ with } i \leq s \leq j \} = \Span \{ [i, j] \colon i \leq \vee \{ s \in S \colon s \leq j \} \} \cong \Span \{ [i, s] \colon i \leq \vee \{ s \in S \colon s \leq j \} \}, \]
    where the last isomorphism is given by mapping \( [i,j] \) to \( [i,s] \). Note that the final module is the projective left module associated to the vertex \( \vee \{ s \in S \colon s \leq j \} \), which is in particular it is a projective left module.
    This concludes the proof.
\end{proof}

\begin{lemma}
    \label{lemma:proj-dim-kP/Q}
    If \(\Qscr \subseteq \Pscr\) are finite posets such that any subset of \(\Qscr\) has a join in \(\Pscr\) which lies in \(\Qscr\), then \(\gldim(\kbb \Pscr/\Qscr) \leq \gldim(\kbb \Pscr)\).
\end{lemma}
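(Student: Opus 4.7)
The plan is to bound the global dimension of $\kbb\Pscr/\Qscr$ by reducing it to bounding the projective dimensions of the simple right $\kbb\Pscr/\Qscr$-modules, and then comparing projective resolutions over $\kbb\Pscr$ and over $\kbb\Pscr/\Qscr$ via the change-of-rings functor $- \otimes_{\kbb\Pscr} \kbb\Pscr/\Qscr$. The key technical input is the Tor-vanishing statement of \cref{lemma.no_tor}, which is exactly what makes a $\kbb\Pscr$-projective resolution descend to a $\kbb\Pscr/\Qscr$-projective resolution.

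First, since $\Pscr$ is finite, $\kbb\Pscr/\Qscr$ is a finite-dimensional $\kbb$-algebra, and so it suffices to show that every simple right $\kbb\Pscr/\Qscr$-module has projective dimension at most $\gldim(\kbb\Pscr)$. From the description of $\kbb\Pscr/\Qscr$ in \cref{lemma:kP/Q-description} one sees that its simple right modules are precisely the modules $\Ssf_k$ with $k \in \Pscr \setminus \Qscr$, viewed as $\kbb\Pscr/\Qscr$-modules via the quotient map $\kbb\Pscr \twoheadrightarrow \kbb\Pscr/\Qscr$.

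Next, fix $k \in \Pscr \setminus \Qscr$, set $d := \gldim(\kbb\Pscr)$, and choose a projective resolution $P_\bullet \to \Ssf_k$ over $\kbb\Pscr$ with $P_i = 0$ for $i > d$. Apply $- \otimes_{\kbb\Pscr} \kbb\Pscr/\Qscr$ to obtain a complex $P_\bullet \otimes_{\kbb\Pscr} \kbb\Pscr/\Qscr$ of right $\kbb\Pscr/\Qscr$-modules. Its homology computes $\Tor^{\kbb\Pscr}_\ast(\Ssf_k, \kbb\Pscr/\Qscr)$, which, by \cref{lemma.no_tor}, vanishes in positive degrees (this is precisely where $k \notin \Qscr$ is used, and where the hypothesis on joins of subsets of $\Qscr$ enters via that lemma), and in degree zero equals $\Ssf_k \otimes_{\kbb\Pscr} \kbb\Pscr/\Qscr \cong \Ssf_k$. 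Moreover, each $P_i \otimes_{\kbb\Pscr} \kbb\Pscr/\Qscr$ is projective over $\kbb\Pscr/\Qscr$, since tensoring preserves direct sums and direct summands and $\kbb\Pscr \otimes_{\kbb\Pscr} \kbb\Pscr/\Qscr \cong \kbb\Pscr/\Qscr$. Thus $P_\bullet \otimes_{\kbb\Pscr} \kbb\Pscr/\Qscr \to \Ssf_k$ is a projective resolution of $\Ssf_k$ over $\kbb\Pscr/\Qscr$ of length at most $d$, yielding $\pdim_{\kbb\Pscr/\Qscr}(\Ssf_k) \leq d$ and hence the claim.

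The only real obstacle in this plan is the Tor-vanishing step, which requires the rather specific hypothesis on $\Qscr$ being closed under joins (of subsets) taken in $\Pscr$; but this has already been handled in \cref{lemma.no_tor}. Everything else is a routine change-of-rings argument: passing from a resolution of $\Ssf_k$ over $\kbb\Pscr$ to one over $\kbb\Pscr/\Qscr$ via base change along the quotient map.
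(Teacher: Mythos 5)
Your proposal is correct and follows essentially the same route as the paper: reduce to the projective dimensions of the simple \(\kbb \Pscr/\Qscr\)-modules, identify these as the \(\Ssf_k\) with \(k \in \Pscr \setminus \Qscr\), and tensor a \(\kbb \Pscr\)-projective resolution of \(\Ssf_k\) with \(\kbb \Pscr/\Qscr\), using the Tor-vanishing of \cref{lemma.no_tor} to see that the result is a \(\kbb \Pscr/\Qscr\)-projective resolution of length at most \(\gldim(\kbb \Pscr)\). The only cosmetic difference is that you read off the simples from the algebra description in \cref{lemma:kP/Q-description}, whereas the paper restricts a simple \(\kbb \Pscr/\Qscr\)-module along the quotient map; both yield the same identification.
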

\begin{proof}
    It is a standard result (see, e.g., \cite[Corollary~11]{auslander}) that the global dimension of a finite dimensional algebra is the maximum of the projective dimensions of its simple modules (i.e., modules with no non-zero proper submodules). Thus, it is enough to show that, for every simple module \(M \in \modcat_{\kbb \Pscr/\Qscr}\), we have \(\pdim_{\kbb \Pscr/\Qscr}(M) \leq \gldim(\kbb \Pscr)\).
    We claim that any simple \(\kbb \Pscr/\Qscr\)-module \(M\) is isomorphic to \(\Ssf_i \otimes_{\kbb \Pscr} \kbb\Pscr/\Qscr\) for some \(i \in \Pscr \setminus \Qscr\).
    To see this, note that, as a \(\kbb \Pscr\)-module, the module \(M\) must also be simple.
    This implies that, as a \(\kbb \Pscr\)-module, \(M\) must be isomorphic to \(\Ssf_i\) for some \(i \in \Pscr\), so that, as a \(\kbb \Pscr/\Qscr\)-module, \(M\) must be isomorphic to \(\Ssf_i \otimes_{\kbb \Pscr} \kbb \Pscr/\Qscr\), with \(i \notin \Qscr\) since \(M\) is non-zero.

    We can now take a \(\kbb \Pscr\)-projective resolution of \(\Ssf_i\) of length at most \(\gldim(\kbb \Pscr)\) and tensor it on the right with \(\kbb \Pscr/\Qscr\), which, by \cref{lemma.no_tor}, gives us a \(\kbb \Pscr/\Qscr\)-projective resolution of \(\Ssf_i \otimes_{\kbb \Pscr} \kbb \Pscr/\Qscr \cong M\) of length at most \(\gldim(\kbb \Pscr)\), as required.
\end{proof}
%

\begin{lemma}
    \label{prop.gldim_for_kP/e}
    Let \(\Pscr\) be a finite lattice and let \(\Qscr \subseteq \Pscr\) be closed under joins and meets.
    For any \( i \not\in \Qscr \), we have that \( \pdim_{\kbb \Pscr / \Qscr}\left( \Ssf_i \otimes_{\kbb \Pscr} \kbb \Pscr / \Qscr \right) \leq | i^+ | \).
    If, moreover, \( \vee i^+ \not\in \Qscr \) and for any proper subset \( A \subsetneq i^+ \) we have \( \vee A < \vee i^+ \), then \( \pdim_{\kbb \Pscr / \Qscr}\left( \Ssf_i \otimes_{\kbb \Pscr} \kbb \Pscr / \Qscr\right) = | i^+ | \).
\end{lemma}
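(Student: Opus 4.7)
The overall plan is to construct a $\kbb\Pscr$-projective resolution of $\Ssf_i$ of length $|i^+|$, tensor it with $\kbb\Pscr/\Qscr$ using the Tor-vanishing in \cref{lemma.no_tor}, and then obtain the matching lower bound via an $\Ext$ computation into the simple at $\vee i^+$. To construct the resolution, let $\Ical \subseteq \Pscr$ be the upset generated by $i^+$. Then $0 \to \kbb_\Ical \to \Psf_i \to \Ssf_i \to 0$ is exact, and applying \cref{proposition:Koszul-for-upper-set} to the inclusion $f \colon i^+ \hookrightarrow \Pscr$ gives a $\kbb\Pscr$-projective resolution of $\kbb_\Ical$ of length $|i^+|-1$. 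Splicing these yields a $\kbb\Pscr$-projective resolution of $\Ssf_i$ of the form
\[
0 \to \Psf_{\vee i^+} \to \bigoplus_{\substack{A \subseteq i^+ \\ |A| = |i^+|-1}} \Psf_{\vee A} \to \cdots \to \bigoplus_{k \in i^+} \Psf_k \to \Psf_i \to \Ssf_i \to 0
\]
of total length $|i^+|$.

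Next, I would tensor this resolution on the right with the $\kbb\Pscr$-bimodule $\kbb\Pscr/\Qscr$. Since $\Qscr$ is closed under joins, the hypothesis of \cref{lemma.no_tor} is met, and since $i \notin \Qscr$ that lemma yields $\Tor_d^{\kbb\Pscr}(\Ssf_i, \kbb\Pscr/\Qscr) = 0$ for every $d > 0$; thus the tensored complex remains exact. Each term $\Psf_j \otimes_{\kbb\Pscr} \kbb\Pscr/\Qscr$ is a projective right $\kbb\Pscr/\Qscr$-module (it is a direct summand of $\kbb\Pscr/\Qscr$), equal to the indecomposable projective at vertex $j$ when $j \notin \Qscr$ and zero otherwise. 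This produces a $\kbb\Pscr/\Qscr$-projective resolution of $\Ssf_i \otimes_{\kbb\Pscr} \kbb\Pscr/\Qscr$ of length at most $|i^+|$, giving the upper bound $\pdim_{\kbb\Pscr/\Qscr}(\Ssf_i \otimes \kbb\Pscr/\Qscr) \leq |i^+|$.

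For the equality statement, the hypothesis $\vee i^+ \notin \Qscr$ ensures that the simple $\Ssf_{\vee i^+}$ descends to a simple $\kbb\Pscr/\Qscr$-module, and I would compute $\Ext^{|i^+|}_{\kbb\Pscr/\Qscr}(\Ssf_i \otimes \kbb\Pscr/\Qscr, \Ssf_{\vee i^+})$ by applying $\Hom_{\kbb\Pscr/\Qscr}(-, \Ssf_{\vee i^+})$ to the resolution above. By the standard extension-of-scalars adjunction along $\kbb\Pscr \twoheadrightarrow \kbb\Pscr/\Qscr$,
\[
\Hom_{\kbb\Pscr/\Qscr}\!\left(\Psf_{\vee A} \otimes \kbb\Pscr/\Qscr,\, \Ssf_{\vee i^+}\right) \cong \Hom_{\kbb\Pscr}(\Psf_{\vee A}, \Ssf_{\vee i^+}) \cong \Ssf_{\vee i^+}(\vee A),
\]
which is $\kbb$ when $\vee A = \vee i^+$ and zero otherwise. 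The strict inequality $\vee A < \vee i^+$ for every proper $A \subsetneq i^+$ kills this $\Hom$-group on every summand contributing to cohomological degree $|i^+|-1$, so the differentials adjacent to the top $\Hom$-term both vanish, yielding $\Ext^{|i^+|}_{\kbb\Pscr/\Qscr}(\Ssf_i \otimes \kbb\Pscr/\Qscr, \Ssf_{\vee i^+}) \cong \kbb \neq 0$, and hence $\pdim_{\kbb\Pscr/\Qscr}(\Ssf_i \otimes \kbb\Pscr/\Qscr) \geq |i^+|$. The main delicate point is invoking \cref{lemma.no_tor} to preserve exactness when tensoring, which hinges on $i \notin \Qscr$ and the closure of $\Qscr$ under joins; once this is in place, both the upper bound and the $\Ext$ computation for the lower bound are essentially formal.
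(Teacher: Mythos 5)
Your proposal is correct and takes essentially the same route as the paper: splice \(0 \to \kbb_\Ical \to \Psf_i \to \Ssf_i \to 0\) with the Koszul-type resolution of \cref{proposition:Koszul-for-upper-set} to get a \(\kbb\Pscr\)-projective resolution of \(\Ssf_i\) of length \(|i^+|\), then tensor with \(\kbb\Pscr/\Qscr\) and use the Tor-vanishing of \cref{lemma.no_tor} to obtain a \(\kbb\Pscr/\Qscr\)-projective resolution of \(\Ssf_i \otimes_{\kbb\Pscr} \kbb\Pscr/\Qscr\) of the same length. The only (inessential) difference is how the lower bound is certified: the paper notes that, under the extra hypotheses, the leftmost map of the tensored resolution is not a split monomorphism, whereas you show \(\Ext^{|i^+|}_{\kbb\Pscr/\Qscr}\bigl(\Ssf_i \otimes_{\kbb\Pscr} \kbb\Pscr/\Qscr, \Ssf_{\vee i^+}\bigr) \neq 0\) via the extension-of-scalars adjunction—both are standard and yield \(\pdim \geq |i^+|\).
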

\begin{proof}
    Let \(\Ical\) be the upset of elements that are greater than or equal to an element of \(i^+\).
    By splicing the short exact sequence \(0 \to \kbb_\Ical \to \Psf_i \to \Ssf_{i} \to 0\) with the resolution of \(\kbb_\Ical\) in \cref{proposition:Koszul-for-upper-set},
    we get a projective resolution of \(\Ssf_i\) of length \(|i^+|\).
    Tensor this resolution with \( \kbb \Pscr / \Qscr \).
    By \cref{lemma.no_tor} we obtain a projective resolution of \( \Ssf_i \otimes_{\kbb \Pscr} \kbb \Pscr/\Qscr \) over \( \kbb \Pscr / \Qscr \), which proves the first claim.
    To prove the second claim, note that, under the additional assumptions of the claim, the first map (on the left) in the resolution we constructed is not a split monomorphism.
\end{proof}
%

Now we turn back to our original goal of studying the rank global dimension over a lattice \( \Sscr \). We do so by relating it to the usual global dimension of an algebra \( \kbb \Pscr / \Qscr \) of the form we have just studied. First we fix the precise setup. 

\begin{notation}
\label{notation:P_from_S}
For the rest of this section, we let \( \Sscr \) be a finite lattice, we let \( \Sscr_{\infty} \coloneqq \Sscr \cup \{ \infty \} \), and we let \( \Pscr \coloneqq \{ (a, b) \in \Sscr_\infty^2 \colon a \leq b \} \), which is a lattice with the pointwise partial order.
Note that \( \Qscr = \{ (a, a) \colon a \in \Sscr_\infty \} \subseteq \Pscr \) is a sublattice of \(\Pscr\).
We write \( \Lbb = \bigoplus_{a < b \in \Sscr_\infty} \Lsf_{a, b} \in \modcat_{\kbb \Sscr}\).
We denote the projective dimension relative to the rank exact structure of a module \(M \in \mod_{\kbb \Sscr}\) by \(\pdimrank_{\kbb \Sscr}(M)\).
\end{notation}

\begin{lemma} \label{lemma.algebra_isomorphism}
    We have an isomorphism of \(\kbb\)-algebras \(\End_{\kbb \Sscr}( \Lbb ) \cong \kbb\Pscr / \Qscr\).
\end{lemma}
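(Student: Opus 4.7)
The plan is to construct an explicit isomorphism of $\kbb$-algebras $\Phi \colon \kbb \Pscr / \Qscr \to \End_{\kbb \Sscr}(\Lbb)$ by matching their natural bases. The first step is to analyze when there is a non-zero morphism between two hook summands of $\Lbb$. Since $\Lsf_{c,d}$ and $\Lsf_{a,b}$ have one-dimensional stalks on their supports and vanish elsewhere, a direct diagram-chase on the naturality squares shows that $\Hom_{\kbb \Sscr}(\Lsf_{c,d}, \Lsf_{a,b})$ is at most one-dimensional, and is non-zero precisely when $a \leq c$, $b \leq d$, and $b \nleq c$. In that case, the canonical (up to scalar) morphism $g_{(a,b),(c,d)} \colon \Lsf_{c,d} \to \Lsf_{a,b}$ acts as the identity on the intersection of the supports and is zero elsewhere. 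Collecting these morphisms yields a $\kbb$-basis of $\End_{\kbb \Sscr}(\Lbb)$, viewed as the direct sum of the Hom-spaces between the hook summands, indexed by quadruples $(a,b,c,d)$ with $a < b$, $c < d$, $a \leq c$, $b \leq d$, and $b \nleq c$.

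Next, I would compare with the basis of $\kbb \Pscr / \Qscr$ from \cref{lemma:kP/Q-description}. A basis element $[(a,b),(c,d)]$ of the quotient requires $(a,b) \leq (c,d)$ in $\Pscr$ together with the non-existence of an element of $\Qscr$ between them; since $a \leq b \leq d$, this amounts to saying that no $x \in \Sscr_\infty$ satisfies $b \leq x \leq c$ (equivalently, $b \nleq c$), while the exclusion of diagonal endpoints forces $a < b$ and $c < d$. This matches the indexing found for $\End_{\kbb \Sscr}(\Lbb)$, so I set $\Phi([(a,b),(c,d)]) \coloneqq g_{(a,b),(c,d)}$ on basis vectors and extend $\kbb$-linearly to an isomorphism of $\kbb$-vector spaces.

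The remaining step is to verify that $\Phi$ preserves multiplication. In $\kbb \Pscr / \Qscr$, the product $[(a,b),(c,d)] \cdot [(c',d'),(e,f)]$ equals $[(a,b),(e,f)]$ when $(c,d) = (c',d')$ and the result lies in the basis (i.e.\ $b \nleq e$), and is zero otherwise. On the $\End$-side, the composition $g_{(a,b),(c,d)} \circ g_{(c,d),(e,f)}$ vanishes whenever the middle indices do not match. When they do, the geometric core of the argument is the inclusion
\[
    \text{supp}(\Lsf_{e,f}) \cap \text{supp}(\Lsf_{a,b}) \subseteq \text{supp}(\Lsf_{c,d}),
\]
valid under $a \leq c \leq e$ and $b \leq d \leq f$: if $k$ lies in the left-hand side, then $c \leq e \leq k$ gives $c \leq k$, while $d \leq k$ would force $b \leq d \leq k$, contradicting $b \nleq k$. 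Thus the composition restricts to the identity on the intersection of the supports of $\Lsf_{a,b}$ and $\Lsf_{e,f}$, agreeing with $g_{(a,b),(e,f)}$, and vanishing jointly with the algebra product when $b \leq e$. This concludes the proof.

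The hardest part is handling the direction-reversal between basis vectors of the path-like algebra $\kbb \Pscr$ and morphisms between hook summands, so that algebra multiplication matches composition rather than its opposite, and pinning down the support-inclusion above that makes compositions of canonical morphisms remain canonical.
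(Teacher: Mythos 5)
Your proposal is correct and follows essentially the same route as the paper: compute the one-dimensional Hom-spaces between hook summands, match the resulting basis of \(\End_{\kbb\Sscr}(\Lbb)\) with the basis of \(\kbb\Pscr/\Qscr\) given in \cref{lemma:kP/Q-description}, and check that composition agrees with the algebra multiplication. The only difference is that you spell out the multiplicative check (via the support inclusion \(\mathrm{supp}(\Lsf_{e,f})\cap\mathrm{supp}(\Lsf_{a,b})\subseteq\mathrm{supp}(\Lsf_{c,d})\)) that the paper dismisses as straightforward, and this verification is accurate.
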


\begin{proof}
    We start by noting that
    \[  \Hom_{\kbb \Sscr}( \Lsf_{a, b}, \Lsf_{c,d}) = \begin{cases} \kbb & \text{if } a \geq c, b \geq d, a \not\geq d \\ 0 & \text{otherwise.} \end{cases} \]
    Thus, as a vector space, \(\End_{\kbb \Sscr}(\Lbb)\) is generated by the morphisms \(f_{(a,b),(c,d)} \colon \Lsf_{c,d} \to \Lsf_{a,b}\), only defined for pairs \( (a, b) \leq (c, d) \) such that there does not exist \(x \in \Sscr\) with \( (a, b) \leq (x,x) \leq (c, d) \), which map \(1 \in \kbb = \Lsf_{a,b}(a)\) to \(1 \in \kbb = \Lsf_{c,d}(a)\).
    Thus, \(\End_{\kbb \Sscr}(\Lbb)\) has a basis in bijection with the basis of the algebra \(\Abb\) of \cref{lemma:kP/Q-description}.
    It is straightforward to see that the algebra multiplication coincides with that of \(\Abb\), so the result follows from \cref{lemma:kP/Q-description}.
\end{proof}

Since \(\Lbb\) is an \((\End(\Lbb),\kbb \Sscr)\)-bimodule by \cref{lemma.algebra_isomorphism}, we have a functor
\[ 
    \Hom_{\kbb \Sscr}(\Lbb, -) \colon \mod_{\kbb \Sscr} \to \mod_{\kbb \Pscr/\Qscr}.
\]
Let \(\add \Lbb\) denote the full subcategory of \(\modcat_{\kbb \Sscr}\) spanned by modules that decompose as direct sums of modules of the form \(\Lsf_{a,b}\) with \(a<b \in \Sscr_{\infty}\), and let \(\proj \kbb \Pscr/\Qscr\) denote the full subcategory of \(\modcat_{\kbb \Pscr/\Qscr}\) spanned by the projective modules.

We now establish a fundamental property of the functor $\Hom_{\kbb \Sscr}(\Lbb, -) \colon \mod_{\kbb \Sscr} \to \mod_{\kbb \Pscr/\Qscr}$, which in particular lets us compute the projective dimension of a module $M$ relative to the rank exact structure by computing the (usual) projective dimension of $\Hom_{\kbb \Sscr}(\Lbb, M)$.

\begin{lemma} \label{lemma.equivalence_X_and_L}
    The functor \( \Hom_{\kbb \Sscr}(\Lbb, -) \colon \mod_{\kbb \Sscr} \to \mod_{\kbb \Pscr /\Qscr} \) restricts to an equivalence of categories \( \add \Lbb \simeq \proj \kbb \Pscr/\Qscr \).
    In particular, we have \( \pdimrank_{\kbb \Sscr}(M) = \pdim_{\kbb \Pscr/\Qscr}(\Hom(\Lbb, M)) \), for every \( M \in \mod_{\kbb \Sscr} \).
\end{lemma}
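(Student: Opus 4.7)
The plan is to use a standard projectivization / Morita-style argument: by \cref{theorem:rank-exact-structure-facts} the rank projectives in $\modcat_{\kbb\Sscr}$ are exactly the hook-decomposable modules, so $\Lbb$ is an additive generator of $\add\Lbb$, and \cref{lemma.algebra_isomorphism} identifies $\End_{\kbb\Sscr}(\Lbb)$ with $\kbb\Pscr/\Qscr$. Accordingly, $\Hom_{\kbb\Sscr}(\Lbb,-)$ sends $\Lbb$ to the rank-one free right $\kbb\Pscr/\Qscr$-module. The equivalence $\add\Lbb \simeq \proj\kbb\Pscr/\Qscr$ is then essentially formal: essential surjectivity follows since every finitely generated projective $\kbb\Pscr/\Qscr$-module is a summand of some $(\kbb\Pscr/\Qscr)^n = \Hom(\Lbb,\Lbb^n)$, and full faithfulness on $\add\Lbb$ reduces, by additivity, to the tautology $\Hom_{\kbb\Sscr}(\Lbb,\Lbb) = \kbb\Pscr/\Qscr = \End_{\kbb\Pscr/\Qscr}(\kbb\Pscr/\Qscr)$.

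For the projective dimension equality, I would prove both inequalities. The forward one, $\pdim_{\kbb\Pscr/\Qscr}(\Hom(\Lbb,M)) \leq \pdimrank_{\kbb\Sscr}(M)$, is immediate: take a finite minimal rank projective resolution $P_\bullet \to M$ (existence by \cref{theorem:rank-exact-structure-facts}) and apply $\Hom(\Lbb,-)$; since $\Lbb$ is rank projective, the functor takes rank exact sequences to exact sequences, yielding a projective resolution of $\Hom(\Lbb,M)$ in $\proj\kbb\Pscr/\Qscr$ of the same length. For the reverse inequality, I would induct on $d = \pdim_{\kbb\Pscr/\Qscr}(\Hom(\Lbb,M))$: pick a rank projective cover $P_0 \to M$ with kernel $K$; rank exactness of $0 \to K \to P_0 \to M \to 0$ yields a short exact sequence of $\kbb\Pscr/\Qscr$-modules in which $\Hom(\Lbb,P_0)$ is projective, so $\pdim \Hom(\Lbb,K) \leq d-1$, and induction gives $\pdimrank_{\kbb\Sscr}(K) \leq d-1$, hence $\pdimrank_{\kbb\Sscr}(M) \leq d$. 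The base case $d = 0$ asks that $\Hom(\Lbb,M) \in \proj\kbb\Pscr/\Qscr$ force $M \in \add\Lbb$.

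I expect the main technical step to be extending full faithfulness of $\Hom(\Lbb,-)$ from $\add\Lbb$ to all of $\modcat_{\kbb\Sscr}$, which is what underwrites the base case. For arbitrary $M,N \in \modcat_{\kbb\Sscr}$, fix a rank projective presentation $P_1 \to P_0 \to M \to 0$ from \cref{theorem:rank-exact-structure-facts}; applying $\Hom_{\kbb\Sscr}(-,N)$ on one side and $\Hom(\Lbb,-)$ followed by $\Hom_{\kbb\Pscr/\Qscr}(-,\Hom(\Lbb,N))$ on the other produces two left exact sequences (the second because $\Hom(\Lbb,-)$ is right exact on the rank exact presentation). Their middle and right terms match via the equivalence on $\add\Lbb$, so the kernels coincide, giving $\Hom_{\kbb\Sscr}(M,N) \cong \Hom_{\kbb\Pscr/\Qscr}(\Hom(\Lbb,M),\Hom(\Lbb,N))$. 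With this in hand, any isomorphism $\Hom(\Lbb,M) \cong \Hom(\Lbb,P)$ with $P \in \add\Lbb$ (provided by the equivalence) lifts uniquely to an isomorphism $M \cong P$, closing the base case and completing the argument.
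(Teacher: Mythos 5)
Your proof of the first claim (the equivalence $\add \Lbb \simeq \proj \kbb\Pscr/\Qscr$) is the same projectivization argument as the paper's, via \cref{lemma.algebra_isomorphism} and additivity. For the ``in particular'' statement your route is genuinely different in organization: the paper deduces the equality in one stroke by observing that $\Hom_{\kbb\Sscr}(\Lbb,-)$ sends a \emph{minimal} rank projective resolution of $M$ to a \emph{minimal} $\kbb\Pscr/\Qscr$-projective resolution of $\Hom(\Lbb,M)$, and then uses that the length of a minimal resolution computes the projective dimension on both sides; you instead prove the two inequalities separately, the easy direction by applying $\Hom(\Lbb,-)$ to a rank projective resolution, and the reverse direction by dimension shifting (induction on $d=\pdim_{\kbb\Pscr/\Qscr}\Hom(\Lbb,M)$, using that the syzygy sequence $0\to K\to P_0\to M\to 0$ is rank exact), with the base case resting on the reflection property ``$\Hom(\Lbb,M)$ projective $\Rightarrow M\in\add\Lbb$,'' which you obtain by upgrading full faithfulness of $\Hom(\Lbb,-)$ to all of $\modcat_{\kbb\Sscr}$. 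That upgrade is correct (and is essentially the standard fact that $\Hom(G,-)$ is fully faithful for a generator $G$; here $\Lbb$ contains all $\Psf_a=\Lsf_{a,\infty}$, so $\kbb\Sscr\in\add\Lbb$), and your argument via a rank projective presentation is valid since such presentations are $\Hom(\Lbb,-)$-exact. Your route is longer but more self-contained: it makes explicit the faithfulness/reflection input that the paper's one-line appeal to ``minimality is preserved by the first claim'' uses only implicitly.

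One small imprecision: when you say the middle and right terms of the two left-exact sequences ``match via the equivalence on $\add\Lbb$,'' what you actually need is the natural isomorphism $\Hom_{\kbb\Sscr}(P,N)\cong\Hom_{\kbb\Pscr/\Qscr}(\Hom(\Lbb,P),\Hom(\Lbb,N))$ for $P\in\add\Lbb$ and \emph{arbitrary} $N$, which is not literally the restricted equivalence; it does, however, follow by the same additivity reduction to $P=\Lbb$ together with $\Hom_{\kbb\Pscr/\Qscr}(\kbb\Pscr/\Qscr,Y)\cong Y$, and naturality in both variables is what makes the two sequences isomorphic as complexes. With that spelled out, your argument is complete.
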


\begin{proof}
    We start by proving the first claim.
    By definition, \(\Hom_{\kbb \Sscr}(\Lbb, -) \) maps \(\Lbb\) to \(\End_{\kbb \Sscr}(\Lbb)\), which is isomorphic to \(\kbb \Pscr/\Qscr\), by \cref{lemma.algebra_isomorphism}.
    Moreover, we have \(\End_{\kbb \Sscr}(\Lbb) \cong \kbb \Pscr/\Qscr \cong \End_{\kbb \Pscr/\Qscr}(\kbb \Pscr/\Qscr)\), also induced by \(\Hom_{\kbb \Sscr}(\Lbb, -) \).
    This implies that \(\Hom_{\kbb \Sscr}(\Lbb, -) \) restricts to an equivalence of categories between the subcategory of \(\modcat_{\kbb \Sscr}\) spanned by the object \(\Lbb\) and the subcategory of \(\modcat_{\kbb \Pscr/\Qscr}\) spanned by the object \(\kbb \Pscr/\Qscr\).
    Since the functor \(\Hom_{\kbb \Sscr}(\Lbb, -) \) is additive, it also restricts to an equivalence \( \add \Lbb \simeq \proj \kbb \Pscr/\Qscr \) between the additive closures of the modules \(\Lbb\) and \(\kbb \Pscr/\Qscr\).

    To prove the equality between the projective dimensions, note that \( \Hom_{\kbb \Sscr}(\Lbb, -) \) maps minimal relative projective resolutions to minimal \( \kbb \Pscr/\Qscr \)-projective resolutions, where minimality is preserved by the first claim.
\end{proof}

In order to bound the global dimension of the rank exact structure, we need an explicit bound for the projective dimension relative to the rank exact structure of (usual) injective modules.
We compute this by constructing an explicit rank projective resolution for them.
For this, we need the following.

\begin{lemma}
    \label{sufficient-conditions-rank-exact}
    Let \(\Ebb \colon 0 \to A \to B \to C \to 0\) be a short exact sequence in \(\modcat_{\kbb \Sscr}\).
    Assume that \(A\) has property \((\ast)\): for all \(i \in \Sscr\), \(A(0) \to A(i)\) is surjective.
    Then \(\Ebb\) is rank exact.
    Similarly, a long exact sequence of modules with property \((\ast)\) is rank exact.
\end{lemma}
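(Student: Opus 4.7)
The plan is to reduce everything to a pointwise snake-lemma argument. The first step is to observe that property $(\ast)$ is in fact equivalent to saying that every structure morphism $\phi^A_{a,b} \colon A(a) \to A(b)$ of $A$ is surjective: since the surjection $A(0) \twoheadrightarrow A(b)$ factors through $A(a)$, the map $\phi^A_{a,b}$ must be surjective too.

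For the short exact case, I would fix $a \leq b \in \Sscr$ and apply the snake lemma to the evaluation of $\Ebb$ at $a$ and $b$, with the two rows connected by the structure morphisms $\phi^A_{a,b}$, $\phi^B_{a,b}$, $\phi^C_{a,b}$. Since $\coker \phi^A_{a,b} = 0$ by the observation above, the six-term exact sequence produced by the snake lemma collapses to a short exact sequence $0 \to \ker \phi^A_{a,b} \to \ker \phi^B_{a,b} \to \ker \phi^C_{a,b} \to 0$. Combining the resulting additivity of kernel dimensions with $\dim B(a) = \dim A(a) + \dim C(a)$ and the rank--nullity theorem immediately yields the desired equality $\rk \phi^B_{a,b} = \rk \phi^A_{a,b} + \rk \phi^C_{a,b}$.

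For the long exact case, the definition of rank exactness reduces the problem to showing that each short exact sequence $0 \to \ker(f_k) \to A_k \to \coker(f_{k+1}) \to 0$ is rank exact, so by the short-exact case it suffices to verify that $\ker(f_k)$ has property $(\ast)$. Using that $\ker(f_k) = \im(f_{k+1})$ by exactness, I would instead prove the more general claim that images of morphisms between modules with property $(\ast)$ inherit property $(\ast)$, via a one-line lifting argument: any $x = f_{k+1}(y) \in \im(f_{k+1})(i)$ is covered by $f_{k+1}(z) \in \im(f_{k+1})(0)$, where $z$ is a lift of $y$ provided by property $(\ast)$ of $A_{k+1}$. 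The only step requiring any thought is this final inheritance; once it is in hand, everything else is a formal consequence of the snake lemma and the short-exact case.
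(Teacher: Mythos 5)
Your proof is correct and follows essentially the same route as the paper's: a pointwise snake-lemma argument, with \(\coker \phi^A_{a,b} = 0\) forced by property \((\ast)\), for the short exact case, and the observation that the syzygies \(\ker(f_k) = \im(f_{k+1})\) inherit property \((\ast)\) for the long exact case. The only cosmetic differences are that the paper phrases the kernel terms as \(\Hom(\Lsf_{i,j},-)\) (leaving the rank--nullity bookkeeping implicit) and passes through cokernels rather than images when verifying the inheritance of \((\ast)\); the substance is identical.
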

\begin{proof}
    We start with the first claim.
    Let \(i < j \in \Sscr\).
    Since \(A(i) \to A(j)\) is surjective, we have that \(0 \to \Hom(\Lsf_{i,j},A) \to A(i) \to A(j) \to 0\) is exact.
    We can then consider the following diagram:
    \[
        \begin{tikzpicture}
            \matrix (m) [matrix of math nodes,row sep=1em,column sep=1em,minimum width=2em,nodes={text height=1.75ex,text depth=0.25ex}]
            {      & 0                  & 0                  & 0                  &                          \\
                 0 & \Hom(\Lsf_{i,j},A) & \Hom(\Lsf_{i,j},B) & \Hom(\Lsf_{i,j},C) & 0                        \\
                 0 & A(i)               & B(i)               & C(i)               & 0                             \\
                 0 & A(j)               & B(j)               & C(j)               & 0                             \\
                   & 0                  &                    &                    & \\};
            \path[line width=0.75pt, -{>[width=8pt]}]
            (m-2-1) edge (m-2-2)
            (m-3-1) edge (m-3-2)
            (m-4-1) edge (m-4-2)
            (m-2-4) edge (m-2-5)
            (m-3-4) edge (m-3-5)
            (m-4-4) edge (m-4-5)
            (m-4-4) edge (m-4-5)
            (m-1-2) edge (m-2-2)
            (m-1-3) edge (m-2-3)
            (m-1-4) edge (m-2-4)
            (m-4-2) edge (m-5-2)
            (m-2-2) edge (m-2-3) (m-2-3) edge (m-2-4)
            (m-3-2) edge (m-3-3) (m-3-3) edge (m-3-4)
            (m-4-2) edge (m-4-3) (m-4-3) edge (m-4-4)
            (m-2-2) edge (m-3-2)
            (m-2-3) edge (m-3-3)
            (m-2-4) edge (m-3-4)
            (m-3-2) edge (m-4-2)
            (m-3-3) edge (m-4-3)
            (m-3-4) edge (m-4-4)
            ;
        \end{tikzpicture}
    \]
    The middle and bottom rows are exact by assumption and the first column is exact by the previous observation.
    By the snake lemma, the top row is exact, as required.

    For the second claim, let \(X_\bullet\) be a long exact sequence.
    We prove that in \( 0 \to \ker d_k \to X_k \to \coker d_{k+1} \to 0\) the module \(\ker d_k\) satisfies property \((\ast)\), and, since \(\ker d_k \cong \coker d_{k+2}\) by exactness, it is enough to check this for \(\coker d_k\) for all $k$.
    Let \(i \in \Sscr\).
    Since the composite \(X(0) \to X(i) \to (\coker d_k)(i)\) is surjective and factors through \((\coker d_k)(0) \to (\coker d_k)(i)\), we have that this last map is surjective, as needed.
\end{proof}

The proof of the following result is straightforward.

\begin{lemma}
    \label{lemma:functoriality-Koszul}
    The resolution of \cref{proposition:Koszul-for-upper-set} is functorial in \(f\), in the following sense.
    Let \(f, f' \colon [m] \to \Pscr\) be two not necessarily monotonic functions such that \(f'(\ell) \leq f(\ell)\) for all \(\ell \in [m]\).
    Let \(\Ical^f\) (resp.\ \(\Ical^{f'}\)) denote the upset generated by the image of \(f\) (resp.\ \(f'\)), and let \(R_f\) (resp.\ \(R_{f'}\)) denote the projective resolution of \(\kbb_{\Ical^f}\) (resp.\ \(\kbb_{\Ical^{f'}}\)) of \cref{proposition:Koszul-for-upper-set}.
    Then there is a monomorphism of resolutions \(R_f \to R_{f'}\) extending the natural monomorphism \(\kbb_{\Ical^f} \to \kbb_{\Ical^{f'}}\).
    In degree \(k\), the morphism is given by the natural monomorphism 
    \[
        \bigoplus_{\substack{A \subseteq [m] \\ |A| = k}} \Psf_{\vee f(A)} \to \bigoplus_{\substack{A \subseteq [m] \\ |A| = k}} \Psf_{\vee f'(A)}
    \]
    induced by the natural monomorphism in each summand.
    \qed
\end{lemma}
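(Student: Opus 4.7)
The proof will be a routine verification that the resolution of \cref{proposition:Koszul-for-upper-set} is strictly functorial in $f$ with respect to the reverse pointwise order. My plan has three steps.

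First, I would observe that $f'(l) \leq f(l)$ for all $l \in [m]$ implies $\vee f'(A) \leq \vee f(A)$ for every $A \subseteq [m]$, so for each such $A$ there is a natural inclusion $\iota_A \colon \Psf_{\vee f(A)} \to \Psf_{\vee f'(A)}$; pointwise, $\iota_A$ is either the identity on $\kbb$ or the zero map $0 \to \kbb$, so it is a monomorphism. Taking direct sums over all $A$ with $|A| = k$ defines the prescribed degree-$k$ component of the chain map, which is again a monomorphism. Note also that $\Ical^f \subseteq \Ical^{f'}$, giving the natural monomorphism $\kbb_{\Ical^f} \to \kbb_{\Ical^{f'}}$ to be extended.

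Second, I would verify that these degree-wise components commute with the differentials. Recall from the proof of \cref{proposition:Koszul-for-upper-set} that the component of the differential from $\Psf_{\vee f(A)}$ to $\Psf_{\vee f(B)}$ is $(-1)^\ell$ times the natural inclusion whenever $A = B \cup \{i\}$, with $\ell$ depending only on $B$ and $i$ and not on $f$, and is zero otherwise. The same formula with the same signs holds for $R_{f'}$. Hence it suffices to check that, for each such pair $(A,B)$, the square with vertices $\Psf_{\vee f(A)}$, $\Psf_{\vee f(B)}$, $\Psf_{\vee f'(A)}$, $\Psf_{\vee f'(B)}$, whose four maps are the natural inclusions, commutes; both composites equal the natural inclusion $\Psf_{\vee f(A)} \to \Psf_{\vee f'(B)}$, so this is immediate.

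Third, commutativity of the augmentation square in degree zero follows from exactly the same observation: the composite $\bigoplus_l \Psf_{f(l)} \to \bigoplus_l \Psf_{f'(l)} \to \kbb_{\Ical^{f'}}$ agrees with the composite $\bigoplus_l \Psf_{f(l)} \to \kbb_{\Ical^f} \to \kbb_{\Ical^{f'}}$ because both are determined pointwise by the identity on $\kbb$ where relevant. No serious obstacle is expected: every morphism in either resolution is a signed sum of natural inclusions between projectives of the form $\Psf_i$, and since such inclusions compose strictly, everything is automatically natural in $f$.
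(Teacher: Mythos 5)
Your proof is correct and is exactly the routine verification the paper has in mind: the paper omits the argument entirely (declaring it straightforward), and your three steps—pointwise inclusions $\Psf_{\vee f(A)} \to \Psf_{\vee f'(A)}$ from $\vee f'(A) \leq \vee f(A)$, commutation with the differentials since the signs depend only on the combinatorics of $A$, $B$, and not on $f$, and the augmentation square—are precisely what needs to be checked.
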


We can now bound the projective dimension relative to the rank exact structure of (usual) injective modules.

\begin{proposition}
    \label{proposition:injective-relative-pdim}
    Let \(M \in \modcat_{\kbb \Sscr}\) be \(\kbb \Sscr\)-injective.
    Then \(\pdimrank_{\kbb\Sscr}(M) \leq \sup_{i \in \Sscr} |i^\wedge| - 1\).
\end{proposition}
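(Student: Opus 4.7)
The overall plan is to reduce to bounding $\pdimrank$ for single rectangle modules $\Rsf_{0,i}$ and then to exhibit, for each $i$, an explicit rank projective resolution of $\Rsf_{0,i}$ of length $|i^\wedge|-1$ built entirely out of hook modules of the form $\Lsf_{0,c}$. The reduction step follows from \cref{lemma:proj-and-inj-modules}, which identifies the $\kbb\Sscr$-injectives with finite direct sums of modules $\Rsf_{0,i}$ with $i\in \Sscr$, combined with additivity of $\pdimrank$ over finite direct sums. When $i^\wedge = \varnothing$ the module $\Rsf_{0,i}$ is just $\Psf_0 = \Lsf_{0,\infty}$, which is already hook-decomposable, so the content is in the case $m \coloneqq |i^\wedge| \geq 1$.

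For $m \geq 1$, my candidate resolution is the Koszul-type complex
\[
    0 \to \Lsf_{0,\,\vee i^\wedge} \to \bigoplus_{\substack{A \subseteq i^\wedge \\ |A| = m-1}} \Lsf_{0,\,\vee A} \to \cdots \to \bigoplus_{b \in i^\wedge} \Lsf_{0,\,b} \to \Rsf_{0,i} \to 0,
\]
with differentials assembled, with the usual Koszul signs, from the canonical surjections $\Lsf_{0,\vee A}\twoheadrightarrow \Lsf_{0,\vee B}$ for $B\subseteq A \subseteq i^\wedge$ (which exist because $\vee B\leq \vee A$ forces $\operatorname{supp}(\Lsf_{0,\vee B})\subseteq \operatorname{supp}(\Lsf_{0,\vee A})$), and with augmentation given by the canonical surjections $\Lsf_{0,b}\twoheadrightarrow \Rsf_{0,i}$ for $b\in i^\wedge$ (which exist because $b\in i^\wedge$ forces $\operatorname{supp}(\Rsf_{0,i})\subseteq \operatorname{supp}(\Lsf_{0,b})$). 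This is the hook analogue of the Koszul complex of \cref{proposition:Koszul-for-upper-set} with $\Psf_{\vee A}$ replaced by $\Lsf_{0,\vee A}$; the point of this replacement is that modules of the form $\Lsf_{0,c}$, together with $\Rsf_{0,i}$, all satisfy property $(\ast)$ of \cref{sufficient-conditions-rank-exact}, whereas the $\Psf_{\vee A}$ do not.

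To verify exactness of the complex, for each $k\in \Sscr$ I would set $A_k\coloneqq \{b\in i^\wedge: b\leq k\}$ and observe that $\Lsf_{0,\vee A}(k)\cong \kbb$ exactly when $A \nsubseteq A_k$, so evaluation at $k$ recovers (up to sign conventions) the augmented relative simplicial chain complex of the pair consisting of the full $(m-1)$-simplex on $i^\wedge$ and its subsimplex on $A_k$. This is acyclic in each of the three regimes: $A_k=\varnothing$ (then $k\leq i$, $\Rsf_{0,i}(k)=\kbb$, and one sees the full augmented chain complex of a simplex), $A_k=i^\wedge$ (then $k\not\leq i$, $\Rsf_{0,i}(k)=0$, and every term vanishes), and $\varnothing\subsetneq A_k\subsetneq i^\wedge$ (then $\Rsf_{0,i}(k)=0$ and the relative homology of two contractible augmented simplicial chain complexes vanishes). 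Rank exactness of the resulting long exact sequence then follows from \cref{sufficient-conditions-rank-exact}, since every term has identity structure maps from grade $0$ into its entire support and so enjoys property $(\ast)$. Because each term is hook-decomposable, \cref{theorem:rank-exact-structure-facts} identifies the complex as a rank projective resolution, and its length is $m-1$, giving the claimed bound. I expect the main technical hurdle to be the pointwise homology check together with a careful treatment of the Koszul signs, but both amount to standard facts about simplicial chain complexes of simplices and their subsimplices.
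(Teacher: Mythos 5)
Your proposal is correct and follows essentially the same route as the paper: the complex you write down is precisely the cokernel of the monomorphism of Koszul resolutions \(R_f \to R_{f'}\) that the paper constructs via \cref{proposition:Koszul-for-upper-set} and \cref{lemma:functoriality-Koszul}, and both arguments reduce to \(\Rsf_{0,i}\) using \cref{lemma:proj-and-inj-modules} and obtain rank exactness from \cref{sufficient-conditions-rank-exact} via property \((\ast)\). The only difference is that you verify exactness by a direct pointwise relative-simplicial-homology computation, whereas the paper deduces it from the short exact sequence of complexes given by the functoriality lemma; both checks are sound.
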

\begin{proof}
    Let \(0 \in \Sscr\) be the minimum element of \(\Sscr\).
    By \cref{lemma:proj-and-inj-modules}, the module \(M\) is a direct sum of modules of the form \(\Rsf_{0,i}\), with \(i \in \Sscr\), so it is enough to prove the result for one these modules.
    We construct a relative projective resolution of \(\Rsf_{0,i}\) by taking a quotient of a resolution of \(\Psf_0\).
    Let \(k = |i^\wedge|\) and let \(f \colon [k] \to i^\wedge \subseteq \Sscr\) be a bijection.
    Let \(f' \colon [k] \to \Sscr\) be constantly \(0\).
    Let \(\Ical' = \Sscr\), and let \(\Ical\) be the upset generated by \(i^\wedge\).
    Applying \cref{proposition:Koszul-for-upper-set} and \cref{lemma:functoriality-Koszul} to \(f\) and \(f'\), we get a monomorphism of resolutions \(R_f \to R_{f'}\) of length \(k-1\) extending the natural inclusion \(\kbb_\Ical \to \kbb_{\Ical'}\).
    By \cref{lemma:functoriality-Koszul}, the cokernel of \(R_f \to R_{f'}\) is a resolution of \(\Rsf_{0,i}\) by hook modules, which has length \(k-1 = |i^\wedge| - 1\).
    In order to see that this resolution is rank exact, we apply \cref{sufficient-conditions-rank-exact}, since all the hook modules appearing in the cokernel of \( \Rsf_f \to \Rsf_{f'} \) are generated at \(0\).
\end{proof}

Before bounding the global dimension of the rank exact structure, we state and prove an easy consequence the existence of the Koszul-like resolution of \cref{proposition:Koszul-for-upper-set} for the usual global dimension.
Note that similar results are already known (see, e.g., \cite[Theorem~2.6]{iyama-marczinzik}).

\begin{proposition}
    \label{proposition:gl-dim-lattice}
    If \(\Sscr\) is a finite lattice, then \(\gldim(\kbb\Sscr) \leq \sup_{i \in \Sscr} |i^\wedge|\).
\end{proposition}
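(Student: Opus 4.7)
The plan is to bound the global dimension of $\kbb \Sscr$ by the projective dimensions of simple modules, and then apply the Koszul-like resolution from \cref{proposition:Koszul-for-upper-set} to each simple. Since $\kbb \Sscr$ is a finite-dimensional algebra, we have $\gldim(\kbb \Sscr) = \max_{i \in \Sscr} \pdim_{\kbb \Sscr}(\Ssf_i)$ (the standard fact already invoked in the proof of \cref{lemma:proj-dim-kP/Q}). For each $i \in \Sscr$, the short exact sequence
\[
0 \to \kbb_\Ical \to \Psf_i \to \Ssf_i \to 0,
\]
where $\Ical = \{j \in \Sscr : j > i\}$ is the upset generated by $i^+$, reduces matters to bounding $\pdim_{\kbb \Sscr}(\kbb_\Ical)$. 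Feeding a bijection $f \colon [|i^+|] \to i^+$ into \cref{proposition:Koszul-for-upper-set} produces a projective resolution of $\kbb_\Ical$ of length $|i^+| - 1$, so that $\pdim_{\kbb \Sscr}(\Ssf_i) \leq |i^+|$; the edge cases $|i^+| \in \{0,1\}$ are immediate, since in those cases $\Ssf_i$ or $\kbb_\Ical$ is already projective.

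The remaining step, which I expect to be the main obstacle, is the combinatorial inequality $|i^+| \leq |i^\wedge|$ in a finite lattice (note that this can fail for general posets, so the lattice structure must genuinely enter here). I would establish it by exhibiting an injection $\phi \colon i^+ \hookrightarrow i^\wedge$ that sends $a \in i^+$ to some minimal element $b$ of $\{x \in \Sscr : x \leq a,\; x \nleq i\}$; any such $b$ lies in $i^\wedge$, because an element of $\{y : y \nleq i\}$ strictly below $b$ would still lie in the restricted set and contradict minimality. The crux is injectivity: suppose $\phi(a) = \phi(a') = b$ for distinct $a, a' \in i^+$. Distinct elements of $i^+$ are incomparable (otherwise the covering property of $i$ would fail), so $a \wedge a' < a$; since $a$ covers $i$, this forces $a \wedge a' \leq i$, and combined with $a \wedge a' \geq i$ we obtain $a \wedge a' = i$. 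But then $b \leq a \wedge a' = i$, contradicting $b \in i^\wedge$. Chaining the bounds yields $\gldim(\kbb \Sscr) \leq \max_i |i^+| \leq \sup_i |i^\wedge|$, as desired.
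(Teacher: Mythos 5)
Your proof is correct, but it follows a genuinely different route from the paper's. You reduce to simple modules via the standard fact that $\gldim(\kbb\Sscr)$ is the maximum of $\pdim_{\kbb\Sscr}(\Ssf_i)$, bound $\pdim_{\kbb\Sscr}(\Ssf_i)\leq |i^+|$ by splicing the sequence $0\to\kbb_\Ical\to\Psf_i\to\Ssf_i\to 0$ with the Koszul-type resolution of \cref{proposition:Koszul-for-upper-set} applied to $i^+$ (this step is essentially the first claim of \cref{prop.gldim_for_kP/e} with trivial $\Qscr$), and then supply a new combinatorial lemma $|i^+|\leq|i^\wedge|$ valid in any finite lattice. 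Your injection argument for that lemma is sound: for distinct covers $a,a'$ of $i$ one has $a\wedge a'=i$, so no element $b\nleq i$ can lie below two distinct covers, whence the sets $\{b\in i^\wedge : b\leq a\}$, $a\in i^+$, are non-empty and pairwise disjoint; this is exactly where the lattice (meet) structure enters, as you note. The paper instead reduces to injective modules: every module has a finite injective resolution (\cref{lemma:finite-poset-finite-resolution}), the syzygy-type inequality $\pdim(L)\leq\max\{\pdim(M),\pdim(N)\}$ reduces the bound to the indecomposable injectives $\Rsf_{0,i}$ (\cref{lemma:proj-and-inj-modules}), and \cref{corollary:projective-dimension-rectangle} gives $\pdim_{\kbb\Sscr}(\Rsf_{0,i})\leq|i^\wedge|$ directly by applying the Koszul resolution to the upset generated by $i^\wedge$. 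Your route buys a formally sharper intermediate statement, $\gldim(\kbb\Sscr)\leq\sup_{i}|i^+|$, and avoids injectives entirely, at the cost of the extra combinatorial lemma; the paper's route keeps the bound phrased in terms of $i^\wedge$ from the start and produces, along the way, control on projective dimensions of injectives and rectangles, which is the ingredient reused later in the relative setting (e.g., \cref{proposition:injective-relative-pdim} and the proof of \cref{theorem:gldim-rank-poset}).
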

\begin{proof}
    Recall that any \(M \in \modcat_{\kbb \Sscr}\) admits a finite injective resolution (\cref{lemma:finite-poset-finite-resolution}).
    Using induction and the fact that, for every short exact sequence of \(\kbb \Sscr\)-modules \(0 \to L \to M \to N \to 0\), we have \(\pdim(L) \leq \max\{\pdim(M),\pdim(N)\}\), we reduce the problem to proving that the projective dimension of all indecomposable \(\kbb\Sscr\)-injective modules is at most \(\sup_{i \in \Sscr} |i^\wedge|\).
    By \cref{lemma:proj-and-inj-modules}, the indecomposable injectives are isomorphic to modules of the form \(\Rsf_{0,i}\), where \(0 \in \Sscr\) is the minimum of \(\Sscr\), so the result follows from \cref{corollary:projective-dimension-rectangle}.
\end{proof}


We now give a lower and upper bound for the global dimension of the rank exact structure on an arbitrary finite lattice.

\begin{theorem}
    \label{theorem:gldim-rank-poset}
    For any finite lattice $\Sscr$, we have \(\gldim(\kbb \Pscr/\Qscr) - 2  \leq \gldimrank(\Sscr) \leq 2 \cdot \sup_{i \in \Sscr} |i^\wedge| - 2\).
\end{theorem}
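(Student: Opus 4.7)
The plan leverages the equivalence of \cref{lemma.equivalence_X_and_L}, which gives $\pdimrank_{\kbb\Sscr}(M) = \pdim_{\kbb\Pscr/\Qscr}(\Hom(\Lbb, M))$ for every $M \in \modcat_{\kbb\Sscr}$. For the lower bound, let $N \in \modcat_{\kbb\Pscr/\Qscr}$ have projective dimension $d = \gldim(\kbb\Pscr/\Qscr)$, and take any projective presentation $P_1 \xrightarrow{\phi} P_0 \to N \to 0$. Via \cref{lemma.equivalence_X_and_L}, I lift $\phi$ to a morphism $\psi \colon L_1 \to L_0$ with $L_0, L_1 \in \add\Lbb$, and set $M = \ker(\psi)$. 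Left exactness of $\Hom(\Lbb, -)$ applied to $0 \to M \to L_1 \xrightarrow{\psi} L_0$ then yields $\Hom(\Lbb, M) \cong \ker(\phi) = \Omega^2 N$, the second syzygy of $N$, which has projective dimension $d - 2$ by standard dimension shifting; hence $\pdimrank(M) = d - 2$, giving $\gldimrank(\Sscr) \geq \gldim(\kbb\Pscr/\Qscr) - 2$.

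For the upper bound, I first establish the auxiliary bound $\gldim(\kbb\Pscr/\Qscr) \leq 2 \sup_i |i^\wedge|$: by \cref{lemma:proj-dim-kP/Q} and \cref{proposition:gl-dim-lattice} applied to $\Pscr$, it suffices to check $|(a, b)^\wedge_\Pscr| \leq 2 \sup |i^\wedge|$ for all $(a, b) \in \Pscr$. A case analysis of the minimal elements of $\{(c, d) \in \Pscr \colon (c, d) \not\leq (a, b)\}$ shows that they are either of the form $(c, c)$ with $c \in a^\wedge_{\Sscr_\infty}$ and $c \leq b$, or $(0, d)$ with $d \in b^\wedge_{\Sscr_\infty}$, yielding $|(a, b)^\wedge_\Pscr| \leq |a^\wedge_{\Sscr_\infty}| + |b^\wedge_{\Sscr_\infty}| \leq 2\sup|i^\wedge|$.

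To improve this from $2\sup|i^\wedge|$ to $2\sup|i^\wedge| - 2$, I argue by induction on $s = \idim_{\kbb\Sscr}(M) \leq \sup|i^\wedge|$ (using \cref{proposition:gl-dim-lattice} again). The key observation is that $\pdim_{\kbb\Sscr}(\Lbb) \leq 1$, since each hook $\Lsf_{a,b}$ fits in the short exact sequence $0 \to \Psf_b \to \Psf_a \to \Lsf_{a,b} \to 0$; consequently $\Ext^k_{\kbb\Sscr}(\Lbb, -) = 0$ for $k \geq 2$. For the base case ($M$ injective), \cref{proposition:injective-relative-pdim} combined with \cref{lemma.equivalence_X_and_L} gives $\pdim(\Hom(\Lbb, M)) \leq \sup|i^\wedge| - 1$. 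For the inductive step, applying $\Hom(\Lbb, -)$ to $0 \to M \to E(M) \to M^1 \to 0$, where $E(M)$ is the injective envelope, and using the injectivity of $E(M)$ together with the vanishing of $\Ext^{\geq 2}(\Lbb, -)$, produces a 4-term exact sequence
\[
    0 \to \Hom(\Lbb, M) \to \Hom(\Lbb, E(M)) \to \Hom(\Lbb, M^1) \to \Ext^1_{\kbb\Sscr}(\Lbb, M) \to 0.
\]
Splitting this into two short exact sequences and twice applying $\pdim(A) \leq \max(\pdim(B), \pdim(C) - 1)$ yields
\[
    \pdim(\Hom(\Lbb, M)) \leq \max\bigl\{\sup|i^\wedge| - 1,\; \pdim(\Hom(\Lbb, M^1)) - 1,\; \pdim(\Ext^1(\Lbb, M)) - 2\bigr\},
\]
which is bounded by $2\sup|i^\wedge| - 2$ thanks to the induction hypothesis applied to $M^1$ (of injective dimension $s - 1$) and the auxiliary bound on $\gldim(\kbb\Pscr/\Qscr)$. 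The main obstacle is setting up and executing this induction, in particular verifying the case analysis yielding the Koszul-type bound on $|(a, b)^\wedge_\Pscr|$ and correctly tracking the dimension inequalities through the 4-term sequence to recover the specific constant $2\sup|i^\wedge| - 2$.
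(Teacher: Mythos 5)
Your proposal is correct and follows essentially the same route as the paper: both bounds hinge on \cref{lemma.equivalence_X_and_L}, the lower bound realizes a second syzygy of a module of maximal projective dimension over \(\kbb\Pscr/\Qscr\) as \(\Hom(\Lbb,M)\) via left exactness (just as the paper does by truncating a minimal resolution), and the upper bound combines \cref{proposition:injective-relative-pdim} with the same combinatorial estimate \(|(a,b)^\wedge_\Pscr|\leq |a^\wedge|+|b^\wedge|\) giving \(\gldim(\kbb\Pscr/\Qscr)\leq 2\sup_i|i^\wedge|\). The only (harmless) deviation is that your upper bound runs an induction on injective dimension using injective envelopes and the identification of the cokernel with \(\Ext^1_{\kbb\Sscr}(\Lbb,M)\) via \(\pdim_{\kbb\Sscr}(\Lbb)\leq 1\), whereas the paper gets the same estimate in one step from a two-term injective copresentation \(0\to M\to I^0\to I^1\), bounding the resulting cokernel directly by \(\gldim(\kbb\Pscr/\Qscr)\).
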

\begin{proof}
    We start with the upper bound.
    In view of \cref{lemma.equivalence_X_and_L}, it is enough to show that, for every \( M \in \mod_{\kbb \Sscr} \), we have \( \pdim_{\kbb \Pscr/\Qscr}(\Hom(\Lbb,M)) \leq 2n-2 \).
    Since there are enough (usual) injectives, we may choose \( 0 \to M \to I^0 \to I^1 \) exact with \( I^0 \) and \( I^1 \) injective.
    Applying \( \Hom(\Lbb, - ) \) gives an exact sequence
    \[ 0 \to \Hom(\Lbb, M) \to \Hom(\Lbb, I^0) \to  \Hom(\Lbb, I^1). \]
    We denote the cokernel of the final map by \( C \). The resulting six-term exact sequence implies that
    \[
        \pdim_{\kbb \Pscr/\Qscr}(\Hom(\Lbb, M)) \leq \max \{ \pdim_{\kbb \Pscr/\Qscr}(\Hom(\Lbb, I^0)), \pdim_{\kbb \Pscr/\Qscr}(\Hom(\Lbb, I^1)) - 1, \pdim_{\kbb \Pscr/\Qscr}( C) - 2 \}.
    \]
    Let \(d = \sup_{i \in \Sscr} |i^\wedge|\).
    Since, by \cref{proposition:injective-relative-pdim}, injectives have relative projective resolutions of length at most \( d-1 \), we have \( \pdim_{\kbb \Pscr/\Qscr}(\Hom(\Lbb,M)) \leq \max\{d-1, d-2, \pdim_{\kbb \Pscr/\Qscr} (C) - 2\} \), by \cref{lemma.equivalence_X_and_L}.
    It is thus enough to show that \(\gldim(\kbb \Pscr/\Qscr) \leq 2d\),
    and \cref{lemma:proj-dim-kP/Q} reduces this to showing \(\gldim(\kbb \Pscr) \leq 2d\).
    By applying \cref{proposition:gl-dim-lattice} to the lattice \(\Pscr\), we see that it is enough to show that, for \(j \leq k \in \Sscr\), we have \(|(j,k)^\wedge| \leq |j^\wedge| + |k^\wedge|\).
    To show this, note that if \((a,b) \nleq (j,k)\), then \(a \nleq j\) or \(b \nleq k\).
    This implies that, if \((a,b) \in (j,k)^\wedge\), then \(a=0\) or \(b=0\), since, if \(a\neq 0\) and \(b \neq 0\), then \((a,0) \nleq (j,k)\) and \((a,0) < (a,b)\), so \((a,b)\) would not be minimal.
    To conclude, note that if \((a,0) \in (j,k)^\wedge\) then \(a \in j^\wedge\), and if \((0,b) \in (j,k)^\wedge\), then \(b \in k^\wedge\).
    
    We now prove the lower bound in part (2).
    Let \(c = \gldim(\kbb \Pscr/\Qscr)\) and let \(Q_\bullet \to M\) be a minimal projective resolution of length \(c\) of some \(\kbb \Pscr/\Qscr\)-module \(M\).
    By \cref{lemma.equivalence_X_and_L}, there exist a chain complex of relative projectives \( 0 \to P_c \to \cdots \to P_1 \to P_0 \) such that \( 0 \to \Hom(\Lbb, P_{c}) \to \cdots \to \Hom(\Lbb,P_1) \to \Hom(\Lbb,P_0) \) is isomorphic to \( 0 \to Q_{c} \to \cdots \to Q_1 \to Q_0 \).
    Since \( \Hom(\Lbb, - ) \) is left exact, we have that \( \Hom(\Lbb,\ker(P_1 \to P_0)) \cong \ker(Q_1 \to Q_0) \).
    It follows that \( 0 \to \Hom(\Lbb, P_{c}) \to \cdots \to \Hom(\Lbb,P_2) \to \Hom(\Lbb,\ker(P_1 \to P_0)) \) is a minimal projective resolution of length \( c - 2 \), and thus that \( \pdimrank_{\kbb \Sscr}( \ker(P_1 \to P_0) ) \geq c - 2 \), by \cref{lemma.equivalence_X_and_L}.
\end{proof}

As a consequence, we can compute the global dimension of the rank exact structure on a finite grid.

\begin{proposition}
    \label{proposition:global-dimension-rank-grid}
    Let \(m \geq 3\) and \(\Sscr = [m]^n\).
    Then \(\gldimrank(\modcat_{\kbb \Sscr}) = 2n - 2\).
\end{proposition}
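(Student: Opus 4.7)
The plan is to apply \cref{theorem:gldim-rank-poset} to the lattice $\Sscr = [m]^n$ and to verify that its two bounds collapse to $2n-2$. Recall the statement gives $\gldim(\kbb\Pscr/\Qscr) - 2 \leq \gldimrank(\modcat_{\kbb\Sscr}) \leq 2\sup_{i\in\Sscr}|i^\wedge| - 2$, where $\Pscr = \{(a,b)\in\Sscr_\infty^2 : a\leq b\}$ and $\Qscr = \{(a,a) : a\in\Sscr_\infty\}$.

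For the upper bound, I would compute $|i^\wedge|$ explicitly. Given $i = (i_1,\dots,i_n)\in[m]^n$, an element $a\in\Sscr$ satisfies $a \nleq i$ exactly when $a_k > i_k$ for some $k$, and the minimal such elements are those with $a_k = i_k + 1$ for a unique index $k$ (which requires $i_k < m-1$) and $a_j = 0$ for $j\neq k$. Hence $|i^\wedge| = |\{k : i_k < m-1\}| \leq n$, with equality achieved, e.g., at $i = (0,\dots,0)$. This gives the upper bound $\gldimrank(\modcat_{\kbb\Sscr}) \leq 2n - 2$.

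For the lower bound, I would exhibit an element $i\in\Pscr$ with $|i^+| = 2n$ satisfying the hypotheses of the equality clause of \cref{prop.gldim_for_kP/e}. The candidate is $i = (\mathbf{0},\mathbf{1})$, where $\mathbf{0} = (0,\dots,0)$ and $\mathbf{1} = (1,\dots,1)$ in $[m]^n$. A case analysis on Hasse covers in the product poset $\Pscr$ shows that the covers of $(\mathbf{0},\mathbf{1})$ are exactly the $2n$ pairs $\{(e_k,\mathbf{1})\}_{k=1}^n \cup \{(\mathbf{0},\mathbf{1}+e_k)\}_{k=1}^n$, where $e_k$ denotes the $k$-th standard basis vector. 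The hypothesis $m \geq 3$ enters crucially here: it ensures that $\mathbf{1}+e_k \in \Sscr$ for each $k$, so that these are genuine covers in $\Pscr$ rather than jumps to $\infty$. One then verifies that $\vee i^+ = (\mathbf{1},2\mathbf{1})$ does not lie in $\Qscr$ (since $\mathbf{1}\neq 2\mathbf{1}$) and that removing any single cover from $i^+$ strictly decreases either the first or the second coordinate of the resulting join. Applying \cref{prop.gldim_for_kP/e} thus yields $\pdim_{\kbb\Pscr/\Qscr}\bigl(\Ssf_{(\mathbf{0},\mathbf{1})} \otimes_{\kbb\Pscr} \kbb\Pscr/\Qscr\bigr) = 2n$, hence $\gldim(\kbb\Pscr/\Qscr) \geq 2n$ and $\gldimrank(\modcat_{\kbb\Sscr}) \geq 2n - 2$.

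The main obstacle is not technical but combinatorial: one needs to identify a witness $i \in \Pscr$ whose cover count saturates the upper bound $2n$ and whose join structure is rigid enough (every proper subset of $i^+$ having strictly smaller join) to invoke the equality clause of \cref{prop.gldim_for_kP/e}. The choice $i = (\mathbf{0},\mathbf{1})$ works precisely because both endpoints are ``small enough'' to admit the maximum possible number of covers within $\Sscr$, which is exactly the role played by the assumption $m \geq 3$.
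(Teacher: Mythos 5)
Your proposal is correct and follows essentially the same route as the paper: the upper bound via $|i^\wedge|\leq n$ for $\Sscr=[m]^n$ in \cref{theorem:gldim-rank-poset}, and the lower bound by applying \cref{prop.gldim_for_kP/e} to the same witness $(\mathbf{0},\mathbf{1})\in\Pscr$, checking $|(\mathbf{0},\mathbf{1})^+|=2n$, $\vee(\mathbf{0},\mathbf{1})^+=(\mathbf{1},\mathbf{2})\notin\Qscr$, and the strict-join condition, then invoking the lower bound of \cref{theorem:gldim-rank-poset}. You simply spell out the cover and $i^\wedge$ computations (and the role of $m\geq 3$) in more detail than the paper does.
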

\begin{proof}
    For \(\Sscr = [m]^n\), \(m \geq 3\), and \(i \in \Sscr\), we have \(|i^\wedge| \leq n\), so \(\gldimrank(\Sscr) \leq 2n - 2\) by the upper bound in \cref{theorem:gldim-rank-poset}.
    Denote \(1 = (1,\dots,1) \in \Sscr\).
    Using the notation of \cref{theorem:gldim-rank-poset}, consider the element \((0,1) \in \Pscr\).
    By \cref{prop.gldim_for_kP/e}, we have \(\pdim(\Ssf_{(0,1)} \otimes_{\kbb \Pscr} \kbb \Pscr/\Qscr) = |(0,1)^+| = 2n\), since \(\vee (0,1)^+ = (1,2) \notin \Qscr\),
    and \(\vee A < (1,2)\) for every \(A \subsetneq (0,1)^+\), where \(2 = (2, \dots, 2) \in \Sscr\).
    The lower bound in \cref{theorem:gldim-rank-poset} now finishes the proof.
\end{proof}

We conclude this section by proving \cref{theorem:gldim-rank-Rn}.

\begin{theorem}
    \label{theorem:gldim-rank-Rn}
    We have \(\gldimrank\left(\vect^{\Rscr^n}_{\textup{\fpp}}\right) = 2n-2\).
\end{theorem}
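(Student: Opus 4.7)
The plan is to transfer the computation from $\Rscr^n$ to finite grids, where Proposition~\ref{proposition:global-dimension-rank-grid} already identifies the global dimension as $2n-2$. The bridge is provided by Theorem~\ref{theorem:rank-exact-structure-facts}(2), which says that minimal rank projective resolutions of \fpp $\Rscr^n$-persistence modules can be obtained as left Kan extensions of minimal rank projective resolutions over finite grids along injective monotonic embeddings. Since such a Kan extension is an exact, fully faithful functor that preserves hook modules (indecomposable rank projectives) and minimality, it preserves the length of minimal rank projective resolutions.

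For the upper bound, I would take an arbitrary \fpp module $M \colon \Rscr^n \to \vect$, and apply Theorem~\ref{theorem:rank-exact-structure-facts}(2) to express a minimal rank projective resolution of $M$ as the left Kan extension of a minimal rank projective resolution of some $M' \colon [m_1]\times\cdots\times[m_n] \to \vect$. For the finite grid $\Sscr = [m_1]\times\cdots\times[m_n]$, every element $i \in \Sscr$ satisfies $|i^\wedge| \leq n$ (a minimal element of $\{a : a \nleq i\}$ differs from $i$ in exactly one coordinate, giving at most $n$ such minimal elements). Theorem~\ref{theorem:gldim-rank-poset} then yields $\gldimrank(\mod_{\kbb\Sscr}) \leq 2n-2$, so $\pdimrank(M') \leq 2n-2$ and hence $\pdimrank(M) \leq 2n-2$ as well.

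For the lower bound, I would invoke Proposition~\ref{proposition:global-dimension-rank-grid} applied to $\Sscr = [3]^n$ to obtain a module $M' \in \mod_{\kbb [3]^n}$ with $\pdimrank_{\kbb[3]^n}(M') = 2n-2$. Choosing any injective monotonic embedding $f \colon [3]^n \hookrightarrow \Rscr^n$, let $M \coloneqq \Lan_f M'$; this is \fpp because it is a left Kan extension of a finitely generated module along an embedding with finite image. By Theorem~\ref{theorem:rank-exact-structure-facts}(2), the left Kan extension of the minimal rank projective resolution of $M'$ is a minimal rank projective resolution of $M$ of the same length, so $\pdimrank(M) = 2n-2$.

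The main obstacle is purely conceptual rather than computational: one must be comfortable that the statement of Theorem~\ref{theorem:rank-exact-structure-facts}(2) genuinely provides a two-way translation between rank projective resolutions over $\Rscr^n$ and over finite grids, so that both bounds transfer cleanly. Since all of the hard technical work (constructing the Koszul-like resolutions, showing that finite grids have $\gldimrank = 2n-2$, and proving the Kan extension lemma for rank projective resolutions) has been carried out earlier, the proof itself reduces to assembling these two results.
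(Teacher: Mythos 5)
Your proposal is correct and follows essentially the same route as the paper, whose proof of \cref{theorem:gldim-rank-Rn} is precisely the combination of \cref{proposition:global-dimension-rank-grid} (global dimension $2n-2$ on finite grids, via the bound $|i^\wedge|\leq n$ and \cref{theorem:gldim-rank-poset}) with \cref{theorem:rank-exact-structure-facts}(2) (left Kan extension along a grid embedding preserves minimal rank projective resolutions, giving both the upper-bound transfer and the lower-bound witness). You have simply spelled out the two transfer steps that the paper leaves implicit.
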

\begin{proof}
    This is a direct consequence of \cref{proposition:global-dimension-rank-grid} and \cref{theorem:rank-exact-structure-facts}(2).
\end{proof}

\subsection{Bounds on the size of rank exact decomposition}
\label{section:polynomial-bound}
We start by fixing notation.
Given a \fpp persistence module \(M \colon \Rscr^n \to \vect\), let
\begin{align*}
    \bettibars(M) &= |\beta_0(M) \cup \dots \cup \beta_n(M)|,\\
    \bettibarsrank(M) &= |\barcrank_\sp(M) \cup \barcrank_\sm(M)| = |\bettirank_0(M) \cup \dots \cup \bettirank_{2n-2}(M)|,\\
    \decbars(M) &= |\mrd_\sp(M) \cup \mrd_\sm(M)|.
\end{align*}

The following result is standard; see \cref{section:appendix} for a proof.

\begin{restatable}{lemma}{bounddimensionles}
    \label{lemma:bound-dimensions-les}
    Let \(0 \to X_\ell \to \cdots \to X_1 \to X_0 \to 0\) be an exact sequence of finite dimensional vector spaces.
    Let the sequence \([x,y,z]\) be a reordering of the sequence \([0,1,2]\).
    Then, we have \(\sum_{i \equiv x (\mod\, 3)} \dim(X_i) \leq \sum_{i \equiv y (\mod\, 3)} \dim(X_i) + \sum_{i \equiv z (\mod\, 3)} \dim(X_i)\).
\end{restatable}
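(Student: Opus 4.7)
The plan is to adapt the standard proof that $\sum_i (-1)^i \dim X_i = 0$ for an exact sequence of finite dimensional vector spaces, but tracking dimensions by residue class modulo $3$ instead of by parity. The key observation is that such an exact sequence can be broken into short exact sequences through its image modules, after which additivity of dimensions does all the work.

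Concretely, for each $i$, let $I_i$ denote the image of the map $X_i \to X_{i-1}$, with the conventions $I_0 = 0$ and $I_{\ell+1} = 0$ to cover the endpoints of the sequence. Exactness gives short exact sequences $0 \to I_{i+1} \to X_i \to I_i \to 0$ for all $0 \leq i \leq \ell$, which yield $\dim X_i = \dim I_i + \dim I_{i+1}$. Setting $T_a \coloneqq \sum_{i \equiv a \pmod{3}} \dim I_i$ and $S_a \coloneqq \sum_{i \equiv a \pmod{3}} \dim X_i$, the identity above rewrites as $S_a = T_a + T_{a+1}$, with the subscripts on the right read modulo $3$.

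The final step is bookkeeping. Since $\{x, y, z\}$ and $\{x+1, y+1, z+1\}$ are both equal to $\{0, 1, 2\}$ modulo $3$, summing the three identities $S_a = T_a + T_{a+1}$ over $a \in \{x, y, z\}$ gives $S_x + S_y + S_z = 2(T_0 + T_1 + T_2)$. Rearranging, $S_y + S_z - S_x = 2(T_0 + T_1 + T_2 - T_x - T_{x+1}) = 2T_{x+2}$, which is non-negative as a sum of dimensions.

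No step of this plan presents a real obstacle; the only subtle point is to set up the boundary conventions $I_0 = I_{\ell+1} = 0$ carefully so that the short exact sequences exist uniformly and the identity $\dim X_i = \dim I_i + \dim I_{i+1}$ holds at the endpoints of the sequence as well.
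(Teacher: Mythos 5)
Your proof is correct and follows essentially the same route as the paper: both split the exact sequence into the short exact sequences $0 \to \im(d_{i+1}) \to X_i \to \im(d_i) \to 0$ (the paper writes these via $\coker(d_{i+2})$ and $\ker(d_{i-1})$) and then sum over the residue class $i \equiv x \pmod 3$. The only cosmetic difference is that the paper immediately relaxes to the termwise inequality $\dim(X_i) \leq \dim(X_{i+1}) + \dim(X_{i-1})$ before summing, whereas you keep the exact identities and identify the slack as $2T_{x+2}$.
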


We now prove a standard result which gives a characterization of the (usual) Betti numbers of a \fpp persistence module in terms of an \(\Ext\) functor, and use it to bound the size of the Betti numbers of a module in a short exact sequence by the sum of the sizes of the Betti numbers of the other two modules.

\begin{lemma}
    \label{lemma:betti-numbers-as-ext}
    Let $\Pscr$ be a finite poset, let \(M \colon \Pscr \to \vect\) be a persistence module, let \(a \in \Pscr\), and let \(i \in \Nbb\).
    Then, the multiplicity of \([\Psf_a]\) in \(\beta_i(M)\) is equal to the dimension of \(\Ext^i_{\kbb \Pscr}(M,\Ssf_a)\)
\end{lemma}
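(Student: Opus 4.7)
The plan is to compute $\Ext^i(M, \Ssf_a)$ via a minimal projective resolution $P_\bullet \to M$ and show that the induced differentials on $\Hom(P_\bullet, \Ssf_a)$ all vanish, so that $\dim \Ext^i(M, \Ssf_a) = \dim \Hom(P_i, \Ssf_a)$, which directly counts the multiplicity of $[\Psf_a]$ in $\barc(P_i) = \beta_i(M)$.

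First, I would record the basic Hom computation: for $a, b \in \Pscr$, one has $\Hom_{\kbb \Pscr}(\Psf_b, \Ssf_a) \cong \kbb$ when $b = a$ and $0$ otherwise. Indeed, a natural transformation $\Psf_b \to \Ssf_a$ can only be nonzero at the component $a$, which forces $b \leq a$; naturality of the square $\Psf_b(b) \to \Psf_b(a) \to \Ssf_a(a)$ versus $\Psf_b(b) \to \Ssf_a(b) \to \Ssf_a(a)$ then forces the component at $a$ to vanish unless $b = a$. Consequently, if $P_i \cong \bigoplus_{b \in B_i} \Psf_b$ is the decomposition from $\beta_i(M) = \barc(P_i)$, then $\dim \Hom(P_i, \Ssf_a)$ equals the multiplicity of $[\Psf_a]$ in $\beta_i(M)$.

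The main step is the vanishing of the induced differentials. The existence of a minimal projective resolution of $M$ is standard when $\Pscr$ is finite (the incidence algebra $\kbb\Pscr$ is then an Artin algebra, so minimal resolutions exist by the existence of projective covers), and for $\Pscr = \Rscr^n$ the \fpp case reduces to a finite subgrid by the Kan extension argument in \cref{theorem:rank-exact-structure-facts}(2) applied to the usual exact structure, which is compatible with minimality. By minimality, each differential $d_{i+1} \colon P_{i+1} \to P_i$ has image contained in the radical $\mathrm{rad}(P_i)$, i.e.\ the direct sum over $b$ of the submodules of $\Psf_b$ supported on $\{j \in \Pscr \colon b < j\}$. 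Any morphism $P_i \to \Ssf_a$ annihilates $\mathrm{rad}(P_i)$, since by the Hom computation above a nonzero map $\Psf_b \to \Ssf_a$ is supported exactly at $b = a$, which lies outside the radical. Hence the map $d_{i+1}^* \colon \Hom(P_i, \Ssf_a) \to \Hom(P_{i+1}, \Ssf_a)$ is identically zero.

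Combining the two points, $\Ext^i(M, \Ssf_a) = \ker(d_{i+1}^*)/\mathrm{im}(d_i^*) = \Hom(P_i, \Ssf_a)$, and its dimension is precisely the multiplicity of $[\Psf_a]$ in $\beta_i(M)$, as claimed. The only mild obstacle is justifying the existence and standard behavior (radicality of differentials) of minimal projective resolutions uniformly for finite posets and for \fpp modules over $\Rscr^n$; the former is textbook Artin algebra theory, while the latter is handled by reducing to a finite subposet via the same Kan extension technique used elsewhere in the paper.
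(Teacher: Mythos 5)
Your proof is correct and follows essentially the same route as the paper: compute $\Ext^i(M,\Ssf_a)$ from a minimal projective resolution, use minimality to show the induced differentials on $\Hom(P_\bullet,\Ssf_a)$ vanish, and conclude via $\dim\Hom(\Psf_b,\Ssf_a)=\delta_{a,b}$. Phrasing the vanishing through the radical rather than through the $\Psf_a\to\Psf_a$ components (as the paper does) is only a cosmetic difference, and your extra remarks on the existence of minimal resolutions are a harmless addition.
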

\begin{proof}
    Let \(P_\bullet \to M\) be a minimal resolution of \(M\) by (usual) projectives.
    Assume that \(j \in \Nbb\) and that \(\Psf_a\) is a direct summand of both \(P_{j+1}\) and \(P_j\).
    Then, the differential \(P_{j+1} \to P_j\), composed with the isomorphisms \(P_{j+1} \cong \Psf_a \oplus P_{j+1}'\) and \(P_j \cong \Psf_a \oplus P_j'\), must restrict to the zero morphism \(\Psf_a \to \Psf_a\), since, otherwise, the resolution \(P_\bullet \to M\) would not be minimal.
    This implies that the differentials of the chain complex \(\Hom_{\kbb \Pscr}(P_\bullet, \Ssf_a)\) are all zero, and thus \(\Ext^i_{\kbb \Pscr}(M,\Ssf_a) \cong \Hom_{\kbb \Pscr}(P_i, \Ssf_a)\).
    To conclude, note that \(\dim(\Hom_{\kbb \Pscr}(\Psf_b,\Ssf_a))\) is either \(0\) or \(1\), and is \(1\) if and only if \(a=b\).
\end{proof}

\begin{lemma}
    \label{lemma:betti-ses}
    Let \(0 \to M_0 \to M_1 \to M_2 \to 0\) be a short exact sequence of \fpp \(\Rscr^n\)-persistence modules and let \([x,y,z]\) be a reordering of \([0,1,2]\).
    Then \(\bettibars(M_x) \leq \bettibars(M_y) + \bettibars(M_z)\).
\end{lemma}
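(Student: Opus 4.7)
The plan is to reduce the claim to the purely linear-algebraic inequality of \cref{lemma:bound-dimensions-les} by way of the long exact $\Ext$-sequence, using \cref{lemma:betti-numbers-as-ext} to go back and forth between multigraded Betti numbers and dimensions of $\Ext$-groups.

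First, I would fix an element $a \in \Rscr^n$ and apply the contravariant functor $\Hom_{\kbb \Rscr^n}(-, \Ssf_a)$ to the given short exact sequence $0 \to M_0 \to M_1 \to M_2 \to 0$. This yields the long exact sequence
\begin{equation*}
0 \to \Ext^0(M_2, \Ssf_a) \to \Ext^0(M_1, \Ssf_a) \to \Ext^0(M_0, \Ssf_a) \to \Ext^1(M_2, \Ssf_a) \to \Ext^1(M_1, \Ssf_a) \to \cdots
\end{equation*}
which has only finitely many non-zero terms. This finiteness follows because each $M_j$ is \fpp and thus admits a minimal projective resolution of finite length, obtained by left Kan extension along $[m]^n \hookrightarrow \Rscr^n$ of a minimal projective resolution over the finite grid $[m]^n$, where \cref{lemma:finite-poset-finite-resolution} guarantees finite length. (This is the same Kan extension trick used in \cref{theorem:rank-exact-structure-facts}(2), but here applied to the usual exact structure.)

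Next, I would relabel this finite exact sequence with consecutive integers so that the term $\Ext^i(M_2, \Ssf_a)$ sits in position $3i$, the term $\Ext^i(M_1, \Ssf_a)$ in position $3i+1$, and the term $\Ext^i(M_0, \Ssf_a)$ in position $3i+2$. Given any reordering $[x,y,z]$ of $[0,1,2]$, applying \cref{lemma:bound-dimensions-les} to the reordering $[2-x, 2-y, 2-z]$ of $[0,1,2]$ produces the pointwise inequality
\begin{equation*}
\sum_{i \geq 0} \dim \Ext^i(M_x, \Ssf_a) \;\leq\; \sum_{i \geq 0} \dim \Ext^i(M_y, \Ssf_a) + \sum_{i \geq 0} \dim \Ext^i(M_z, \Ssf_a).
\end{equation*}

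Finally, I would sum this inequality over $a \in \Rscr^n$. For an \fpp module $M_j$, only finitely many $a$ contribute non-zero terms (namely those $a$ corresponding to indecomposable summands appearing in a minimal projective resolution of $M_j$), so the sum is finite and well defined. By \cref{lemma:betti-numbers-as-ext}, the quantity $\dim \Ext^i(M_j, \Ssf_a)$ is the multiplicity of $[\Psf_a]$ in $\beta_i(M_j)$, so $\sum_{a, i} \dim \Ext^i(M_j, \Ssf_a) = \bettibars(M_j)$. The two sides of the summed inequality therefore collapse to $\bettibars(M_x)$ and $\bettibars(M_y) + \bettibars(M_z)$, respectively, yielding the claim. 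There is no genuine obstacle here; the only point requiring a moment of care is the finiteness of the $\Ext$-long-exact sequence and of the outer sum over $a$, both of which are immediate consequences of finite presentability.
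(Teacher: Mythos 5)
Your proof is correct and takes essentially the same route as the paper's: identify Betti multiplicities with dimensions of $\Ext$ against simple modules (\cref{lemma:betti-numbers-as-ext}), feed the long exact $\Ext$-sequence induced by the short exact sequence into \cref{lemma:bound-dimensions-les}, and use finite presentability (via the Kan-extension reduction to a finite grid) for the finiteness of resolutions. The only cosmetic difference is that the paper first restricts all three modules to a common finite grid $[m]^n$ and applies the dimension lemma once to $\Ext^\bullet\bigl(-,\bigoplus_{a}\Ssf_a\bigr)$, whereas you stay over $\Rscr^n$, apply it for each $\Ssf_a$ separately, and sum over the finitely many relevant $a$.
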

\begin{proof}
    A straightforward extension of \cref{theorem:rank-exact-structure-facts}(2) shows that we can assume that \(M_0\), \(M_1\), and \(M_2\) are left Kan extensions along a common monotonic function \([m]^n \to \Rscr^n\) for \(m \in \Nbb\), and thus that \(M_0\), \(M_1\), and \(M_2\) are \([m]^n\)-persistence modules.
    If \(N \colon [m]^n \to \vect\), we know from \cref{lemma:betti-numbers-as-ext} that for \(a \in [m]^n\) and \(i \in \Nbb\), the multiplicity of \([\Psf_a]\) in \(\beta_i(N)\) is equal to the dimension of \(\Ext^i_{\kbb [m]^n}(N,\Ssf_a)\), so that \(|\beta_i(N)| = \sum_{a \in [m]^n} \dim(\Ext^i_{\kbb [m]^n}(N,\Ssf_a)) = \dim(\Ext^i_{\kbb [m]^n}(N,\bigoplus_{a \in [m]^n}\Ssf_a))\).
    Then, the result follows by applying \cref{lemma:bound-dimensions-les} to the long exact sequence of \(\Ext^\bullet_{\kbb [m]^n}(-, \bigoplus_{a \in [m]^n}\Ssf_a)\) induced by \(0 \to M_0 \to M_1 \to M_2 \to 0\).
\end{proof}

As a consequence, we can bound the size of the Betti numbers of the first relative syzygy of a module in terms of the size of the absolute and relative Betti numbers of the module.

\begin{lemma}
    \label{lemma:inductive-step}
    Let \(M \colon \Rscr^n \to \vect\) be \fpp and not relative projective.
    Let \(P \to M\) be the first step in a minimal rank projective resolution of \(M\) and let \( 0 \to S \to P \to M \to 0\) be the induced short exact sequence.
    Then \(\bettibars(S) \leq 2 \cdot |\bettirank_0(M)| + \bettibars(M)\).
\end{lemma}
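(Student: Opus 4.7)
The plan is to combine Lemma~\ref{lemma:betti-ses} applied to the short exact sequence $0 \to S \to P \to M \to 0$ with a simple bound on $\bettibars(P)$ that uses the fact that $P$ is hook-decomposable.

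First, I would verify that $S$ is \fpp so that Lemma~\ref{lemma:betti-ses} applies: $M$ is \fpp by assumption, and $P$ is a finite direct sum of hook modules, each of which is \fpp (as shown below), and so $P$ is \fpp; since \fpp $\Rscr^n$-persistence modules form an abelian category, the kernel $S$ is also \fpp. Applying Lemma~\ref{lemma:betti-ses} with $[x,y,z] = [0,1,2]$ then gives
\[
    \bettibars(S) \leq \bettibars(P) + \bettibars(M).
\]

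Next, I would bound $\bettibars(P)$ in terms of $|\bettirank_0(M)|$. By definition, $\bettirank_0(M) = \barc(P)$, so $P$ is isomorphic to a direct sum of $|\bettirank_0(M)|$ hook modules (counted with multiplicity). Since Betti numbers are additive under direct sums, it suffices to show that $\bettibars(\Lsf_{i,j}) \leq 2$ for every hook module $\Lsf_{i,j}$ with $i < j \in \Rscr^n \cup \{\infty\}$. If $j = \infty$ then $\Lsf_{i,\infty} = \Psf_i$ is projective and $\bettibars(\Lsf_{i,j}) = 1$. If $j \in \Rscr^n$, the natural inclusion yields a short exact sequence $0 \to \Psf_j \to \Psf_i \to \Lsf_{i,j} \to 0$, which is a projective resolution of length $1$; it is minimal because $\Psf_j \to \Psf_i$ is not a split monomorphism (as $\Lsf_{i,j} \neq 0$), so $\bettibars(\Lsf_{i,j}) = 2$. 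In either case, $\bettibars(\Lsf_{i,j}) \leq 2$, giving $\bettibars(P) \leq 2\,|\bettirank_0(M)|$.

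Combining the two inequalities yields $\bettibars(S) \leq 2\,|\bettirank_0(M)| + \bettibars(M)$, as required. There is no real obstacle here; the entire argument is a direct application of Lemma~\ref{lemma:betti-ses} together with the explicit two-term projective resolution of a hook module.
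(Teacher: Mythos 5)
Your proof is correct and follows essentially the same route as the paper: apply \cref{lemma:betti-ses} to the sequence \(0 \to S \to P \to M \to 0\) and bound \(\bettibars(P) \leq 2\,|\bettirank_0(M)|\) using that each hook module has a two-term projective resolution. The extra details you supply (finite presentability of \(S\), the explicit resolution \(0 \to \Psf_j \to \Psf_i \to \Lsf_{i,j} \to 0\)) are exactly what the paper leaves implicit.
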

\begin{proof}
    Let \(P = \bigoplus_{(a,b) \in A} \Lsf_{a,b}\), where \(A\) is a multiset of elements of \(\Rscr^n \times (\Rscr^n \cup \{\infty\})\),
    and apply \cref{lemma:betti-ses}, using that \(|A| = |\bettirank_0(M)|\) and \(\bettibars\left(\bigoplus_{(a,b) \in A} \Lsf_{a,b}\right) \leq 2 \cdot |A|\).
\end{proof}

The following is a general construction that allows us to bound the size of the relative Betti numbers in terms of the size of the absolute Betti numbers, when we only have a bound for the size of $0$th relative Betti numbers.

\begin{lemma}
    \label{lemma:induction}
    Assume that there exists \(\alpha \geq 2 \in \Rbb\) such that, for all \fpp \(M \colon \Rscr^n \to \vect\), we have \(|\bettirank_0(M)| \leq \bettibars(M)^\alpha\).
    Then, for all \(i \in \Nbb\), we have \(\sum_{j=0}^i|\bettirank_j(M)| \leq \big(4\cdot \bettibars(M)\big)^{\alpha^{i+1}}\).
\end{lemma}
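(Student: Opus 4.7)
My plan is to prove the bound by induction on $i$, where the key idea in the inductive step is to pass to the first relative syzygy of $M$ and apply \cref{lemma:inductive-step}. The base case $i=0$ is immediate from the hypothesis: $|\bettirank_0(M)| \leq \bettibars(M)^\alpha \leq (4\bettibars(M))^\alpha$.

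For the inductive step, suppose the result holds for $i$ and fix $M$ \fpp. If $\bettibars(M)=0$ then $M=0$ and there is nothing to prove; if $M$ is rank projective but non-zero, then $\bettirank_j(M) = 0$ for $j \geq 1$, and the bound follows from the base case together with $\alpha^{i+2} \geq \alpha$. Otherwise, let $P \to M$ be a rank projective cover with kernel $S$, so that $|\barc(P)| = |\bettirank_0(M)|$. A standard argument shows that prepending $P \to M$ to a minimal rank projective resolution of $S$ yields a minimal rank projective resolution of $M$, hence $\bettirank_{j+1}(M) = \bettirank_j(S)$ for all $j \geq 0$. Applying the inductive hypothesis to $S$ and combining with \cref{lemma:inductive-step} and the assumption, I get (using $\bettibars(M) \geq 1$, so $\bettibars(M) \leq \bettibars(M)^\alpha$)
\[
    \bettibars(S) \leq 2|\bettirank_0(M)| + \bettibars(M) \leq 2\bettibars(M)^\alpha + \bettibars(M) \leq 3\bettibars(M)^\alpha,
\]
which feeds back into
\[
    \sum_{j=0}^{i+1}|\bettirank_j(M)| \;\leq\; \bettibars(M)^\alpha + \big(4\bettibars(S)\big)^{\alpha^{i+1}} \;\leq\; \bettibars(M)^\alpha + \big(12\bettibars(M)^\alpha\big)^{\alpha^{i+1}}.
\]

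The remaining step, which I expect to be the one requiring care (though it is elementary), is to verify that this last quantity is bounded by $(4\bettibars(M))^{\alpha^{i+2}} = (4^\alpha\bettibars(M)^\alpha)^{\alpha^{i+1}}$. Setting $y = \bettibars(M)^\alpha \geq 1$ and $k = \alpha^{i+1} \geq 2$, and using $\alpha \geq 2$ so that $4^\alpha \geq 16$, the goal reduces to $(12y)^k + y \leq (16y)^k$. An easy estimate gives $(16y)^k - (12y)^k = y^k(16^k - 12^k) \geq 112\, y^k \geq y$, since $y \geq 1$ and $k \geq 2$. The constant $4$ in the statement is chosen precisely so that $4^\alpha$ dominates $4 \cdot 3 = 12$ with enough slack to absorb the additive term $|\bettirank_0(M)|$; this is the only non-trivial point, and a tighter constant would demand a more delicate analysis that is not needed here since this lemma is a stepping stone toward \cref{proposition:polynomial-bound-intro}.
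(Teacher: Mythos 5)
Your proof is correct, and it rests on the same two ingredients as the paper's --- the syzygy bound of \cref{lemma:inductive-step} and the hypothesis on \(|\bettirank_0(-)|\) --- but the induction is organized differently. The paper fixes \(M\), sets \(s_i = \sum_{j\leq i}|\bettirank_j(M)|\), uses the whole tower of syzygies of one minimal resolution to derive the scalar recursion \(s_i \leq s_{i-1} + (2s_{i-1}+\bettibars(M))^\alpha\), proves the intermediate closed form \(s_i \leq 4^{(2\alpha)^i}\cdot\bettibars(M)^{\alpha^{i+1}}\) by a second induction, and then passes to the stated bound via the comparison \((2\alpha)^i \leq \alpha^{i+1}\). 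You instead induct over all modules simultaneously, applying the statement for \(i\) to the first syzygy \(S\) (via \(\bettirank_{j+1}(M)=\bettirank_j(S)\) and \(\bettibars(S)\leq 3\,\bettibars(M)^\alpha\)) and closing with the elementary estimate \((12y)^k + y \leq (16y)^k\) for \(y\geq 1\), \(k\geq 2\), which is valid since \(16^k-12^k\) is increasing in \(k\). Your route buys something concrete: it avoids the paper's final exponent comparison, which is equivalent to \(2^i\leq\alpha\) and does not hold throughout the range actually used later (\(\alpha = 2n+1\), \(i\) up to \(2n-2\), already failing at \(n=3\)); so your argument gives a cleaner, fully justified derivation of the stated bound, whereas the paper's formulation keeps the explicit recursion for \(s_i\), which would be the thing to refine if one wanted sharper constants. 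Two small points worth making explicit in a write-up: the syzygy \(S\) is again \fpp (as \(\vect^{\Rscr^n}_\fpp\) is abelian), which you need in order to apply both hypotheses to it, and the splice of a rank projective cover with a minimal resolution of its kernel is minimal by the definition of minimality used in the paper --- both are implicit in the paper's proof as well.
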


\begin{proof}
    We prove \(\sum_{j=0}^i|\bettirank_j(M)| \leq 4^{(2\alpha)^i} \cdot \bettibars(M)^{\alpha^{i+1}}\) for all $i \in \Nbb$.
    The result then follows from the fact that $(2\alpha)^i \leq \alpha^{i+1}$.

    We start with a few observations.
    To keep notation short, in this proof we let \(s_i = \sum_{j=0}^i|\bettirank_j(M)|\) and \(\bettibars = \bettibars(M)\). 
    Let \(P_\bullet \to M\) be a minimal rank projective resolution.
    Let \(M_0 = M\), let \(M_1\) be the kernel of \(P_0 \to M\), and let \(M_{i+1}\) be the kernel of \(P_i \to P_{i-1}\).
    In particular, we have short exact sequences \(0 \to M_{i+1} \to P_i \to M_i \to 0\).
    Using \cref{lemma:inductive-step} on the short exact sequence \(0 \to M_{i} \to P_{i-1} \to M_{i-1} \to 0\), we get \(\bettibars(M_i) \leq 2 \cdot |\bettirank_{i-1}(M)| + \bettibars(M_{i-1})\), since \(\bettirank_0(M_{i-1}) = \bettirank_{i-1}(M)\).
    An inductive argument then shows that
    \begin{equation}
        \label{eq:technical-lemma-1}
        \bettibars(M_i) \leq 2 s_{i-1} + \bettibars.
    \end{equation}
    We use this to bound \(s_i\) as follows
    \begin{align*}
        s_i &= s_{i-1} + |\bettirank_i(M)|\\
            &= s_{i-1} + |\bettirank_0(M_i)|\\
            &\leq s_{i-1} + \bettibars(M_i)^\alpha\\
            &\leq s_{i-1} + (2 s_{i-1} + \bettibars)^\alpha,
    \end{align*}
    where in the first inequality we used the hypothesis of this result and in the second inequality we used \cref{eq:technical-lemma-1}.
    We use this inequality to prove the result, by induction on \(i\).

    The base case \(i=0\) is true by assumption.
    For the inductive step, we have
    \begin{align*}
        s_i &\leq (2 \cdot s_{i-1} + \bettibars)^\alpha + s_{i-1},\\
            &\leq (3 \cdot s_{i-1} + \bettibars)^\alpha,\\
            &\leq \left( 3 \cdot 4^{(2\alpha)^{i-1}} \cdot \bettibars^{\alpha^i} + \bettibars\right)^\alpha\\
            &\leq \left( 3 \cdot 4^{(2\alpha)^{i-1}} \cdot \bettibars^{\alpha^i} + 4^{(2\alpha)^{i-1}}\cdot \bettibars^{\alpha^i}\right)^\alpha\\
            &\leq \left( 4 \cdot 4^{(2\alpha)^{i-1}} \cdot \bettibars^{\alpha^i}\right)^\alpha\\
            &= 4^{(2\alpha)^{i-1}\alpha + \alpha} \cdot \bettibars^{\alpha^{i+1}}
    \end{align*}
    where in the second inequality we used the inductive hypothesis, and in the rest of the inequalities we used simple bounds.
    To conclude, note that \((2\alpha)^{i-1}\alpha + \alpha \leq (2\alpha)^{i}\).
\end{proof}

In order to be able to use \cref{lemma:induction}, we need an explicit bound for the size of $0$th relative Betti numbers in terms of the absolute Betti numbers.

\begin{lemma}
    \label{lemma:explicit-bound-degree-0}
    Let \(M \colon \Rscr^n \to \vect\) be \fpp.
    Then \(|\bettirank_0(M)| \leq \bettibars(M)^{2n + 1}\).
\end{lemma}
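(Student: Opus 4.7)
My plan is to first reduce to a finite grid via \cref{theorem:rank-exact-structure-facts}(2), and then apply the projectivization-style equivalence of \cref{lemma.equivalence_X_and_L} to replace the count \(|\bettirank_0(M)|\) by a straightforward \(\kbb\)-dimension estimate. Concretely, I would use \cref{theorem:rank-exact-structure-facts}(2) to replace \(M\) by a module \(M' \colon \Sscr \to \vect\) defined on a finite product of chains \(\Sscr = [m_1] \times \cdots \times [m_n]\) embedded injectively and monotonically into \(\Rscr^n\), in such a way that \(|\bettirank_0(M)| = |\bettirank_0(M')|\). Taking each \(m_i\) to be the number of distinct \(i\)-th coordinates appearing among the grades of \(\beta_0(M) \cup \beta_1(M)\), I would obtain \(m_i \leq \bettibars(M)\), hence \(|\Sscr| \leq \bettibars(M)^n\); moreover, since left Kan extension along a fully faithful monotonic map preserves indecomposable projectives, I would also get \(|\beta_0(M')| = |\beta_0(M)| \leq \bettibars(M)\).

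Next, by \cref{lemma.equivalence_X_and_L}, the additive equivalence \(\Hom_{\kbb\Sscr}(\Lbb, -) \colon \add \Lbb \xrightarrow{\sim} \proj(\kbb\Pscr/\Qscr)\) sends the minimal rank projective cover of \(M'\) to the minimal (usual) projective cover of \(N \coloneqq \Hom_{\kbb\Sscr}(\Lbb, M')\) over the algebra \(\kbb\Pscr/\Qscr\). This algebra is basic as a \(\kbb\)-algebra---its primitive idempotents \([i,i]\), \(i \in \Pscr \setminus \Qscr\), are pairwise non-isomorphic with one-dimensional simple tops---so the number of indecomposable summands of a minimal projective cover of \(N\) equals \(\dim_\kbb(N/\mathrm{rad}\,N) \leq \dim_\kbb N\), which yields \(|\bettirank_0(M)| \leq \dim_\kbb N\).

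To bound \(\dim_\kbb N\), I would split the sum over summands of \(\Lbb\):
\[
    \dim_\kbb N = \sum_{a < b \in \Sscr_\infty} \dim_\kbb \Hom_{\kbb\Sscr}(\Lsf_{a,b}, M') \leq |\Sscr|^2 \cdot \max_{a \in \Sscr} \dim_\kbb M'(a),
\]
using that a morphism \(\Lsf_{a,b} \to M'\) is determined by the image of its generator in \(M'(a)\), and that there are at most \(|\Sscr|^2\) pairs \((a,b)\) with \(a < b \in \Sscr_\infty\). Since \(M'\) is a quotient of \(\bigoplus_{p \in \beta_0(M')} \Psf_p\), one obtains \(\dim_\kbb M'(a) \leq |\beta_0(M')| \leq \bettibars(M)\). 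Combining everything gives \(|\bettirank_0(M)| \leq |\Sscr|^2 \cdot \bettibars(M) \leq \bettibars(M)^{2n+1}\).

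The main obstacle I expect is the first step: correctly invoking \cref{theorem:rank-exact-structure-facts}(2) to obtain a reduction to a finite grid of size \(\leq \bettibars(M)^n\) while ensuring that \(|\bettirank_0(-)|\) is preserved under Kan extension. Once the finite-grid reduction is in place, the equivalence \(\add \Lbb \simeq \proj(\kbb\Pscr/\Qscr)\) and the bound on \(\dim_\kbb N\) are essentially bookkeeping.
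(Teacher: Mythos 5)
Your argument is correct, and it reaches the same exponent $2n+1$ by a genuinely different route than the paper. The paper works directly over $\Rscr^n$: it shows that every hook $\Lsf_{a,b}$ appearing in the rank projective cover must have $a$ in the $n$-fold join closure $\bar{I}^{\vee n}$ of the degree-$0$ Betti grades and $b \in \bar{J}^{\vee n} \cup \{\infty\}$ (via a factorization argument through a minimal presentation), bounds multiplicities by $\dim\Hom(\Lsf_{a,b},M) \leq \dim M(a) \leq |I|$, and concludes $|\bettirank_0(M)| \leq |I|^{n+1}(1+|J|^n) \leq \bettibars(M)^{2n+1}$. You instead reduce to a grid of size $\leq \bettibars(M)^n$ via \cref{theorem:rank-exact-structure-facts}(2) and then use the projectivization equivalence of \cref{lemma.equivalence_X_and_L} to convert the count of degree-$0$ hooks into $\dim_\kbb \Hom_{\kbb\Sscr}(\Lbb, M')$, bounded crudely by (number of pairs $a<b$ in $\Sscr_\infty$) times the pointwise dimension. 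What the paper's approach buys is a finer intermediate bound in terms of $|I|$ and $|J|$ separately and independence from the grid/projectivization machinery; what yours buys is a cleaner multiplicity argument (the top-dimension count over the basic algebra $\kbb\Pscr/\Qscr$ replaces the paper's terse multiplicity claim) at the cost of counting all pairs in the grid rather than only those generated by Betti grades. Two small points to make airtight: the preservation of $|\bettirank_0|$ under the Kan-extension reduction is exactly what \cref{theorem:rank-exact-structure-facts}(2) provides (hooks Kan-extend to hooks), and the fact that $\Hom_{\kbb\Sscr}(\Lbb,-)$ sends the minimal rank projective cover to a projective cover is stated in the proof of \cref{lemma.equivalence_X_and_L}; also, you do not need $|\beta_0(M')| = |\beta_0(M)|$ --- the pointwise bound $\dim M'(a) = \dim M(f(a)) \leq |\beta_0(M)|$ suffices.
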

\begin{proof}
    Let $\bigoplus_{j \in J} \Psf_j \to \bigoplus_{i \in I} \Psf_i \to M$ be a minimal projective presentation of $M$, so that \(\beta_0(M) = \{[\Psf_i]\}_{i \in I}\) and \(\beta_1(M) = \{[\Psf_j]\}_{j \in J}\), and \(I\) and \(J\) are multisets of elements of \(\Rscr^n\).
    Let \(\bar{I} \subseteq \Rscr^n\) be the subset of \(\Rscr^n\) consisting of distinct elements of \(I\), and define \(\bar{J}\) analogously.
    Let \(\bar{I}^{\vee 0} = \bar{I} \subseteq \Rscr^n\) and let \(\bar{I}^{\vee k+1} = \bar{I} \vee \bar{I}^{\vee k}\), where, for \(X,Y \subseteq \Rscr^n\), we let \(X\vee Y = \{x\vee y \colon x \in X, y \in Y\} \subseteq \Rscr^n\).
    Define \(\bar{J}^{\vee k}\) analogously.

    Let \(\bigoplus_{(a,b) \in A} \Lsf_{a,b} \to M\) be the first step in a minimal rank projective resolution of \(M\), where \(A\) is a multiset of elements of \(\Rscr^n \times (\Rscr^n \cup \{\infty\})\).
    We claim that, for each \((a,b) \in A\), we have \(a \in \bar{I}^{\vee n}\) and either \(b \in \bar{J}^{\vee n}\) or \(b = \infty\).
    This follows from two claims: (a) any morphism \(\Lsf_{a',b'} \to M\) with \(b' \neq \infty\) factors through a map \(\Lsf_{a'',b''} \to M\) with \(a'' \in \bar{I}^{\vee k}\) and \(b'' \in \bar{J}^{\vee k}\) for a sufficiently large \(k\); (b) \(\bar{I}^{\vee k} = \bar{I}^{\vee n}\) and \(\bar{J}^{\vee k} = \bar{J}^{\vee n}\) for all \(k \geq n\).
    The case \(b = \infty\) is analogous.

    To prove claim (b), observe that any element of $\Rscr^n$ that is written as a join of $k$ elements with $k \geq n$ can actually be written as a join of $n$ of those elements, since the join operation in $\Rscr^n$ is equivalently a componentwise maximum.
    To prove claim (a), note that such a morphism induces a commutative diagram 
    \[
        \begin{tikzpicture}
            \matrix (m) [matrix of math nodes,row sep=2em,column sep=4em,minimum width=2em,nodes={text height=1.75ex,text depth=0.25ex}]
            { \Psf_{b'} & {\bigoplus_{j \in J} \Psf_j}   \\
              \Psf_{a'} & \bigoplus_{i \in I}\Psf_i \\
              M & M. \\};
            \path[line width=0.75pt, -{>[width=8pt]}]
            (m-1-1) edge (m-1-2)
            (m-1-1) edge (m-2-1)
            (m-2-1) edge (m-2-2)
            (m-1-2) edge (m-2-2)
            (m-2-1) edge (m-3-1)
            (m-2-2) edge (m-3-2)
            (m-3-1) edge [double equal sign distance,-] (m-3-2)
            ;
        \end{tikzpicture}
    \]
    The morphism $\Psf_{a'} \to \bigoplus_{i \in I}\Psf_i$ clearly factors through $\Psf_{a''}$, for some $a'' \in \bar{I}^{\vee k}$ for large enough $k$, and, analogously, the morphism $\Psf_{b'} \to \bigoplus_{j \in J}\Psf_j$ factors through $\Psf_{b''}$, for some $b'' \in \bar{J}^{\vee k}$.
    The claim follows.

    If we let $\bar{A} \subseteq \Rscr^n \times (\Rscr^n \cup \{\infty\})$ be the set of distinct elements of $A$, we have proved that $|\bar{A}| \leq |\bar{I}^{\vee n}| + (|\bar{I}^{\vee n}| \cdot |\bar{J}^{\vee n}|)$.

    Now, note that \(\dim(\Hom(\Lsf_{a,b},M)) = \dim(\ker(M(a) \to M(b)))\) if $b \neq \infty$, and $\dim(\Hom(\Lsf_{a,b},M)) = \dim(M(a))$ if $b = \infty$.
    In either case, \(\dim(\Hom(\Lsf_{a,b},M)) \leq \dim(M(a)) \leq |I|\).
    Thus, if $(a,b) \in \bar{A}$, the multiplicity of $(a,b)$ as an element of $A$ is bounded above by $\dim(\Hom(\Lsf_{a,b},M))$, and thus by $|I|$.

    This lets us bound as follows
    \begin{align*}
        |\bettirank_0(M)|
        &= |A|
        = \sum_{(a,b) \in \bar{A}} \text{ (multiplicity of $(a,b)$ in $A$)} \leq |\bar{A}| \cdot |I|\\
        &\leq 
        \left(|\bar{I}^{\vee n}| + (|\bar{I}^{\vee n}| \cdot |\bar{J}^{\vee n}|) \right) |I| \leq |I|^{n+1} \cdot (1 + |J|^n) \leq \bettibars(M)^{2n + 1},
    \end{align*}
    as required.
\end{proof}

The results up to this point allow us to give an upper bound for the size of the relative Betti numbers in terms of the size of the absolute Betti numbers.
The next three results allow us to give a lower bound.

\begin{lemma}
    \label{lemma:koszul-upset-R2}
    Let \(\{g_1, \dots, g_k\} \subseteq \Rscr^2\) be a sequence of incomparable elements ordered by their abscissa and let \(\Ical = \{x \in \Rscr^2 \colon \exists a, x \geq g_a\}\).
    Then \(\beta_0(\kbb_\Ical) = \{[\Psf_{g_1}],\dots, [\Psf_{g_k}]\}\), \(\beta_1(\kbb_\Ical) = \{[\Psf_{g_1\vee g_2}],[\Psf_{g_2\vee g_3}], \dots, [\Psf_{g_{k-1} \vee g_{k}}]\}\), and \(\beta_2(\kbb_\Ical) = \varnothing\).
    In particular, \(\bettibars(\kbb_{\Ical}) = 2k-1\).
\end{lemma}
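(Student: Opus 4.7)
The plan is to write down an explicit minimal projective resolution of $\kbb_\Ical$ of length one; the claims about Betti numbers and $\bettibars(\kbb_\Ical)$ then follow immediately.

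Since the $g_i$ are ordered by abscissa and are pairwise incomparable, their ordinates are strictly decreasing: $g_{1,y} > g_{2,y} > \cdots > g_{k,y}$. The key combinatorial observation is that for every $p \in \Ical$, the set $S_p = \{i \in [k] \colon g_i \leq p\}$ is an interval $\{a, a+1, \ldots, b\}$ in $[k]$: if $i < l < j$ with $g_i, g_j \leq p$, then $p_x \geq g_{j,x} > g_{l,x}$ and $p_y \geq g_{i,y} > g_{l,y}$, so $g_l \leq p$ as well. Note also that $g_i \vee g_{i+1} \leq p$ iff $\{i, i+1\} \subseteq S_p$.

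I will propose the resolution
\[
0 \to \bigoplus_{i=1}^{k-1} \Psf_{g_i \vee g_{i+1}} \xrightarrow{\;d_1\;} \bigoplus_{i=1}^{k} \Psf_{g_i} \xrightarrow{\;d_0\;} \kbb_\Ical \to 0,
\]
where $d_0$ restricts to the canonical surjection $\Psf_{g_i} \twoheadrightarrow \kbb_\Ical$ on each summand, and the $i$th summand of the source of $d_1$ maps to the $i$th and $(i+1)$th summands of the target via $(\iota, -\iota)$, with $\iota$ the canonical inclusion. Evaluating at any $p \in \Ical$ with $S_p = \{a, \ldots, b\}$, the sequence becomes
\[
0 \to \kbb^{\,b-a} \to \kbb^{\,b-a+1} \to \kbb \to 0,
\]
which one recognizes as the (reduced augmented) simplicial chain complex of the path graph on vertices $\{a, \ldots, b\}$, and so is exact. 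At points $p \notin \Ical$ the entire sequence is zero. This shows the sequence is exact. Minimality is immediate: for every $i$ we have $g_i \vee g_{i+1} > g_i$ and $g_i \vee g_{i+1} > g_{i+1}$ strictly, so no component of $d_1$ is split; similarly $d_0$ is a projective cover since $\{g_1, \ldots, g_k\}$ is exactly the set of minimal elements of $\Ical$.

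Reading off the resolution gives the claimed descriptions of $\beta_0$, $\beta_1$, and $\beta_2 = \varnothing$, whence $\bettibars(\kbb_\Ical) = k + (k-1) = 2k-1$. The only non-routine step is the pointwise exactness verification; this reduces cleanly to the ``staircase'' property that $S_p$ is an interval, so I do not expect any real difficulty. As an alternative, one could instead invoke Proposition~\ref{proposition:Koszul-for-upper-set} with $f(l) = g_l$, observe that $\vee f(A) = g_{\min A} \vee g_{\max A}$ depends only on the extremal indices of $A$, and cancel the redundant summands; but the direct construction above seems more transparent and also yields minimality without additional work.
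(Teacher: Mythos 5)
Your proof is correct and follows essentially the same route as the paper: the paper exhibits the very same two-term sequence \(0 \to \bigoplus_{\ell=2}^k \Psf_{g_{\ell-1}\vee g_\ell} \to \bigoplus_{\ell=1}^k \Psf_{g_\ell} \to \kbb_\Ical\) and asserts (with reference to the first two stages of \cref{proposition:Koszul-for-upper-set}) that it is a minimal resolution, while you simply carry out the pointwise exactness and minimality check in detail. Your ``staircase'' observation that \(S_p\) is an interval, reducing pointwise exactness to the augmented chain complex of a path graph, is exactly the verification the paper leaves implicit.
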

\begin{proof}
    It is easy to verify that we have minimal resolution
    \[
        0 \to \bigoplus_{\ell=2}^k \Psf_{g_{\ell-1} \vee g_\ell} \to \bigoplus_{\ell = 1}^k \Psf_{g_\ell} \to \kbb_{\Ical},
    \]
    by considering the first two stages of the resolution of \cref{proposition:Koszul-for-upper-set}.
\end{proof}

\begin{lemma}
    \label{lemma:lower-bound-bars-mrd}
    Let \(\{g_1, \dots, g_k\} \subseteq \Rscr^2\) be a sequence of incomparable elements of \(\Rscr^2\), ordered by their abscissa, and let \(\{c_1, \dots, c_\ell\} \subseteq \Rscr^2\) be a sequence of incomparable elements of \(\Rscr^2\), also ordered by their abscissa.
    Assume that \((g_k)_x < (c_1)_x\) and that \((g_1)_y < (c_\ell)_y\), so that, in particular, \(\Ical = \{x  \in \Rscr^2 \colon \exists a, g_a \leq x, \exists b, x \leq c_b\}\) is an interval of \(\Rscr^2\).
    We have \(\bettibars(\kbb_\Ical) \leq 2(k + \ell)\) and \(\decbars(\kbb_\Ical) \geq k\ell\).
\end{lemma}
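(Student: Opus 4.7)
The plan is to establish the two inequalities separately, by a short-exact-sequence argument for the upper bound on $\bettibars$ and by a multiplicity computation for the lower bound on $\decbars$.

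For the upper bound $\bettibars(\kbb_\Ical) \leq 2(k+\ell)$, the key is the short exact sequence
\[
    0 \to \kbb_{U \setminus \Ical} \to \kbb_U \to \kbb_\Ical \to 0,
\]
where $U = \{x \in \Rscr^2 \colon \exists a,\, g_a \leq x\}$ is the upset generated by the $g_a$. Applying \cref{lemma:betti-ses} yields
$\bettibars(\kbb_\Ical) \leq \bettibars(\kbb_U) + \bettibars(\kbb_{U \setminus \Ical})$, and \cref{lemma:koszul-upset-R2} gives $\bettibars(\kbb_U) = 2k-1$ directly. For the second summand, I would first check that $U\setminus \Ical$ is an interval: it is an upset (as the intersection of $U$ with the complement of the downset generated by the $c_b$), and it is comparably connected because any two of its elements admit a common upper bound inside it. Passing to a sufficiently large finite sub-lattice of $\Rscr^2$ containing the $g_a$, the $c_b$, and the relevant joins and meets, $U \setminus \Ical$ is an upset whose minimal elements form an antichain of size at most $\ell+1$ (one for each ``tread'' of the staircase determined by the $c_b$, including the two extremal ones). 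A second application of \cref{lemma:koszul-upset-R2} then gives $\bettibars(\kbb_{U \setminus \Ical}) \leq 2(\ell+1) - 1 = 2\ell + 1$, and summing produces the bound $2(k+\ell)$.

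For the lower bound $\decbars(\kbb_\Ical) \geq k \ell$, I would first check that the conditions $(g_k)_x < (c_1)_x$ and $(g_1)_y < (c_\ell)_y$ together with the abscissa orderings force $g_a \leq c_b$ for every pair $(a,b) \in [k] \times [\ell]$, so that the $k\ell$ rectangles $R_{a,b}$ with corners $g_a$ and $c_b$ are well-defined and pairwise distinct subsets of $\Ical$. I would then show that each $R_{a,b}$ appears with nonzero multiplicity in $\mrd_\spm(\kbb_\Ical)$ by using the M\"obius inversion description of the minimal rank decomposition by rectangles developed in \cite{botnan-oppermann-oudot} (in the spirit of the generalized persistence diagrams of \cite{kim-memoli,clause-kim-memoli}). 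The multiplicity of $R_{a,b}$ is an alternating sum of values of $\rk(\kbb_\Ical)$ at pairs of points obtained from $(g_a, c_b)$ by replacing one or both entries with neighbouring meets or joins involving $g_{a\pm 1}$ and $c_{b\pm 1}$; localising to a neighbourhood of $(g_a, c_b)$ inside the finite lattice generated by the $g_a$ and $c_b$, and exploiting the pairwise incomparability of the $g_a$ and of the $c_b$, this alternating sum collapses to the M\"obius inversion of a single unit square, whose value is $\pm 1$. Since the multiplicities are all nonzero and $\mrd_\sp,\mrd_\sm$ are disjoint by minimality, the $k\ell$ distinct rectangles $R_{a,b}$ contribute $k\ell$ bars, giving $\decbars(\kbb_\Ical) \geq k\ell$.

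The main obstacle is the detailed M\"obius-inversion bookkeeping in the second claim: one must verify that, for each diagonal rectangle $R_{a,b}$, the potential cancellations from ``off-diagonal'' rank values (involving meets or joins with $g_{a'}$ or $c_{b'}$ for $a'\neq a$ or $b'\neq b$) actually vanish, leaving only the local unit-square contribution. Organising the relevant rank values along the two staircase boundaries of $\Ical$ and arguing separately for horizontal and vertical perturbations should reduce this to a routine but somewhat tedious verification.
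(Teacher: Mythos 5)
Your proof is correct in substance, and for the first inequality it is essentially the paper's own argument: the same short exact sequence $0 \to \kbb_{\Jcal\setminus\Ical} \to \kbb_{\Jcal} \to \kbb_\Ical \to 0$ with $\Jcal$ the upset generated by the $g_a$, the same appeal to \cref{lemma:betti-ses}, and the same count via \cref{lemma:koszul-upset-R2} of $k$ minimal elements for $\Jcal$ and $\ell+1$ for the complementary staircase upset (the paper writes these generators out explicitly as $((g_1)_x,(c_1)_y),\, c_1\wedge c_2,\dots,c_{\ell-1}\wedge c_\ell,\, ((c_\ell)_x,(g_k)_y)$; your detour through a finite sublattice is unnecessary since \cref{lemma:koszul-upset-R2} is stated over $\Rscr^2$). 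Where you genuinely diverge is the lower bound: the paper does not perform any M\"obius-inversion computation but simply quotes \cite[Equation~5.1]{botnan-oppermann-oudot}, which gives the full minimal rank decomposition of such a staircase interval in closed form --- in particular $\mrd_\sp(\kbb_\Ical)$ contains every $\Rsf^o_{g_a,c_b}$ with multiplicity one, so $\decbars(\kbb_\Ical)\geq k\ell$ is immediate. Your plan of establishing nonvanishing multiplicities of the $k\ell$ corner rectangles by localizing the inclusion--exclusion to a unit square is believable (and your preliminary observation that the hypotheses force $g_a < c_b$ for all pairs is correct and needed), but the cancellation bookkeeping you defer as ``routine but tedious'' is exactly the content that the cited formula already packages; as written, the hardest step of your lower bound is only sketched, whereas a citation of \cite[Equation~5.1]{botnan-oppermann-oudot} closes it in one line. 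So: same route for the upper bound, a viable but heavier and not fully executed route for the lower bound, which the paper handles by citation.
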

\begin{proof}
    We start with the first claim.
    Let \(\Jcal = \{ x \in \Rscr^2 \colon \exists a, g_a \leq x\}\), \(B = \{ (((g_1)_x,(c_1)_y), c_1 \wedge c_2, c_2 \wedge c_3, \dots, c_{\ell - 2} \wedge c_{\ell - 1}, c_{\ell - 1} \wedge c_\ell, ((c_\ell)_x,(g_k)_y)\}\), and \(\Kcal = \{ x \in \Rscr^2 \colon \exists b \in B, b\leq x\}\).
    We have a short exact sequence \(0 \to \kbb_\Kcal \to \kbb_\Jcal \to \kbb_\Ical \to 0\).
    From \cref{lemma:betti-ses}, it follows that \(\bettibars(\kbb_\Ical) \leq \bettibars(\kbb_\Jcal) + \bettibars(\kbb_\Kcal)\).
    So that \(\bettibars(\kbb_\Ical) \leq 2(k+\ell)\) by \cref{lemma:koszul-upset-R2}.

    The second claim follows from \cite[Equation~5.1]{botnan-oppermann-oudot}, which implies that
    \begin{align*}
        \mrd_\sp(\kbb_\Ical) & = \left\{[\Rsf^o_{g_a,c_b}]\right\}_{\substack{1 \leq a \leq k                 \\1 \leq b \leq \ell}}\;\; \cup\;\; \left\{[\Rsf^o_{g_a\vee g_{a+1},c_{b}\wedge c_{b+1}}]\right\}_{\substack{1 \leq a \leq k-1\\1 \leq b \leq \ell-1}}\\
        \mrd_\sm(\kbb_\Ical) & = \left\{[\Rsf^o_{g_a,c_{b}\wedge c_{b+1}}]\right\}_{\substack{1 \leq a \leq k \\1 \leq b \leq \ell-1}}\;\; \cup\;\; \left\{[\Rsf^o_{g_a\vee g_{a+1},c_b}]\right\}_{\substack{1 \leq a \leq k-1\\1 \leq b \leq \ell}},
    \end{align*}
    and thus \(\decbars(\kbb_\Ical) \geq k\ell\).
\end{proof}

\begin{figure}
    \begin{center}
        \includegraphics[width=0.7\linewidth]{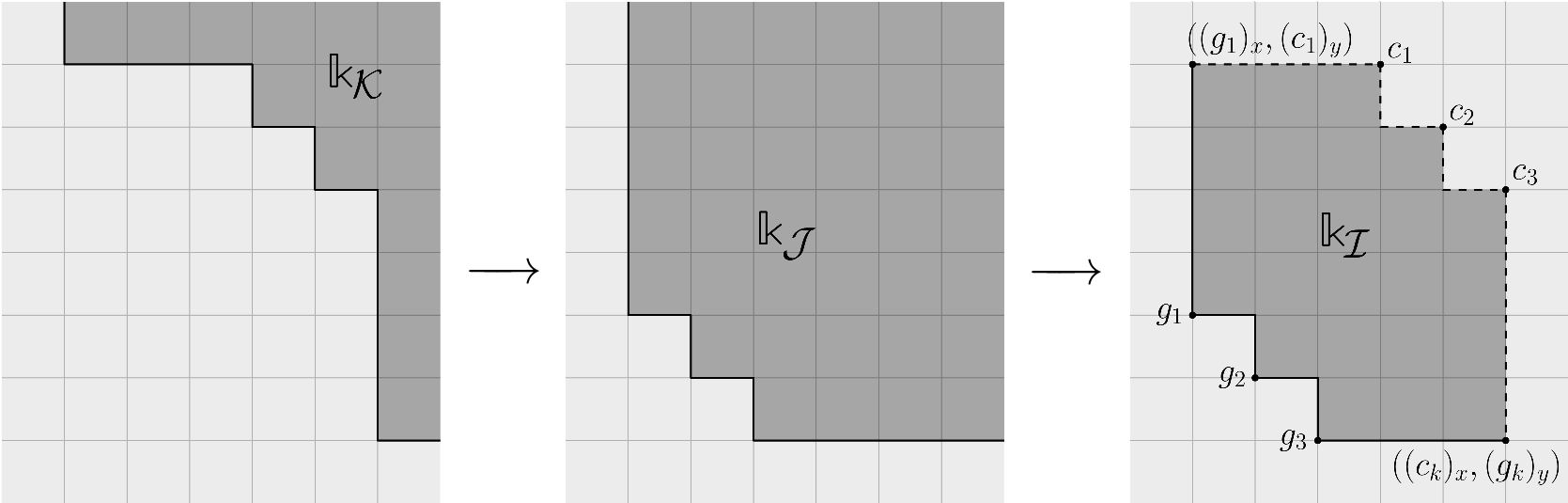}
    \end{center}
    \caption{The short exact sequence in the proof of \cref{lemma:lower-bound-bars-mrd}, when \(k = \ell = 3\).}
    \label{figure:lower-bound-bars-mrd}
\end{figure}

\begin{lemma}
    \label{lemma:mrd-hook}
    Let \(a < b \in \Rscr^n \cup \{\infty\}\) and let \(\Lsf_{a,b} \colon \Rscr^n \to \vect\) be the corresponding hook module.
    Then \(\decbars(\Lsf_{a,b}) \leq 2^n - 1\).
    In particular, for all \fpp \(M \colon \Rscr^n \to \vect\), we have \(\decbars(M) \leq (2^n - 1)\, \bettibarsrank(M)\).
\end{lemma}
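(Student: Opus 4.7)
The plan is to prove the two claims in turn, with the first being the substantive one. For the single-hook bound, I will construct an explicit rank decomposition of $\Lsf_{a,b}$ by at most $2^n - 1$ rectangles via inclusion--exclusion. If $b = \infty$, then $\Lsf_{a,\infty} = \Psf_a = \Rsf^o_{a,(\infty,\dots,\infty)}$ is itself a rectangle module, so $\decbars(\Lsf_{a,\infty}) = 1 \leq 2^n - 1$. Otherwise $b \in \Rscr^n$, and for any $s \leq t \in \Rscr^n$ with $a \leq s$ the condition $s \ngeq b$ is automatic from $t \ngeq b$, so $\rk(\Lsf_{a,b})(s,t) = \mathbf{1}[a \leq s \leq t,\, t \ngeq b]$. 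Applying inclusion--exclusion to the expression $\mathbf{1}[t \ngeq b] = \mathbf{1}[\exists k,\, t_k < b_k]$, and writing $b^S \in (\Rscr \cup \{\infty\})^n$ for the point with $(b^S)_k = b_k$ when $k \in S$ and $(b^S)_k = \infty$ otherwise, I would obtain
\[
   \rk(\Lsf_{a,b}) \;=\; \sum_{\varnothing \neq S \subseteq \{1,\dots,n\}} (-1)^{|S|+1}\, \rk\bigl(\Rsf^o_{a,b^S}\bigr).
\]

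Next, I would observe that any term indexed by an $S$ containing a coordinate $k$ with $a_k = b_k$ is identically zero, since the constraint $t_k < b_k = a_k \leq t_k$ is impossible; discarding these summands leaves only genuine rectangle modules, with pairwise distinct upper corners $b^S$ (they differ on the symmetric difference of the indexing subsets). Splitting the remaining sum by the parity of $|S|$ therefore produces a rank decomposition of $\Lsf_{a,b}$ whose positive and negative parts are disjoint multisets of total size at most $2^n - 1$. By the uniqueness of the minimal rank decomposition by rectangles (recalled in \cref{section:rank-decs}), this decomposition coincides with $\mrd_\spm(\Lsf_{a,b})$, and hence $\decbars(\Lsf_{a,b}) \leq 2^n - 1$.

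For the general bound, I would use that $\barcrank_\spm(M)$ is itself a rank decomposition of $M$, because a minimal rank projective resolution is rank exact by \cref{theorem:rank-exact-structure-facts}, and its alternating sum of ranks recovers $\rk(M)$. Replacing each hook module appearing in $\barcrank_\spm(M)$ by its rectangle decomposition from the first part yields a (possibly non-minimal) rank decomposition of $M$ by at most $(2^n - 1)\,\bettibarsrank(M)$ rectangles. Cancelling common rectangles between the positive and negative parts of this decomposition produces a rank decomposition with disjoint parts of no greater total size, which by uniqueness must equal $\mrd_\spm(M)$; this yields $\decbars(M) \leq (2^n - 1)\,\bettibarsrank(M)$. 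The only substantive step is the inclusion--exclusion identity for the single hook; the reduction to the general case is a routine minimality argument, and the main care needed is in handling the degenerate summands (those with $a_k = b_k$ for some $k \in S$) without breaking the disjointness of positive and negative parts.
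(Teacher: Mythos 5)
Your proof is correct, and both of your steps land on the same decomposition the paper uses, but you reach the single-hook bound by a somewhat different route. The paper works at the module level: it covers the support of \(\Lsf_{a,b}\) by at most \(n\) right open rectangles (each downward-closed in the support) and invokes \cite[Lemma~5.5]{botnan-oppermann-oudot} to produce a rank exact sequence whose terms are the \(\kbb_{\cap_{i\in S}R_i}\), one per non-empty \(S\), which immediately exhibits \(\mrd_\spm(\Lsf_{a,b})\) with at most \(2^n-1\) rectangles. You instead work purely at the level of \(\Zbb\)-valued rank functions: inclusion--exclusion on the indicator \(\mathbf{1}[t\ngeq b]\) gives the identity \(\rk(\Lsf_{a,b})=\sum_{\varnothing\neq S}(-1)^{|S|+1}\rk(\Rsf^o_{a,b^S})\), and after discarding the degenerate summands (those \(S\) meeting \(\{k: a_k=b_k\}\)) and noting the remaining upper corners \(b^S\) are pairwise distinct, uniqueness of the minimal rank decomposition identifies this with \(\mrd_\spm(\Lsf_{a,b})\). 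The rectangles \(\Rsf^o_{a,b^S}\) are exactly the intersections \(\cap_{k\in S}R_k\) from the paper's argument, so the content is the same; what your version buys is self-containedness (no appeal to the rank exact sequence of \cite[Lemma~5.5]{botnan-oppermann-oudot}, only to the uniqueness statement already recalled in \cref{section:rank-decs}), at the cost of the small bookkeeping about degenerate coordinates and the separate case \(b=\infty\), which you handle correctly. Your treatment of the second claim---substituting each hook's rectangle decomposition with signs, cancelling common rectangles, and invoking uniqueness---is exactly the paper's argument.
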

\begin{proof}
    For the first claim, let \(\Ical \subseteq \Rscr^n\) be the support of \(\Lsf_{a,b}\).
    It is observed in the proof of \cite[Theorem~5.4]{botnan-oppermann-oudot} that $\Ical$ is the union of at most \(n\) non-empty, right open rectangles $R_1, \dots, R_k$ ($k \leq n$), which are all downward-closed subsets of \(\Ical\).
    Then, \cite[Lemma~5.5]{botnan-oppermann-oudot} implies that there exists a rank exact sequence starting with $0 \to \kbb_\Ical$ and such that the rest of the summands are the modules form $\kbb_{\cap_{i \in S} R_i}$ for $\varnothing \neq S \subseteq \{1, \dots, k\}$, each one appearing exactly once.
    This implies that the minimal rank decomposition by rectangles of $\kbb_\Ical = \Lsf_{a,b}$ has as many rectangles as there are non-empty subsets of $\{1, \dots, k\}$, so that \(\decbars(\Lsf_{a,b}) \leq \sum_{1 \leq i \leq k} {k \choose i} = 2^k - 1 \leq 2^n - 1\).

    For the second claim, note that the minimal rank decomposition by rectangles of \(M\) can be computed by considering the rank exact decomposition of \(M\), taking the minimal rank decomposition by rectangles of each of the bars taking their sign into account, and then canceling, with multiplicity, the bars that appear both as positive and negative.
\end{proof}

We are now ready to prove the final result of this section.

\begin{proposition}
    \label{proposition:polynomial-bound}
    For all \fpp \(M \colon \Rscr^n \to \vect\), we have
    \[
        \bettibarsrank(M) \leq \big(4 \cdot \bettibars(M)\big)^{(2n+1)^{2n-1}},
    \]
    and there exists a family of \fpp modules $M_k : \Rscr^2 \to \vect$ for $k \in \Nbb$ such that $\bettibars(M_k) \leq 4k$ and $\bettibarsrank(M_k) \geq k^2/3$.
\end{proposition}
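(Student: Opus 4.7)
The proof proposal splits naturally into the two bounds stated.

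For the upper bound, I would combine three of the lemmas already established in this subsection. By \cref{theorem:gldim-rank-Rn}, the rank exact global dimension is $2n-2$, so any minimal rank projective resolution of $M$ has length at most $2n-2$, and therefore
\[
    \bettibarsrank(M) = \sum_{j=0}^{2n-2} |\bettirank_j(M)|.
\]
\cref{lemma:explicit-bound-degree-0} shows that the hypothesis of \cref{lemma:induction} is satisfied with $\alpha = 2n+1 \geq 2$. Applying \cref{lemma:induction} with $i = 2n-2$ then yields
\[
    \bettibarsrank(M) = \sum_{j=0}^{2n-2} |\bettirank_j(M)| \leq \bigl(4 \cdot \bettibars(M)\bigr)^{(2n+1)^{2n-1}},
\]
which is the desired inequality. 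This part is essentially a bookkeeping step, so I expect no difficulties here.

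For the lower bound, I would use the explicit family from \cref{lemma:lower-bound-bars-mrd}. Fix $k \in \Nbb$ and choose any two sequences $\{g_1,\dots,g_k\}$ and $\{c_1,\dots,c_k\} \subseteq \Rscr^2$ of incomparable elements ordered by abscissa, with $(g_k)_x < (c_1)_x$ and $(g_1)_y < (c_k)_y$, and let $M_k \coloneqq \kbb_{\Ical_k}$ for the interval $\Ical_k$ that they generate. Taking $k = \ell$ in \cref{lemma:lower-bound-bars-mrd} gives $\bettibars(M_k) \leq 4k$ and $\decbars(M_k) \geq k^2$.

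It remains to pass from $\decbars$ to $\bettibarsrank$. For this I would invoke \cref{lemma:mrd-hook} in the case $n=2$, which yields $\decbars(M_k) \leq (2^2 - 1)\, \bettibarsrank(M_k) = 3 \bettibarsrank(M_k)$. Combined with $\decbars(M_k) \geq k^2$ this gives $\bettibarsrank(M_k) \geq k^2/3$, as required. The main (mild) subtlety is being careful that the lower bound on $\decbars$ produced by \cref{lemma:lower-bound-bars-mrd} is indeed compatible with the upper bound coming from \cref{lemma:mrd-hook}, but since both refer to the same invariant of $M_k$, the inequality is immediate once both lemmas are available.
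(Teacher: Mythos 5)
Your proposal is correct and follows essentially the same route as the paper: the upper bound by combining \cref{lemma:explicit-bound-degree-0} (giving $\alpha = 2n+1$), \cref{lemma:induction} with $i = 2n-2$, and \cref{theorem:gldim-rank-Rn}; the lower bound via \cref{lemma:lower-bound-bars-mrd} with $\ell = k$ followed by \cref{lemma:mrd-hook} for $n=2$. The only cosmetic difference is that the paper fixes explicit coordinates $g_i = (i-1,-i+1)$ and $c_i = (k+i,k+2-i)$ where you take an arbitrary admissible choice, which is equally valid.
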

\begin{proof}
    For the upper bound, we use \cref{lemma:induction} and \cref{lemma:explicit-bound-degree-0}, by instantiating \(i = 2n-2\), using the global dimension of the rank exact structure (\cref{theorem:gldim-rank-Rn}), and \(\alpha = 2n +1\).

    For the lower bound, we use \cref{lemma:lower-bound-bars-mrd}.
    For this, we let \(M_k \coloneqq \kbb_\Ical \colon \Rscr^n \to \vect\) as in the statement of the result, taking \(\ell = k\), $g_i = (i-1,-i+1)$ for $1 \leq i \leq k$, and $c_i = (k+i, k+2-i)$ for $1 \leq i \leq k$.
    We thus have \(\bettibars(M_k) \leq 4k\) and \(k^2 \leq \decbars(M_k)\) by \cref{lemma:lower-bound-bars-mrd}.
    To conclude the proof, we combine the last inequality with the inequality of \cref{lemma:mrd-hook}.
\end{proof}

The next example shows how the upper bound of \cref{proposition:polynomial-bound} works for the module of \cref{example:rank-exact-resolution}.
In particular, it shows that the upper bound can be as low as linear.

\begin{example}
    Let $M \coloneqq \kbb_\Ical : \Rscr^2 \to \vect$ with $\Ical = \big([0,2)\times [0,1)\big) \cup \big([0,1)\times [0,2)\big)$, as in \cref{example:rank-exact-resolution}.
    An argument like the one in \cref{lemma:lower-bound-bars-mrd} (see \cref{lemma:koszul-upset-R2,figure:lower-bound-bars-mrd}) shows that $\beta_0(M) = \{[\Psf_{(0,0)}]\}$, $\beta_1 = \{[\Psf_{0,2},\Psf_{2,0},\Psf_{1,1}]\}$, and $\beta_2 = \{[\Psf_{1,2}],[\Psf_{2,1}]\}$. 
    In particular, $\bettibars(M) = 6$.
    Meanwhile, $\bettibarsrank(M) = 5$, by \cref{example:rank-exact-resolution}.
    This shows that the upper bound of \cref{proposition:polynomial-bound} can be very coarse in general.
    Indeed, following the analysis in this example, one can show that, if $k \geq 2$ and $M_k \coloneqq \kbb_{\Ical_k} : \Rscr^2 \to \vect$ with
    \[
        \Ical_k =
            \big([0,1)\times [0,k)\big)
            \cup
            \big([0,2)\times [0,k-1)\big)
            \cup
            \cdots
            \cup
            \big([0,k-1)\times [0,2)\big)
            \cup
            \big([0,k)\times [0,1)\big),
    \]
    then $\bettibars(M_k) = 2k+2$ and $\bettibarsrank(M_k) = 2k+1$.
    This means that there exist infinite families of modules with $\bettibars$ not bounded and with $\bettibarsrank$ linear in $\bettibars$.
\end{example}

We conclude this section stating a conjecture that says that, in the two parameter case, we expect the size of the Betti numbers relative to the rank exact structure to be at most cubic in the size of the usual Betti numbers.

\begin{conjecture}
    \label{conjecture:cubic-bound}
    Given \(M \colon \Rscr^2 \to \vect\) \fpp, we have $\bettibarsrank(M) \in O\left( \bettibars(M)^{3}\right)$, in the sense that $\bettibarsrank(M)$ can be bounded above by $\bettibars(M)^3$ times a constant independent of $M$.
\end{conjecture}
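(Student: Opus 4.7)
The plan is to bound each of the three non-zero levels $|\bettirank_0(M)|$, $|\bettirank_1(M)|$, $|\bettirank_2(M)|$ separately by $O(\bettibars(M)^3)$; recall that $\gldimrank\left(\vect^{\Rscr^2}_\fpp\right) = 2$ by \cref{theorem:gldim-rank-Rn}, so no other levels contribute to $\bettibarsrank(M)$. The starting point is a sharpening of \cref{lemma:explicit-bound-degree-0} in the two-parameter case: the bound $|\bettirank_0(M)| \leq \bettibars(M)^{2n+1} = \bettibars(M)^5$ proved there is obtained through several coarse estimates, and one should aim to reduce it substantially by exploiting the geometry of $\Rscr^2$.

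First I would re-examine the proof of \cref{lemma:explicit-bound-degree-0} for $n=2$. The bound $|\bar{A}| \leq |\bar{I}^{\vee 2}|(1 + |\bar{J}^{\vee 2}|)$ together with $|\bar{I}^{\vee 2}| \leq |I|^2$ leaves considerable room for improvement, since the pairs $(a,b)$ that actually parameterize summands in a minimal rank projective cover of $M$ are highly constrained: in $\Rscr^2$, for a fixed generator grade $a$, the candidate values of $b$ lie in a one-dimensional antichain determined by the relations of $M$ above $a$. Combined with a sharpened multiplicity bound---noting that each generator of $M$ can contribute to only a controlled number of minimal hooks---one can hope for an inequality of the form $|\bettirank_0(M)| \leq c \cdot \bettibars(M)^{\alpha}$ for some $\alpha$ substantially smaller than $5$.

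For the higher levels, the natural approach is to iterate. Let $S = \ker(P_0 \to M)$ be the first relative syzygy; by \cref{lemma:inductive-step} we have $\bettibars(S) \leq 2|\bettirank_0(M)| + \bettibars(M)$, and then $|\bettirank_1(M)| = |\bettirank_0(S)|$ and $|\bettirank_2(M)| = |\bettirank_0(S')|$, where $S'$ is the second relative syzygy. If a bound $|\bettirank_0(N)| \leq c \cdot \bettibars(N)^\alpha$ holds universally for \fpp modules $N$, then iterating yields $|\bettirank_i(M)| \leq O\!\left(\bettibars(M)^{\alpha^{i+1}}\right)$, so the cubic conjecture---along this route---reduces to proving $|\bettirank_0(N)| \leq O\!\left(\bettibars(N)^{3^{1/3}}\right)$, i.e.\ a nearly linear bound.

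The main obstacle is precisely this essentially linear requirement on $|\bettirank_0(N)|$, which seems very difficult to achieve by direct means and may well be false. A more promising line of attack is therefore a global argument that controls all three levels simultaneously, rather than propagating inductively through syzygies. One possibility is to analyze the minimal rank decomposition by rectangles $\decbars(M)$ directly---using the explicit two-parameter formulas relating $\mrd_\spm(M)$ to the generator-relation data of $M$ given in~\cite{botnan-oppermann-oudot}---and then invoke \cref{lemma:mrd-hook} to transfer the bound to $\bettibarsrank(M)$. Another possibility is to encode the minimal rank resolution combinatorially via the staircase generated by $\bar{I}$ and $\bar{J}$ in $\Rscr^2$ and to bound the number of "critical cells" of each dimension by a direct counting argument. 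Either approach would have to break the exponential blow-up that plagues the iterative route, and doing so seems to be the central difficulty in settling \cref{conjecture:cubic-bound}.
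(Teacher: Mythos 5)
There is a genuine gap here, and in fact two things should be said. First, the statement you are treating is stated in the paper only as \cref{conjecture:cubic-bound}; the authors do not prove it, so there is no proof in the paper to compare against. Second, and more importantly, your text is not a proof either: it is a plan in which every decisive step is left open. The iterative route reduces the conjecture, as you note, to a bound of the form $|\bettirank_0(N)| \leq O\bigl(\bettibars(N)^{3^{1/3}}\bigr)$ for all \fpp $N$, but you do not prove any improvement of \cref{lemma:explicit-bound-degree-0} beyond the exponent $2n+1=5$, you concede the needed nearly linear bound ``may well be false,'' and the machinery you would iterate (\cref{lemma:induction}, built on \cref{lemma:inductive-step}) is only stated for $\alpha \geq 2$, so even a proved sub-quadratic bound would require reworking that lemma before it could yield a cubic total. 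Identifying that this is the obstacle is exactly restating the open problem, not resolving it.

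The alternative ``global'' routes you sketch are also not viable as described. The transfer via \cref{lemma:mrd-hook} points in the wrong direction: that lemma gives $\decbars(M) \leq (2^n-1)\,\bettibarsrank(M)$, i.e.\ it bounds the size of the minimal rank decomposition by rectangles \emph{in terms of} the rank exact decomposition, so controlling $\decbars(M)$ through the explicit two-parameter formulas of \cite{botnan-oppermann-oudot} gives no upper bound on $\bettibarsrank(M)$; indeed $\decbars(M)$ can be much smaller than $\bettibarsrank(M)$ because of cancellations between positive and negative bars. The ``staircase / critical cell'' counting is likewise only named, with no construction of the cells, no identification of them with summands of a minimal rank projective resolution, and no counting estimate. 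In short, nothing in the proposal establishes the cubic bound; the conjecture remains open, and your write-up should be presented as a discussion of possible strategies (with the \cref{lemma:mrd-hook} step removed or reversed) rather than as a proof.
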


\section{Bottleneck stability of the rank exact decomposition and universality}

In \cref{section:bottleneck-stability-rank-intro-signed-barcode}, we prove \cref{theorem:stability-rank}, the bottleneck stability result for the rank exact decomposition.
In \cref{section:universality}, we prove \cref{proposition:universality}, a universality result for the signed bottleneck dissimilarity restricted to signed barcodes of hook modules.
We also show that the triangle inequality precludes the existence of a discriminative \emph{and} ''rank-stable'' dissimilarity on signed barcodes of hook modules (\cref{prop:no-stable-tri}).

\subsection{Bottleneck stability of the rank exact decomposition}
\label{section:bottleneck-stability-rank-intro-signed-barcode}

\begin{theorem}
    \label{theorem:stability-rank}
    Let \(n \geq 1 \in \Nbb\).
    For all \fpp modules \(M,N \colon \Rscr^n \to \vect\), we have
    \[
        \widehat{d_B}(\, \barcrank_\spm(M)\,,\, \barcrank_\spm(N)\,) \leq (2n-1)^2 \cdot d_I(M,N).
    \]
\end{theorem}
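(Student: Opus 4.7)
The plan is to follow the general scheme announced in the introduction: combine the persistent Schanuel lemma (\cref{lemma:relative-stability-resolutions}) with the bottleneck stability of hook-decomposable modules (\cref{proposition-stability-hooks}) and the finiteness of the global dimension of the rank exact structure (\cref{theorem:gldim-rank-Rn}).

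The first step is a simple verification: I would check that the rank exact structure $\Erank$ on $\vect^{\Rscr^n}$ is shift-invariant in the sense of \cref{stability-projective-resolutions-section}. This is immediate because the rank invariant is preserved by shifts and a short exact sequence remains exact after shifting, so rank exactness is preserved by $(-)[\epsilon]$.

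Next, set $\epsilon = d_I(M,N)$ (assuming this is finite, otherwise there is nothing to prove). By \cref{theorem:gldim-rank-Rn}, both $M$ and $N$ admit finite minimal rank projective resolutions $P_\bullet \to M$ and $Q_\bullet \to N$ of length at most $2n-2$. By \cref{theorem:rank-exact-structure-facts}, each $P_i$ and each $Q_i$ is hook-decomposable, and since shifts of hook modules are again hook modules, the two alternating direct sums
\[
    A = P_0 \oplus Q_1[\epsilon] \oplus P_2[2\epsilon] \oplus \cdots, \qquad
    B = Q_0 \oplus P_1[\epsilon] \oplus Q_2[2\epsilon] \oplus \cdots
\]
are themselves hook-decomposable (and are countable direct sums of hook modules). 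By \cref{lemma:relative-stability-resolutions} applied to $\Ecal = \Erank$, the modules $A$ and $B$ are $\epsilon$-interleaved. \cref{proposition-stability-hooks} then yields a $(2n-1)\epsilon$-matching between $\barc(A)$ and $\barc(B)$.

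The final step—and the main obstacle—is to decode this matching into one between the actual signed barcodes, namely between $\bettirank_{2\Nbb}(M)\cup \bettirank_{2\Nbb+1}(N)$ and $\bettirank_{2\Nbb}(N)\cup \bettirank_{2\Nbb+1}(M)$. Indeed, a bar of $A$ is of the form $L[i\epsilon]$ where $L$ belongs to $\bettirank_i(M)$ if $i$ is even or to $\bettirank_i(N)$ if $i$ is odd, with $0 \le i \le 2n-2$, and symmetrically for $B$. When the matching from \cref{proposition-stability-hooks} pairs $L[i\epsilon]$ with $L'[j\epsilon]$, we have $d_I(L[i\epsilon], L'[j\epsilon]) \le (2n-1)\epsilon$; using \cref{lemma:formulas-interleavings} (which gives $d_I(H, H[k\epsilon]) \le k\epsilon$ for any hook $H$) and the triangle inequality, this translates into a bound on $d_I(L, L')$ that involves the shift amounts $i, j \in \{0,1,\ldots,2n-2\}$. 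Unmatched bars are handled symmetrically via the distance to the zero module. Keeping track of these shifts carefully and invoking the cost-accounting argument from the proof of \cite[Theorem~8.4]{oudot-scoccola}, in which the matching constant $c=2n-1$ is combined with the length $d+1 = 2n-1$ of the resolution, yields the claimed bound of $(2n-1)^2 \cdot \epsilon$ on the cost of the induced matching between the unshifted signed barcodes, which is exactly $\widehat{d_B}(\barcrank_\spm(M), \barcrank_\spm(N))$.
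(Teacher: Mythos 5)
Your proposal is correct and uses exactly the paper's ingredients---shift-invariance of the rank exact structure, minimal rank projective resolutions of length at most $2n-2$, \cref{lemma:relative-stability-resolutions}, and \cref{proposition-stability-hooks}---but you perform the last two steps in the opposite order, which is where your write-up gets vague. The paper never needs the ``decoding'' step you single out as the main obstacle: it first observes that the \emph{unshifted} sums $P_0\oplus Q_1\oplus P_2\oplus\cdots$ and $Q_0\oplus P_1\oplus Q_2\oplus\cdots$ are $(0;(2n-2)\epsilon)$-interleaved with their shifted counterparts, composes interleavings to conclude that the unshifted sums are $(2n-1)\epsilon$-interleaved, and only then applies \cref{proposition-stability-hooks}; since the barcodes of the unshifted sums are exactly $\bettirank_{2\Nbb}(M)\cup\bettirank_{2\Nbb+1}(N)$ and $\bettirank_{2\Nbb}(N)\cup\bettirank_{2\Nbb+1}(M)$, the factor $(2n-1)^2$ falls out as the matching constant $2n-1$ applied to a $(2n-1)\epsilon$-interleaving. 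Your order (match the shifted barcodes first, then transport the matching bar by bar) also works, but you should finish the additive computation you started rather than invoke the multiplicative cost accounting of \cite[Theorem~8.4]{oudot-scoccola}, which belongs to the other ordering: a matched pair $L[i\epsilon]$, $L'[j\epsilon]$ gives, by composing genuine interleavings, an $(i+j+2n-1)\epsilon$-interleaving between $L$ and $L'$ with $i,j\le 2n-2$, and an unmatched $L[i\epsilon]$ that is $(2n-1)\epsilon$-interleaved with $0$ forces the same for $L$; this yields a $(6n-5)\epsilon$-matching between the unshifted signed barcodes, and $6n-5\le(2n-1)^2$ for all $n\ge 1$, so your route in fact gives a marginally better constant. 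Two small fixes: argue with an arbitrary $\epsilon$ for which an interleaving exists and pass to the infimum at the end, rather than setting $\epsilon=d_I(M,N)$ (the infimum is not known a priori to be attained), and note that $d_I(X,X[\delta])\le\delta$ holds for every persistence module, so no appeal to \cref{lemma:formulas-interleavings} is needed for that step.
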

\begin{proof}
    Let \(\epsilon \geq 0 \in \Rscr\) and assume given \fpp modules \(M,N \colon \Rscr^n \to \vect\) that are \(\epsilon\)-interleaved.
    Let \(P_\bullet \to M\) and \(Q_\bullet \to N\) be minimal rank projective resolutions.
    By \cref{theorem:gldim-rank-poset} these resolutions have length at most \(2n-2\).
    By \cref{lemma:relative-stability-resolutions}, we have that
    \(A' = P_0 \oplus Q_1[\epsilon] \oplus P_2[2\epsilon] \oplus Q_3[3\epsilon] \oplus \cdots\) and \(B' = Q_0 \oplus P_1[\epsilon] \oplus Q_2[2\epsilon] \oplus P_3[3\epsilon] \oplus \cdots\) are \(\epsilon\)-interleaved.
    Since \(P_\bullet\) and \(Q_\bullet\) have length at most \(2n-2\), it follows that \(A = P_0 \oplus Q_1 \oplus P_2 \oplus Q_3 \oplus \cdots\) is \((0;(2n-2)\epsilon)\)-interleaved with \(A'\) and that \(B = Q_0 \oplus P_1 \oplus Q_2 \oplus P_3 \oplus \cdots\) is \((0;(2n-2)\epsilon)\)-interleaved with \(B'\).
    By composing these interleavings, we see that \(A\) and \(B\) are \((2n-1)\epsilon\)-interleaved.
    To conclude, note that \(\barc(A) = \bettirank_{2\Nbb}(M) \cup \bettirank_{2\Nbb+1}(N)\) and \(\barc(B) = \bettirank_{2\Nbb}(N) \cup \bettirank_{2\Nbb+1}(M)\), so the result follows from \cref{proposition-stability-hooks}.
\end{proof}


\begin{example}
    \label{example:stability}
    For an illustration of this example, see \cref{figure:example-stability}.
    Let $M,N : \Rscr^2 \to \vect$, with $M \coloneq \Rsf_{(0,0),(5,5)}^o$ and $N \coloneqq \kbb_{\Ical}$, with $\Ical = \big([0,5)\times [0,4)\big) \cup \big([0,4)\times [0,5)\big)$.
    An argument similar to that in \cref{example:rank-exact-resolution} shows that the minimal rank projective resolutions and rank exact decompositions of $M$ and $N$ look follows:
    \begin{align*}
        0 \to \Lsf_{(0,0),(5,5)}
            \to
        \Lsf_{(0,0),(0,5)} \oplus \Lsf_{(0,0),(5,0)}
            \to
        &M\\
        0 \to \Lsf_{(0,0),(5,4)} \oplus \Lsf_{(0,0),(4,5)}
            \to
        \Lsf_{(0,0),(0,5)} \oplus \Lsf_{(0,0),(4,4)}\oplus \Lsf_{(0,0),(5,0)}
            \to
        &N,
    \end{align*}
    \begin{align*}
        \barcrank_\spm(M) &=
            \left( \left\{
                [\Lsf_{(0,0),(0,5)}],
                [\Lsf_{(0,0),(5,0)}]
            \right\}, \left\{
                [\Lsf_{(0,0),(5,5)}]
            \right\}\right),\\
        \barcrank_\spm(N) &=
        \left( \left\{
        [\Lsf_{(0,0),(0,5)}],
        [\Lsf_{(0,0),(4,4)}],
        [\Lsf_{(0,0),(5,0)}]
        \right\}, \left\{
            [\Lsf_{(0,0),(5,4)}],
            [\Lsf_{(0,0),(4,5)}]
            \right\}\right).
    \end{align*}

    The modules $M$ and $N$ are $1$-interleaved, so \cref{theorem:stability-rank} implies that there exists $3$-matching between
    $
    \left( \left\{
    [\Lsf_{(0,0),(0,5)}],
    [\Lsf_{(0,0),(5,0)}],
    [\Lsf_{(0,0),(5,4)}],
    [\Lsf_{(0,0),(4,5)}]
    \right\}\right)
    $
    and
    $
    \left( \left\{
    [\Lsf_{(0,0),(0,5)}],
    [\Lsf_{(0,0),(4,4)}],
    [\Lsf_{(0,0),(5,0)}],
    [\Lsf_{(0,0),(5,5)}]
    \right\}\right).
    $
    Indeed, as illustrated in \cref{figure:example-stability}, there even exists a $1$-matching.
\end{example}

\subsection{Universality of the signed bottleneck dissimilarity}
\label{section:universality}

We start by giving a remark, showing that the signed bottleneck dissimilarity does not satisfy the triangle inequality.

\begin{remark}
    \label{remark:no-triangle-inequality}
    The same construction as in \cite[Example~4.1]{oudot-scoccola} shows that the dissimilarity \(\widehat{d_B}\) on \(\Rscr^2\)-barcodes does not satisfy the triangle inequality, even when restricted to signed barcodes arising as the rank exact decomposition of a \fpp \(\Rscr^2\)-persistence module.
    Although the example deals with the Betti signed barcode, that is, the signed barcode associated to the usual exact structure on \fpp \(\Rscr^2\)-persistence modules, it applies without any changes to the rank exact decomposition, since the (usual) minimal resolutions of the modules considered in \cite[Example~4.1]{oudot-scoccola} coincide with the minimal rank projective resolutions of the modules.
    Examples for \(n\geq 3\) can be constructed by considering the modules of \cite[Example~4.1]{oudot-scoccola} and taking the external tensor product with \(\Psf_0 \colon \Rscr^{n-2} \to \vect\), as in the beginning of the proof of \cref{proposition:instability-mrd-rectangles}.
\end{remark}

We now define the notions of balanced and \(\barcrank_{\spm}\)-stable dissimilarities on signed barcodes, and prove the universality result. 

\begin{definition}
    A dissimilarity function \(d\) on finite signed \(\Rscr^n\)-barcodes is \define{balanced} if
    \[
        d\left( (\Ccal_\sp, \Ccal_\sm \cup \Acal), (\Dcal_\sp, \Dcal_\sm) \right) = d\left( (\Ccal_\sp, \Ccal_\sm), (\Dcal_\sp \cup \Acal, \Dcal_\sm) \right)
    \]
    for all finite signed \(\Rscr^n\)-barcodes \(\Ccal_\spm\) and \(\Dcal_\spm\), and finite \(\Rscr^n\)-barcode \(\Acal\).
\end{definition}

\begin{definition}
    A dissimilarity function \(d\) on finite signed \(\Rscr^n\)-barcodes is \define{\(\barcrank_{\spm}\)-stable} if for every pair of \fpp modules \(M,N \colon \Rscr^n \to \vect\) we have \(d(\barcrank_\spm(M),\barcrank_\spm(N)) \leq d_I(M,N)\).
\end{definition}
It is straightforward to prove the existence of a most discriminative \(\barcrank_{\spm}\)-stable dissimilarity on signed $\Rscr^n$-barcodes.
\begin{proposition}
\label{proposition:universality-existence}
The collection of \(\barcrank_{\spm}\)-stable and balanced dissimilarity functions on finite signed \(\Rscr^n\)-barcodes has a maximum with respect to the pointwise order, denoted \(d_\msf\).
\end{proposition}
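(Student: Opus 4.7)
The plan is to define \(d_\msf\) pointwise as the supremum of \(d(\Ccal_\spm, \Dcal_\spm)\) taken over all balanced and \(\barcrank_\spm\)-stable dissimilarities \(d\) on finite signed \(\Rscr^n\)-barcodes, and then to verify that this supremum itself lies in the collection. The collection is non-empty, since the constant zero dissimilarity trivially belongs to it, so the supremum is well-defined as a function on ordered pairs of finite signed \(\Rscr^n\)-barcodes with values in \([0,\infty]\).

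First, I will check that \(d_\msf\) is itself a dissimilarity. The point is that non-negativity, symmetry, and the vanishing \(d_\msf(\Ccal_\spm, \Ccal_\spm) = 0\) all pass to pointwise suprema: non-negativity is immediate, symmetry follows because the supremum commutes with swapping the two arguments, and the diagonal vanishing holds because every dissimilarity in the supremum is zero on the diagonal.

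Next, I will verify balancedness and \(\barcrank_\spm\)-stability of \(d_\msf\) by the same principle. Balancedness is an equality of the form \(d((\Ccal_\sp, \Ccal_\sm \cup \Acal), (\Dcal_\sp, \Dcal_\sm)) = d((\Ccal_\sp, \Ccal_\sm), (\Dcal_\sp \cup \Acal, \Dcal_\sm))\) that holds for every \(d\) in the collection, so the two suprema, taken over the same index set of term-by-term equal values, agree. Similarly, \(\barcrank_\spm\)-stability is the inequality \(d(\barcrank_\spm(M), \barcrank_\spm(N)) \leq d_I(M,N)\) holding uniformly in \(d\), and an upper bound that is satisfied by every member of the collection is preserved under the supremum. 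I do not anticipate any real obstacle: the argument is a formal verification that each of the defining properties of the collection is closed under arbitrary pointwise suprema. The only mildly subtle point is that \(d_\msf\) may take the value \(+\infty\) on some pairs, but this is permitted by the paper's definition of a dissimilarity.
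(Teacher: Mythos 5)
Your proposal is correct and is essentially the paper's own argument: the paper defines \(d_\msf\) as exactly this pointwise supremum and asserts that it is "easily seen" to be the largest \(\barcrank_{\spm}\)-stable and balanced dissimilarity; you simply spell out the routine verification that each defining property (the dissimilarity axioms, balancedness, and \(\barcrank_{\spm}\)-stability) is preserved under pointwise suprema, together with the non-emptiness of the collection.
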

\begin{proof}
    Define \(d_\msf\) by
    \[
        d_\msf(\Ccal, \Dcal) = \sup \left\{ d(\Ccal, \Dcal) \; \colon \; \text{\(d\) is a \(\barcrank_{\spm}\)-stable and balanced dissimilarity on finite signed \(\Rscr^n\)-barcodes.}\right\},
    \]
    which is easily seen to be the largest \(\barcrank_{\spm}\)-stable and balanced dissimilarity on finite signed \(\Rscr^n\)-barcodes.
\end{proof}
We now show that for signed barcodes of hook modules, $\widehat{d_B}$ offers a good approximation of $d_\msf$.
\begin{proposition}
    \label{proposition:universality}
    When restricted to finite signed \(\Rscr^n\)-barcodes containing only hook modules, $\widehat{d_B}$ and $d_\msf$ satisfy the following bi-Lipschitz equivalence.
    \[
        \widehat{d_B}/(2n-1)^2 \leq d_\msf \leq \widehat{d_B}.
    \]
\end{proposition}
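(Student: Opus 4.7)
The plan is to establish the two inequalities separately, using the universality characterization of $d_\msf$ from \cref{proposition:universality-existence}.

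For the upper bound $\widehat{d_B}/(2n-1)^2 \leq d_\msf$, which actually holds on all finite signed barcodes (not just hook ones), I will simply verify that the dissimilarity $\widehat{d_B}/(2n-1)^2$ itself belongs to the class over which $d_\msf$ is the supremum. The balanced property is immediate from the definition of $\widehat{d_B}$ as a bottleneck distance on the unions $\Ccal_\sp \cup \Dcal_\sm$ and $\Dcal_\sp \cup \Ccal_\sm$, and is preserved by scaling; the $\barcrank_\spm$-stability is exactly the content of \cref{theorem:stability-rank}. So $\widehat{d_B}/(2n-1)^2 \leq d_\msf$ by definition of $d_\msf$.

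For the lower bound $d_\msf \leq \widehat{d_B}$ on finite signed hook barcodes, I will use a direct ``realization'' argument. Fix two finite signed barcodes $\Ccal_\spm$ and $\Dcal_\spm$ containing only hook modules, and let $d$ be any $\barcrank_\spm$-stable and balanced dissimilarity. For $\delta > 0$, pick $\epsilon = \widehat{d_B}(\Ccal_\spm, \Dcal_\spm) + \delta$ so that there is an $\epsilon$-matching between $\Ccal_\sp \cup \Dcal_\sm$ and $\Dcal_\sp \cup \Ccal_\sm$. Set $A \coloneqq \bigoplus_{[\kbb_I] \in \Ccal_\sp \cup \Dcal_\sm} \kbb_I$ and $B \coloneqq \bigoplus_{[\kbb_I] \in \Dcal_\sp \cup \Ccal_\sm} \kbb_I$; these are hook-decomposable, hence rank-projective by \cref{theorem:rank-exact-structure-facts}, so $\barcrank_\spm(A) = (\Ccal_\sp \cup \Dcal_\sm, \varnothing)$ and $\barcrank_\spm(B) = (\Dcal_\sp \cup \Ccal_\sm, \varnothing)$. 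The $\epsilon$-matching produces an $\epsilon$-interleaving between $A$ and $B$ by taking direct sums of interleavings on matched indecomposables and with the zero module on unmatched ones.

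Now $\barcrank_\spm$-stability of $d$ gives $d(\barcrank_\spm(A), \barcrank_\spm(B)) \leq d_I(A,B) \leq \epsilon$, while iterated application of the balanced property lets me shift all negative bars across the pair, yielding
\[
    d\big((\Ccal_\sp \cup \Dcal_\sm, \varnothing), (\Dcal_\sp \cup \Ccal_\sm, \varnothing)\big) = d(\Ccal_\spm, \Dcal_\spm).
\]
Combining, $d(\Ccal_\spm, \Dcal_\spm) \leq \epsilon$, and letting $\delta \to 0$ and taking the supremum over $d$ gives $d_\msf(\Ccal_\spm, \Dcal_\spm) \leq \widehat{d_B}(\Ccal_\spm, \Dcal_\spm)$. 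The only non-routine step is verifying the existence of the claimed $\epsilon$-interleaving between $A$ and $B$ from the matching, but this is standard because the indecomposable summands are hook modules and matched pairs are by definition $\epsilon$-interleaved as modules.
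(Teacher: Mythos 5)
Your proposal is correct and follows essentially the same route as the paper: the first inequality by checking that $\widehat{d_B}/(2n-1)^2$ is balanced and $\barcrank_\spm$-stable (via \cref{theorem:stability-rank}), and the second by realizing $\Ccal_\sp \cup \Dcal_\sm$ and $\Dcal_\sp \cup \Ccal_\sm$ as barcodes of hook-decomposable (hence rank-projective) modules and combining stability with the balanced property. The only cosmetic difference is that you unfold the inequality $d_I(A,B) \leq d_B(\barc(A),\barc(B))$ into an explicit matching-to-interleaving construction with $\delta$-slack, whereas the paper simply cites this standard fact from \cref{section:matchings-bottleneck-distance-bottleneck-dissimilarity}.
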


\begin{proof}
The first inequality is immediate from the fact that \(\widehat{d_B}/(2n-1)^2\) is \(\barcrank_{\spm}\)-stable (\cref{theorem:stability-rank}) and balanced, and the defining property of $d_\msf$ (\cref{proposition:universality-existence}). Note that this part of the bi-Lipschitz equivalence holds without the additional assumption that the signed barcodes contain only hook modules.

    To show the other part of the equivalence, Let $d$ be any \(\barcrank_{\spm}\)-stable and balanced dissimilarity function on finite signed \(\Rscr^n\)-barcodes. We now show that \(d(\Ccal_\spm,\Dcal_\spm) \leq \widehat{d_B}(\Ccal_\spm,\Dcal_\spm)\) for any two signed $\Rscr^n$-barcodes \(\Ccal_\spm\), \(\Dcal_\spm\) of hook modules. First, consider the modules \[M = \bigoplus_{[X] \in \Ccal_\sp \cup \Dcal_\sm} X\qquad \text{and}\qquad N = \bigoplus_{[Y] \in \Dcal_\sp \cup \Ccal_\sm} Y.\]
Observe that $\barcrank_\spm(M) = (\Ccal_\sp\cup \Dcal_\sm , \varnothing)$ and $\barcrank_\spm(N) = (\Dcal_\sp\cup \Ccal_\sm, \varnothing)$ by the assumption that the signed barcodes consist of hook modules.
Using this and the fact that \(d\) is balanced, we have \(d(\Ccal_\spm,\Dcal_\spm) = d(\barcrank_\spm(M), \barcrank_\spm(N))\), and by the \(\barcrank\)-stability of \(d\), we have \(d(\barcrank_\spm(M), \barcrank_\spm(N)) \leq d_I(M,N)\).
    As observed in \cref{section:matchings-bottleneck-distance-bottleneck-dissimilarity}, we have \(d_I(M,N) \leq d_B(\barc(M), \barc(N))\).
    Putting these together, we get 
    \(d(\Ccal_\spm,\Dcal_\spm) \leq d_B(\barc(M), \barc(N))\), and, by definition, we have \(d_B(\barc(M), \barc(N)) = \widehat{d_B}(\Ccal_\spm, \Dcal_\spm)\), concluding the proof of the universality result. 
\end{proof}

To conclude this section, we prove that the assumption of \(\barcrank_{\spm}\)-stability precludes the existence of discriminative metrics on signed barcodes.

\begin{proposition}
If \(d\) is a \(\barcrank_{\spm}\)-stable and balanced dissimilarity function on finite signed \(\Rscr^n\)-barcodes which satisfies the triangle inequality, then, for all \fpp \(M,N \colon \Rscr^n \to \vect\) such that \(d_I(M,N) < \infty\), we have \(d(\barcrank_\spm(M), \barcrank_\spm(N)) = 0\).
   \label{prop:no-stable-tri}\end{proposition}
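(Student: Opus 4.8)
The plan is to exploit the signed triangle inequality by routing the comparison through an auxiliary signed barcode with a deliberately inflated negative part, so that one leg of the triangle becomes a comparison of a signed barcode with itself (hence of cost $0$), while the other leg becomes a comparison of two hook-decomposable modules that, by the mechanism behind the instability examples of \cref{proposition:instability-mrd-rectangles,remark:instability-mrdh}, can be made arbitrarily close in the interleaving distance.

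Write $\barcrank_\spm(M) = (P_M, N_M)$ and $\barcrank_\spm(N) = (P_N, N_N)$; all four barcodes consist of hook modules, since the rank projectives are hook-decomposable (\cref{theorem:rank-exact-structure-facts}). Put $\Bcal_1 = P_M \cup N_N$ and $\Bcal_2 = P_N \cup N_M$. By \cref{theorem:stability-rank},
\[
    d_B(\Bcal_1, \Bcal_2) = \widehat{d_B}(\barcrank_\spm M, \barcrank_\spm N) \le (2n-1)^2\, d_I(M,N) < \infty,
\]
so we may fix $\beta > d_B(\Bcal_1,\Bcal_2)$ and a $\beta$-matching between $\Bcal_1$ and $\Bcal_2$. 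Fix $\eta > 0$. We will choose a finite barcode $\Acal$ of hook modules (depending on $\eta$) and set $\Gcal_\spm := (P_M \cup \Acal,\, N_M \cup \Acal)$, a signed $\Rscr^n$-barcode. By the triangle inequality, $d(\barcrank_\spm M, \barcrank_\spm N) \le d(\barcrank_\spm M, \Gcal_\spm) + d(\Gcal_\spm, \barcrank_\spm N)$. A straightforward manipulation using balancedness and symmetry of $d$ gives $d(\barcrank_\spm M, \Gcal_\spm) = d\big((P_M \cup \Acal, N_M), (P_M \cup \Acal, N_M)\big) = 0$, and, moving $N_M\cup\Acal$ and then $N_N$ across as the balanced axiom allows, $d(\Gcal_\spm, \barcrank_\spm N) = d\big((\Bcal_1 \cup \Acal, \varnothing), (\Bcal_2 \cup \Acal, \varnothing)\big)$. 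Since the hook-decomposable modules $K_1 := \bigoplus_{[X] \in \Bcal_1 \cup \Acal} X$ and $K_2 := \bigoplus_{[Y] \in \Bcal_2 \cup \Acal} Y$ are rank projective, their rank exact decompositions are $(\Bcal_1\cup\Acal,\varnothing)$ and $(\Bcal_2\cup\Acal,\varnothing)$, so $\barcrank_\spm$-stability of $d$ yields $d(\Gcal_\spm, \barcrank_\spm N) = d(\barcrank_\spm K_1, \barcrank_\spm K_2) \le d_I(K_1, K_2)$. Thus it suffices to choose $\Acal$ so that $K_1$ and $K_2$ are $2\eta$-interleaved.

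This is where the construction behind the instability examples enters. For each matched pair $\Lsf_{u,v}\leftrightarrow\Lsf_{u',v'}$ of the $\beta$-matching whose endpoints agree in the coordinates equal to $\infty$ and differ by at most $\beta$ in the remaining ones (case (a) of \cref{lemma:formulas-interleavings}), include in $\Acal$ the interior $\{[\Lsf_{x_1,y_1}],\dots,[\Lsf_{x_{m-1},y_{m-1}}]\}$ of a chain $\Lsf_{u,v} = \Lsf_{x_0,y_0},\dots,\Lsf_{x_m,y_m}=\Lsf_{u',v'}$ obtained by linearly interpolating the finite coordinates of the endpoints finely enough (i.e.\ with $m > \beta/\eta$) that consecutive hooks are $\eta$-interleaved; these $\Lsf_{x_t,y_t}$ are genuine hooks because a convex combination of corner pairs is a corner pair, and $d_I(\Lsf_{x_t,y_t},\Lsf_{x_{t+1},y_{t+1}}) \le m^{-1}\max(\|u'-u\|_\infty,\|v'-v\|_\infty)$ by \cref{lemma:formulas-interleavings}. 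For every unmatched hook, and for every matched pair falling into case (b) of \cref{lemma:formulas-interleavings} (all of which are at finite distance from the zero module), include in $\Acal$ the interior of a chain shrinking the hook to an $\eta$-trivial hook in the same manner. Grouping the summands of $K_1$ and $K_2$ according to the matching, the ``shift-by-one'' interleaving used in \cref{proposition:instability-mrd-rectangles} matches $\bigoplus(\{[\Lsf_{u,v}]\}\cup\text{interior})$ with $\bigoplus(\{[\Lsf_{u',v'}]\}\cup\text{interior})$ up to an $\eta$-interleaving for matched pairs of type (a), and matches $\bigoplus(\{[\Lsf_{e,f}]\}\cup\text{shrinking interior})$ with $\bigoplus(\text{shrinking interior})$ up to a $2\eta$-interleaving for the rest (using that adjoining an $\eta$-trivial summand to a module yields an $\eta$-interleaved module). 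The direct sum of these interleavings shows $K_1$ and $K_2$ are $2\eta$-interleaved, as required; hence $d(\barcrank_\spm M, \barcrank_\spm N)\le 2\eta$ for all $\eta>0$, and therefore $d(\barcrank_\spm M, \barcrank_\spm N)=0$.

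The only genuinely new point is the choice of the auxiliary signed barcode: the negative part $N_M\cup\Acal$ must be inflated enough to make one leg of the triangle trivial while keeping the other leg realizable by hook-decomposable modules whose interleaving distance the instability mechanism drives to $0$; recognizing that the signed triangle inequality (unlike the triangle inequality for the induced distance on unsigned barcodes) permits this is the crux. The rest is bookkeeping---the routine but fiddly verification that hook modules at finite interleaving distance are joined by fine chains of $\eta$-interleaved hooks, and the handling of coordinates equal to $\infty$ and of the trivial bars---and this is precisely the argument of \cite{oudot-scoccola} for the Betti signed barcode, here adapted to the rank exact decomposition.
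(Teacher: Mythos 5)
Your proof is correct, and it takes a recognizably different formal route from the paper's, even though the heart of the argument---the interpolating auxiliary barcode of hooks---is the same. The paper first invokes the universality bound \(d \le \widehat{d_B}\) (\cref{proposition:universality}) and then builds one auxiliary signed barcode together with two explicit \(\epsilon/2\)-matchings, so that both legs of the triangle are controlled through \(\widehat{d_B}\). You never use \cref{proposition:universality}: by inflating \emph{both} signs of the intermediate barcode by the same multiset \(\Acal\), the first leg \(d(\barcrank_\spm(M),\Gcal_\spm)\) collapses, via balancedness and \(d(x,x)=0\), to exactly zero; and the second leg is pushed, again by balancedness, into a comparison of two purely positive hook barcodes, which you realize as the rank exact decompositions of the hook-decomposable (hence rank projective and \fpp) modules \(K_1,K_2\), so that \(\barcrank_\spm\)-stability bounds it by \(d_I(K_1,K_2)\). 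The construction making \(d_I(K_1,K_2)\le 2\eta\) is essentially the paper's construction of its auxiliary barcode (finite-diagonal hooks shrunk to near-trivial ones, the remaining hooks paired through the finite-cost matching guaranteed by \cref{theorem:stability-rank}---which, as you implicitly use, forces matched hooks to agree in their infinite coordinates---with chains fine enough that consecutive hooks are \(\eta\)-interleaved by \cref{lemma:formulas-interleavings}); in effect you have inlined the half of the universality argument that is actually needed here. What your route buys is lighter bookkeeping: a single module-level interleaving, obtained as a direct sum of blockwise interleavings, replaces the paper's two explicit matchings \(h_1,h_2\), and no appeal to \(d\le\widehat{d_B}\) is required. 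What it requires in exchange are the small verifications you do carry out: that the interpolants are genuine hooks, that \(\barcrank_\spm(K_i)=(\barc(K_i),\varnothing)\) because a hook-decomposable module is its own minimal rank projective resolution, and that the block decomposition of \(K_1\) and \(K_2\) covers every summand exactly once.
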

\begin{proof}
Let \(d\) be a \(\barcrank_{\spm}\)-stable and balanced dissimilarity function on signed barcodes, and assume, further, that \(d\) satisfies the triangle inequality.
    By \cref{proposition:universality}, we have \(d \leq \widehat{d_B}\), so it is enough to show the following:  given \fpp \(M,N \colon \Rscr^n \to \vect\) with \(d_I(M,N) < \infty\) and \(\epsilon > 0\), there exists a signed barcode \(\Acal_\spm\) such that \(\widehat{d_B}(\barcrank_\spm(M),\Acal_\spm) + \widehat{d_B}(\Acal_\spm, \barcrank_\spm(N)) \leq \epsilon\).

We iteratively construct $\Acal_\spm$ and the following two $\epsilon/2$-matchings 
\[h_1: \barcrank(M)_\sp\cup \Acal_\sm \to \barcrank(M)_\sm\cup \Acal_\sp \qquad \text{and}\qquad h_2: \barcrank(N)_\sp\cup \Acal_\sm \to \barcrank(N)_\sm\cup \Acal_\sp.\]
Hooks that are at finite interleaving distance with the 0 module can be ''shrunk'' to 0 by interpolation. The remaining hooks must be paired up. 

\medskip 
\noindent\textit{Hooks with $j<\infty$.}
For $[L_{i,j}]$ with $j<\infty$ in either of the four barcodes of $\barcrank(M)_\spm$ and $\barcrank(N)_\spm$, add a copy of the following signed barcode to $\Acal_\spm$, \[ A_\spm = \left(\{[\Lsf_{i,j_{\ell-1}}], \dots, [\Lsf_{i,j_1}]\}, \{[\Lsf_{i,j_{\ell-1}}], \dots, [\Lsf_{i,j_1}]\}\right),\]
where  \(j = j_0, j_1, \dots, j_\ell = i\) is any sequence
such that \(\|j_k - j_{k+1}\|_\infty < \epsilon\) for all \(0 \leq k \leq \ell-1\). If $[L_{i,j}]\in \barcrank(M)_\sp$, then adjoin the following $\epsilon/2$-matching to $h_1$: 
\[h_1([L_{i,j}])=[L_{i,j_1}] \qquad h_1([L_{i, j_1}])= [L_{i,j_2}] \qquad \cdots\qquad h_1([L_{i,{j_{\ell-2}}}]) = [L_{i,{j_{\ell-1}}}], \]
and leave the remaining $[L_{i,j_{\ell-1}}]\in A_\sm$ unmatched. Adjoin the identity matching $A_\sm \to A_\sp$ to $h_2$. For $[L_{i,j}]$ in either of the other barcodes, we update $\Acal_\spm$ and the two matchings in the analogous way. 

\medskip
\noindent\textit{Hooks with $j=\infty$.}
For the infinite hook modules $[L_{i,\infty}]$, we note that, since  \(M\) and \(N\) are at finite interleaving distance, \cref{theorem:stability-rank} implies that there exists a  matching \(h\) between $\barcrank(M)_\sp\cup \barcrank(N)_\sm$ and $\barcrank(M)_\sm\cup \barcrank(N)_\sp$, and, in particular,  $h$ induces a perfect matching between the infinite hook modules. 

For a matched pair $[L_{i,\infty}]\in \barcrank(M)_\sp$ and $[L_{a,\infty}]\in \barcrank(M)_\sm$, add the following signed barcode to $\Acal_\spm$, 
\[A'_\spm = \left(\{[\Lsf_{i_0,\infty}], \dots, [\Lsf_{i_{\ell-1},\infty}]\}, \{[\Lsf_{i_1,\infty}], \dots, [\Lsf_{i_{\ell},\infty}]\}\right),\]
where \(i = i_0, i_1, \dots, i_\ell = a\) is any sequence such that \(\|i_k - i_{k+1}\|_\infty < \epsilon\) for all \(0 \leq k \leq \ell-1\). 
To $h_1$ we adjoin the identity matching $A'_\sm\cup \{[L_{i,\infty}]\}\to A'_\sp\cup\{[L_{a,\infty}]\}$, and to $h_2$ we adjoin the following $\epsilon/2$-matching
\[h_1([L_{i_0,\infty}])=[L_{i_1,\infty}] \qquad h_1([L_{i_1, \infty}])= [L_{i_2,\infty}] \qquad \cdots\qquad h_1([L_{i_{\ell-1},\infty}]) = [L_{i_{\ell},\infty}].\] 
The case $[L_{i,\infty}]\in \barcrank(N)_\sp$ and $[L_{a,\infty}]\in \barcrank(N)_\sm$ is symmetrical. 

For a matching of $[L_{i,\infty}]\in \barcrank(M)_\sp$ and $[L_{a,\infty}]\in \barcrank(N)_\sp$ in $h$, add the following signed barcode to $\Acal_\spm$, 
\[A''_\spm = \left(\{[\Lsf_{i_0,\infty}], \dots, [\Lsf_{i_{\ell -1},\infty}]\}, \{[\Lsf_{i_1,\infty}], \dots, [\Lsf_{i_{\ell-1},\infty}]\}\right),\]
 where \(\{i_j\}_{j=0}^\ell\) is as above. To $h_1$ we adjoin the identity matching $A''_\sm\cup \{[L_{i,\infty}]\}\to A''_\sp$, and to $h_2$ we adjoin the $\epsilon/2$-matching
   \[h_2([L_{i_1,\infty}])=[L_{i_0,\infty}] \qquad h_2([L_{i_2, \infty}])= [L_{i_1,\infty}] \qquad \cdots\qquad h_2([L_{i_{\ell-1},\infty}]) = [L_{i_{\ell-2},\infty}],\] 
   and $h_2([L_{a,\infty}]) = [L_{i_{\ell-1},\infty}]$.
   
For a matching of $[L_{i,\infty}]\in \barcrank(M)_\sm$ and $[L_{a,\infty}]\in \barcrank(N)_\sm$ in $h$, add the following signed barcode to $\Acal_\spm$, 
\[A'''_\spm = \left(\{[\Lsf_{i_1,\infty}], \dots, [\Lsf_{i_{\ell -1},\infty}]\}, \{[\Lsf_{i_0,\infty}], \dots, [\Lsf_{i_{\ell-1},\infty}]\}\right)\]
and update the matchings accordingly. This shows that 
\[\widehat{d_B}(\barcrank(M)_\spm, A_\spm) + \widehat{d_B}(\barcrank(N)_\spm, A_\spm) \leq \epsilon/2 + \epsilon/2 = \epsilon.\]
\end{proof}

\section{Other exact structures}
\label{sec:other-exact-structures}


So far we have focused exclusively on the rank exact structure, and a natural question is to what extent our analysis extends to other exact structures. As mentioned in the introduction, we have been following the roadmap outlined in~\cite[Theorem~8.4]{oudot-scoccola}, which requires the following two ingredients in order to guarantee bottleneck stability for $\Ecal$-signed barcodes derived from a given  exact structure~$\Ecal$:
\begin{enumerate}
\item\label{item:stab-proj} a stability result for the $\Ecal$-projectives in the usual (unsigned) bottleneck distance; 
\item\label{item:gldim} a finite upper bound on $\gldimEcal$.
\end{enumerate}
Then, the constant factor in the stability bound for $\Ecal$-signed barcodes is equal to the one for the $\Ecal$-projectives, multiplied by $1$ plus the upper bound on~$\gldimEcal$.
Hence the importance of deriving bounds that are as tight as possible in ingredients~\ref{item:stab-proj} and~\ref{item:gldim}. 

In the case of the usual exact structure studied in~\cite{oudot-scoccola}, the two ingredients were already available in the literature, in the form of Bjerkevik's bottleneck stability result for free modules on the one hand, of Hilbert's syzygy theorem on the other hand.
Bjerkevik's result is known to be tight when the number of parameters is \(2\) or \(4\), while Hilbert's syzygy theorem is tight for any number of parameters.

In the case of the rank exact structure, \cref{proposition-stability-hooks} in this paper provides the first ingredient, with a bound that is tight at least in the two-parameter setting (as per \cref{remark:projective-stability-tight}), while \cref{theorem:gldim-rank-Rn} provides the second ingredient with a tight bound.

Investigating exact structures associated to classes of interval modules other than the hook modules is a natural next step.
Some of these exact structures were studied recently, and upper bounds on their global dimensions were obtained~\cite{asashiba-escolar-nakashima-yoshiwaki,blanchette-brustle-hanson}.
However, these bounds grow with the size of the indexing poset, thus not giving a finite upper bound in the case of \fpp $\Rscr^n$-persistence modules.
This makes them not directly usable in our approach for proving bottleneck stability.
The fact that the bounds diverge does not seem to be an artifact of the proofs in~\cite{asashiba-escolar-nakashima-yoshiwaki,blanchette-brustle-hanson}, as the global dimension of an exact structure on a category or persistence modules seems to generally increase with the number of degrees of freedom required to specify an indecomposable relative projective.
Indeed, as our \cref{theorem:global-dimension-upset} shows, $\gldimEcal$ provably diverges to infinity in the case where \(\Ecal = \Eupset\), the exact structure that has as indecomposable projectives the interval modules with support an upset.
The rest of the section is devoted to the proof of this result.
After giving the result, we give \cref{remark:connection-miller}, which comments on the relationship between $\Eupset$-projective resolutions and the upset resolutions considered in \cite{miller}.

\bigskip

Let \(\Sscr\) be a poset.
An \define{upset module} is an interval module \(M \in \modcat_{\Sscr}\) such that \(M \cong \kbb_{U}\) for \(U \subseteq \Sscr\) an upset of \(\Sscr\).
A \(\kbb\Sscr\)-module is \define{upset-decomposable} if it is isomorphic to a direct sum of upset modules.

Assume now that $\Sscr$ is finite; the following considerations follow from \cite[Proposition~1.10]{auslander-solberg}.
The poset $\Sscr$ being finite implies that there exist finitely many isomorphism types of upset modules; moreover, \(\kbb\Sscr\)-projective modules are upset-decomposable, so there exists an exact structure \(\Eupset\) on \(\modcat_{\kbb\Sscr}\) such that the \(\Eupset\)-projective modules are precisely the upset-decomposable modules.
Also, the exact structure $\Eupset$ has enough projectives in the sense that, for every \(\Sscr\)-persistence module \(M\), there exists a \(\Eupset\)-exact sequence \(0 \to K \to P \to M \to 0\) with \(P\) a \(\Eupset\)-projective module.
We call \(\Eupset\) the \define{limit exact structure};
the name and notation come from the following characterization of the $\Eupset$-exact sequences:
a sequence \(0 \to L \to M \to N \to 0\) is $\Eupset$-exact precisely when, for every upward closed subposet \(U \subseteq \Pscr\), the induced sequence of vector spaces \(0 \to \varprojlim_U L \to \varprojlim_U M \to \varprojlim_U N \to 0\) is exact.
To prove this characterization of the $\Eupset$-exact sequences, one uses the natural isomorphism $\lim_U M \cong \hom(\kbb_U, M)$, and the fact that any $\Eupset$-projective module decomposes as a direct sum of modules of the form $\kbb_U$, by the above considerations.

In some cases, the limit exact structure also exists when $\Sscr$ is not finite; this is studied in \cite{blanchette-brustle-hanson-2}.
For example, let $\Rscr_{\geq 0} \subseteq \Rscr$ be the subposet of non-negative numbers.
Then, the limit exact structure on \(\vect^{\Rscr_{\geq 0}^n}_\fpp\) exists and has enough projectives; this follows from, e.g., \cite[Corollary~7.22]{blanchette-brustle-hanson-2}, see \cite[Example~7.23(1)]{blanchette-brustle-hanson-2} where this is explained.
In this case too, the indecomposable relative projectives are the upset modules.

Before giving the main result of this section, we need a technical lemma.

\begin{lemma}
    \label{lemma:bound-above-hasse-diagram}
    Let \(m \geq 2\) and let \(\Sscr = [m]^2\).
    Let \(V \subseteq \Sscr\) be an upset, let \(C\) be the set of all upsets of \(\Sscr\) that are contained in \(V\), and let \(D \subseteq C\setminus\{V\}\) consist of the maximal elements of \(C\setminus\{V\}\) with respect to inclusion.
    Then \(|D| \leq m\).
\end{lemma}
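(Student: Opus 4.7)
The plan is to identify the maximal proper sub-upsets of $V$ with the minimal elements of $V$, and then bound the number of minimal elements using the fact that they form an antichain in $[m]^2$.

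First, I would establish the bijection. If $v \in V$ is a minimal element of $V$, then $V \setminus \{v\}$ is easily seen to be an upset (removing a minimal element cannot violate upward closure), and it is strictly contained in $V$, so it belongs to $C \setminus \{V\}$. Conversely, given any $W \in C \setminus \{V\}$, pick some $v \in V \setminus W$ that is minimal in $V \setminus W$ (this exists by finiteness). Then $V \setminus \{v\}$ remains an upset: any element above $v$ in $V$ either equals $v$ or strictly dominates $v$, and in the latter case is not in $V \setminus W$ by minimality of $v$, hence lies in $W$, hence is still present. Thus $W \subseteq V \setminus \{v\} \subsetneq V$. This shows that every element of $C \setminus \{V\}$ is contained in some $V \setminus \{v\}$ with $v$ a minimal element of $V$, and that the $V \setminus \{v\}$ themselves are in $C \setminus \{V\}$, so the maximal elements of $C \setminus \{V\}$ are exactly $\{V \setminus \{v\} : v \text{ is minimal in } V\}$. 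In particular, $|D|$ equals the number of minimal elements of $V$.

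Second, I would bound the number of minimal elements. The set of minimal elements of $V$ forms an antichain in $[m]^2$, so it suffices to show that any antichain in $[m]^2$ has cardinality at most $m$. This is immediate: if $(a,b)$ and $(a',b')$ are two distinct elements of an antichain with $a = a'$, then they are comparable (whichever has the smaller second coordinate is below the other), a contradiction. Hence the first coordinates of the elements of any antichain are all distinct, giving at most $m$ elements. Combining both steps yields $|D| \leq m$.

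The argument is entirely elementary; there is no real obstacle. The only slightly subtle point is verifying the bijection between maximal proper sub-upsets and minimal elements, which requires a small minimality argument to handle the ``conversely'' direction.
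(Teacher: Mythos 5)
Your overall strategy is sound and is in fact essentially the paper's: the paper also parametrizes the maximal proper sub-upsets by the minimal elements \(B\) of \(V\) (its set \(U^b \cap V\) with \(b \in B\) minimal is literally \(V \setminus \{b\}\)), and then uses the same antichain/first-coordinate argument to get \(|B| \leq m\). Your first direction (for \(v\) minimal in \(V\), the set \(V \setminus \{v\}\) is an upset lying in \(C \setminus \{V\}\)) and your counting step are correct.

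However, the justification of the converse direction—which you yourself flag as the subtle point—is wrong as written. Having chosen \(v\) minimal in \(V \setminus W\), you argue that \(V \setminus \{v\}\) is an upset by looking at elements \emph{above} \(v\) and claiming that any element strictly dominating \(v\) ``is not in \(V \setminus W\) by minimality of \(v\)''. That claim is false (minimality of \(v\) in \(V \setminus W\) constrains elements below \(v\), not above; e.g.\ take \(W = \varnothing\)), and in any case elements above \(v\) are irrelevant: after deleting \(v\), upward closure can only fail through an element \(a \in V\) with \(a < v\), i.e.\ \(V \setminus \{v\}\) is an upset precisely when \(v\) is minimal in \(V\). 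So the step you actually need—and never establish—is that your chosen \(v\), minimal in \(V \setminus W\), is minimal in \(V\). This is true, but it requires using that \(W\) is an upset: if \(a \in V\) with \(a < v\), then \(a \notin V \setminus W\) by minimality of \(v\) in \(V \setminus W\), so \(a \in W\), and upward closure of \(W\) forces \(v \in W\), contradicting \(v \in V \setminus W\). (Equivalently, \(V \setminus W\) is downward closed in \(V\), so its minimal elements are minimal in \(V\).) With this correction, every \(W \in C \setminus \{V\}\) is contained in some \(V \setminus \{v\}\) with \(v\) minimal in \(V\), hence \(|D|\) is at most the number of minimal elements of \(V\), and your antichain bound finishes the proof exactly as in the paper.
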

\begin{proof}
    Let \(B\) be the set of minimal elements of \(V\).
    Start by observing that \(|B| \leq m\), which follows from the fact that the map \(B \to [m]\) that maps \((x,y)\) to \(x\) must be injective, since the elements of \(B\) are all incomparable.

    Now, if \(V' \subsetneq V\) is a maximal sub-upset, then there exists \(b \in B\) such that \(b \notin V'\), since otherwise \(B \subseteq V'\) and thus \(V' = V\).
    Let \(U^b = \{a \in \Sscr \colon a \nleq b\}\), which is an upset.
    Note that \(U^b \cap V \subsetneq V\) is an upset.
    We claim that \(V' \subseteq U^b \cap V \subsetneq V\) and that \(U^b \cap V \subsetneq V\) is a maximal sub-upset.
    The fact that \(V' \subseteq U^b \cap V\) is clear, since \(b \notin V'\).
    To show that \(U^b \cap V \subsetneq V\) is maximal, assume we have \(U^b \cap V \subsetneq W \subseteq V\).
    In that case, we must have \(a \in W\) such that \(a \notin U^b\), and thus \(a \leq b\).
    It follows that \(b \in W\).
    Since for all \(b' \in B \setminus \{b\}\) we have \(b' \in U^b\), it follows that \(B \subseteq W\), and thus \(W = V\).
    This shows that \(U^b \cap V \subsetneq V\) is a maximal sub-upset, and, since \(V' \subseteq U^b \cap V\) and \(V' \subsetneq V\) is also a maximal sub-upset, we must have \(V' = U^b \cap V\).

    The above implies that the function from \(B\) to maximal sub-upsets of \(V\) that maps \(b\) to \(U^b \cap V\) is surjective, which proves the result.
\end{proof}

\begin{theorem}
    \label{theorem:global-dimension-upset}
    If \(m \geq 3\), we have \(\gldimupset\left(\vect^{[m]^2}\right) = m-2\).
%
\end{theorem}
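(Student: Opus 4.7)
The plan is to follow the projectivization strategy used in the proof of \cref{theorem:gldim-rank-poset}. Let $\Lambda$ denote the lattice of nonempty upsets of $[m]^2$ ordered by inclusion; this is a lattice because any two nonempty upsets both contain the maximum $(m{-}1,m{-}1)$, so their intersection is nonempty. Set $L = \bigoplus_{U \in \Lambda} \kbb_U \in \vect^{[m]^2}$. The computation $\Hom(\kbb_U, \kbb_V) \cong \kbb$ exactly when $U \subseteq V$, with composition reproducing the multiplication in $\kbb\Lambda$, yields $\End(L) \cong \kbb\Lambda$, analogously to \cref{lemma.algebra_isomorphism}. The functor $F = \Hom(L, -)$ then restricts to an equivalence $\add L \simeq \proj \kbb\Lambda$, and by the very definition of the limit exact structure it sends $\Eupset$-exact sequences to exact sequences of $\kbb\Lambda$-modules. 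Mimicking the argument in \cref{lemma.equivalence_X_and_L} gives the key equality $\pdimupset(M) = \pdim_{\kbb\Lambda}(F(M))$ for every $M \in \vect^{[m]^2}$.

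I next plan to show $\gldim(\kbb\Lambda) = m$. For any $V \in \Lambda$, the minimal upsets not contained in $V$ are exactly the principal upsets $\uparrow a$ with $a$ a maximal element of $[m]^2 \setminus V$, so $|V^\wedge|$ is bounded by the size of a maximal antichain in $[m]^2$, which is $m$. Then \cref{proposition:gl-dim-lattice} gives $\gldim(\kbb\Lambda) \leq m$. For the matching lower bound, take $V$ whose complement is the downward closure of the diagonal antichain $\{(0, m{-}1), (1, m{-}2), \ldots, (m{-}1, 0)\}$, so that $V$ has $m$ elements directly above it in $\Lambda$; the hypotheses of \cref{prop.gldim_for_kP/e} with $\Qscr = \varnothing$ are satisfied, giving $\pdim(\Ssf_V) = |V^+| = m$.

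For the upper bound $\gldimupset(\vect^{[m]^2}) \leq m - 2$, I follow the upper-bound argument of \cref{theorem:gldim-rank-poset}. Every indecomposable ordinary injective in $\vect^{[m]^2}$ is $\kbb_{\downarrow i}$ for some $i \in [m]^2$. One checks directly that $[m]^2 \setminus \downarrow i = \{x : x \not\leq i\}$ is an upset, and that the short exact sequence $0 \to \kbb_{[m]^2 \setminus \downarrow i} \to \kbb_{[m]^2} \to \kbb_{\downarrow i} \to 0$ is $\Eupset$-exact (apply $\Hom(\kbb_U, -)$ for each upset $U$ and verify exactness in the two cases $i \in U$ and $i \notin U$). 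Hence $\pdimupset(\kbb_{\downarrow i}) \leq 1$, and therefore $\pdim_{\kbb\Lambda}(F(\kbb_{\downarrow i})) \leq 1$ by the equality in the first paragraph. For arbitrary $M$, embed $M$ into an ordinary exact sequence $0 \to M \to I^0 \to I^1$ with $I^0, I^1$ ordinary injective, apply the left-exact $F$, and set $C = \coker(F(I^0) \to F(I^1))$, obtaining a 4-term exact sequence in $\kbb\Lambda$-modules. Standard syzygy bookkeeping gives
\[
    \pdim_{\kbb\Lambda}(F(M)) \;\leq\; \max\{\pdim(F(I^0)),\, \pdim(F(I^1)) - 1,\, \pdim(C) - 2\} \;\leq\; \max\{1,\, 0,\, m-2\} \;=\; m - 2,
\]
using $\pdim(C) \leq \gldim(\kbb\Lambda) = m$ and the assumption $m \geq 3$, which gives $\gldimupset(\vect^{[m]^2}) \leq m - 2$.

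The lower bound $\gldimupset(\vect^{[m]^2}) \geq m - 2$ comes from lifting through $F$. Let $Q_\bullet \to \Ssf_V$ be a minimal projective resolution of length $m$ over $\kbb\Lambda$ for $V$ as above. Using the equivalence $\add L \simeq \proj \kbb\Lambda$, lift each $Q_i$ to $P_i \in \add L$ with matching differentials, and set $M = \ker(P_1 \to P_0) \in \vect^{[m]^2}$. Left-exactness of $F$ gives $F(M) \cong \ker(Q_1 \to Q_0)$, the second syzygy of $\Ssf_V$, which has projective dimension $m - 2$; combined with $\pdimupset(M) = \pdim_{\kbb\Lambda}(F(M))$, this gives $\pdimupset(M) = m - 2$. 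The main obstacle in this plan is carefully setting up the projectivization equivalence in the $\Eupset$ setting and checking that $F$ maps minimal $\Eupset$-projective resolutions to minimal $\kbb\Lambda$-projective resolutions; once this framework is in place, the remaining ingredients combine the Koszul-type resolutions of \cref{proposition:Koszul-for-upper-set} and \cref{corollary:projective-dimension-rectangle} with the antichain bound of \cref{lemma:bound-above-hasse-diagram}.
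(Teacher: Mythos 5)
Your proposal follows the paper's proof essentially step for step: projectivize via $\Hom\bigl(\bigoplus_U \kbb_U, -\bigr)$, transfer $\Eupset$-projective dimension to ordinary projective dimension over the incidence algebra of the lattice of nonempty upsets, show this incidence algebra has global dimension $m$, deduce the upper bound $m-2$ from the fact that ordinary injectives have $\Eupset$-projective dimension at most $1$ together with the four-term exact sequence, and obtain the lower bound by lifting the second syzygy of a module of projective dimension $m$ through the equivalence $\add L \simeq \proj$.

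The one point you should fix is the orientation of the order on the upset lattice. With the conventions in force in the paper (right modules over $\End(L)$, multiplication given by composition, as in \cref{lemma.algebra_isomorphism}), $\End(L)$ is the incidence algebra of the upsets ordered by \emph{reverse} inclusion, so $\Hom(L,M)$ is a representation of that poset, and the covers $V^+$ relevant to \cref{prop.gldim_for_kP/e} are the maximal proper sub-upsets of $V$, not the minimal upsets properly containing it. Your witness $V=\{(a,b)\colon a+b\ge m\}$ has $m$ covers in the inclusion order but only $m-1$ maximal proper sub-upsets (its minimal elements form the antichain $a+b=m$), so under the convention needed to invoke the \cref{lemma.equivalence_X_and_L}-style transfer it only yields projective dimension $m-1$, which is not enough for the lower bound. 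This is harmless and fixable in one line: either replace the witness by the upset generated by the antidiagonal $\{a+b=m-1\}$ (the paper's choice, which has $m$ minimal elements, hence $m$ covers in the reverse-inclusion order), or keep your computation and invoke the standard fact that a finite-dimensional algebra and its opposite have the same global dimension. Apart from this, your upper bound $\gldim \le m$ via \cref{proposition:gl-dim-lattice} together with the direct antichain argument (minimal upsets not contained in $V$ are principal upsets at maximal elements of the complement) is a perfectly good dual substitute for the paper's use of \cref{lemma:bound-above-hasse-diagram}, and your explicit $\Eupset$-exact sequence $0 \to \kbb_{\{x \not\le i\}} \to \kbb_{[m]^2} \to \kbb_{\downarrow i} \to 0$ is the same resolution of the injectives that the paper uses.
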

\begin{proof}
    For notational convenience, let $\Sscr = [m]^2$.
    We start with a general construction.
    Consider the right \(\kbb \Sscr\)-module
    \[
        \Ubb = \bigoplus_{\substack{U \subseteq \Sscr,\\\text{upset}}} \kbb_{U}.
    \]
    Let \(\Pscr\) denote the opposite of the poset of upsets of \(\Sscr\), so that the elements of \(\Pscr\) are given by the upsets of \(\Sscr\), and we have \(U \leq V \in \Pscr\) exactly when \(V \subseteq U \subseteq \Sscr\).
    The poset \(\Pscr\) is a lattice, where \(U \wedge V = U \cup V\) and \(U \vee V = U \cap V\).
    A routine check shows that, as algebras, we have \(\End_{\kbb \Sscr}(\Ubb) \cong \kbb \Pscr\), and thus \(\Ubb\) is a \((\kbb \Pscr,\kbb \Sscr)\)-bimodule.
    This implies that we have a functor \(\Hom_{\kbb \Sscr}(\Ubb, -) \colon \modcat_{\kbb \Sscr} \to \modcat_{\kbb \Pscr}\).
    Following the same idea as in the proof of \cref{lemma.equivalence_X_and_L}, one shows that, for all \(M \in \modcat_{\kbb \Sscr}\), we have \(\pdimupset(M) = \pdim_{\kbb \Pscr}(\Hom(\Ubb, M))\).

    We now show that \(\gldim(\kbb\Pscr) = m\), starting with \(\gldim(\kbb\Pscr) \geq m\).
    Consider the set \(B = \{(a,b) \colon a + b = m-1\} \subseteq \Sscr\) and the upset \(V\) generated by \(B\).
    We claim that \(\pdim_{\kbb \Pscr}(\Ssf_V) = m\).
    To see this, we use \cref{prop.gldim_for_kP/e}, with \(\Qscr \subseteq \Pscr\) being the empty poset.
    We must thus show that, in \(\Pscr\), we have \(|V^+| = m\) and that, for every proper subset \(A \subsetneq V^+\) we have \(\vee A < \vee V^+\).
    Note that \(B = \{(0,m-1), (1,m-2), \dots, (i, m-1-i), \dots, (m-2,1), (m-1,0)\}\).
    Given \(i \in [m]\), let \(B^i\) be given by removing the \(i\)th element of \(B\), that is, let \(B^i = B \setminus \{(i, m-1-i)\}\).
    Let \(V^i\) be the upset generated by \(B^i\).
    We have \(V^+ = \{V^i\}_{i \in [m]}\), so that \(|V^+| = m\).
    To conclude the proof that \(\gldim(\kbb\Pscr) \geq m\), we must show that, if \(B^i \notin A \subsetneq V^+\), then \(\vee A < \vee V^+\).
    In terms of sets, we must show that
    \[
        \bigcap_{j \in [m]} V^j \subsetneq \bigcap_{j \in [m]\setminus \{i\}} V^j.
    \]
    This follows from the fact that \((i,m-1-i) \in \bigcap_{j \in [m]\setminus \{i\}} V^j\), but \((i,m-1-i) \notin V^i\).

    We now show that \(\gldim(\kbb \Pscr) \leq m\).
    By a standard result (see, e.g., \cite[Corollary~1.14,~Section~7]{mcconnell-robson}), it is enough to show that, for every simple module \(\Ssf_V \in \modcat_{\kbb \Pscr}\) corresponding to an upset \(V \subseteq \Sscr\), we have \(\pdim_{\kbb \Pscr}(\Ssf_V) \leq m\).
    Using \cref{prop.gldim_for_kP/e} with \(\Qscr \subseteq \Pscr\) being the empty set, it is thus enough to show that, for every upset \(V \subseteq \Sscr\), seen as an element \(V \in \Pscr\), we have \(|V^+| \leq m\), which is the content of \cref{lemma:bound-above-hasse-diagram}.

    We now show that \(\gldimupset(\modcat_{\kbb \Sscr}) \leq m-2\).
    Let \(i \in \Sscr\) and let \(U \subseteq \Sscr\) be an upset.
    Since any map \(\kbb_U \to \Rsf_{0,i}\) factors through the surjective map \(\Psf_0 \to \Rsf_{0,i}\), it follows that \(0 \to \kbb_{i^\wedge} \to \Psf_0 \to \Rsf_{0,i}\) is a minimal \(\Eupset\)-projective resolution, and thus that \(\pdimupset(I) = 1\) for any non-zero \(\kbb\Sscr\)-injective module \(I\).

    As in the proof of the upper bound of \cref{theorem:gldim-rank-poset}(2), given \(M \in \modcat_{\kbb \Sscr}\), we consider an exact sequence 
    \(0 \to M \to I \to J\) with $I$ and $J$ injective, and the induced exact sequence
    \[
        0 \to \Hom(\Ubb,M) \to \Hom(\Ubb, I) \to \Hom(\Ubb, J) \to \coker\left(\Hom(\Ubb, I) \to \Hom(\Ubb, J)\right) \to 0.
    \]
    By the previous considerations, we have \(\pdimupset(M) \leq \max\{1,\gldim(\kbb\Pscr)-2\} = m-2\).

    Finally, we show that \(\gldimupset(\modcat_{\kbb \Sscr}) \geq m-2\).
    Using that \(\gldim(\kbb\Pscr) = m\), let \(N \in \kbb \Pscr\) be such that \(\pdim_{\kbb \Pscr}(N) = m\).
    Then, following the proof of the lower bound in \cref{theorem:gldim-rank-poset}(2), we can consider a minimal \(\kbb \Pscr\)-projective resolution \(P_\bullet \to N\), which must be of the form \(\Hom(\Ubb,X_\bullet) \to N\), for a chain complex of \(\kbb\Sscr\)-modules \(X_\bullet\) of length \(m\).
    We can then consider the \(\kbb\Sscr\)-module \(\ker(X_1 \to X_0)\), and note that, by the fact that \(\Hom(\Ubb,-)\) is left exact, we have \(\Hom(\Ubb, \ker(X_1 \to X_0)) \cong \ker(P_1 \to P_0)\), so that \(\pdimupset(\ker(X_1 \to X_0)) = m-2\).
\end{proof}

\begin{corollary}
    \label{corollary:infty-dim-upset-exact}
    We have \(\gldimupset\left(\vect^{\Rscr_{\geq 0}^2}_{\textup{\fpp}}\right) = \infty\).
\end{corollary}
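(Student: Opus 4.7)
The plan is to leverage \cref{theorem:global-dimension-upset}, which supplies modules over finite grids of arbitrarily large $\Eupset$-projective dimension, and to transfer them to $\vect^{\Rscr_{\geq 0}^2}_\fpp$ via a Kan extension. Fix $m \geq 3$. By \cref{theorem:global-dimension-upset}, there exists $M \in \vect^{[m]^2}$ with $\pdimupset(M) = m - 2$; it therefore suffices to exhibit, for each such $m$, an fpp $\Rscr_{\geq 0}^2$-persistence module $\widetilde{M}$ satisfying $\pdimupset(\widetilde{M}) \geq m - 2$, since letting $m \to \infty$ will then give the conclusion.

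Let $f \colon [m]^2 \hookrightarrow \Rscr_{\geq 0}^2$ be the natural inclusion, and consider the adjunction $\Lan_f \dashv f^{\ast}$. Since $f$ is fully faithful, $f^{\ast} \Lan_f = \mathsf{id}$. Writing $\lfloor a \rfloor := (\min(\lfloor a_1 \rfloor, m-1), \min(\lfloor a_2 \rfloor, m-1)) \in [m]^2$ for $a \in \Rscr_{\geq 0}^2$, the comma category computing $\Lan_f(N)(a)$ has $\lfloor a \rfloor$ as its maximum, so $\Lan_f(N)(a) = N(\lfloor a \rfloor)$. From this one reads that $\Lan_f$ is exact, sends the indecomposable projective $\Psf_i$ to $\Psf_{f(i)}$, and more generally sends an upset module $\kbb_U$ to the upset module $\kbb_{\widetilde{U}}$, where $\widetilde{U} \subseteq \Rscr_{\geq 0}^2$ denotes the upset generated by $f(U)$. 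I will then set $\widetilde{M} := \Lan_f(M)$, which is fpp because $\Lan_f$ preserves cokernels and sends indecomposable projectives to indecomposable projectives.

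The remaining task is to show $\pdimupset(\widetilde{M}) \geq m-2$. For this I will prove that $f^{\ast}$ sends $\Eupset$-projective resolutions to $\Eupset$-projective resolutions, so that any resolution of $\widetilde{M}$ of length $\ell$ restricts to a resolution of $f^{\ast}\widetilde{M} = M$ of length at most $\ell$. That $f^{\ast}$ sends upset-decomposable modules to upset-decomposable modules is clear: for any upset $V \subseteq \Rscr_{\geq 0}^2$, $f^{\ast}(\kbb_V) = \kbb_{f^{-1}(V)}$ is either an upset module or zero. The main point is that $f^{\ast}$ preserves $\Eupset$-exactness, which I will reduce to the natural isomorphism $\varprojlim_U f^{\ast} A \cong \varprojlim_{\widetilde{U}} A$ for every upset $U \subseteq [m]^2$ and every $A \in \vect^{\Rscr_{\geq 0}^2}$.

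The main obstacle will be verifying this last isomorphism, which amounts to showing that $f(U)$ is initial in $\widetilde{U}$ in the categorical sense. The key observation is that $\lfloor a \rfloor \in U$ for every $a \in \widetilde{U}$: if $u \in U$ satisfies $f(u) \leq a$, then $u \leq \lfloor a \rfloor$ componentwise, so $\lfloor a \rfloor \in U$ by upward closure. Consequently, the set $\{f(u) : u \in U, \; f(u) \leq a\}$ has maximum $f(\lfloor a \rfloor)$, hence is nonempty and connected, giving the required initiality. Once this is in place, any $\Eupset$-exact sequence $0 \to A \to B \to C \to 0$ in $\vect^{\Rscr_{\geq 0}^2}_\fpp$ restricts under $f^{\ast}$ to a sequence whose $\varprojlim_U$, for $U \subseteq [m]^2$ any upset, equals $\varprojlim_{\widetilde{U}}$ of the original and is therefore exact; combining this with the exactness of $f^{\ast}$ shows that long $\Eupset$-exact sequences restrict to long $\Eupset$-exact sequences, completing the argument.
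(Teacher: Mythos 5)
Your proof is correct, and it follows the same skeleton as the paper's (invoke \cref{theorem:global-dimension-upset} to get modules over $[m]^2$ of $\Eupset$-projective dimension $m-2$, then transport them to $\vect^{\Rscr_{\geq 0}^2}_{\fpp}$ via left Kan extension along the grid inclusion), but the crucial transfer of the dimension bound is done in the opposite direction. The paper pushes resolutions \emph{forward}: it asserts that $\Lan_f$ sends a minimal $\Eupset$-projective resolution of $M$ to a minimal $\Eupset$-projective resolution of $\Lan_f(M)$, outsourcing that fact to the literature on relative exact structures, so that $\pdimupset(\Lan_f M)$ is exactly $m-2$. You instead pull resolutions \emph{back}: you verify by hand that the restriction functor $f^{\ast}$ is exact, sends $\Eupset$-projectives to $\Eupset$-projectives, and preserves $\Eupset$-exactness --- the last point via the initiality argument showing $\varprojlim_{U} f^{\ast}A \cong \varprojlim_{\widetilde{U}} A$, which works because $\{u \in U : f(u)\leq a\}$ has the maximum $\lfloor a\rfloor$ --- so that any $\Eupset$-projective resolution of $\Lan_f(M)$ of length $\ell$ restricts to one of $f^{\ast}\Lan_f(M)=M$, giving $\pdimupset(\Lan_f M) \geq \pdimupset(M) = m-2$, which is all that is needed. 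What your route buys is self-containedness: you need neither the notion of minimality nor the external reference for preservation of minimal relative resolutions under $\Lan_f$, only the elementary cofinality computation and the explicit formula $\Lan_f(N)(a)=N(\lfloor a\rfloor)$ (which is also where the restriction to $\Rscr_{\geq 0}^2$ rather than $\Rscr^2$ is used). The paper's citation-based argument is shorter and yields the sharper statement that the relative projective dimension is preserved exactly, but for the purpose of concluding $\gldimupset = \infty$ the one-sided inequality you prove suffices.
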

\begin{proof}
    Towards a contradiction, assume that there exists \(d \in \Nbb\) such that every \fpp \(N \colon \Rscr_{\geq 0}^2 \to \vect\) admits an \(\Eupset\)-projective resolution of length at most \(d\).
    Let \(m = d + 3\), and let \(M \colon [m]^2 \to \vect\) be such that \(\pdimupset(M) = m - 2 = d + 1\), which must exist by \cref{theorem:global-dimension-upset}.

    Consider the monotonic injection \(\iota \colon [m]^2 \to \Rscr_{\geq 0}^2\) mapping \((a,b)\) to \((a,b)\).
    Let \(\iota_! \colon \modcat_{\kbb [m]^2} \to \vect^{\Rscr_{\geq 0}^2}_\fpp\) denote left Kan extension along \(\iota\).
    It is easy to see that any minimal $\Eupset$-projective resolution $P_\bullet \to N$ has the property that $\iota_!(P_\bullet) \to \iota_!(N)$ is a minimal $\Eupset$-projective resolution of $\iota_!(N)$; see, e.g., \cite[Example~7.24(1)]{blanchette-brustle-hanson-2}.
    But this resolution is of length $m-2 = d + 1$, contradicting the fact that every \(N \colon \Rscr_{\geq 0}^2 \to \vect\) admits an \(\Eupset\)-projective resolution of length at most \(d\).
\end{proof}

\begin{remark}
    \label{remark:connection-miller}
    As a consequence of \cref{theorem:global-dimension-upset}, for every module \(M \colon [m]^2 \to \vect\), there exists an exact sequence of finite length \(0 \to P_\ell \to \cdots \to P_0 \to M \to 0\) with all of the modules \(P_i\) being upset-decomposable modules.
    Finite exact sequences \(0 \to P_\ell \to \cdots \to P_0 \to M \to 0\) in \(\vect^{[m]^2}\) with the property that all of the modules \(P_i\) are upset-decomposable
    are \define{finite upset resolutions} as per \cite{miller}.
    Thus, in the special case where the indexing poset is \([m]^2\), \cref{theorem:global-dimension-upset} refines \cite[Theorem~6.12(6)]{miller}, since it shows the existence of finite resolutions by upset-decomposable modules, which are not only exact but \(\Eupset\)-exact, a stronger condition.
    This is interesting, since, to the best of our knowledge, there is no notion of minimality for upset resolutions in the sense of \cite{miller} that would imply that minimal upset resolutions are unique up to isomorphism, even when the indexing set is a finite grid $[m]^2$.
    Meanwhile, every module $[m]^2 \to \vect$ admits a minimal $\Eupset$-projective resolution, and, in this case, minimality does imply uniqueness up to isomorphism, as this is true for general exact structures (see \cref{section:exact-structures}).
\end{remark}

\appendix

\section{Proofs of results from previous work}
\label{section:appendix}

\rankexactstructurefacts*

\begin{proof}
    Statement~(1) for the case of $\Pscr$ a finite poset follows from \cite[Theorems~4.4~and~4.8]{botnan-oppermann-oudot}.
    Statement~(1) for the case of \(\vect_{\fpp}^{\Rscr^n}\) follows from \cite[Theorem~4.4~and~Proposition~4.14]{botnan-oppermann-oudot}.
    In both cases, one uses the fact that the existence of finite resolutions implies the existence of finite minimal resolutions.

    \medskip

    Before proving statement (2), we prove an intermediate result.
    We claim that there exists \(m_1, \dots, m_n \in \Nbb\), a monotonic injective function \(f \colon [m_1] \times \cdots \times [m_n] \to \Rscr^n\), and a module \(M' \colon [m_1] \times \cdots \times [m_n] \to \vect\), such that \(M\) is isomorphic to the left Kan extension of \(M'\) along \(f\), and such that the image of $f$ is the sublattice of $\Rscr^n$ generated by the grades of the indecomposable projectives involved in a minimal projective presentation of $M$.
    This is standard and is implicit in the proof of \cite[Proposition~4.14]{botnan-oppermann-oudot}, but let us give a proof for completeness.

    Let \(Q \to P\) be a minimal presentation of $M$, so in particular \(Q \cong \bigoplus_{j \in J} \Psf_j\), \(P \cong \bigoplus_{i \in I} \Psf_i\), \(I\) and \(J\) are finite multisets of elements of \(\Rscr^n\), and \(M\) is isomorphic to the cokernel of the morphism \(Q \to P\).

    Consider the sets \(A_1, \dots, A_n \subseteq \Rscr\) with $A_k$ the set of all possible $k$-coordinates of elements in \(I\) or \(J\).
    Formally, let $A_k = \{ r \in \Rscr \colon \exists a \in I \cup J, r = a_k\} \subseteq \Rscr$.
    Moreover, by endowing each \(A_k\) with the linear order induced by that of \(\Rscr\), there is a unique monotonic bijection \(\alpha_k : [m_k] \to A_k\), where \(m_k = |A_k|\).
    It is straightforward to see that $A_1 \times \cdots \times A_k \subseteq \Rscr^n$ is the sublattice of $\Rscr^n$ generated by the grades of the indecomposable projectives appearing in $P$ and $Q$.
    Let $f = \alpha_1 \times \cdots \times \alpha_n : [m_1] \times \cdots \times [m_n] \to A_1 \times \cdots \times A_n \subseteq \Rscr^n$.
    Let us denote $C \coloneqq [m_1] \times \cdots \times [m_n]$ and $A \coloneqq A_1 \times \cdots \times A_n$.
    We get $C$-persistence modules \(M', P', Q' \colon C \to \vect\) by restricting \(M\), \(P\), and \(Q\), respectively, along $f$.
    To prove the claim, it is sufficient to prove that $M$ is isomorphic to the left Kan extension of $M'$ along $f$.


    Let \(U = \{b \in \Rscr^n \colon \exists a \in A, a \leq b\}\).
    Consider the monotonic function \(g \colon U \to A\) which maps \(b \in U\) to \(\vee \{a \in A \colon a \leq b\}\).
    It is easy to see that, for any \(N \colon C \to \vect\), the restriction of \(\Lan_f N\) to \(U\) is isomorphic to the composite \(N \circ \alpha^{-1} \circ g\), whereas the restriction to the complement \(U^c\) is identically \(0\).
    This implies that \(\Lan_f\) is exact, and that \(\Lan_f(Q') \cong Q\) and \(\Lan_f(P')\cong P\).
    In turn, this implies that \(\Lan_f(M') \cong M\), as required.

    We now prove statement (2).
    Let \(M'\) be as in the previous claim.
    From statement (1) it follows that \(M'\) admits a finite minimal $\Erank$-projective resolution.
    By \cite[Lemmas~4.12~and~4.13]{botnan-oppermann-oudot}, \(\Lan_f\) preserves minimal rank projective resolutions, concluding the proof.
\end{proof}

The following is a persistent version of Schanuel's lemma for relative projective covers, which we use to prove the persistent version of Schanuel's lemma for relative projective resolutions.

\begin{lemma}
    \label{lemma:relative-persistent-schanuel}
    Let \(M,N \colon \Rscr^n \to \vect\) and let \(0 \to K \to P \xrightarrow{\alpha} M \to 0\) and \(0 \to L \to Q \xrightarrow{\gamma} N \to 0\) be \(\Ecal\)-exact sequences.
    Assume that \(P\) and \(Q\) are \(\Ecal\)-projective.
    If \(M\) and \(N\) are \(\epsilon\)-interleaved, then \(P \oplus L[\epsilon]\) and \(K[\epsilon] \oplus Q\) are \(\epsilon\)-interleaved.
\end{lemma}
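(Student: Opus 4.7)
The plan is to mimic the classical Schanuel pullback construction in the interleaved setting, producing an explicit $\epsilon$-interleaving between $P\oplus L[\epsilon]$ and $K[\epsilon]\oplus Q$ via block-matrix morphisms. I will assume that $\Ecal$ is shift-invariant, as this is the context in which the lemma will be invoked (cf.\ \cref{lemma:relative-stability-resolutions}), so that $\alpha[\epsilon]$ and $\gamma[\epsilon]$ remain $\Ecal$-deflations.

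First, I would fix an $\epsilon$-interleaving $f\colon M \to N[\epsilon]$, $g\colon N \to M[\epsilon]$, and use the $\Ecal$-projectivity of $P$ together with the fact that $\gamma[\epsilon]$ is an $\Ecal$-deflation to lift $f\circ\alpha$ to $\tilde f\colon P \to Q[\epsilon]$ with $\gamma[\epsilon]\,\tilde f = f\,\alpha$. Symmetrically, I obtain $\tilde g\colon Q \to P[\epsilon]$ with $\alpha[\epsilon]\,\tilde g = g\,\gamma$. Writing $i_K\colon K\hookrightarrow P$ and $i_L\colon L\hookrightarrow Q$ for the inflations, the identities $\gamma[\epsilon]\,\tilde f\, i_K = 0$ and $\alpha[\epsilon]\,\tilde g\, i_L = 0$ imply that $\tilde f$ and $\tilde g$ factor through restrictions $\tilde f|_K\colon K \to L[\epsilon]$ and $\tilde g|_L\colon L \to K[\epsilon]$. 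Moreover, the interleaving identity $g[\epsilon]\,f = \iota^M_{2\epsilon}$ combined with naturality of $\iota_{2\epsilon}$ yields $\alpha[2\epsilon]\circ(\tilde g[\epsilon]\,\tilde f - \iota^P_{2\epsilon}) = 0$, so this difference factors uniquely as $i_K[2\epsilon]\circ h$ for some $h\colon P \to K[2\epsilon]$; analogously one defines $h'\colon Q \to L[2\epsilon]$ out of $\tilde f[\epsilon]\,\tilde g - \iota^Q_{2\epsilon}$.

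Next I would assemble the block-matrix morphisms
\begin{equation*}
    \Phi = \begin{pmatrix} -h & -\tilde g|_L[\epsilon] \\ \tilde f & i_L[\epsilon] \end{pmatrix} \colon P \oplus L[\epsilon] \longrightarrow K[2\epsilon]\oplus Q[\epsilon],
\end{equation*}
\begin{equation*}
    \Psi = \begin{pmatrix} i_K[\epsilon] & \tilde g \\ -\tilde f|_K[\epsilon] & -h' \end{pmatrix} \colon K[\epsilon]\oplus Q \longrightarrow P[\epsilon]\oplus L[2\epsilon],
\end{equation*}
and verify by direct matrix multiplication that $\Psi[\epsilon]\circ\Phi = \iota^{P\oplus L[\epsilon]}_{2\epsilon}$ and $\Phi[\epsilon]\circ\Psi = \iota^{K[\epsilon]\oplus Q}_{2\epsilon}$. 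The diagonal entries give the required shift maps by the defining identities for $h$ and $h'$; the mixed ``inflation'' cross-terms cancel via $i_K[\epsilon]\,\tilde g|_L = \tilde g\, i_L$ and $i_L[\epsilon]\,\tilde f|_K = \tilde f\, i_K$; and the remaining cross-contributions into the kernel factors vanish after post-composition with the monomorphism $i_L[3\epsilon]$ (resp.\ $i_K[3\epsilon]$), combining naturality of $\iota_{2\epsilon}$ with the defining relations for $h$ and $h'$.

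The main obstacle is purely bookkeeping: choosing the signs in $\Phi$ and $\Psi$ so that every cancellation goes through, and tracking shifts carefully in each block entry. No new conceptual input beyond the classical Schanuel pullback idea is needed; the homotopy-like ``correction'' maps $h,h'$ precisely measure the failure of the lifts $\tilde f,\tilde g$ to intertwine the interleaving strictly, and inserting them into the corner of each block matrix absorbs this failure into an honest $\epsilon$-interleaving between $P\oplus L[\epsilon]$ and $K[\epsilon]\oplus Q$.
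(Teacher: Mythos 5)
Your proof is correct, but it takes a genuinely different route from the paper's. The paper's argument is a pullback-style Schanuel argument: it forms the two pullback modules $X$ (of $f\circ\alpha$ along $\gamma[\epsilon]$) and $Y$ (of $g\circ\gamma$ along $\alpha[\epsilon]$), uses the axiom that deflations are stable under pullback together with $\Ecal$-projectivity of $P$ and $Q$ to split the resulting $\Ecal$-exact sequences, so that $X \cong P\oplus L[\epsilon]$ and $Y \cong K[\epsilon]\oplus Q$, and then interleaves $X$ and $Y$ by the simple pointwise formula $(a,b)\mapsto(\phi^P_{r,r+2\epsilon}(a),b)$ — no correction maps appear. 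You instead avoid the pullback altogether: you use $\Ecal$-projectivity to lift $f\alpha$ and $g\gamma$ through the (shifted) deflations, restrict the lifts to the kernels, introduce the homotopy-correction maps $h,h'$ measuring the failure of $\tilde g[\epsilon]\tilde f$ and $\tilde f[\epsilon]\tilde g$ to be the shift maps, and write the interleaving as explicit block matrices; I checked that with your sign choices the four entries of each composite do come out to $\iota_{2\epsilon}$ and $0$ (the off-diagonal and lower-diagonal verifications go exactly as you sketch, by postcomposing with the kernel monomorphisms and using naturality of $\iota_{2\epsilon}$ together with the defining relations for $h,h'$). What each approach buys: the paper's proof is computation-light and leans on the exact-structure axioms, while yours produces the interleaving morphisms in completely explicit form at the price of more shift/sign bookkeeping. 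One point you handle more carefully than the statement itself: both proofs need $\gamma[\epsilon]$ and $\alpha[\epsilon]$ to remain $\Ecal$-deflations (the paper pulls them back; you lift through them), i.e.\ shift-invariance of $\Ecal$, which is the ambient hypothesis of \cref{lemma:relative-stability-resolutions} and is rightly made explicit in your first paragraph.
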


\begin{proof}
    Let \(f \colon M \to N[\epsilon]\) and \(g \colon N \to M[\epsilon]\) form an \(\epsilon\)-interleaving.
    Define the submodule \(X \subseteq P \oplus Q[\epsilon]\) such that \(X(r) = \{ (a,b) \in P(r) \oplus Q(r+\epsilon) \colon f_r(\alpha_r(a)) = \gamma_{r+\epsilon}(b)\}\).
    Diagrammatically, we have the following pullback square:
    \[
        \begin{tikzpicture}
            \matrix (m) [matrix of math nodes,row sep=2em,column sep=4em,minimum width=2em,nodes={text height=1.75ex,text depth=0.25ex}]
            { X           & P                                     \\
              Q[\epsilon] & N[\epsilon]. \\};
            \path[line width=0.75pt, -{>[width=8pt]}]
            (m-1-1) edge [above] (m-1-2)
            (m-1-1) edge (m-2-1)
            (m-2-1) edge [above] node {\(\gamma[\epsilon]\)} (m-2-2)
            (m-1-2) edge [right] node {\(f \circ \alpha\)} (m-2-2)
            ;
        \end{tikzpicture}
    \]
    The kernel of the bottom horizontal map is \(L[\epsilon]\), so the kernel of the top horizontal map is isomorphic to $L[\epsilon]$ as well.
    Moreover, in any exact structure deflations are closed under pullback, so the top horizontal map is part of an \(\Ecal\)-exact sequence \( 0 \to L[\epsilon] \to X \to P \to 0\).
    Since \(P\), the last term in the sequence, is \(\Ecal\)-projective, the sequence splits, and thus \(X \cong L[\epsilon] \oplus P\).
    We define, analogously, a persistence module \(Y\) with \(Y(r) = \{ (a,b) \in P(r+\epsilon) \oplus Q(r) \colon \alpha_{r+\epsilon}(a) = g_r(\gamma_r(b))\}\).
    By a symmetric argument, it follows that \(Y \cong K[\epsilon] \oplus Q\).

    We now prove that \(X\) and \(Y\) are \(\epsilon\)-interleaved.
    Consider, for each \(r \in \Rscr^n\), the morphism \(X(r) \to Y(r+\epsilon)\) that maps \((a,b) \in P(r) \oplus Q(r+\epsilon)\) to \((\phi^{P}_{r,r+2\epsilon}(a),b) \in P(r+2\epsilon) \oplus Q(r+\epsilon)\).
    This is well defined since, if \((a,b) \in X(r)\), then \(f_r(\alpha_r(a)) = \gamma_{r+\epsilon}(b)\) and thus
    \begin{align*}
        \alpha_{r+2\epsilon}(\phi^{P}_{r,r+2\epsilon}(a)) & = \phi^{M}_{r,r+2\epsilon}(\alpha_{r}(a))                                    \\
                                                          & = g_{r+\epsilon}(f_r(\alpha_r(a))) = g_{r+\epsilon}(\gamma_{r+\epsilon}(b)).
    \end{align*}
    These morphisms assemble into a morphism \(X \to Y[\epsilon]\), and there is an analogous morphism \(Y \to X[\epsilon]\).
    It is easy to see that these morphisms form an \(\epsilon\)-interleaving.
\end{proof}

\relativestabilityresolutions*

\begin{proof}
    We may assume that the \(\Ecal\)-projective resolutions have length at most \(\ell \in \Nbb\).
    We proceed by induction on \(\ell\).
    If \(\ell = 0\), there is nothing to prove.
    Let \(\ell \geq 1\) and consider the \(\Ecal\)-exact sequences
    \(0 \to \ker \alpha \to P_0 \xrightarrow{\alpha} M \to 0\) and
    \(0 \to \ker \gamma \to Q_0 \xrightarrow{\gamma} N \to 0\).
    By \cref{lemma:relative-persistent-schanuel}, we have that
    \((\ker \gamma)[\epsilon] \oplus P_0\) and \((\ker \alpha)[\epsilon] \oplus Q_0\) are \(\epsilon\)-interleaved.
    We can then use the inductive hypothesis on the \(\Ecal\)-projective resolutions of length at most $\ell-1$
    \begin{equation*}\label{inductive-step-equation-schanuel} \begin{split}
            0 &\to P_\ell[\epsilon] \to \cdots \to P_2[\epsilon] \to P_1[\epsilon] \oplus Q_0 \to (\ker \alpha)[\epsilon] \oplus Q_0,\\
            0 &\to Q_\ell[\epsilon] \to \cdots \to Q_2[\epsilon] \to Q_1[\epsilon] \oplus P_0 \to (\ker \gamma)[\epsilon] \oplus P_0,
        \end{split} \end{equation*}
    of \((\ker \alpha)[\epsilon] \oplus Q_0\) and \((\ker \gamma)[\epsilon] \oplus P_0\), respectively, concluding the proof.
\end{proof}

\persistencemodulesaremodules*
\begin{proof}
    We start by defining an additive functor $F : \Vect^\Pscr \to \Modcat_{\kbb \Pscr}$.
    Given a functor $M : \Pscr \to \Vect$, consider the vector space $F(M) \coloneqq \bigoplus_{i \in \Pscr} M(i)$.
    Given $[i,j] \in \kbb \Pscr$ and $x \in M(k)$, define the action $x \cdot [i,j]$ to be $0$ if $i\neq k$ and $\phi_{i,j}^M(x)$ if $i=k$.
    It is clear that this extends to an action making $F(M)$ into a right $\kbb \Pscr$-module.

    Given a natural transformation $f : M \to N$ with $M,N : \Pscr \to \Vect$, define a linear map $F(f) : F(M) \to F(N)$ by linearly extending the rule that maps $x \in M(i)$ to $f_i(x) \in N(i)$; it is clear that this is a morphism of $\kbb\Pscr$-modules.
    The fact that $F$ is additive follows immediately from its definition.

    \medskip

    We now define a functor $G : \Modcat_{\kbb \Pscr} \to \Vect^\Pscr$.
    Given a right $\kbb \Pscr$-module $A$, consider the functor $G(A) : \Pscr \to \Vect$ defined on objects by $G(A)(i) \coloneqq A \cdot [i,i] = \{x \cdot [i,i] : x \in A\}$.
    Given $i \leq j \in \Pscr$, let $\phi^{G(A)}_{i,j}$ be defined by mapping $x \cdot [i,i]$ to $x \cdot [i,j]$; this is well defined since, if $x \cdot [i,i] = y \cdot [i,i]$, then $x \cdot [i,j] = x \cdot [i,i] \cdot [i,j] = y \cdot [i,i] \cdot [i,j] = y \cdot [i,j]$.
    It is clear that these definitions make $G(A)$ a functor.

    Given a morphism of $\kbb \Pscr$-modules $g : A \to B$, define a natural transformation $G(g)$ with component corresponding to $i \in \Pscr$ given by mapping $x \cdot [i,i]$ to $g(x) \cdot [i,i]$.
    The naturality of $G(g)$ follows directly from the fact that $g$ respects the action of $\kbb \Pscr$.
    The fact that $G$ is additive follows immediately from its definition.

    \medskip

    We now show that the composite $G \circ F$ is naturally isomorphic to the identity.
    This proves that $F$ is faithful.
    If $M : \Pscr \to \Vect$, then, by definition of $F$ and $G$,
    \[
        G(F(M))(i) = \left(\bigoplus_{j \in \Pscr} M(j)\right) \cdot [i,i] = M(i),
    \]
    since $[i,i]$ annihilates all components of $\bigoplus_{j \in \Pscr} M(j)$ except for the $i$th one, where it acts as the identity.
    This gives a transformation $G(F(M)) \to M$, with all components being isomorphisms, and which is easily checked to be natural.

    \medskip

    To see that $F$ is full, note that any morphism $F(M) \to F(N)$ is determined by its action on the vector spaces $M(i) = F(M)\cdot[i,i] \to F(N)\cdot[i,i] = N(i)$.

    \medskip

    Finally, if $\Pscr$ is finite and $M : \Pscr \to \vect$, it is clear that $F$ restricts to a functor \(F' : \vect^\Pscr \to \modcat_{\kbb \Pscr}\), and that $G$ restricts to a functor \(G' : \modcat_{\kbb \Pscr} \to \vect^\Pscr\).
    To conclude, we prove that the composite $F' \circ G'$ is naturally isomorphic to the identity.
    Since $\Pscr$ is finite, the element $\sum_{i \in \Pscr} [i,i] \in \kbb \Pscr$ is well defined, and is in fact the unit of the algebra.
    This implies that
    \begin{align*}
        F'(G'(A)) = \bigoplus_{i \in \Pscr} G'(A)(i) = \bigoplus_{i \in \Pscr} A \cdot [i,i] \cong A.
    \end{align*}
    concluding the proof.
\end{proof}

\bounddimensionles*
\begin{proof}
    By convention, let $X_i = 0$ if $i \notin \{0, \dots, \ell\}$.
    Denote the morphisms in the long exact sequence by $d_i : X_i \to X_{i-1}$, extending them by zero to $i \notin \{0, \dots, \ell\}$.
    By exactness, we have short exact sequences as follows
    \[
        0 \to \coker(d_{i+2}) \to X_i \to \ker(d_{i-1}) \to 0,
    \]
    which implies that $\dim(X_i) \leq \dim(\coker(d_{i+2})) + \dim(\ker(d_{i-1})) \leq \dim(X_{i+1}) + \dim(X_{i-1})$.
    Let $x \in \{0,1,2\}$.
    We then get the desired result by summing the above inequality over $i \equiv x (\mod\, 3)$.
\end{proof}

\bibliographystyle{alpha}
\bibliography{biblio}

\end{document}